\documentclass[a4paper,11pt]{amsart}

\usepackage[a4paper,left=30mm,right=30mm,top=35mm,bottom=35mm,marginpar=25mm]{geometry} 

\usepackage[utf8]{inputenc}

\usepackage{epsfig}
\usepackage{color}

\usepackage{mathrsfs}
\usepackage{amsmath}    
\usepackage{amsfonts,amssymb}
\usepackage{amsthm}
\usepackage{graphicx}
\usepackage{esint}
\usepackage{stackrel}
\usepackage{enumerate}
\numberwithin{equation}{section}
\newtheorem{theorem}{Theorem}[section]
\newtheorem{lemma}[theorem]{Lemma}
\newtheorem{proposition}[theorem]{Proposition}
\newtheorem{corollary}[theorem]{Corollary}
\theoremstyle{definition}
\newtheorem{definition}[theorem]{Definition}
\newtheorem{remark}[theorem]{Remark}

\newtheorem{notation}[theorem]{Notation}
\newtheorem{Convention}[theorem]{Convention}

\newcommand{\Hf}{\mathcal{H}}
\newcommand{\e}{\mathbf{e}}

\newcommand{\p}{\mathbf{p}}

\newcommand{\C}{\mathbf{C}}
\newcommand{\D}{\mathbf{D}}
\newcommand{\R}{\mathbb{R}}

\newcommand{\A}{\mathcal{A}}
\newcommand{\G}{\mathcal{G}}
\newcommand{\J}{\mathcal{J}}
\newcommand{\F}{\mathcal{F}}
\newcommand{\N}{\mathbb{N}}
\newcommand{\eps}{\varepsilon}

\newcommand{\1}{\mathbf{1}}

\AtBeginDocument{
  \let\div\relax
 \DeclareMathOperator{\div}{div}
\DeclareMathOperator{\Div}{div}

\DeclareMathOperator{\Id}{Id}

}

\title[Minimality of the ball for a model of charged liquid droplets]{Minimality of the ball for a model of charged liquid droplets}

\author[E. Mukoseeva]{Ekaterina Mukoseeva}
\address{E.M: Department of Mathematics and Statistics, P.O. Box 68 (Gustaf H\"allstr\"omin katu 2), FI-00014 University of Helsinki, Finland}
\email{ekaterina.mukoseeva@helsinki.fi}

\author[G. Vescovo]{Giulia Vescovo}
\address{G.V.: SISSA, Via Bonomea 265, 34136 Trieste, Italy}
\email{giulia.vescovo88@gmail.com}

\subjclass[2010]{}
\keywords{}

\begin{document}

\begin{abstract} We deduce that charged liquid droplets minimizing \emph{Debye-H\"uckel-type free energy} are spherical in the small charge regime.
The variational model was proposed by Muratov and Novaga in 2016 to avoid the ill-posedness of the classical one.
By combining a recent (partial) regularity result with Selection Principle of Cicalese and Leonardi, we prove that the ball is the unique minimizer in the small charge regime.
%
\end{abstract}

\maketitle

\section{Introduction}
\subsection{Background and description of the model.}
In this paper we deal with a variational model describing the shape of charged liquid droplets.
We investigate the droplets minimizing a suitable free energy composed by an attractive term, coming from surface tension forces, and a repulsive one, due to the electric forces generated by the interaction between charged particles. 
Thanks to the particular structure of the energy, one may expect that for small values of the total charge the attractive part is predominant, forcing in this way the spherical shape.

The experiments agree with this guess - one observes the following phenomenon:
the shape of the liquid droplet is spherical in a small charge regime. Then, as soon as the value of the total charge increases, the droplet gradually deforms into an ellipsoid, it develops conical singularities, the so-called Taylor cones, \cite{Taylor64}, and finally, the liquid starts emitting a thin jet (\cite{DoyleMoffettVonnegut64},\cite{DuftAchtzehnMullerHuberLeisner03},\cite{RichardsonPiggHightower89}, \cite{WilsonTaylor25}).
The first experiments were conducted by Zeleny in 1914, \cite{Zeleny17}, but in a slightly different context.

Several mathematical models of charged liquid droplets have been studied over the years. 
A difficulty is that contrary to the numerical and experimental observations these models
are in general mathematically ill-posed, see \cite{GoldmanNovagaRuffini15}. For a more exhaustive discussion we refer the reader to \cite{MuratovNovaga16}. 

The main issue with the variational model studied in \cite{GoldmanNovagaRuffini15} comes from the tendency of charges to concentrate at the interface of the liquid. To restore the well-posedness one should consider a physical regularizing mechanism in the functional. With this purpose in mind, Muratov and Novaga in \cite{MuratovNovaga16} integrate the entropic effects associated with the presence of free ions in the liquid. The advantage of this model is that the charges are now distributed inside of the droplet.
More precisely, they suggest considering the following \emph{Debye-H\"uckel-type free energy} (in every dimension):
\begin{equation}\label{e:DH}
\mathcal{F}(E,u,\rho):=P(E)+Q^{2}\Bigg\{\int_{\mathbb{R}^{n}}a_{E}|\nabla{u}|^{2}\,dx+K\int_{E}\rho^{2}\,dx\Bigg\}.
\end{equation}
Here $E\subset\R^{n}$ represents the droplet, $P(E)$ is the De Giorgi perimeter, \cite[Chapter 12]{Maggi12}, the constant $Q>0$ is the total charge enclosed in $E$ and
\[
a_{E}(x):=\1_{E^{c}}+\beta\1_{E},
\]
where \(\1_{F}\) is the characteristic function of a set  \(F\) and $\beta>1$
\footnote{ Mathematically,
considering $\beta\leq 1$ amounts to considering the complement of the set $E$ in place of $E$.
Our proof would work without change also for the case $\beta\leq 1$. However, some changes
would be needed in \cite{DPHV}, so we wouldn't be able to use their regularity results directly.}
is the permittivity of the liquid.

The normalized density of charge $\rho\in L^{2}(\mathbb{R}^{n};\mathbb{R}^{n})$ satisfies 
\begin{equation}\label{e:rho}
\rho \1_{E^c}=0 \qquad\text{and}\qquad \int \rho=1,
\end{equation}
and the  electrostatic potential  \(u\) is such that  $\nabla u\in L^{2}(\mathbb{R}^{n})$ and 
\begin{equation} \label{vincolo1}
-\div \big(a_{E}\,\nabla{u}\big)=\rho\qquad\text{in}\;\mathcal{D}'(\mathbb{R}^{n}).
\end{equation}
For a fixed set $E$ we define the set of admissible pairs of functions $u$ and $\rho$: 
\begin{equation}\label{e:admissible}
\mathcal{A}(E):=\big\{\text{\((u,\rho) \in D^{1}(\R^{n})\times  L^2(\R^n)\): \(u\) and \(\rho\) satisfy~\eqref{vincolo1} and~\eqref{e:rho}}\big\},
\end{equation}
where
\[
D^{1}(\R^{n})=\overline{ C_{c}^{\infty}(\R^{n})}^{\mathring{W}^{1,2}(\R^{n})},\qquad  \|\varphi\|_{\mathring{W}^{1,2}(\R^{n})}=\|\nabla \varphi\|_{L^{2}(\R^{n})}.
\]
Note that the class of admissible couples \(\mathcal A(E)\) is non-empty only if \(n\ge 3\) (see \cite[Remark 2.2]{DPHV}). For this reason the assumption \(n\ge 3\) will be in force throughout the whole paper.
The variational problem proposed in \cite{MuratovNovaga16} is the following:
\begin{equation}\label{e:variational}
\min \big\{\F(E,u,\rho):|E|=V, E\subset B_{R}, (u,\rho)\in \A(E)\big\}.
\end{equation}
The a-priori boundedness assumption $E\subseteq B_R$ ensures the existence of a minimizer in the class of sets of finite perimeter with a prescribed volume, \cite[Theorem 3]{MuratovNovaga16}.

For convenience we introduce the following notation:
\begin{equation}\label{e:G}
\G_{\beta,K}(E):=\inf_{(u,\rho)\in\mathcal A(E)}\left\{\int_{\mathbb{R}^{n}}a_{E}|\nabla{u}|^{2}+K\int_{E}\rho^{2}\right\}.
\end{equation}
For $E\subset\R^{n}$ we set
\begin{equation*}
\mathcal F_{\beta,K,Q}(E):= P(E)+Q^{2}\G_{\beta,K}(E).
\end{equation*}
By scaling (see the introduction of \cite{DPHV}), we can reduce the problem to the case $|E|=|B_{1}|$ and so in the rest of the paper we will work with the following problem:
\begin{equation} \label{e:problem} \tag{$\mathcal{P}_{\beta,K,Q,R}$}
 \min \big\{\F_{\beta, K,Q}(E):|E|=|B_{1}|,\  E\subset B_{R}\big\}.
\end{equation}
We will often omit the subscripts $\beta$ and $K$ as those are fixed physical parameters.

We note that the model we investigate can be seen as "interpolation" between Gamow model and the free interface problems
arising in optimal design (see, for example, \cite{FJ}). For the former, it has been recently shown (\cite{KM},\cite{Jul}) that for small enough charges
the unique minimizers are balls. However, in Gamow model the non-local term is Lipschitz with respect to 
symmetric difference between sets, implying that on small scales  the perimeter dominates the non-local part of the energy.
This is not the case for the energy $\F$ defined in \eqref{e:DH} and our analysis is thus more complicated.   
\subsection{Main results.}
As we mentioned above, one can expect that the shape of the droplet in a small charge regime is spherical. We confirm this intuition by proving that the ball is the unique minimizer of the functional $\F$ for small values of the total charge $Q$. Precisely, we obtain the following result.

\begin{theorem}\label{thm:main} 
	Fix $K>0$, $\beta>1$. Then there exists $Q_0>0$ such that
	for all $Q<Q_0$ and any $R\geq 1$ the only minimizers of (\ref{e:problem})
	are the balls of radius $1$.
\end{theorem}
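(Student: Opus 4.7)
The strategy is a contradiction argument combined with the Cicalese--Leonardi \emph{Selection Principle}, reducing the question to a Fuglede-type inequality on nearly spherical perturbations of $B_1$. Suppose, for contradiction, that there exist $Q_k\downarrow 0$, $R_k\ge 1$ and minimizers $E_k$ of \eqref{e:problem} (with charge $Q_k$ and confinement $R_k$) which are not balls of radius $1$. Comparing with $B_1$ gives $P(E_k)\le P(B_1)+Q_k^{2}\,\G_{\beta,K}(B_1)$, hence $P(E_k)\to P(B_1)$; by the quantitative isoperimetric inequality, up to translation $E_k\to B_1$ in $L^{1}$.

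Next, I apply the Selection Principle: I perturb $\F_{\beta,K,Q_k}$ by a Lipschitz penalization that forces closeness to $B_1$ (e.g.\ a term proportional to the Fraenkel asymmetry, compensated by an appropriate volume Lagrange multiplier) and take minimizers $F_k$ of the penalized problem. By construction the $F_k$ are almost-minimizers of the original functional, still fail to be $B_1$, and enjoy enough quasi-minimality to invoke the partial regularity result of \cite{DPHV}. That result, together with the $L^{1}$-convergence $F_k\to B_1$, upgrades $\partial F_k$ to a $C^{1,\alpha}$-graph over $\partial B_1$ of a function $v_k$ with $\|v_k\|_{C^{1,\alpha}(\partial B_1)}\to 0$. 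In particular the $F_k$ are \emph{nearly spherical} in the classical sense.

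On such sets two quadratic expansions enter into competition. First, after translating to fix the barycenter and using $|F_k|=|B_1|$, Fuglede's inequality gives
\begin{equation*}
P(F_k) - P(B_1) \;\ge\; c_0\,\|v_k\|_{H^{1}(\partial B_1)}^{2}.
\end{equation*}
Second, since $B_1$ is a critical point of $\G_{\beta,K}$ by symmetry, I establish the complementary bound
\begin{equation*}
\bigl|\G_{\beta,K}(F_k) - \G_{\beta,K}(B_1)\bigr| \;\le\; C\,\|v_k\|_{H^{1/2}(\partial B_1)}^{2}
\end{equation*}
by linearizing the transmission problem \eqref{vincolo1} around $\partial B_1$ and using elliptic regularity for the resulting second variation. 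Multiplying the second estimate by $Q_k^{2}$ and summing, for $Q_k$ small enough the perimeter gain strictly dominates the electrostatic loss, which forces $v_k\equiv 0$ and contradicts the non-sphericity of $F_k$; hence $Q_0$ exists.

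The main obstacle is the quadratic control of $\G_{\beta,K}$. As stressed in the introduction, $\G_{\beta,K}$ is \emph{not} Lipschitz with respect to symmetric difference, so the small-scale bound does not follow from $L^{1}$-convergence alone. One must linearize the transmission PDE carefully around $\partial B_1$, control traces of the perturbed potential in $H^{1/2}(\partial B_1)$ uniformly in $k$, and compare this with the $H^{1}$-coercivity of Fuglede's inequality — the former is sharp enough to be absorbed by the latter precisely because of the $Q_k^{2}$ factor. This matching of norms, rather than any purely perimetric estimate, is the technical heart of the argument and is what ultimately fixes the threshold $Q_0$.
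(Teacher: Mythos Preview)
Your overall architecture is right and matches the paper: contradiction, $L^{1}$-convergence to $B_1$, upgrade to nearly spherical, then Fuglede versus a quadratic bound on $\G$. Two points, however, deserve correction.

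First, the Selection Principle step as you describe it is both unnecessary and mis-stated. The paper does \emph{not} introduce auxiliary penalized minimizers $F_k$: it works directly with the minimizers $E_k$ of \eqref{e:problem}. These already enjoy the full quasi-minimality needed for the regularity theory of \cite{DPHV} (density estimates, excess decay, etc.), so there is nothing to gain by passing to a penalized problem. Moreover, your claim that the $F_k$ ``still fail to be $B_1$'' is unsupported: a penalization forcing closeness to $B_1$ could perfectly well have $B_1$ itself as minimizer, and then the contradiction evaporates. Simply drop this step and run the argument on $E_k$.

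Second, and this is the genuine gap, the $C^{1,\alpha}$ regularity from \cite{DPHV} is not enough to carry out the second-variation bound you invoke for $\G$. The linearization of the transmission problem and the control of the shape Hessian (the paper's Lemma~\ref{taylor} and its proof in Section~\ref{fuglede}) require the boundaries $\partial E_k$ to be $C^{2,\vartheta}$-graphs with small norm: the flow construction, the $C^{2}$ bounds on the state function $\psi_t$ (Proposition~\ref{bound on psi_t}), and the differentiation of the Euler--Lagrange system all use second derivatives of the parametrization. Upgrading from $C^{1,\vartheta}$ to $C^{2,\vartheta}$ is precisely the content of Section~\ref{h reg} (Theorem~\ref{C2 reg}), and it is not free: one first proves $C^{1,\eta}$ regularity of the potential $u$ up to $\partial E$ via a Campanato-type argument for the transmission problem (Theorem~\ref{ureg}), then feeds this into the Euler--Lagrange equation for the graph function $f$ to run Schauder estimates. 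Without this step your quadratic control of $\G$ is not justified. (Incidentally, the paper only proves an $H^{1}$ bound in Lemma~\ref{taylor}; the sharper $H^{1/2}$ bound you state is established only at the ball in Appendix~\ref{sec: second der on the ball} and is not needed for the main result.)
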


The condition $E\subset B_R$ in the minimizing problem (\ref{e:problem})
is required to have existence of minimizers. However, thanks to Theorem \ref{thm:main} it can be dropped for small enough charges.

\begin{corollary}\label{cor:existence in R^n}
	Fix $K>0$, $\beta>1$. Then there exists $Q_0>0$ such that
	for all $Q<Q_0$ the infimum in the problem
	\begin{equation} \label{e:problem in R^n} \tag{$\mathcal{P}_{\beta,K,Q}$}
		 \inf \big\{\F_{\beta, K,Q}(E):|E|=|B_{1}|\big\}
	\end{equation}
	is attained. Moreover, the only minimizers 
	are the balls of radius $1$.
\end{corollary}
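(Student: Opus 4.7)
The plan is to derive Corollary~\ref{cor:existence in R^n} from Theorem~\ref{thm:main} by showing that the unrestricted infimum equals $\F_{\beta,K,Q}(B_1)$, which, by Theorem~\ref{thm:main}, is the common value of the constrained infima for every $R\ge 1$. The upper bound $\inf\F\le \F(B_1)$ is immediate since $B_1$ is a competitor in~(\ref{e:problem in R^n}); the work is to prove the reverse inequality $\F(E)\ge \F(B_1)$ for every $E\subset\R^n$ with $|E|=|B_1|$ and $\F(E)<\infty$, and then to characterize the equality case.

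The approach is to approximate $E$ by bounded competitors through truncation and rescaling. Since $|E|<\infty$, Fubini in polar coordinates yields $R_k\to\infty$ along which $\mathcal{H}^{n-1}(E^{(1)}\cap\partial B_{R_k})\to 0$. Set $E_k:=E\cap B_{R_k}$, $\lambda_k:=(|B_1|/|E_k|)^{1/n}\to 1^+$, and $\tilde E_k:=\lambda_k E_k$; then $\tilde E_k$ is bounded with $|\tilde E_k|=|B_1|$, so Theorem~\ref{thm:main} applied on $B_{\lambda_k R_k}$ (which eventually contains $B_1$) gives $\F(\tilde E_k)\ge \F(B_1)$. The perimeter part converges cleanly: $P(\tilde E_k)=\lambda_k^{n-1}P(E_k)\le \lambda_k^{n-1}\bigl(P(E)+\mathcal{H}^{n-1}(E^{(1)}\cap\partial B_{R_k})\bigr)\to P(E)$.

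The main technical obstacle is the upper semicontinuity of the non-local term, i.e., $\limsup_k\G_{\beta,K}(\tilde E_k)\le \G_{\beta,K}(E)$. Given an $\eps$-almost optimal pair $(u^*,\rho^*)\in\A(E)$, I would define $\rho_k^0:=\rho^*\1_{E_k}/c_k$ with $c_k:=\int_{E_k}\rho^*\to 1$, rescale to $\tilde\rho_k(x):=\lambda_k^{-n}\rho_k^0(x/\lambda_k)$ (supported on $\tilde E_k$, with $\int\tilde\rho_k=1$), and take $\tilde u_k$ as the unique $D^1(\R^n)$ solution of $-\div(a_{\tilde E_k}\nabla\tilde u_k)=\tilde\rho_k$. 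The Dirichlet energy $\int a_{\tilde E_k}|\nabla\tilde u_k|^2$ can be bounded from above via the dual flux formulation $\min\{\int|J|^2/a_{\tilde E_k}:\div J=-\tilde\rho_k\}$, tested on a rescaled and divergence-corrected restriction of the flux $-a_E\nabla u^*$; the correction term has $L^2$-energy controlled by $\|\rho^*\1_{E\setminus B_{R_k}}\|_{L^2}$ through a standard elliptic estimate. Combined with the analogous tail decay $\int_{\R^n\setminus B_{R_k}}a_E|\nabla u^*|^2\to 0$ and the scaling factors $\lambda_k^{2-n},\lambda_k^{-n}\to 1$, this yields $\limsup_k\G_{\beta,K}(\tilde E_k)\le \G_{\beta,K}(E)+O(\eps)$, and sending $\eps\to 0$ closes the estimate.

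Putting the pieces together, $\F(B_1)\le \F(\tilde E_k)\le \F(E)+o(1)$, so the infimum equals $\F(B_1)$ and is attained by $B_1$. For the uniqueness of minimizers, if $E$ is another minimizer then the above inequalities become equalities and $\F(\tilde E_k)\to\F(B_1)$; applying a standard concentration-compactness argument to the uniformly perimeter-bounded sequence $\tilde E_k$ (up to translations) produces a limit $E^\infty$ with $|E^\infty|=|B_1|$ contained in a fixed $B_R$ with $R\ge 1$, which, by lower semicontinuity of $\F$ and Theorem~\ref{thm:main}, must be a translate of $B_1$; tracking the translations together with $|\tilde E_k\triangle E_k|\to 0$ and $|E\setminus E_k|\to 0$ then forces $E$ itself to coincide, up to a Lebesgue null set and a translation, with $B_1$.
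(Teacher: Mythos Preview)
Your overall strategy for showing that the unconstrained infimum equals $\F(B_1)$---truncate, rescale to restore the volume constraint, and invoke Theorem~\ref{thm:main} on the bounded approximants---is exactly what the paper does. Two points of comparison are worth noting.

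\medskip
\textbf{The non-local term.} Your hands-on construction (restrict $\rho^*$, renormalize, rescale, and test the dual flux formulation with a divergence-corrected field) is workable in principle, but the ``divergence correction'' step needs care: restricting the flux $a_E\nabla u^*$ to $B_{R_k}$ introduces a singular boundary contribution on $\partial B_{R_k}$, and the correction field must absorb both this and the tail of $\rho^*$. The paper avoids all of this by observing that $|\tilde E_k\triangle E|\to 0$ and invoking the $L^1$-continuity of $\G$ (\cite[Proposition~2.6]{DPHV}), which gives $\G(\tilde E_k)\to \G(E)$ outright. This is considerably cleaner and gives the two-sided limit rather than just upper semicontinuity.

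\medskip
\textbf{Uniqueness: a genuine gap.} Your concentration-compactness argument is circular. Since $\lambda_k\to 1$ and $|E\setminus E_k|\to 0$, one already has $\tilde E_k\to E$ in $L^1$ \emph{without} any translations, so the limit $E^\infty$ you extract is (a translate of) $E$ itself. Claiming that $E^\infty$ is ``contained in a fixed $B_R$'' is therefore exactly the boundedness of $E$ you are trying to prove; concentration-compactness gives $L^1$-tightness after translation, not a bounded support for the limit. Consequently you cannot feed $E^\infty$ into Theorem~\ref{thm:main}. The paper closes this gap differently: it argues that the volume density estimates (Theorem~\ref{properties}\,(v)), whose constants are independent of the container radius $R$, persist for minimizers of the unconstrained problem; then an unbounded minimizer would contain infinitely many disjoint balls of fixed positive mass, contradicting $|E|=|B_1|$. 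Hence every minimizer is bounded and Theorem~\ref{thm:main} applies directly.
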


\begin{remark} In the proof we provide the constant $Q_0$ is the same as in Theorem \ref{thm:main}. However,
almost the same proof would give existence of minimizers (but not the fact that they are balls) for $Q<Q_c$
where $Q_c$ is such that the minimizers for \ref{e:problem} are close enough to the ball in $L^1$ norm.
A priori $Q_c$ might be bigger than $Q_0$ and indeed we expect that existence fails later than minimizers
cease to be spherical.
\end{remark}

For the proof of Theorem \ref{thm:main} we combine an improved version of (partial) regularity results for the minimizers of \cite[Theorem 1.2]{DPHV} with second variation techniques. 
The first step is to obtain the partial $C^{2,\vartheta}$-regularity of minimizers. 
In fact, we are able to prove the following partial $C^\infty$-regularity of minimizers, a result that is interesting in itself.

\begin{theorem} [$C^{\infty}$-regularity] \label{smooth reg apx}
Given \(n\ge 3\), \(A>0\) and \(\vartheta\in (0,1/2)\), there exists \(\varepsilon_{\textnormal{reg}}=\varepsilon_{\textnormal{reg}}(n, A, \vartheta)>0\)  such that if \(E\) is a minimizer of\(~\eqref{e:problem}\) with \(Q+\beta+K+\frac{1}{K}\le A\),
\[
x_0\in\partial E \quad \text{and} \quad r+\e_E(x_0, r)+Q^2\,D_E(x_0, r)\le \varepsilon_{\textnormal{reg}},
\] 
then \(E\cap \C(x_0,r/2)\)  coincides with  the  epi-graph of a \(C^{\infty}\)-function $f$. In particular, we have that \(\partial E\cap \C(x_0,r/2)\) is a \(C^{\infty}\) \((n-1)\)-dimensional manifold. Moreover \footnote{Let $\Omega\subset\R^m$ be an open and bounded set, $f\in C(\overline{\Omega})$. Then 
\[ [f]_{ C^{0,\vartheta}(\overline{\Omega}) }:=\sup_{x\neq y, x,y\in \overline{\Omega} } \frac{|f(x)-f(y)|}{|x-y|^\vartheta}.\] 
Moreover, if $f\in C^k(\overline{\Omega})$ then \[ [f]_{C^{k,\vartheta}(\overline{\Omega})}:=\sum_{|\alpha|\leq k} \|D^{\alpha} f\|_{C(\overline{\Omega})}+\sum_{|\alpha|=k} [D^{\alpha} f]_{C^{0,\vartheta}(\overline{\Omega})}.\]},
\begin{equation}
[f]_{C^{k,\vartheta}(\D(x_0',r/2))}\leq C(n,A,k,\vartheta)
\end{equation}
for every $k\in\mathbb{N}$ with $k\geq 2$.
\end{theorem}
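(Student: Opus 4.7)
The starting point is the partial $C^{1,\vartheta}$-regularity already established in \cite[Theorem 1.2]{DPHV}, which under the present smallness hypothesis guarantees that $\partial E\cap \C(x_0,r/2)$ is the graph of some $f\in C^{1,\vartheta}(\D(x_0',r/2))$ with quantitative bound on the $C^{1,\vartheta}$-seminorm. To upgrade this to $C^\infty$, the plan is a standard bootstrap based on the Euler--Lagrange equations of the model, as in classical free-boundary regularity.

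First I would write down the two Euler--Lagrange identities. Minimising $\G_{\beta,K}$ at fixed $E$ in $\rho$, subject to $\int\rho=1$ and $\rho\,\1_{E^c}=0$, yields the pointwise relation
\[
\rho \;=\; \frac{\lambda-2u}{2K}\,\1_{E},
\]
with $\lambda$ the Lagrange multiplier of the integral constraint; in particular $\rho$ is \emph{a priori} bounded on $E$ and, once $u$ has been shown to be smoother, $\rho$ inherits the same regularity on $E$. A first variation $E_t=(\Id+tX)(E)$ together with the envelope theorem applied to the inner minimum $\G_{\beta,K}$ produces a free-boundary condition of the form
\[
H_{\partial E} \;=\; \mu + Q^{2}\,\Phi\!\bigl(u,\nabla u|_{-},\nabla u|_{+},\rho|_{-}\bigr) \qquad \text{on } \partial E\cap\C(x_0,r/2),
\]
where $\mu$ is the Lagrange multiplier of the volume constraint and $\Phi$ is a universal smooth function obtained from the jumps of $a_{E}|\nabla u|^{2}$ and $K\rho^{2}$ across $\partial E$. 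Simultaneously $u$ solves the transmission problem $-\div(a_{E}\nabla u)=\rho$ with continuity of $u$ and of $a_{E}\partial_{\nu}u$ across $\partial E$.

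The heart of the proof is then an induction on $k\ge 1$. Assume $f\in C^{k,\vartheta}(\D(x_0',r/2))$ with quantitative bounds depending only on $n,A,k,\vartheta$; the base case $k=1$ is \cite{DPHV}. Flatten the interface via a local $C^{k,\vartheta}$ diffeomorphism: the pulled-back equation becomes a transmission problem across a hyperplane with $C^{k-1,\vartheta}$ coefficients. Classical Schauder estimates for elliptic transmission problems then yield $u\in C^{k,\vartheta}$ on each side up to the interface (at the base level $k=1$ only $\rho\in L^{\infty}$ is needed). The pointwise identity for $\rho$ in turn upgrades $\rho$ to $C^{k,\vartheta}$ on $E$. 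Plugging these gains into the free-boundary equation, the right-hand side $\mu+Q^{2}\Phi(\cdot)$ belongs to $C^{k-1,\vartheta}$ on $\partial E$; Schauder regularity for the quasilinear, uniformly elliptic prescribed-mean-curvature equation $H(f)=\mu+Q^{2}\Phi(\cdot)$ then delivers $f\in C^{k+1,\vartheta}$ with constants depending only on $n,A,k,\vartheta$. Iterating in $k$ gives $C^{\infty}$-regularity together with the desired quantitative estimates.

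The delicate point is the first step of the induction, from $C^{1,\vartheta}$ to $C^{2,\vartheta}$. One has to (i) derive the exact Euler--Lagrange identity $H=\mu+Q^{2}\Phi(\cdot)$ through a first variation that preserves the constraints $\int\rho=1$ and $\spt\rho\subset E$, and (ii) apply transmission Schauder at the minimal regularity level, where the interface is only $C^{1,\vartheta}$ and $\rho$ is merely bounded. Once this initial step is in place, the induction is essentially formal and the constants at each stage depend only on $n,k,\vartheta$ and on the uniform bound $Q+\beta+K+\tfrac{1}{K}\le A$, yielding the estimate $[f]_{C^{k,\vartheta}(\D(x_0',r/2))}\le C(n,A,k,\vartheta)$ claimed in the theorem.
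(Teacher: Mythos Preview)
Your proposal is correct and follows essentially the same bootstrap scheme as the paper: transmission Schauder estimates to lift the regularity of $u$ (and hence $\rho$) up to $\partial E$, then Schauder for the prescribed-mean-curvature equation to gain one derivative on $f$, and iterate. The paper organises the delicate base step $C^{1,\vartheta}\to C^{2,\vartheta}$ slightly differently: rather than flattening with a $C^{1,\vartheta}$ change of variables, it works directly with the quantity $T_E u=\partial_{\nu_E^\perp}u+a_E\partial_{\nu_E}u$ (which has no jump across $\partial E$) and proves a Campanato decay for it using the excess/Dirichlet-energy improvement from \cite{DPHV}; flattening is postponed until the boundary is already $C^{2,\vartheta}$, at which point the normal-based diffeomorphism of Lemma~5.1 makes the higher induction steps clean. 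Your route via immediate flattening is equally valid, but the ``classical transmission Schauder'' you invoke at level $k=1$ is precisely the content of the paper's Lemma~4.4 and Proposition~4.5, so it is worth flagging that this step requires a genuine argument and not just a citation.
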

We refer the reader to Notation \ref{notations1} for the definition of \(\e_E(x_0, r)\), \(D_E(x_0, r)\) and \(\C(x_0,r/2)\).
\subsection{Strategy of the proof and structure of the paper.}

We use Selection Principle, the technique introduced by Cicalese and Leonardi
in \cite{CL} for the proof of quantitative isoperimetric inequality
(see also \cite{AFM}, where the authors use a similar approach
to investigate a nonlocal isoperimetric problem).

To prove Theorem \ref{thm:main} we first reduce our problem to the so-called \emph{nearly-spherical sets}. Those are the sets which can be described as subgraphs of smooth functions defined over the boundary of the unitary ball. 
The advantage is that for this particular class of sets we are able to deduce a Taylor expansion for the energy near the ball $B_{1}$.

In the first part of the paper (from Section \ref{closed to ball} to Section \ref{nearly spherical}) we show that a minimizer is 
nearly-spherical whenever the total charge is small enough. We argue by contradiction and get a sequence
of minimizers with corresponding total charge going to zero.
In Section \ref{closed to ball} we prove the $L^{1}$-convergence of the minimizers to the unitary ball and the convergence of the perimeters as the charge goes to zero.
Thanks to \emph{uniform} density estimates for the volume and the perimeter of a minimizer we obtain the Kuratowski convergence of sets as well as their boundaries.

Now we need to improve the convergence deduced in Section \ref{closed to ball}. For this purpose it is crucial to enhance the regularity result obtained in \cite{DPHV}. Hence, Section \ref{h reg} is dedicated to the higher regularity of minimizers. By exploiting the Euler-Lagrange equation and the $C^{1,\eta}$-regularity of $u$ up to the boundary $\partial E$, we deduce the partial $C^{2,\vartheta}$-regularity of minimizers. 

In Section \ref{smooth}, by a bootstrap argument, we obtain the partial smooth regularity of minimizers. 

Since for each \(Q\) small enough the corresponding minimal set $E_{Q}$ has $C^{2,\vartheta}$-regular boundary (with uniform bounds), by Ascoli-Arzel\`a, up to extracting a subsequence, we get that $E_{Q}$ converges to $B_{1}$ in a stronger $C^{2,\vartheta'}$-sense
for every $\vartheta'<\vartheta$. This is the content of Section \ref{nearly spherical}.

In Sections \ref{s:thm for nearly spherical} and \ref{fuglede} we prove Theorem \ref{thm:main} for nearly spherical sets.
To this end, we write Taylor expansion of the energy $\G$ using shape derivatives and providing a bound for the "Hessian".
A direct computation provides a similar bound for the perimeter and this allows us to conclude.

In Appendix \ref{sec: second der on the ball} we provide a sharp bound for the second variation of the energy $\G$ at the ball.
We don't need it for the main results but we think it might be of some interest.

 \subsection*{Acknowledgements}

We warmly thank our advisor Guido De Philippis for introducing
us to the problem and for many fruitful discussions.

The work of the authors  is supported by the INDAM-grant ``Geometric Variational Problems".

\section{Notation and preliminary results} \label{notation and preliminaries}
In this section we fix the notation and collect some results obtained in \cite{DPHV} which will be useful in the proof of regularity.
\begin{notation} \label{notations1}
Let $E\subset\R^n$ be a set of finite perimeter, $x\in\R^{n}$, $\nu\in{}\mathbb{S}^{n-1}$ and $r>0$.
\begin{enumerate}
\item[-]  We call $\mathbf{p}^{\nu}(x):=x-(x\cdot{}\nu)\,\nu$ and $\mathbf{q}^{\nu}(x):=(x\cdot{}\nu)\,\nu$, respectively, the \emph{orthogonal projection} onto the plane $\nu^\perp$ and the \emph{projection} on $\nu$. For simplicity we write $\mathbf{p}(x):=\mathbf{p}^{e_{n}}(x)$ and $\mathbf{q}(x):=\mathbf{q}^{e_{n}}(x)=x_{n}$.
\item[-] We define the \emph{cylinder} with center at $x_0\in{}\R^{n}$ and radius $r>0$ with respect to the direction $\nu\in{}\mathbb{S}^{n-1}$ as 
\[
\mathbf{C}(x_0,r,\nu):=\bigl\{x\in\R^{n}\,:\,|\mathbf{p}^{\nu}(x-x_{0})|<r\,\mbox{,}\,|\mathbf{q}^{\nu}(x-x_{0})|<r\bigr\},
\]
and write $\mathbf{C}_{r}:=\mathbf{C}(0,r,e_{n})$, $\mathbf{C}:=\mathbf{C}_{1}$.
\item[-] We denote the (\(n-1\))-dimensional \emph{disk} centered at \(y_0\in\R^{n-1}\) and of radius \(r\) by 
 \[
 \mathbf{D}(y_0,r):=\bigl\{y\in{}\R^{n-1}: |y-y_{0}|<r\bigr\}.
 \]
We let $\mathbf{D}_{r}:=\mathbf{D}(0,r)$ and $\mathbf{D}:=\mathbf{D}(0,1)$.
\item[-] We define
$$\textbf{e}_E(x,r):=\inf_{\nu\in\mathbb{S}^{n-1}} \frac{1}{r^{n-1}}\,\int_{\partial^* E\cap B_r(x)} \frac{|\nu_E(y)-\nu|^2}{2}\,d\mathcal{H}^{n-1}(y).$$
We call $\textbf{e}_E(x,r)$ the \emph{spherical excess}.
Note that from from the definition it follows that
$$\textbf{e}_E(x,\lambda r)\leq\frac{1}{\lambda^{n-1}}\textbf{e}_E(x,r)$$
for any $\lambda\in (0,1)$.
\item[-] Let $(u,\rho)\in\mathcal{A}(E)$ be the minimizer of $\G_{\beta,K}(E)$. We define the \emph{normalized Dirichlet energy} at $x$ as
$$D_E(x,r):=\frac{1}{r^{n-1}}\,\int_{B_r(x)} |\nabla u|^2\,dx.$$
\end{enumerate}
\end{notation}
\begin{Convention}[Universal constants]
Let \(A>0\) be a positive constant. We say that
\itemize{}
\item the parameters \(\beta, K, Q\) with \(\beta\ge 1\) are \emph{controlled by} \(A\) if 
\[
\beta+K+\frac{1}{K}+Q\le A;
\]
\item a constant is \emph{universal} if  it depends only on the dimension \(n\) and on \(A\). 
%
%
\end{Convention}
Note that in particular universal constants \emph{do not depend} on the size of the container where the minimization problem is set.
\\
\indent In the following theorem we collect some properties of minimizers. For the proofs we refer the reader to \cite{DPHV}.
\begin{theorem} \label{properties}
Let $E\subset\mathbb{R}^{n}$ be a set of finite measure. Then 
\begin{enumerate}
\item[{\rm (i)}] there exists a unique pair \((u_E,\rho_E)\in \A(E)\) minimizing \(\G_{\beta,K}(E)\). Moreover,
\[
u_E+K\rho_E=\G_{\beta,K}(E)\qquad \textrm{in \(E\)}, 
\]
and 
\[
0\le u_E\le \G_{\beta,K}(E),\qquad  0\le K\rho_E \le \G_{\beta,K}(E)\1_E.
\]
In particular, \(\rho_E\in L^p\) for all \(p\in [1,\infty]\) with 
\[
\|\rho_E\|_{p}\le C(n,\beta, K, 1/|E|).
\]
\item[{\rm (ii)}] {\rm (Euler-Lagrange equation)} If \(E\) is a minimizer of~\eqref{e:problem}, then 
\[
\begin{split}
\int_{\partial^* E} {\rm div}_E \eta \,d\Hf^{n-1}-Q^2\,\int_{\R^n}a_E \Big(|\nabla u_E|^2{\rm div} \eta &-2\nabla u_E\cdot (\nabla \eta\, \nabla u_E))\,dx\\
&-Q^2\,K\int_{\R^n} \rho_E^2 {\rm div} \eta\,dx=0
\end{split}
\]
for all \(\eta\in C^{1}_c(B_R;\R^n)\) with \(\int_E {\rm div} \eta \,dx=0\). 
\item[{\rm (iii)}] {\rm (Compactness)}
Let  \(K_h, Q_h\in \R\), \(\beta_h\ge 1\) and \(R_h\ge 1\) be such that 
\[
K_h\to K>0\,,\quad \beta_h\to \beta\ge 1\,,\quad R_h\to R\ge 1\,,\quad Q_h\to Q\ge 0,
\]
when $h\to\infty$. For every  \(h\in \mathbb N\) let \(E_h\) be  a minimizer of $\left(\mathcal{P}_{\beta_h,K_h,Q_h,R_h}\right)$. 
\\
Then, up to a non relabelled subsequence,  there exists a set of finite perimeter \(E\) such that
\[
\lim_{h\to\infty}|E\Delta E_h|= 0.
\]
Moreover, \(E\) is a minimizer of~\eqref{e:problem} and 
\[
 \F_{\beta,K, Q}(E)=\lim_{h\to \infty} \F_{\beta_h,K_h, Q_h}(E_h), \qquad \lim_{h\to\infty}P(E_h)=P(E).
\]
\end{enumerate}
Let \(A>0\). For the following properties we require that  \(\beta, K\) and \(Q\) are controlled by \(A\).
\begin{enumerate}
\item[{\rm (iv)}] {\rm (Boundedness of the normalized Dirichlet energy)} There exists a universal  constant \(C_{\mathrm{e}}>0\) such that,  if  \(E\) is a minimizer of  \eqref{e:problem}, then for all \(x\in \overline{B_R}\),
\[
Q^2D_{E}(x,r)=\frac{Q^2}{r^{n-1}} \int_{B_r(x)}|\nabla u|^2\,dx \le C_{\mathrm{e}}.
\]
\item[{\rm (v)}] {\rm (Density estimates)} There exist universal  constants \(C_{\mathrm{o}}\), \(C_{{\rm i}}>0\) 
and \(\bar{r}>0\) such that, if  \(E\) is a minimizer of  \eqref{e:problem}, then\footnote{Here and in the sequel we will always work with the representative of \(E\) such that
 \[
 \partial E=\Biggl\{x:   \frac{|B_r(x)\setminus E|}{|B_r(x)|} \cdot \frac{|B_r(x)\cap E|}{|B_r(x)|}>0\quad \text{for all \(r>0\)}\Biggr\},
 \] 
 see \cite[Proposition 12.19]{Maggi12}.}
\[
\frac{1}{C_{{\rm i}}}r^{n-1}\le P(E,B_{r}(x))\le C_{\mathrm{o}}r^{n-1} \qquad \text{for all   \(x \in \partial E\) and \(r\in (0,\bar{r})\),}
\]
and
\[
 \frac{1}{C_{{\rm i}}} \le \frac{|B_r(x)\cap E|}{|B_r(x)|}\le C_{\mathrm{o}}   \qquad \text{for all  \(x \in E\) and \(r\in (0,\bar{r})\).}
\]
\item[{\rm (vi)}] {\rm (Excess improvement)} There exists a universal constant \(C_{\mathrm {dec}}>0$ such that for all   \(\lambda\in (0,1/4)\) there exists $\varepsilon_{\mathrm{dec}}=\varepsilon_{\mathrm{dec}}(n,A,\lambda)>0$ satisfying the following:   if \(E\) is a minimizer of \eqref{e:problem} and 
\begin{equation*}
x\in\partial E,\quad  r+Q^2D_{E}(x,r)+\e_E(x,r)\leq{}\varepsilon_{\mathrm{dec}},
\end{equation*}
then
\[
Q^2D_{E}(x,\lambda r)+\e_E(x,\lambda r)\leq{} C_{\mathrm{dec}}\lambda \Bigl(\e_E(x,r) +Q^2D_{E}(x,r)+r\Bigr).
\]
\item[{\rm (vii)}] {\rm (Decay of the Dirichlet energy)}\label{p:dir}
There exists a universal constant \(C_{\mathrm{dir}}>0\) such that for all \(\lambda \in (0,1/2)\)  there exists  \(\eps_{\mathrm{dir}}=\eps_{\mathrm{dir}}(n,A, \lambda)\) satisfying the following:  if \(E\) is a minimizer of \eqref{e:problem}, \(x\in \partial E\) and 
\[
r+\e_E(x,r,e_n)\le \eps_{\mathrm{dir}},
\]
then 
\[
D_E(x,\lambda r)\le C_{\mathrm{dir}}\lambda \Bigl(D_E(x, r)+r\Bigr).
\]
\end{enumerate}
\end{theorem}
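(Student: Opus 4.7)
The proof proceeds in three stages, moving from the flatness hypothesis through $C^{1,\alpha}$ and $C^{2,\vartheta}$ up to $C^\infty$-regularity, and relies on properties (v)--(vii) together with the Euler--Lagrange equation (ii) already stated above.

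The first stage establishes a $C^{1,\mu}$ graph representation by iterating the excess improvement (vi) with the Dirichlet decay (vii). Fix any $\mu\in(0,1)$ and choose $\lambda\in(0,1/4)$ so small that $C_{\mathrm{dec}}\lambda\le \lambda^\mu$. A standard absorption argument based on (vi) yields, for every $y\in\partial E$ sufficiently close to $x_0$ and every $k\in\mathbb N$,
\[
\e_E(y,\lambda^k r)+Q^2D_E(y,\lambda^k r)+\lambda^k r\;\le\;\lambda^{k\mu}\bigl(\e_E(y,r)+Q^2D_E(y,r)+r\bigr).
\]
Combined with the density estimates (v), this decay on the tilt-excess gives, via the classical De Giorgi height-bound and Lipschitz approximation scheme, that $\partial E\cap\C(x_0,r/2)$ is the graph of a $C^{1,\mu}$-function $f\colon\D(x_0',r/2)\to\R$, with $[f]_{C^{1,\mu}}$ controlled by universal constants.

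The second stage upgrades to $C^{2,\vartheta}$. Testing (ii) with deformations of the form $\eta=\varphi\nu_E$ rewrites the Euler--Lagrange condition as a pointwise balance on $\partial E$,
\[
H_{\partial E}=Q^2\Bigl(\bigl[a_E|\nabla u_E|^2\bigr]_{-}^{+}-K\rho_E^{2}\Bigr)+\Lambda,
\]
where $[\,\cdot\,]_-^+$ denotes the jump across $\partial E$ and $\Lambda$ is the volume multiplier. With $\partial E$ now $C^{1,\mu}$, the transmission PDE \eqref{vincolo1} (with the piecewise-constant coefficient $a_E$ jumping from $1$ to $\beta$) admits Schauder estimates on each side separately: $\nabla u_E\in C^{0,\eta}$ up to $\partial E$ from both sides for some universal $\eta$. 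The identity $u_E+K\rho_E=\G_{\beta,K}(E)$ in $E$ from (i) shows $\rho_E$ has the same boundary regularity. Hence the right-hand side above is $C^{0,\vartheta}$ on the graph, and Schauder theory for the quasilinear mean-curvature equation $H=-\div(\nabla f/\sqrt{1+|\nabla f|^{2}})$ yields $f\in C^{2,\vartheta}$, for $\vartheta$ any prescribed number in $(0,1/2)$.

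The third stage is a bootstrap. Assuming inductively $f\in C^{k,\vartheta}$ with $k\ge 2$, the boundary $\partial E$ is $C^{k,\vartheta}$; Schauder theory for the transmission problem with $C^{k,\vartheta}$ interface then gives $u_E\in C^{k,\vartheta}$ from each side, and $\rho_E$ inherits the same regularity via the identity in (i). Plugging back into the Euler--Lagrange balance shows $H_{\partial E}\in C^{k-1,\vartheta}$ in the graph parametrization, and Schauder for the mean-curvature operator promotes $f$ to $C^{k+1,\vartheta}$. Iterating produces $f\in C^{\infty}$ with the quantitative bound $[f]_{C^{k,\vartheta}(\D(x_0',r/2))}\le C(n,A,k,\vartheta)$.

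The main obstacle is the second stage: writing (ii) in the pointwise jump form above, and treating the transmission problem across a merely $C^{1,\mu}$-interface with a discontinuous coefficient, so as to control the quadratic term $|\nabla u_E|^{2}$ in Hölder norm up to $\partial E$ from each side. Once this is in place, the bootstrap of the third stage is routine elliptic Schauder iteration.
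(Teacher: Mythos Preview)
Your proposal does not address the stated theorem at all. Theorem~\ref{properties} is a collection of structural facts about minimizers---existence/uniqueness of the optimal pair and the identity $u_E+K\rho_E=\G_{\beta,K}(E)$ in (i), the first-variation formula in (ii), compactness in (iii), and the a priori estimates (iv)--(vii). What you have written is instead a sketch of the $C^\infty$-regularity theorem (Theorem~\ref{smooth reg apx}): you \emph{assume} (i), (ii), (v), (vi), (vii) and use them to iterate excess decay, upgrade to $C^{2,\vartheta}$ via the Euler--Lagrange equation, and bootstrap. That is a different statement in the paper, proved in Sections~\ref{h reg}--\ref{smooth}.

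For the record, the paper's proof of Theorem~\ref{properties} consists of two parts. Parts (i), (iii)--(vii) are not re-proved here; they are quoted from \cite{DPHV} with precise references. The only item argued in this paper is (ii), because the formula in \cite[Corollary~3.3]{DPHV} contains a sign error. The argument for (ii) goes via a duality identity
\[
\inf_{-\Div(a_E\nabla u)=\rho}\int a_E|\nabla u|^2=\inf_{-\Div V=\rho}\int \frac{|V|^2}{a_E},
\]
which allows one to compare $E$ with the deformed set $E_t=(\mathrm{Id}+t\eta)(E)$ using the competitor $V_t=a_{E_t}A_t\nabla u_t$ (so that the constraint is preserved exactly), and then expands $P(E_t)$ and $\int a_{E_t}|A_t\nabla u_t|^2+K\int\rho_t^2$ to first order in $t$. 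Your text contains nothing toward either the duality step or the first-order expansion, so as a proof of the stated theorem it is vacuous.
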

\begin{proof} The proofs of (i), (iii), (iv), (v), (vi) and (vii) can be found respectively in \cite[Proposition 2.1, Proposition 5.1, Lemma 6.5, Proposition 6.4, Proposition 6.6, Theorem 7.1, Proposition 7.6]{DPHV}.
There is no detailed proof of (ii) in \cite{DPHV}. Moreover, the formula given in \cite[Corollary 3.3]{DPHV} has a sign mistake and thus we give a proof of (ii) here.

We start by showing the following identity for any $\rho\in L^2(\mathbb{R}^n)$:
\begin{equation}\label{e:duality dirichlet energy}
\begin{split}
	&\inf\left\{\int_{\mathbb{R}^n}{a_E\vert\nabla u\vert^2}: u\in D^1(\mathbb{R}^n), -\Div(a_E\nabla u)=\rho\right\}\\
	&\qquad\qquad=\inf\left\{\int_{\mathbb{R}^n}{\frac{\vert V\vert^2}{a_E}}: V\in L^2(\mathbb{R}^n;\mathbb{R}^n), -\Div(V)=\rho\right\}.
\end{split}	
\end{equation}
Right-hand side is trivially not larger than the left-hand side, as we can take $V=a_E\nabla u$ as a competitor. So we only need to show that
\begin{equation*}
\begin{split}
	&\inf\left\{\int_{\mathbb{R}^n}{a_E\vert\nabla u\vert^2}: u\in D^1(\mathbb{R}^n), -\Div(a_E\nabla u)=\rho\right\}\\
	&\qquad\qquad\leq\inf\left\{\int_{\mathbb{R}^n}{\frac{\vert V\vert^2}{a_E}}: V\in L^2(\mathbb{R}^n;\mathbb{R}^n), -\Div(V)=\rho\right\}.
\end{split}	
\end{equation*}
We use that the infimum is achieved in both cases by convexity. Hence, the right-hand side has a minimizer $V_0$ and it satisfies the corresponding Euler-Langrange equation, that is 
\begin{equation*}
\frac{V_0}{a_E}\cdot X=0\qquad\text{ for any }X\in C^\infty_c(\mathbb{R}^n;\mathbb{R}^n)\text{ such that }\Div X=0.
\end{equation*} 
But that
gives us that $\frac{V_0}{a_E}=\nabla u_0$ for some $u_0\in D^1(\mathbb{R}^n)$. Since $\Div(V_0)=\rho$, we get
$-\Div(a_E\nabla u_0)=\rho$ and thus
\begin{equation*}
\begin{split}
	&\inf\left\{\int_{\mathbb{R}^n}{a_E\vert\nabla u\vert^2}: u\in D^1(\mathbb{R}^n), -\Div(a_E\nabla u)=\rho\right\}\leq \int_{\mathbb{R}^n}{a_E\vert\nabla u_0\vert^2}\\
	&\qquad\leq \int_{\mathbb{R}^n}{\frac{\vert V_0\vert^2}{a_E}}=\inf\left\{\int_{\mathbb{R}^n}{\frac{\vert V\vert^2}{a_E}}: V\in L^2(\mathbb{R}^n;\mathbb{R}^n), -\Div(V)=\rho\right\},
\end{split}	
\end{equation*}
which finishes the proof of \eqref{e:duality dirichlet energy}. 

Suppose now that $E$ is a minimizer of~\eqref{e:problem} and $(u,\rho)\in\mathcal{A}$ is the pair minimizing $\G(E)$. We fix a vector field $\eta\in C^\infty_c(B_R;\mathbb{R}^n)$ with \(\int_E {\rm div} \eta \,dx=0\) and, following \cite[Lemma 3.1]{DPHV}, we define 
\begin{equation*}
\varphi_t(x):=x+t\eta,
\qquad u_t:=u\circ\varphi_t^{-1},
\qquad \tilde{\rho}_t:=\mathrm{det}(\nabla\varphi_t^{-1})\rho\circ\varphi_t^{-1}.
\end{equation*}
By \cite[Lemma 3.1]{DPHV} we have 
\begin{equation}\label{e:new electrostatic potential}
-\Div(a_{E_t}A_t\nabla u_t)=\rho_t,
\end{equation}
where $E_t=\varphi_t(E)$, $A_t=\mathrm{det}(\nabla\varphi_t^{-1})(\nabla\varphi_t^{-1})^{-t}(\nabla\varphi_t^{-1})^{-1}$.
Note that $\vert E_t\vert=\vert E\vert+o(t)=\vert B_1\vert+o(t)$ since  \(\int_E {\rm div} \eta \,dx=0\).

Now we recall that $E$ is a minimizer and we use \eqref{e:duality dirichlet energy} to get that
\begin{equation*}
\begin{split}
&P(E)+Q^{2}\Bigg\{\int_{\mathbb{R}^{n}}a_{E}|\nabla{u}|^{2}\,dx+K\int_{E}\rho^{2}\,dx\Bigg\}\\
&=\min\left\{P(E)+Q^2\inf_{(u,\rho)\in\mathcal A(E)}\left\{\int_{\mathbb{R}^{n}}a_{E}|\nabla{u}|^{2}+K\int_{E}\rho^{2}\right\}:\vert E\vert=\vert B_1\vert, E\subset B_R\right\}\\
&=\min\left\{P(E)+Q^2\inf_{\rho\in L^2(\mathbb{R}^n)}\left\{\inf_{-\Div(a_E\nabla u)=\rho}\int_{\mathbb{R}^{n}}a_{E}|\nabla{u}|^{2}+K\int_{E}\rho^{2}\right\}:\vert E\vert=\vert B_1\vert, E\subset B_R\right\}\\
&=\min\left\{P(E)+Q^2\inf_{\rho\in L^2(\mathbb{R}^n)}\left\{\inf_{-\Div(V)=\rho}\int_{\mathbb{R}^{n}}\frac{|V|^{2}}{a_E}+K\int_{E}\rho^{2}\right\}:\vert E\vert=\vert B_1\vert, E\subset B_R\right\}\\
&\leq P(E_t)+Q^2\left\{\int_{\mathbb{R}^{n}}a_{E_t}|A_t\nabla u_t|^{2}+K\int_{E_t}\rho_t^{2}\right\},
\end{split}
\end{equation*}
where for the last inequality we used \eqref{e:new electrostatic potential} and the fact that $\vert E_t\vert=\vert B_1\vert+o(t)$ by the choice of $\eta$. To get Euler-Lagrange equation it remains to expand the last quantity in terms of $t$.
It is well known (see, for example, \cite[Theorem 17.8]{Maggi12}) that
\begin{equation}\label{e:expansion of the perimeter}
	P(E_t)=P(E)+t\int_{\partial^* E}{\Div_E\eta}+o(t),
\end{equation}
where $\Div_E\eta=\Div\eta-\nu_E\cdot (\nabla\eta\;\nu_E)$.
As for the other part of the energy, using change of variables, we have
\begin{equation*}
\begin{split}
&\int_{\mathbb{R}^{n}}a_{E_t}|A_t\nabla u_t|^{2}+K\int_{E_t}\rho_t^{2}
=\int_{\mathbb{R}^{n}}a_{E}|\mathrm{det}(\nabla\varphi_t^{-1})(\nabla\varphi_t)^{t}(\nabla\varphi_t)(\nabla\varphi_t)^{-t}\nabla u|^{2}\mathrm{det}(\nabla\varphi_t)\\
&\qquad\qquad+K\int_{E}\rho^{2}\mathrm{det}^2(\nabla\varphi_t^{-1})\mathrm{det}(\nabla\varphi_t)\\
&=\int_{\mathbb{R}^{n}}a_{E}|(1-t\Div\eta)(\mathrm{Id}+t(\nabla\eta)^t)(\mathrm{Id}+t\nabla\eta)(\mathrm{Id}-t(\nabla\eta)^t)\nabla u|^{2}(1+t\Div\eta)\\
&\qquad\qquad+K\int_{E}\rho^{2}(1-t\Div\eta)+o(t)\\
&=\int_{\mathbb{R}^{n}}a_{E}|(1-t\Div\eta)(\mathrm{Id}+t\nabla\eta)\nabla u|^{2}(1+t\Div\eta)
+K\int_{E}\rho^{2}(1-t\Div\eta)+o(t)\\
&=\int_{\mathbb{R}^{n}}a_{E}|(1+t\left(-\Div\eta\;\mathrm{Id}+\nabla\eta\right))\nabla u|^{2}(1+t\Div\eta)
+K\int_{E}\rho^{2}(1-t\Div\eta)+o(t)\\
&=\int_{\mathbb{R}^{n}}a_{E}\left(|\nabla u|^{2}+t\left(-2\Div\eta\vert\nabla u\vert^2+2\nabla u\cdot(\nabla\eta\nabla u)\right)\right)(1+t\Div\eta)\\
&\qquad\qquad+K\int_{E}\rho^{2}(1-t\Div\eta)+o(t)\\
&=\int_{\mathbb{R}^{n}}a_{E}|\nabla u|^{2}+K\int_{E}\rho^{2}+t\left(\int_{\mathbb{R}^{n}}a_{E}\left(-\Div\eta\vert\nabla u\vert^2+2\nabla u\cdot(\nabla\eta\nabla u)\right)-K\int_{E}\rho^{2}\Div\eta\right)\\
&\qquad\qquad\qquad+o(t),
\end{split}
\end{equation*}
where for the first equality we used that $\nabla\varphi_t=\left(\nabla\varphi_t^{-1}\right)^{-1}\circ\varphi_t$.
Bringing it all together we get that for any $\eta\in C^\infty_c(B_R;\mathbb{R}^n)$ with \(\int_E {\rm div} \eta \,dx=0\)
for any $t$ we have
\begin{equation*}
\begin{split}
&P(E)+Q^{2}\Bigg\{\int_{\mathbb{R}^{n}}a_{E}|\nabla{u}|^{2}\,dx+K\int_{E}\rho^{2}\,dx\Bigg\}
\leq P(E)+Q^2\left\{\int_{\mathbb{R}^{n}}a_{E}|\nabla u|^{2}+K\int_{E}\rho^{2}\right\}\\
&+t\left(\int_{\partial^* E}{\Div_E\eta}
+Q^2 \left(\int_{\mathbb{R}^{n}}a_{E}\left(-\Div\eta\vert\nabla u\vert^2+2\nabla u\cdot(\nabla\eta\nabla u)\right)-K\int_{E}\rho^{2}\Div\eta\right)\right)+o(t),
\end{split}
\end{equation*}
which gives us (ii).
\end{proof}

We state now the $\eps$-regularity theorem.
\begin{theorem}[{\cite[Theorem 1.2]{DPHV}}] \label{eps reg}
Given \(n\ge 3\), \(A>0\) and \(\vartheta\in (0,1/2)\), there exists \(\varepsilon_{\textnormal{reg}}=\varepsilon_{\textnormal{reg}}(n, A, \vartheta)>0\)  such that if \(E\) is minimizer of\(~\eqref{e:problem}\) with \(Q+\beta+K+\frac{1}{K}\le A\), \(x\in \partial E\)  and 
\[
r+\e_E(x, r)+Q^2\,D_E(x, r)\le \varepsilon_{\textnormal{reg}},
\] 
then \(E\cap \C(x,r/2)\)  coincides with  the  epi-graph of a \(C^{1,\vartheta}\) function. In particular,  \(\partial E~\cap~\C(x,r/2)\) is a \(C^{1,\vartheta}\) \((n-1)\)-dimensional manifold.
\end{theorem}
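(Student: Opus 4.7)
The plan is to prove Theorem \ref{eps reg} by the standard De Giorgi-style excess decay iteration, using the quantitative inputs provided by properties (vi) and (vii) of Theorem \ref{properties} and the density bounds (v) to propagate smallness to nearby boundary points; the final step from excess decay to a $C^{1,\vartheta}$ graph is then classical. Throughout I write
\[
\Phi_E(y,s):=\e_E(y,s)+Q^2 D_E(y,s)+s.
\]

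\textbf{Step 1 (one-point iteration at $x$).} Fix $\vartheta\in(0,1/2)$ and choose $\lambda\in(0,1/4)$ small enough that $C_{\mathrm{dec}}\lambda\le\tfrac14\lambda^{2\vartheta}$, then set $\varepsilon_{\mathrm{reg}}\le\varepsilon_{\mathrm{dec}}(n,A,\lambda)$. Property (vi) gives
\[
\Phi_E(x,\lambda^{k+1}r)\le C_{\mathrm{dec}}\lambda\,\Phi_E(x,\lambda^k r)+\lambda^{k+1}r
\]
as long as $\Phi_E(x,\lambda^k r)\le\varepsilon_{\mathrm{dec}}$. A routine induction shows $\Phi_E(x,\lambda^k r)\le M\lambda^{2k\vartheta}\Phi_E(x,r)$ for a universal $M$, so the smallness persists at every scale and the iteration is self-sustaining. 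Interpolating between the discrete scales $\lambda^{k+1}r$ and $\lambda^k r$ (using monotonicity of $\e_E$ and the explicit scaling of $D_E$ via (iv)) upgrades this to the continuous decay $\Phi_E(x,s)\le C(s/r)^{2\vartheta}\Phi_E(x,r)$ for every $s\in(0,r]$.

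\textbf{Step 2 (propagation to nearby boundary points).} For $y\in\partial E\cap B_{r/4}(x)$ and $s\le r/4$ one has $B_s(y)\subset B_{r/2}(x)\subset B_r(x)$, so a direct comparison of the integrals defining $\e_E$ and $D_E$ yields
\[
\e_E(y,s)+Q^2D_E(y,s)\le C(n,A)\,\bigl(\e_E(x,r)+Q^2D_E(x,r)\bigr),
\]
the factor $C(n,A)$ coming from the Ahlfors regularity of $\partial E$ in (v). Shrinking $\varepsilon_{\mathrm{reg}}$ by this universal factor allows one to re-run Step 1 with base point $y$, obtaining the uniform Campanato-type decay
\[
\sup_{y\in\partial E\cap B_{r/4}(x)}\e_E(y,s)\le C\Bigl(\tfrac{s}{r}\Bigr)^{2\vartheta}\Phi_E(x,r),\qquad s\le r/4.
\]

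\textbf{Step 3 (from excess decay to $C^{1,\vartheta}$).} This is the classical De Giorgi--Allard step (see, e.g., \cite[Theorem 26.3]{Maggi12}). The height bound and the Lipschitz approximation theorem first force the optimal hyperplane at scale $r$ to agree up to a small rotation with $\nu_E(x)$, so that $\partial E\cap\C(x,r/2)$ lies in a thin horizontal slab; the uniform power-type decay of the excess at every nearby point then identifies this set with the graph of a single function $f$ over $\D(x',r/2)$, and a standard integration shows that $\nabla f$ is $\vartheta$-Hölder continuous with universal constants, giving the claim.

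The main obstacle is Step 1: the $+r$ inhomogeneity inside (vi) blocks an immediate geometric contraction, so one must calibrate $\lambda$ so that $C_{\mathrm{dec}}\lambda$ is strictly smaller than $\lambda^{2\vartheta}$ and then verify inductively that the accumulated error $\sum_{j\le k}\lambda^{j+1}r$ never violates the smallness threshold $\varepsilon_{\mathrm{dec}}$ needed to re-apply (vi) at the next step. Once this balancing is in place, the propagation in Step 2 and the conversion to a $C^{1,\vartheta}$ graph in Step 3 proceed exactly as in the classical regularity theory for almost-minimizers of the perimeter.
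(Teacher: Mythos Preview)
The paper does not prove Theorem \ref{eps reg}: it is simply quoted from \cite[Theorem 1.2]{DPHV} without proof, and the ingredients you use (properties (v)--(vii) of Theorem \ref{properties}) are themselves imported from that reference. So there is nothing to compare against in this paper; what you have written is essentially a reconstruction of the iteration that \cite{DPHV} carries out to obtain the cited result.

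Your outline is correct in spirit and would go through, but two minor points deserve cleaning up. In Step~2, the comparison $\e_E(y,s)+Q^2D_E(y,s)\le C(n,A)\bigl(\e_E(x,r)+Q^2D_E(x,r)\bigr)$ is false for \emph{all} $s\le r/4$: the inclusion $B_s(y)\subset B_r(x)$ only gives a factor $(r/s)^{n-1}$, which blows up as $s\to 0$. What you actually need (and what the subsequent sentence makes clear you intend) is to fix a single starting scale, say $s=r/4$, get smallness there with constant $4^{n-1}$, and then re-run the Step~1 iteration from $(y,r/4)$; this does not require the Ahlfors regularity (v) at all. In Step~1, ``monotonicity of $\e_E$'' is not the right word---the excess is not monotone---you mean the scaling property $\e_E(x,\mu r)\le \mu^{-(n-1)}\e_E(x,r)$ recorded in Notation~\ref{notations1}, which is what one uses to pass from the discrete scales $\lambda^k r$ to arbitrary $s\in(0,r]$ at the cost of a harmless factor $\lambda^{-(n-1)}$. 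With these adjustments the argument is the standard one, and the conversion in Step~3 via \cite[Theorem 26.3]{Maggi12} is indeed routine.
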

\section{Closeness to the ball} \label{closed to ball}
In this section we deduce the $L^{\infty}$-closeness of minimizers to the unitary ball in the small charge regime. Let us start with the following proposition.
\begin{proposition} [$L^{1}$-closeness to the ball] \label{L1 closed}
Let $\{Q_h\}_{h\in\mathbb{N}}$ be a sequence in $\mathbb{R}$ such that $Q_h>0$ and $Q_h\to 0$ when $h\to\infty$. Let $\{E_h\}_{h\in\mathbb{N}}$ be a sequence of minimizers of $\left(\mathcal{P}_{\beta,K,Q_h,R}\right)$. Then, up to translations, $E_h\to B_{1}$ in $L^{1}$  and $P(E_{h})\to P(B_{1})$ when $h\to\infty$.
\end{proposition}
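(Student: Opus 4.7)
The strategy is to use $B_1$ itself as a competitor in~\eqref{e:problem}: since $R\ge 1$ we have $B_1\subset B_R$ and $|B_1|$ has the prescribed measure, so the minimality of $E_h$ gives
\[
P(E_h)+Q_h^{2}\,\G_{\beta,K}(E_h)\le P(B_1)+Q_h^{2}\,\G_{\beta,K}(B_1).
\]
The constant $\G_{\beta,K}(B_1)$ is finite: by Theorem~\ref{properties}(i) the infimum defining $\G$ is attained on any set of finite measure, and $\A(B_1)\ne\emptyset$ because $n\ge 3$. Dropping the nonnegative second term on the left,
\[
P(E_h)\le P(B_1)+Q_h^{2}\,\G_{\beta,K}(B_1),
\]
so $\sup_h P(E_h)<\infty$ and, since $Q_h\to 0$, also $\limsup_h P(E_h)\le P(B_1)$.

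Combined with $|E_h|=|B_1|$ and $E_h\subset B_R$, the standard compactness theorem for sets of finite perimeter extracts, along a non-relabelled subsequence, a set $E$ with $|E|=|B_1|$ and $E_h\to E$ in $L^1(\R^n)$. Lower semicontinuity of the perimeter together with the previous bound yields
\[
P(E)\le\liminf_h P(E_h)\le\limsup_h P(E_h)\le P(B_1),
\]
while the Euclidean isoperimetric inequality gives $P(E)\ge P(B_1)$, with equality exactly on balls. Hence $E$ is a ball of radius $1$, all inequalities above are equalities, and in particular $P(E_h)\to P(B_1)$.

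To handle the ``up to translations'' clause I center each minimizer at its barycenter $x_h$. By continuity of the barycenter under $L^1$-convergence of uniformly bounded sets, any subsequential $L^1$-limit of $E_h-x_h$ is a ball of radius $1$ whose barycenter is the origin, hence coincides with $B_1$. A Urysohn-type argument (every subsequence admits a further subsequence for which the conclusion holds) promotes this to convergence of the whole translated sequence in $L^1(\R^n)$. I do not expect a genuine obstacle here: the argument is the textbook ``$L^1$-compactness plus isoperimetric rigidity'' applied to a perimeter-plus-lower-order energy in the vanishing-coupling limit. The only point requiring more than a routine check is the finiteness of $\G_{\beta,K}(B_1)$, which is precisely where the standing assumption $n\ge 3$ enters through Theorem~\ref{properties}(i).
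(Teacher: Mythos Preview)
Your argument is correct, but it follows a different route from the paper's. The paper invokes the \emph{quantitative} isoperimetric inequality of Fusco--Maggi--Pratelli: for each $h$ there is a point $x_h$ with $|E_h\Delta B_1(x_h)|^2\le C\bigl(P(E_h)-P(B_1)\bigr)$, and then the competitor bound $P(E_h)-P(B_1)\le Q_h^2\,\G_{\beta,K}(B_1)$ immediately gives $|E_h\Delta B_1(x_h)|\to 0$ together with $P(E_h)\to P(B_1)$, with no subsequences or compactness needed. Your approach is more elementary---it only uses BV compactness, lower semicontinuity of perimeter, and the \emph{qualitative} rigidity of the isoperimetric inequality---so it avoids the deep \cite{FuscoMaggiPratelli08} result, at the cost of an extra Urysohn step and the barycenter-centering argument. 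The paper's method has the advantage of yielding an explicit rate $|E_h\Delta B_1|\le CQ_h$, which your compactness argument does not provide; however, that rate is not used later in the paper, so for the purposes of Proposition~\ref{L1 closed} both proofs are adequate.
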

\begin{proof} By the quantitative isoperimetric inequality, \cite[Theorem 1.1]{FuscoMaggiPratelli08}, for every $h~\in~\mathbb{N}$ there exists a point $x_h\in\mathbb{R}^n$ such that
\[
|E_{h}\Delta B_{1}(x_h)|^{2}\leq C\left(P(E_{h})-P(B_{1})\right)
\]
for some constant $C=C(n)>0$ which depends only on $n$. By translating each set $E_h$ we can assume without loss of generality that the following inequality holds:
\begin{equation} \label{qii}
|E_{h}\Delta B_{1}|^{2}\leq C\left(P(E_{h})-P(B_{1})\right).
\end{equation}
By the minimality of $E_{h}$ we have
\[ 
\begin{split}
\F_{\beta,K,Q_{h},R}\left(E_{h}\right) &=P(E_{h})+Q^{2}_{h}\,\G_{\beta,K}(E_{h})\\
&\leq P(B_{1})+Q^{2}_{h}\,\G_{\beta,K}(B_{1})=\F_{\beta,K,Q_{h},R}\left(B_{1}\right),\quad \forall\,h\in\mathbb{N}.
\end{split}
\]
Hence, \eqref{qii} yields
\[
|E_{h}\Delta B_{1}|^{2}\leq C\left(P(E_{h})-P(B_{1})\right)\leq C\,Q^{2}_{h}\,\G_{\beta,K}(B_{1})\quad \forall\,h\in\mathbb{N},
\]
for some constant $C=C(n)>0$ which depends only on the dimension $n$. 

Then $Q_h\to 0$ implies $E_h\to B_{1}$ in $L^{1}$ and $P(E_{h})\to P(B_{1})$ when $h\to\infty$.
\end{proof}
Thanks to the density estimates (see Theorem \ref{properties} (v)), we can improve the convergence of Proposition \ref{L1 closed}.
\begin{proposition} [$L^{\infty}$-closeness to the ball] \label{Linfinity closed}
Let $\{Q_h\}_{h\in\mathbb{N}}$ be a sequence such that $Q_h>0$ and $Q_h\to 0$ when $h\to\infty$. Let $\{E_h\}_{h\in\mathbb{N}}$ be a sequence of minimizers of $\left(\mathcal{P}_{\beta,K,Q_h,R}\right)$. Then, up to translations, $E_h\to \overline{B}_{1}$ and $\partial E_h\to \partial B_{1}$ in the Kuratowski sense. 
\end{proposition}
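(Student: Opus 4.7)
The plan is to upgrade the $L^1$-convergence from Proposition~\ref{L1 closed} to Kuratowski convergence by exploiting the uniform density estimates of Theorem~\ref{properties}(v); since $Q_h\to 0$, the parameters $\beta,K,Q_h$ are eventually controlled by a universal $A$, so the constants $C_{\mathrm{o}}$, $C_{\mathrm{i}}$, $\bar r$ can be chosen independent of $h$.

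First I would establish $E_h\to\overline{B}_1$ in the Kuratowski sense. For the outer inclusion $\limsup E_h\subseteq \overline{B}_1$, suppose $x_{h_k}\in E_{h_k}$ converges to $\bar x\notin\overline{B}_1$. Pick $r\in(0,\bar r)$ with $B_r(\bar x)\cap B_1=\emptyset$. For $k$ large $B_{r/2}(x_{h_k})\subset B_r(\bar x)$, and the volume density estimate yields $|E_{h_k}\cap B_{r/2}(x_{h_k})|\ge c\,r^n$, contradicting $|E_{h_k}\Delta B_1|\to 0$. For the reverse inclusion, fix $\bar x\in\overline{B}_1$; then $|B_r(\bar x)\cap B_1|>0$ for every $r>0$, so by $L^1$-convergence $E_h\cap B_r(\bar x)\ne\emptyset$ for $h$ large, and a diagonal argument with $r=1/h$ produces $x_h\in E_h$ with $x_h\to\bar x$.

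For $\partial E_h\to\partial B_1$, the outer inclusion uses the density estimates for both $E_{h_k}$ and $E_{h_k}^c$ at a boundary point (the $E^c$-lower bound is the standard symmetric counterpart of~(v) for minimizers of this kind of functional). If $x_{h_k}\in\partial E_{h_k}\to\bar x$, then for small $r$ we have both $|E_{h_k}\cap B_{r/2}(x_{h_k})|\ge c\,r^n$ and $|B_{r/2}(x_{h_k})\setminus E_{h_k}|\ge c\,r^n$. If $\bar x\in\mathrm{int}(B_1)$, eventually $B_{r/2}(x_{h_k})\subset B_1$ and the second estimate contradicts $L^1$-convergence; if $\bar x\notin\overline{B}_1$, eventually $B_{r/2}(x_{h_k})\cap B_1=\emptyset$ and the first does. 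Hence $\bar x\in\partial B_1$. For the reverse inclusion, fix $\bar x\in\partial B_1$. For every small $r>0$ both $|B_r(\bar x)\cap B_1|>0$ and $|B_r(\bar x)\setminus B_1|>0$, so by $L^1$-convergence, for $h$ large, $B_r(\bar x)$ meets both $\mathrm{int}(E_h)$ and $\mathrm{int}(E_h^c)$ in sets of positive measure. Working with the representative of $E_h$ described in the footnote to~(v), $\R^n$ splits as $\mathrm{int}(E_h)\sqcup\partial E_h\sqcup\mathrm{int}(E_h^c)$, and connectedness of the open ball $B_r(\bar x)$ then forces $\partial E_h\cap B_r(\bar x)\ne\emptyset$. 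Diagonalizing yields $x_h\in\partial E_h$ with $x_h\to\bar x$.

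The only mildly delicate point I foresee is invoking the density lower bound for $E_h^c$ at points of $\partial E_h$, since~(v) is stated only for $x\in E$; this however is standard for minimizers and is already implicitly used in~\cite{DPHV}. All other steps are essentially bookkeeping combining $L^1$-convergence with the uniform density estimates.
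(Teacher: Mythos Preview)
Your proof is correct and, for the convergence $E_h\to\overline{B}_1$, it is essentially identical to the paper's argument: both combine the $L^1$-convergence of Proposition~\ref{L1 closed} with the uniform volume density estimates of Theorem~\ref{properties}(v).

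For the boundary convergence $\partial E_h\to\partial B_1$, however, you take a genuinely different route. The paper argues via the \emph{perimeter} density estimates in (v) together with the convergence $P(E_h)\to P(B_1)$ from Proposition~\ref{L1 closed} (so that the perimeter measures converge weakly-$*$ and one can localize). You instead argue via two-sided \emph{volume} density estimates at boundary points together with the $L^1$-convergence alone. Your approach has the advantage of not needing the convergence of perimeters at all, and the connectedness argument you give for the inner inclusion is clean; its cost is that you must invoke the lower volume density for the complement $E_h^c$ at points of $\partial E_h$, which, as you correctly note, is not literally stated in (v) but is the standard symmetric companion estimate. The paper's approach stays strictly within what (v) provides, at the price of using an extra piece of information (perimeter convergence) from Proposition~\ref{L1 closed}. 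Either route is fine.
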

\begin{proof} 
By Proposition \ref{L1 closed} we know that up to translations 
$E_h\rightarrow B_1$ in $L^1$.
First, we prove the Kuratowski convergence of $E_h$ to the ball $\overline{B}_{1}$, i.e.
\begin{enumerate}
\item[\rm{(i)}] $x_{h}\to x, \, x_{h}\in E_h \, \Rightarrow \, x\in \overline{B}_{1}$,
\item[\rm{(ii)}] $x\in \overline{B}_{1} \Rightarrow \exists x_h \in E_h \text{ such that }x_h \to x $.
\end{enumerate}
In order to prove (i) let $x_{h}\to x$ and $x_{h}\in E_{h}$. Assume by contradiction that $x\notin \overline{B}_{1}$. Then there exits $B_{s}(x)\subset \R^{n}$ such that $B_{s}(x)\cap B_{1}=\emptyset$.
By Theorem \ref{properties} (v), 
for $Q_{h}$ small enough there exist a radius $\bar{r}>0$ and a constant $C>0$, both independent of $Q_{h}$, such that
\begin{equation} \label{density volume}
\left|B_{r}(x_h)\cap E_h\right|\geq C\,r^{n} \quad \forall r\leq\bar{r}.
\end{equation}
%
Since $x_h\rightarrow x$, for any $r>0$ we can define $h(r)\in\mathbb{N}$ such that $B_{\frac{r}{2}}(x_{h})\subset B_{r}(x)$ for every $h\geq h(r)$. Then, for any $r\leq\overline{r}$, $h\geq h(r)$,

\begin{equation} \label{density volume ball}
\left|B_{r}(x)\cap E_h\right|\geq |B_{\frac{r}{2}}(x_h)\cap E_h|\geq C\,r^{n}.
\end{equation}
By the $L^{1}$-convergence of $E_h$ to $B_{1}$ and \eqref{density volume ball} we deduce $|B_r(x)\cap B_1|>0$ for any $r\leq\overline{r}$, a contradiction with $B_s(x)\cap B_1=\emptyset$.

The proof of (ii) follows by arguing similarly as above, exploiting the $L^{1}$-convergence.
Analogously, by using density estimates for the perimeter of $E_{h}$ and the convergence of perimeters $P(E_{h})\to P(B_{1})$, one can prove that $\partial E_h\to \partial B_{1}$ in the Kuratowski sense. 
\end{proof}
\section{Higher regularity} \label{h reg}
In this section we improve Theorem \ref{eps reg}. To be more precise, we deduce the partial $C^{2,\vartheta}$ regularity of minimizers. 
The first step is to obtain better regularity for a couple $(u,\rho)\in\A(E)$, where $E\subset\R^{n}$ is a minimizer of the problem \eqref{e:problem}: we prove that $u$ is $C^{1,\eta}$-regular up to the boundary of $E$.
We start with some preliminary results. 
\begin{notation} \label{TE}  Let $E\subset\R^n$ be such that $\partial E\cap \C(x_0,r)$ is described by the graph of a regular function $f$.
\itemize
\item If $x\in\R^n$, we write $x=(x',x_n)$, where $x'\in\R^{n-1}$ and $x_n\in\R$.
\item We denote by $\nu_E$ the outer-unit normal to $\partial E$. Moreover, we extend $\nu_E$ at every point in the following way
\[
\nu_E(x',x_n)=\nu_{E}(x',f(x')) \quad \forall x=(x',x_n)\in\C(x_0,r).
\]
\item Let $u$ be a solution of 
$$-{\rm div }(a_{E} \nabla u)=\rho_{E} \quad \text{in } \mathcal{D}'\left(B_{r}(x_{0})\right),$$ 
where 
\[
\rho_E\in L^{\infty}\left(B_r(x_0)\right) \quad \text{and} \quad a_{E}=\beta \textbf{1}_{E}+\textbf{1}_{E^{c}}.
\]
We denote by 
\[
T_E u \index{\(T_E u\)}:=\partial_{\nu_E^{\perp}}u+ (1+(\beta-1) \textbf{1}_{E}) \partial_{\nu_E}u,
\]
where
\[
\partial_{\nu_E^{\perp}} u:= \nabla u-(\nabla u \cdot \nu_E)\,\nu_E \quad \text{and } \quad \partial_{\nu_E} u:= (\nabla u \cdot \nu_E)\,\nu_E.
\]
%
\item We denote by 
$$[g]_{x,r}:=\frac{1}{\vert B_r\vert}\int_{B_{r}(x)}g\,dx$$
the \emph{mean value} of $g\in L^{1}(B_r(x))$. We simply write $[g]_{r}:=[g]_{0,r}$.
\item We denote the restrictions of a function $v$ to $E$ and $E^c$
by $v^+$ and $v^-$ respectively:
$$v^+:=v\,\1_E,\quad v^-:=v\,\1_{E^c}.$$

\end{notation}
Let us recall the following integral characterization of H\"older continuous functions.
\begin{lemma} [Campanato's lemma, see {\cite[Theorem 7.51]{AFP}} ] \label{campanato}
Let $p\geq1$ and $g~\in~L^{p}(B_{2R}(x_0))$. Assume that there exist $\sigma\in(0,1)$ and $A>0$ such that for every $x\in B_R(x_0)$
\begin{equation} \label{campanatoeq}
\frac{1}{\vert B_r\vert}\int_{B_r(x)}|g(y)-[g]_{x,r}|^p\,dy\leq A^p\,\left(\frac{r}{R}\right)^{p\sigma}, \quad \forall B_r(x)\subset B_R(x_0).
\end{equation}
Then there exists a constant $C=C(n,p,\sigma)$ such that $g$ is $\sigma$-H\"older continuous in $B_R(x_0)$ with a constant $C\frac{A}{R^\sigma}$ and
\begin{equation*}
\max_{x\in B_R(x_0)}{\left\vert g(x)\right\vert}\leq CA+\left\vert [g]_{x_0,R}\right\vert.
\end{equation*}
\end{lemma}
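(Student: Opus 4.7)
The plan is the classical Campanato iteration: show that the local means $[g]_{x,r}$ form a Cauchy net as $r\to 0$, identify the limit $\tilde g$ with a continuous representative of $g$ (via Lebesgue differentiation), and read off the H\"older seminorm by comparing means at two nearby centers.

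First, for fixed $x\in B_R(x_0)$ and $0<s<r$ with $B_r(x)\subset B_R(x_0)$, I would estimate $[g]_{x,r}-[g]_{x,s}$ by writing it as $[g-[g]_{x,r}]_{x,s}$, applying Jensen's inequality, and invoking the hypothesis \eqref{campanatoeq} to obtain
\[
|[g]_{x,r}-[g]_{x,s}|^{p}\le\left(\frac{r}{s}\right)^{\!n}A^{p}\left(\frac{r}{R}\right)^{\!p\sigma}.
\]
Taking $s=r/2$ yields $|[g]_{x,r}-[g]_{x,r/2}|\le CA(r/R)^{\sigma}$; telescoping over $r_{k}=2^{-k}r_{0}$ makes the sequence $\{[g]_{x,r_{k}}\}_{k}$ Cauchy because $\sum_{k}2^{-k\sigma}<\infty$. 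Denote its limit $\tilde g(x)$; by Lebesgue differentiation it coincides with $g(x)$ a.e., and passing to the limit $s\to 0$ gives the quantitative estimate $|\tilde g(x)-[g]_{x,r}|\le CA(r/R)^{\sigma}$ for all admissible $r$.

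Next, for $x,y\in B_{R}(x_{0})$ with $\delta:=|x-y|$ small enough that $B_{2\delta}(x)\cup B_{2\delta}(y)\subset B_{R}(x_{0})$, I would pick $r:=2\delta$ and note that the overlap $B_{r}(x)\cap B_{r}(y)$ contains a ball of radius $\delta$, so its measure is at least $c_{n}r^{n}$. Inserting $g(z)$ inside the integral and averaging over the overlap,
\[
|[g]_{x,r}-[g]_{y,r}|^{p}\le\frac{C}{r^{n}}\int_{B_{r}(x)\cap B_{r}(y)}\bigl(|g-[g]_{x,r}|^{p}+|g-[g]_{y,r}|^{p}\bigr)\,dz\le CA^{p}(r/R)^{p\sigma},
\]
and the triangle inequality together with the previous step delivers $|\tilde g(x)-\tilde g(y)|\le CA(|x-y|/R)^{\sigma}$, i.e.\ a H\"older seminorm bounded by $CA/R^{\sigma}$. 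For the sup bound I would decompose $|\tilde g(x)|\le|\tilde g(x)-[g]_{x_{0},R}|+|[g]_{x_{0},R}|$ and control the first term by chaining the two previous estimates through an intermediate mean $[g]_{x,r}$ with $r\sim R-|x-x_{0}|$.

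The only mildly delicate point, and the main obstacle to a clean write-up, is bookkeeping the admissible radii for centers close to $\partial B_{R}(x_{0})$, where $r$ must shrink in order to keep $B_{r}(x)\subset B_{R}(x_{0})$. This is handled in the standard way: prove the estimate first on $B_{R/2}(x_{0})$ (where $r\sim R/4$ is always admissible) and then extend to $B_{R}(x_{0})$ by a finite chaining argument, or exploit that $g\in L^{p}(B_{2R}(x_{0}))$ to enlarge the admissible range. No conceptually new ingredient is required beyond the classical Campanato iteration of \cite[Theorem~7.51]{AFP}.
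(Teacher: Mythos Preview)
The paper does not actually prove this lemma; it is merely stated with a reference to \cite[Theorem~7.51]{AFP} and then used as a black box. Your sketch is the standard Campanato iteration that underlies that reference, and it is correct.
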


We also recall a simple iteration lemma.
\begin{lemma} [{\cite[Lemma 7.54]{AFP}}] \label{iterating lemma}
Let $0<q<p$, $s>0$. Suppose that $h:(0,a)~\to~[0,+\infty)$ is an increasing function such that
\begin{equation*}
h(r)\leq c_1 \,\left(\left(\frac{r}{R}\right)^p+R^s\right) h(R)+c_2\,R^q \quad \text{for every }\, 0<r<R\leq a,
\end{equation*}
where $c_1$ and $c_2$ are positive constants.
Then there exists  $R_0=R_0(p,q,s,c_1)>0$, $c~=~c(p,q,s)>0$ such that
\begin{equation*}
h(r)\leq c \,\left\{\left(\frac{r}{R}\right)^q\,h(R)+c_2\,r^q\right\} \quad \text{for every }\, 0<r<R\leq\min(R_0,a).
\end{equation*}
\end{lemma}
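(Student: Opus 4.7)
The plan is to apply the standard discretized iteration, reducing the continuous inequality to a geometric recursion at the scales $\tau^k R$ for a suitably chosen $\tau\in(0,1)$. The key technical point is that one cannot iterate directly with the exponent $q$ appearing on the right-hand side, because this would generate a linearly growing factor $k$ from the inhomogeneous term; one has to pass through an intermediate exponent strictly between $q$ and $p$.

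Concretely, I would fix $\sigma\in(q,p)$ (for instance $\sigma=(p+q)/2$), then choose $\tau\in(0,1)$ so small that $2c_1\tau^p\le \tau^\sigma$, and select $R_0=R_0(p,q,s,c_1)>0$ so small that $c_1 R_0^s\le \tfrac12 \tau^\sigma$. With these choices, the hypothesis applied with the particular radius $r=\tau R$, for any $R\le R_0$, gives
\[
h(\tau R)\le \bigl(c_1\tau^p+c_1 R^s\bigr)h(R)+c_2 R^q\le \tau^\sigma h(R)+c_2 R^q.
\]

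Next I would iterate this at the dyadic-type scales $r_k:=\tau^k R$. Setting $a_k:=h(r_k)$ and $b_k:=\tau^{-kq}a_k$, the estimate above becomes the linear recursion $b_{k+1}\le \tau^{\sigma-q}b_k+c_2\tau^{-q}R^q$. Since $\sigma>q$, the factor $\tau^{\sigma-q}$ is strictly less than one, so summing the resulting geometric series yields
\[
a_k\le \tau^{kq}\bigl(h(R)+C\,c_2 R^q\bigr),\qquad C=C(\tau,q,\sigma).
\]

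To pass from the discrete radii to an arbitrary $r\in(0,R]$, I would use the monotonicity of $h$: pick the unique $k$ with $\tau^{k+1}R<r\le \tau^k R$; then $h(r)\le a_k$ and $\tau^{kq}\le \tau^{-q}(r/R)^q$. Substituting into the previous bound yields precisely
\[
h(r)\le c\left\{\Bigl(\tfrac{r}{R}\Bigr)^q h(R)+c_2 r^q\right\},
\]
with $c$ depending only on $p,q,s$ (and implicitly on $c_1$, through $\tau$ and $C$). The only genuine obstacle is the bookkeeping around the intermediate exponent $\sigma\in(q,p)$: iterating with $\sigma=q$ would instead generate an inhomogeneous contribution of order $k\,\tau^{kq}R^q$, whose continuous counterpart is $(r/R)^q\log(R/r)$ rather than $(r/R)^q$, and thus would not close the inequality in the sharp form required.
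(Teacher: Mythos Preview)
Your proof is correct and is exactly the standard iteration argument. The paper does not give its own proof of this lemma; it merely quotes the statement from \cite[Lemma~7.54]{AFP}, so there is nothing to compare. Your choice of an intermediate exponent $\sigma\in(q,p)$, the selection of $\tau$ and $R_0$, the geometric recursion for $b_k=\tau^{-kq}h(\tau^kR)$, and the final passage from discrete to continuous radii via monotonicity are precisely the steps of the classical proof.

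One small remark: you are right that the constant $c$ in the conclusion depends on $c_1$ through $\tau$ (and hence through $C$). The paper's statement lists $c=c(p,q,s)$, which is a slight imprecision; your parenthetical comment catches this correctly, and it is harmless for the applications in the paper.
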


We are going to use the following lemma.
\begin{lemma}[{\cite[Theorem 7.53]{AFP}}] \label{piano}
Let $v$ be a solution of 
$$-{\rm div }(a_{H} \nabla v)=\rho_{H} \quad \text{in } \mathcal{D}'\left(B_{1}(x_{0})\right),$$ 
where $\rho_H\in L^{\infty}\left(B_1(x_0)\right)$ and
\[
H:=\{y\in\R^{n}\,:\,(y-x_{0})\cdot e_{n}\leq 0\}, \quad a_{H}=\beta \1_{H}+\1_{H^{c}}.
\]
Then there exist $\gamma\in(0,1)$ and a constant $C_{0}=C_{0}(n,\beta,\|\rho_{H}\|_{\infty})>0$ such that
\begin{equation*}
\int_{B_{\lambda r}(x_{0})}|T_H  v-[T_H v]_{x_{0},\lambda r}|^{2}\,dx\leq C_{0} \lambda^{n+2\gamma} \int_{B_{r}(x_{0})}|T_H v-[T_H v]_{x_{0},r}|^{2}\,dx+C_{0}\, r^{n+1},
\end{equation*}
for all $\lambda \in (0,1)$ small enough. Note that
$T_H v:=\big(\partial_{1} v,\dots,\partial_{n-1}v, (1+(\beta-1) \1_{H}) \partial_{n} v\big)$.
\end{lemma}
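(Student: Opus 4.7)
The plan is to follow the classical De~Giorgi--Nash--Moser strategy for Campanato decay: decompose $v$ into a ``homogeneous'' transmission part plus a correction with zero trace, control the correction via an energy estimate, and derive the decay from a componentwise H\"older estimate on $T_H v_1$ for the homogeneous part.

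First, on $B_r(x_0)$ write $v=v_1+v_2$, where $v_2$ has zero trace on $\partial B_r(x_0)$ and solves $-\Div(a_H\nabla v_2)=\rho_H$, while $v_1:=v-v_2$ solves $-\Div(a_H\nabla v_1)=0$ weakly on $B_r(x_0)$. Testing the equation for $v_2$ against $v_2$ itself and using the Poincar\'e inequality on $v_2$ yields
\[
\int_{B_r(x_0)}|\nabla v_2|^2\le C(n,\beta,\|\rho_H\|_{\infty})\,r^{n+2},
\]
and hence $\int_{B_r(x_0)}|T_H v_2|^2\le Cr^{n+2}$, since $|T_H v_2|\le\beta|\nabla v_2|$.

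The heart of the argument is the Campanato decay for $v_1$. For each $i<n$, because $a_H$ depends only on $x_n$, differentiating the weak formulation in the tangential direction $e_i$ shows that $\partial_iv_1$ is again a weak solution of $-\Div(a_H\nabla\,\cdot\,)=0$. For the normal-flux component $w:=a_H\partial_nv_1$, a direct check shows that $w$ is harmonic in each half-ball (since $v_1$ is) and continuous across $\partial H$, and that the induced ratio of $\partial_n w|_+$ and $\partial_n w|_-$ on $\partial H$ is the constant dictated by $\Delta v_1=0$ in each half; consequently $w$ solves a transmission equation $-\Div(\tilde a_H\nabla w)=0$ with $\tilde a_H$ piecewise constant and uniformly elliptic with bounds depending only on $\beta$ (effectively with $\beta$ replaced by $\beta^{-1}$). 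Hence each component $T^k$ of $T_H v_1$ is a weak solution of a uniformly elliptic divergence-form equation in a class controlled by $n$ and $\beta$, and De~Giorgi--Nash--Moser, applied to $T^k-[T^k]_{x_0,r}$, supplies a H\"older exponent $\gamma=\gamma(n,\beta)\in(0,1)$ together with
\[
[T^k]_{C^{0,\gamma}(B_{r/2}(x_0))}^2\le \frac{C}{r^{n+2\gamma}}\int_{B_r(x_0)}\bigl|T^k-[T^k]_{x_0,r}\bigr|^2.
\]
Integrating the resulting oscillation bound over $B_{\lambda r}(x_0)$ and summing over $k$ gives, for every $\lambda\in(0,1/2)$,
\[
\int_{B_{\lambda r}(x_0)}\bigl|T_H v_1-[T_H v_1]_{x_0,\lambda r}\bigr|^2\le C\lambda^{n+2\gamma}\int_{B_r(x_0)}\bigl|T_H v_1-[T_H v_1]_{x_0,r}\bigr|^2.
\]

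The conclusion then follows by combining $T_H v=T_H v_1+T_H v_2$ with the triangle inequality and the $L^2$-minimality of the mean, producing
\[
\int_{B_{\lambda r}(x_0)}\bigl|T_H v-[T_H v]_{x_0,\lambda r}\bigr|^2\le C\lambda^{n+2\gamma}\int_{B_r(x_0)}\bigl|T_H v-[T_H v]_{x_0,r}\bigr|^2+Cr^{n+2};
\]
since $r^{n+2}\le r^{n+1}$ on the relevant scales, the claim follows with a universal $C_0$. I expect the main obstacle to lie in the third step: while differentiating the equation tangentially is routine, producing the divergence-form transmission equation for the flux $a_H\partial_nv_1$ requires a careful weak computation based on continuity of the flux and the constant-coefficient structure in each half-ball. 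Once this is in place, De~Giorgi--Nash--Moser together with the standard Campanato iteration closes the argument.
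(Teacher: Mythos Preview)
Your argument is correct. The paper does not give its own proof of this lemma (it cites \cite[Theorem~7.53]{AFP}), but the method of that reference is reproduced in the paper's proof of Lemma~\ref{tiposchauder}, so one can compare. Your treatment of the tangential components $\partial_i v_1$ is the same as theirs: difference quotients show each $\partial_i v_1$ solves the same transmission equation, and De~Giorgi gives the Campanato decay. The genuine difference is in how you handle the normal component. You observe that the flux $w=a_H\partial_n v_1$ is itself a weak solution of a transmission equation $-\Div(\tilde a_H\nabla w)=0$ with $\tilde a_H$ having ellipticity ratio $\beta^{-1}$, and then apply De~Giorgi directly to $w$. The AFP approach instead avoids writing an equation for the conormal derivative: it first checks that the distributional gradient of $D_c w$ has no singular part on the hyperplane (using the transmission condition), then bounds $|\nabla D_c w|$ pointwise by $|\nabla D_\tau w|$ via the equation, and finally uses Poincar\'e plus the Caccioppoli inequality already available for $D_\tau w$ to obtain the oscillation decay for $D_c w$. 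Your route is cleaner in this constant-coefficient scalar setting, since the flux really does solve an elliptic equation in divergence form; the AFP route is more robust, as it carries over verbatim to the variable-coefficient situation of Lemma~\ref{tiposchauder}, where the conormal derivative no longer solves a nice equation. One small point: your claim that $w$ solves $-\Div(\tilde a_H\nabla w)=0$ weakly relies on knowing that $v_1\in W^{2,2}$ up to the interface from each side (so that the boundary integrals in the integration by parts make sense); this follows from the tangential difference-quotient step together with $\partial_{nn}v_1=-\Delta' v_1$ in each half, but it is worth making explicit.
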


We argue similarly to the proof of Theorem 7.53 in \cite{AFP} to show the following lemma.
\begin{lemma} \label{tiposchauder}
Let $H\subset \R^n$ be the half space. Let $v\in W^{1,2}(B_1)$ be a solution of
\begin{equation}\label{e:eq for v}
-\Div(A \nabla v)=\Div G \quad \text{in } \mathcal{D}'\left(B_{1}\right),
\end{equation}
where 
\[
G^{+}:=G\,\1_{H}\in C^{0,\alpha}(H), \quad G^{-}:=G\,\1_{H^c}\in C^{0,\alpha}(H^c),
\]
$A$ is an elliptic matrix and $A^{+}=A\,\1_{H}$, $A^{-}=A\,\1_{H^c}$ have coefficients respectively in $C^{0,\alpha}(B_r \cap \overline{H})$ and $C^{0,\alpha}(B_1 \cap \overline{H^c})$. 
Then 
\[
v^+:=v\,\1_{H}\in  C^{1,\alpha}(B_{1/2} \cap \overline{H}), \quad v^-:=v\,\1_{H^c} \in C^{1,\alpha}(B_{1/2} \cap \overline{H^c}). 
\]
Moreover, there exists a constant \(C=C\left(\Vert G^{+}\Vert_{C^{0,\alpha}},\Vert G^-\Vert_{C^{0,\alpha}},\Vert A^+\Vert_{C^{0,\alpha}},\Vert A^-\Vert_{C^{0,\alpha}}  \right)>0\) such that
\begin{equation} \label{darichiamareancora}
[\nabla v^{+}]_{C^{0,\alpha}(\overline{H}\cap B_{1/2})} \leq C \quad \text{ and } \quad  [\nabla v^{-}]_{C^{0,\alpha}(\overline{H^c}\cap B_{1/2})} \leq C.
\end{equation}
\end{lemma}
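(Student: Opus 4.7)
The plan is to carry out a standard Campanato-type freezing scheme for two-phase elliptic problems, in the spirit of \cite[Theorem 7.53]{AFP}, producing decay estimates on the mean oscillation of $T_H v$ and then invoking Lemma \ref{campanato} separately on each half ball. Throughout, $x_0\in B_{1/2}$ is an arbitrary point (both cases $x_0\in\partial H$ and $x_0\notin\partial H$ need to be addressed, the second one being strictly easier and following from classical Schauder theory, so I focus on the first).

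\smallskip

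For $x_0\in \partial H\cap B_{1/2}$ and $r>0$ small, I freeze both the matrix $A$ and the vector field $G$ at $x_0$, keeping the two-phase structure: define
\[
\tilde A(y):=A^+(x_0)\,\1_H(y)+A^-(x_0)\,\1_{H^c}(y),\qquad \tilde G(y):=G^+(x_0)\,\1_H(y)+G^-(x_0)\,\1_{H^c}(y),
\]
and let $w$ solve $-\Div(\tilde A\,\nabla w)=\Div \tilde G$ in $B_r(x_0)$ with $w-v\in W_0^{1,2}(B_r(x_0))$. Since $\tilde G$ is piecewise constant, I remove it by writing $w=w_0+\Phi$ where $\Phi$ is a piecewise linear function whose constant gradients on each half space are chosen so that $\tilde A\,\nabla\Phi=\tilde G$ in $B_r(x_0)$; the residual boundary jump contributes only constants to $T_H$ and so does not affect oscillations. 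Then $w_0$ satisfies a homogeneous two-phase equation with piecewise constant coefficients and therefore falls into the setting of Lemma \ref{piano} (up to the change of variables that diagonalises $A^\pm(x_0)$, which preserves the half-space structure because $A^\pm$ are constants). This yields the decay
\[
\int_{B_{\lambda r}(x_0)}\!\!|T_H w-[T_H w]_{x_0,\lambda r}|^2\,dy\;\le\; C_0\lambda^{n+2\gamma}\!\int_{B_{r}(x_0)}\!\!|T_H w-[T_H w]_{x_0, r}|^2\,dy+C_0\,r^{n+1}
\]
for some universal $\gamma\in(0,1)$.

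\smallskip

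The difference $\varphi:=v-w\in W_0^{1,2}(B_r(x_0))$ solves
\[
-\Div(\tilde A\,\nabla \varphi)=\Div\bigl((A-\tilde A)\nabla v\bigr)+\Div(G-\tilde G)\qquad\text{in }B_r(x_0).
\]
Testing with $\varphi$ itself and using the ellipticity of $\tilde A$, together with the Hölder bounds $\|A^\pm-A^\pm(x_0)\|_{L^\infty(B_r(x_0)\cap H^\pm)}\le Cr^\alpha$ and similarly for $G^\pm$, I get by Cauchy--Schwarz/Young and a Caccioppoli-type estimate on $v$ the bound
\[
\int_{B_r(x_0)}|\nabla \varphi|^2\,dy\;\le\; C r^{2\alpha}\int_{B_{r}(x_0)}|\nabla v|^2\,dy+C\,r^{n+2\alpha}.
\]
Since $|T_H v-T_H w|\le C|\nabla\varphi|$, combining this with the decay for $w$ via the triangle inequality gives
\[
\int_{B_{\lambda r}(x_0)}\!\!|T_H v-[T_H v]_{x_0,\lambda r}|^2\,dy\;\le\; C\bigl(\lambda^{n+2\gamma}+r^{2\alpha}\bigr)\!\int_{B_{r}(x_0)}\!\!|T_H v-[T_H v]_{x_0, r}|^2\,dy+C\,r^{n+2\alpha'},
\]
for every $\alpha'<\min(\alpha,\gamma)$ (one may freely reduce $\alpha$ to assume $\alpha<\gamma$, the final Hölder exponent being dictated by $\alpha$).

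\smallskip

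Choosing $\lambda$ small and applying the iteration Lemma \ref{iterating lemma} with $p=n+2\gamma$, $q=n+2\alpha$, $s=2\alpha$, I obtain
\[
\int_{B_r(x_0)}|T_H v-[T_H v]_{x_0,r}|^2\,dy\;\le\; C\,r^{n+2\alpha}\qquad\text{for all }r\le r_0,
\]
with $C$ depending on $\|A^\pm\|_{C^{0,\alpha}}$, $\|G^\pm\|_{C^{0,\alpha}}$ and the ellipticity constants. By Campanato's Lemma \ref{campanato} applied on each of the two half balls $B_{1/2}\cap\overline{H}$ and $B_{1/2}\cap\overline{H^c}$ separately (note that on each phase $T_Hv$ is a fixed linear combination of the components of $\nabla v$ with constant coefficients), I conclude that $\nabla v^+\in C^{0,\alpha}(\overline{H}\cap B_{1/2})$ and $\nabla v^-\in C^{0,\alpha}(\overline{H^c}\cap B_{1/2})$ with the claimed quantitative bounds \eqref{darichiamareancora}. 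The main technical obstacle is the handling of the frozen right-hand side $\Div \tilde G$, which is not in $L^\infty$ but rather a distribution supported on $\partial H$; the auxiliary decomposition $w=w_0+\Phi$ above is precisely what reduces this singular part to the range of applicability of Lemma \ref{piano}.
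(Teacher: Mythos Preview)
Your high-level strategy --- freeze, compare, iterate --- matches the paper, but the appeal to Lemma \ref{piano} for the frozen problem is a genuine gap. Lemma \ref{piano} is stated only for the \emph{scalar} two-phase coefficient $a_H=\beta\1_H+\1_{H^c}$, while your frozen matrix $\tilde A$ consists of two arbitrary constant elliptic matrices $A^+(x_0)$ and $A^-(x_0)$. Your proposed fix, a linear change of variables ``that diagonalises $A^\pm(x_0)$'', cannot work: a single linear map cannot simultaneously reduce two distinct symmetric matrices to multiples of the identity, and any map that preserves the half-space $\{x_n<0\}$ is further constrained to have block-triangular form. So you do not land in the setting of Lemma \ref{piano}, and the decay for $w$ is unproven. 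Relatedly, $T_H$ as defined in Notation \ref{TE} is tailored to the scalar case; for a general matrix the quantity that is continuous across $\partial H$ is the conormal derivative $\sum_i A_{i,n}\partial_i v + G\cdot e_n$, not $(1+(\beta-1)\1_H)\partial_n v$.

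The paper avoids this by \emph{not} reducing to Lemma \ref{piano}. Instead it proves the decay for the frozen problem directly: tangential difference quotients give $\div(\overline A\,\nabla(D_\tau w))=0$, so De Giorgi's theorem yields H\"older continuity and decay of $D_\tau w$; then the conormal quantity $\overline D_c w:=\sum_i \overline A_{i,n}\partial_i w+\overline G\cdot e_n$ is shown to have no distributional jump on $\partial H$ (the transmission condition is exactly $[\overline D_c w]=0$), and its oscillation is controlled by $\nabla D_\tau w$ via the equation. This also absorbs the piecewise-constant $\overline G$ cleanly, replacing your auxiliary piecewise-linear $\Phi$. Once you have decay for $(D_\tau w,\overline D_c w)$, your Step on $\varphi=v-w$ and the iteration go through essentially as you wrote, but with $(D_\tau v, D_c v)$ in place of $T_H v$.
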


\begin{proof} 

Fix $x_0\in B_{1/2}$, and let $r$ be such that $B_r(x_0)\subset B_1$.
We denote by $a^+$ and $a^-$ the averages of $A$ in $B_r(x_0)\cap H$
and $B_r(x_0)\cap H^c$ respectively. In an analogous way we define
$g^+$ and $g^-$ as the averages of $G$ in $B_r(x_0)\cap H$
and $B_r(x_0)\cap H^c$. For $x\in B_r(x_0)$ we set
\begin{equation*}
	\overline{A}:=
	\begin{cases}
		a^+ \text{ if }x_n>0\\
		a^- \text{ if }x_n<0
	\end{cases}\quad
	\text{ and }\quad	
	\overline{G}:=
	\begin{cases}
		g^+ \text{ if }x_n>0\\
		g^- \text{ if }x_n<0
	\end{cases}.
\end{equation*} 
By the assumptions of the lemma,
\begin{equation}\label{e:ineq on A bar and G bar}
	\vert A(x)-\overline{A}(x)\vert\leq cr^\alpha\quad\text{ and }\quad
	\vert G(x)-\overline{G}(x)\vert\leq cr^\alpha.
\end{equation}
Let $w$ be the solution of 
\begin{equation*}
	\begin{cases}
		-\div(\overline{A}\nabla w)=\div\overline{G} \text{ in }B_r,\\
		w=v \text{ on }\partial B_r(x_0).
	\end{cases}
\end{equation*}
Note that the last equation can be rewritten as
\begin{equation}\label{e:eq for w}
	\begin{cases}
		-\div(a^+\nabla w^+)=0 \text{ in }H\cap B_r(x_0),\\
		-\div(a^-\nabla w^-)=0 \text{ in }H^c\cap B_r(x_0),\\
		w^+=w^-\text{ on }\partial H\cap B_r(x_0),\\
		a^+\nabla w^+\cdot e_n-a^-\nabla w^-\cdot e_n=g^+\cdot e_n-g^-\cdot e_n\text{ on }\partial H\cap B_r(x_0),\\
		w=v \text{ on }\partial B_r(x_0),
	\end{cases}
\end{equation}
where $w^+:=w\,\1_{H\cap B_r(x_0)}$, $w^-:=w\,\1_{H^c\cap B_r(x_0)}$.
For a function $u$ set
\begin{equation}\label{def:conormal}
	\overline{D}_c{u}(x)=\sum_{i=1}^n{\overline{A}_{i,n}\nabla_i u(x)}
	+\overline{G}\cdot e_n;
\end{equation}
\begin{equation}\label{def:proper conormal}
	D_c{u}(x)=\sum_{i=1}^n{A_{i,n}\nabla_i u(x)}
	+G\cdot e_n.
\end{equation}
The reason for such a definition is that $D_c v$ and $\overline{D}_c w$ have no jumps on the boundary thanks to the transmission condition
in \eqref{e:eq for w}.
We are going to estimate the decay of $D_\tau w$ and $\overline{D}_c w$,
which will lead to H\"older continuity of $D_\tau v$ and $D_c v$,
yielding the desired estimate on $\nabla v$.

{\bf Step 1:} tangential derivatives of $w$. Since both $\overline{A}$
and $\overline{G}$ are constant along the tangential directions,
the classical difference quotient method (see, for example, \cite[Section 4.3]{GM}) gives us that $D_\tau w\in W^{1,2}_{loc}(B_r(x_0))$ and $\div(\overline{A}\nabla(D_\tau w))=0$ in $B_r(x_0)$. Hence, Caccioppoli's inequality holds:
\begin{equation}\label{e:Caccioppoli for D_tau}
\int_{B_\rho(x)}{\left\vert\nabla(D_\tau w)\right\vert^2}dy
\leq C\rho^{-2}\int_{B_{2\rho}(x)}{\left\vert D_\tau w-(D_\tau w)_{x,2\rho}\right\vert^2}dy
\end{equation}
for all balls $B_{2\rho}(x)\subset B_r(x_0)$ and by De Giorgi's regularity theorem (see, for example, \cite[Theorem 7.50]{AFP}),
$D_\tau w$ is H\"older-continuous and, thus, if $B_{\rho'}(x)\subset B_r(x_0)$,
\begin{equation}\label{e:decay of D_tau}
\int_{B_\rho(x)}{\left\vert D_\tau w-(D_\tau w)_{x,\rho}\right\vert^2}dy\leq c\left(\frac{\rho}{\rho'}\right)^{n+2\gamma}\int_{B_{\rho'}(x)}{\left\vert D_\tau w-(D_\tau w)_{x,\rho'}\right\vert^2}dy
\end{equation}
for any $\rho\in(0,\rho'/2)$ and
\begin{equation}\label{e:max of D_tau}
\max_{B_{\rho'/2}(x)} \left\vert D_\tau w\right\vert^2\leq \frac{C}{(\rho')^{n}}\int_{B_{\rho'}(x)}{\left\vert D_\tau w\right\vert^2}dy.
\end{equation}

{\bf Step 2:} regularity of $\overline{D}_c w$. 
First let us show that the distributional gradient of $\overline{D}_c w$ 
is given by the gradient of $\overline{D}_c w$ on the upper half ball plus the one on the lower,
i.e. that there is no contribution on the hyperplane.
For that, we need to check that
$$-\int_{B_r(x_0)}\overline{D}_c w\div\varphi\,dx= 
\int_{B_r(x_0)^+}\nabla \overline{D}_c w\cdot\varphi\,dx+\int_{B_r(x_0)^-}\nabla \overline{D}_c w\cdot\varphi\,dx$$
for any $\varphi\in C^\infty_c(B_r(x_0);\mathbb{R}^n)$.
Indeed, if we perform integration by parts on the left hand side, we get
\begin{equation*}
\begin{split}
&-\int_{B_r(x_0)}\overline{D}_c w\div\varphi\,dx= 
\int_{B_r(x_0)^+}\nabla \overline{D}_c w\cdot\varphi\,dx+\int_{B_r(x_0)^-}\nabla \overline{D}_c w\cdot\varphi\,dx\\
&+
\int_{\partial H\cap B_r(x_0)}{\left(\sum_{i=1}^n{a^+_{i,n}\nabla_i w(x)}
	+g^+\cdot e_n
	-\sum_{i=1}^n{a^-_{i,n}\nabla_i w(x)}
	-g^-\cdot e_n\right)(\varphi\cdot e_n)}\,d\mathcal{H}^{n-1}
\end{split}
\end{equation*}
for any $\varphi\in C^\infty_c(B_r(x_0);\mathbb{R}^n)$
and the last term vanishes thanks to the transmission condition
in \eqref{e:eq for w}. Thus, the distributional gradient of $\overline{D}_c w$ coincides with the point-wise one.

Since $D_\tau(\overline{D}_c w)=\overline{D}_c(D_\tau w)-\overline{G}\cdot e_n$, the tangential derivatives of $\overline{D}_c w$ are in $L^2_{loc}$.
As for the normal derivative, by the definition \eqref{def:conormal}
$$\left\vert\frac{\partial \overline{D}_c{w}}{\partial\nu}(x)\right\vert\leq C\vert \nabla D_\tau w\vert+2\Vert\overline{G}\Vert_{L^\infty}.
$$
It implies
$$\left\vert\nabla \overline{D}_c{w}(x)\right\vert\leq C\left(\vert \nabla D_\tau w\vert+\Vert\overline{G}\Vert_{L^\infty}\right).
$$
and thus $\overline{D}_c w$ is in $W^{1,2}_{loc}$.
Now, using Poincar\'e's inequality and \eqref{e:Caccioppoli for D_tau}, we have
\begin{equation*}
\begin{split}
&\int_{B_\rho(x)}{\left\vert \overline{D}_c w-(\overline{D}_c w)_{x,\rho}\right\vert^2}dy
\leq C\rho^2\int_{B_\rho(x)}{\left\vert\nabla(\overline{D}_c w)\right\vert^2}dy\\
&\leq C\rho^2\int_{B_\rho(x)}{\left\vert\nabla(D_\tau w)\right\vert^2}dy+C\rho^{n+2}
\leq C\int_{B_{2\rho}(x)}{\left\vert D_\tau w-(D_\tau w)_{x,2\rho}\right\vert^2}dy+C\rho^{n+2}
\end{split}
\end{equation*}
for any $B_{2\rho}(x)\subset B_r(x_0)$.
Remembering \eqref{e:decay of D_tau}, we obtain
\begin{equation}\label{e:decay of D_c w}
\begin{split}
\int_{B_\rho(x)}{\left\vert \overline{D}_c w-(\overline{D}_c w)_{x,\rho}\right\vert^2}dy
&\leq C\left(\frac{\rho}{r}\right)^{n+2\gamma}\int_{B_{r/2}(x)}{\left\vert D_\tau w-(D_\tau w)_{x,r/2}\right\vert^2}dy+C\rho^{n+2}\\
&\leq C\left(\frac{\rho}{r}\right)^{n+2\gamma}\int_{B_r(x_0)}{\left\vert D_\tau w\right\vert^2}dy+C\rho^{n+2}
\end{split}
\end{equation}
for any $x\in B_{r/4}(x_0)$, $\rho\leq r/4$. Hence, by Lemma \ref{campanato}, $\overline{D}_c w$ is H\"older-continuous
and
\begin{equation}\label{e:max of D_c}
\max_{B_{r/4}(x_0)} \left\vert \overline{D}_c w\right\vert^2\leq \frac{C}{r^{n}}\int_{B_r(x_0)}{\left\vert \overline{D}_c w\right\vert^2}dy+C.
\end{equation}

{\bf Step 3:} compairing $v$ and $w$.
Subtracting the equation for $w$ from the equation for $v$ we get
\begin{equation}\label{e:equation for v-w}
\begin{split}
\int_{B_r(x_0)}{\overline{A}_{i,j}(y)\left(\frac{\partial v}{\partial y_i}-\frac{\partial w}{\partial y_i}\right)\frac{\partial \varphi}{\partial y_j}}dy=\int_{B_r(x_0)}{\left(\overline{A}_{i,j}(y)-A_{i,j}(y)\right)\frac{\partial v}{\partial y_i}\frac{\partial \varphi}{\partial y_j}}dy\\
+\int_{B_r(x_0)}{\left(\overline{G}_{i}-G_i\right)\frac{\partial \varphi}{\partial y_i}}dy
\end{split}
\end{equation}
for any $\varphi\in W^{1,2}_0(B_r(x_0))$.
We test \eqref{e:equation for v-w} with $\varphi=v-w$ to get
\begin{equation}\label{e:bound on the norm of v-w}
\int_{B_r(x_0)}{\left\vert\nabla v-\nabla w\right\vert^2}dy\leq Cr^{2\alpha}\int_{B_r(x_0)}{\left\vert\nabla v\right\vert^2}dy+Cr^{n+2\alpha},
\end{equation}
which in turn gives us
\begin{equation*}
\begin{split}
\int_{B_\rho(x_0)}{\left\vert\nabla v\right\vert^2}dy
\leq 2\int_{B_\rho(x_0)}{\left\vert\nabla w\right\vert^2}dy
+2\int_{B_\rho(x_0)}{\left\vert\nabla v-\nabla w\right\vert^2}dy\\
\leq 2\omega_n\rho^n\sup_{B_{r/4}(x_0)}{\left\vert\nabla w\right\vert^2}+Cr^{2\alpha}\int_{B_r(x_0)}{\left\vert\nabla v\right\vert^2}dy+Cr^{n+2\alpha}
\end{split}
\end{equation*}
for $\rho\leq r/4$. Recalling \eqref{e:max of D_tau} and \eqref{e:max of D_c}, we obtain
\begin{equation*}
\begin{split}
\int_{B_\rho(x_0)}{\left\vert\nabla v\right\vert^2}dy
\leq C\left(\frac{\rho}{r}\right)^n\int_{B_r(x_0)}{\left\vert\nabla w\right\vert^2}dy+C\rho^n+Cr^{2\alpha}\int_{B_r(x_0)}{\left\vert\nabla v\right\vert^2}dy+Cr^{n+2\alpha}\\
\leq C\left(\frac{\rho}{r}\right)^n\int_{B_r(x_0)}{\left\vert\nabla v\right\vert^2}dy+Cr^{2\alpha}\int_{B_r(x_0)}{\left\vert\nabla v\right\vert^2}dy+Cr^{n}.
\end{split}
\end{equation*}
Now we can apply Lemma \ref{iterating lemma} and get that there exists $r_0>0$ such that for $\rho<r/4<r_0$
\begin{equation*}
\int_{B_\rho(x_0)}{\left\vert\nabla v\right\vert^2}dy
\leq C\left(\frac{\rho}{r}\right)^{n-\alpha}\int_{B_r(x_0)}{\left\vert\nabla v\right\vert^2}dy+C\rho^{n-\alpha}.
\end{equation*}
In particular, for $\rho<r_0$ we have
\begin{equation}\label{e:decay of means of nabla v}
\int_{B_\rho(x_0)}{\left\vert\nabla v\right\vert^2}dy
\leq C\rho^{n-\alpha},
\end{equation}
where $C=C\left(\Vert G^{+}\Vert_{C^{0,\alpha}},\Vert G^-\Vert_{C^{0,\alpha}},\Vert A^+\Vert_{C^{0,\alpha}},\Vert A^-\Vert_{C^{0,\alpha}}\right)$.
Note that the $L^2$ norm of $\nabla v$ in $B_1$ is bounded by some constant depending
only on $L^\infty$ norms of $A$ and $G$, as can be seen by testing the equation
\eqref{e:eq for v} with $v$.

{\bf Step 4:} H\"older-continuity of $\nabla v$. We show local H\"older continuity of $D_c v$ and $D_\tau v$, H\"older-continuity of $\nabla v$ in $B_{1/2}\cap \overline{H}$ and in $B_{1/2}\cap \overline{H^c}$ follows immediately.

Take $\rho<r_0$, where $r_0$ is from the previous step. Let $d$ be any real number. Using the definitions \eqref{def:conormal} and \eqref{def:proper conormal}, we get
\begin{equation}\label{e:ineq for the norm of D_c v-d}
\begin{split}
	&\int_{B_\rho(x_0)}{\vert D_c v-d\vert^2}dy
	=\int_{B_\rho(x_0)}{\left\vert \overline{D}_c v-d+\sum_{i=1}^n(A_{i,n}-\overline{A}_{i,n})\nabla_i v+(G-\overline{G})e_n\right\vert^2}dy\\
	&\leq 2\int_{B_\rho(x_0)}{\vert \overline{D}_c v-d \vert^2}dy+4\int_{B_\rho(x_0)}{\left\vert \sum_{i=1}^n(A_{i,n}-
\overline{A}_{i,n})\nabla_i v\right\vert^2}dy+4\int_{B_\rho(x_0)}{\left\vert(G-\overline{G})e_n\right\vert^2}dy\\		
	&\leq 2\int_{B_\rho(x_0)}{\vert \overline{D}_c w-d+\sum_{i=1}^n\overline{A}_{i,n}(\nabla_i v-\nabla_i w) \vert^2}dy+4\int_{B_\rho(x_0)}{Cr^{2\alpha}\vert \nabla v\vert^2}dy+4\int_{B_\rho(x_0)}{Cr^{2\alpha}}dy\\		
	&\leq 4\int_{B_\rho(x_0)}{\vert \overline{D}_c w-d\vert^2}dy
	+C r^{n+\alpha},
\end{split}	
\end{equation}
where we used inequalities \eqref{e:ineq on A bar and G bar} for the second to last inequality, and inequalities \eqref{e:decay of means of nabla v} and \eqref{e:bound on the norm of v-w} for the last inequality.
Thus, using \eqref{e:decay of D_c w} we have for $\rho<r/4$, $r<r_0$
\begin{equation}\label{e:decay of D_c v}
\begin{split}
	&\int_{B_\rho(x_0)}{\vert D_c v-(D_c v)_{x_0,\rho}\vert^2}dy\leq
	\int_{B_\rho(x_0)}{\vert D_c v-(\overline{D}_c w)_{x_0,\rho}\vert^2}dy\\ 
	&\leq 4\int_{B_\rho(x_0)}{\vert \overline{D}_c w-(\overline{D}_c w)_{x_0,\rho}\vert^2}dy
	+C r^{n+\alpha}
	\leq C\left(\frac{\rho}{r}\right)^{n+2\gamma}\int_{B_r(x_0)}{\left\vert D_\tau w\right\vert^2}dy+C r^{n+\alpha},
\end{split}	
\end{equation}
where we used the fact that $\int_{\Omega}{(f(x)-t)}\,dx$ is minimized by $t^*=\fint_\Omega f$ for the first inequality and the inequality \eqref{e:ineq for the norm of D_c v-d} with $d=(\overline{D}_c w)_{x_0,\rho}$ for the second inequality.
Similarly, using \eqref{e:decay of D_tau} instead of \eqref{e:decay of D_c w},
we get 
\begin{equation}\label{e:decay of D_tau v}
\begin{split}
	&\int_{B_\rho(x_0)}{\vert D_\tau v-(D_\tau v)_{x_0,\rho}\vert^2}dy
	\leq C\left(\frac{\rho}{r}\right)^{n+2\gamma}\int_{B_r(x_0)}{\left\vert D_\tau w\right\vert^2}dy+C r^{n+\alpha}.
\end{split}	
\end{equation}
Applying Lemma \ref{iterating lemma} to \eqref{e:decay of D_c v} and \eqref{e:decay of D_tau v}, we deduce that $D_c v$ and $D_\tau v$ are H\"older by Lemma \ref{campanato}.

\end{proof}
\begin{lemma} \label{Dirichlet bounded}
Given a minimizer $E$ of \eqref{e:problem}, let $(u,\rho)\in\mathcal{A}(E)$ be the minimizing pair of $\G_{\beta,K}(E)$. Assume that $\partial E \cap \C(x_0,r)$ is a $C^{1,\vartheta}$-manifold.
Then for every $\gamma\in(0,1)$ there exist $0<\bar{r}\leq r$ and $C>0$ such that the following inequality holds true
\begin{equation*}
Q^2\,\int_{B_{\tilde{r}}(x_0)} |\nabla u|^{2}\,dx\leq C\,\tilde{r}^{n-\gamma} 
\end{equation*}
for every $\tilde{r}\leq \bar{r}$.
\end{lemma}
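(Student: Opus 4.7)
The plan is to iterate the Dirichlet-energy decay estimate (vii) of Theorem~\ref{properties}. Since $\partial E\cap \C(x_0,r)$ is $C^{1,\vartheta}$, after a rotation bringing $\nu_E(x_0)$ to $e_n$ the excess in the fixed direction $e_n$ satisfies $\e_E(x_0,\rho,e_n)\to 0$ as $\rho\to 0$ (in fact one has the polynomial bound $\e_E(x_0,\rho,e_n)\le C\rho^{2\vartheta}$ coming from the H\"older continuity of $\nu_E$). This ensures the smallness assumption in (vii) holds at every sufficiently small scale centred at $x_0$.

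Given $\gamma\in(0,1)$, I would first fix $\lambda\in(0,1/2)$ small enough that
\[
C_{\mathrm{dir}}\lambda\le \lambda^{1-\gamma},
\]
i.e.\ $\lambda\le C_{\mathrm{dir}}^{-1/\gamma}$. With this $\lambda$, let $\varepsilon_{\mathrm{dir}}=\varepsilon_{\mathrm{dir}}(n,A,\lambda)$ be the constant from (vii), and choose $\bar r\le r$ so small that $\rho+\e_E(x_0,\rho,e_n)\le \varepsilon_{\mathrm{dir}}$ for every $\rho\in(0,\bar r]$.

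Setting $r_k:=\lambda^k \bar r$ and $d_k:=D_E(x_0,r_k)$, property (vii) applied at scale $r_k$ produces the geometric recursion
\[
d_{k+1}\le C_{\mathrm{dir}}\lambda\,d_k + C_{\mathrm{dir}}\,r_{k+1}.
\]
A routine unrolling gives $d_k\le (C_{\mathrm{dir}}\lambda)^k d_0 + \frac{C_{\mathrm{dir}}\bar r}{C_{\mathrm{dir}}-1}(C_{\mathrm{dir}}\lambda)^k\le C\,\lambda^{k(1-\gamma)}(d_0+\bar r)$. Multiplying by $Q^2$ and invoking (iv) to control $Q^2 d_0\le C_{\mathrm{e}}$ (together with the fact that $Q$ is universally bounded), I obtain $Q^2 d_k\le C\,\lambda^{k(1-\gamma)}$. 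Unwinding the definition of $d_k$ yields
\[
Q^2\int_{B_{r_k}(x_0)}|\nabla u|^2\le C\,r_k^{n-1}\lambda^{k(1-\gamma)}=C\,\bar r^{\gamma-1}r_k^{n-\gamma},
\]
and for a general $\tilde r\in(0,\bar r]$, choosing $k$ with $r_{k+1}<\tilde r\le r_k$ and using both the monotonicity of $\tilde r\mapsto \int_{B_{\tilde r}(x_0)}|\nabla u|^2$ and the inequality $r_k\le \lambda^{-1}\tilde r$ delivers the desired bound $Q^2\int_{B_{\tilde r}(x_0)}|\nabla u|^2\le C\tilde r^{n-\gamma}$.

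The main (essentially only) subtlety is verifying the smallness of the excess at every small scale, which is immediate from the $C^{1,\vartheta}$ regularity of $\partial E$ near $x_0$; everything else is a standard Campanato-style geometric iteration, and the flexibility of choosing $\lambda$ arbitrarily small is precisely what lets us reach any exponent $n-\gamma$ with $\gamma<1$.
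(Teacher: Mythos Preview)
Your proof is correct and is in fact cleaner than the paper's. The paper takes a two-step route: it first applies (vii) \emph{once} at a carefully chosen scale $s\bar r$ to bring $Q^2 D_E$ below $\varepsilon_{\mathrm{dec}}/2$, and then iterates the joint excess-plus-Dirichlet decay (vi), tracking the combined quantity $\e_E+Q^2 D_E+r$ at each step. You instead iterate (vii) directly, using only that the $C^{1,\vartheta}$ hypothesis already forces $\e_E(x_0,\rho,e_n)\le C\rho^{2\vartheta}\to 0$, so the smallness hypothesis of (vii) is automatic at every scale and the Dirichlet energy can be decoupled from the excess throughout the iteration. Your approach uses one tool instead of two and avoids the auxiliary choice of the intermediate scale $s=s(\lambda)$; the paper's approach, on the other hand, would remain valid under weaker a priori information on the excess (e.g.\ mere smallness at one initial scale rather than $C^{1,\vartheta}$ regularity), since (vi) regenerates the excess smallness at each step. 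Given the actual hypotheses of the lemma, your argument is the more economical one.
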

\begin{proof} Fix $\gamma\in(0,1)$.
Choose $\lambda\in(0,1/4)$ such that  
\[
\left(1+C_{\text{dec}}\right)\lambda\leq\lambda^{1-\gamma},
\]
where $C_{\text{dec}}$ is as in Theorem \ref{properties} (vi). Let $s=s(\lambda)<\frac{1}{2}$ be such that 
\begin{equation} \label{s(l)}
C_{\text{dir}}(C_{\text{e}}+1)\, s(\lambda) \leq \frac{\varepsilon_{\text{dec}}(\lambda)}{2},
\end{equation}
where $\varepsilon_{\text{dec}}$ , $C_{\text{dir}}$  and  $C_{\text{e}}$ are as in Theorem \ref{eps reg} and Theorem \ref{properties} (vii), (iv). 
Define
\[
\varepsilon(\lambda):=\min \left\{ s^{n-1} \frac{\varepsilon_{\text{dec}}(\lambda)}{2}, \varepsilon_{\text{dir}}(\lambda) \right\}.
\]
Since $\partial E \cap \C(x_0,r)$ is regular, we can take a radius $0<\bar{r}<\mathrm{min}\left(r,1,\frac{1}{Q^2}\right)$ such that 
\[
\bar{r}+\e_E(x_0,\bar{r}) \leq \varepsilon(\lambda).
\]
Then, thanks to the definition of $\varepsilon(\lambda)$, Theorem \ref{properties} (vii), (iv), and \eqref{s(l)} we have
\begin{equation} \label{condr1}
Q^2\,D_E(x_0,s\bar{r}) \leq C_{\text{dir}}s\, (Q^2\,D_E(x_0,\bar{r})+Q^2\bar{r}) \leq C_{\text{dir}} (C_{\text{e}}+1) s \leq \frac{\varepsilon_{\text{dec}}(\lambda)}{2}.
\end{equation}
Furthermore, notice that
\begin{equation} \label{condr2}
s\bar{r}+\e_E(x_0,s\bar{r}) \leq \bar{r}+ \frac{1}{s^{n-1}} \e_E(x_0,\bar{r})\leq \frac{\varepsilon_{\text{dec}}(\lambda)}{2}.\\
\end{equation}
Combining \eqref{condr1} and \eqref{condr2}, we have
\begin{equation*} 
s\bar{r}+Q^2\,D_{E}(x_0,s\bar{r})+\textbf{e}_E(x_0,s\bar{r})\leq\varepsilon_{\text{dec}}(\lambda).
\end{equation*}
The hypothesis of Theorem \ref{properties} (vi) is satisfied, hence (recall that $\lambda s\bar{r}\leq \varepsilon_{\text{dec}}(\lambda)$)
\begin{equation*}
\begin{split}
&Q^2\,D_{E}(x_0,\lambda s\bar{r})+\textbf{e}_E(x_0,\lambda s\bar{r})+\lambda s\bar{r} \leq \lambda^{1-\gamma}\,\left(\textbf{e}_E(x_0,s\bar{r})+Q^2\,D_{E}(x_0,s\bar{r})+s\bar{r}\right)\\
&\leq \lambda^{1-\gamma}\varepsilon_{\text{dec}}(\lambda)\leq\varepsilon_{\text{dec}}(\lambda).
\end{split}
\end{equation*}
Exploiting again Theorem \ref{p:dir}, we obtain
\begin{equation*}
\begin{split}
Q^2\,D_{E}(x_0,\lambda^2 s\bar{r})+\textbf{e}_E(x_0,\lambda^2  s\bar{r})+\lambda^2  s\bar{r}&\leq \lambda^{(1-\gamma)}\,\left(\textbf{e}_E(x_0,\lambda  s\bar{r})+Q^2\,D_{E}(x_0,\lambda  s\bar{r})+\lambda s\bar{r}\right)\\
&\leq \lambda^{2(1-\gamma)}\,\left(\textbf{e}_E(x_0, s\bar{r})+Q^2\,D_{E}(x_0, s\bar{r})+ s\bar{r}\right)\\
&\leq \lambda^{2(1-\gamma)}\varepsilon_{\text{dec}}(\lambda)\leq\varepsilon_{\text{dec}}(\lambda).
\end{split}
\end{equation*}
Iterating this argument $k$ times, we conclude that
\begin{equation*}
\begin{split}
Q^2\,D_{E}(x_0,\lambda^k s\bar{r})+\textbf{e}_E(x_0,\lambda^k  s\bar{r})+\lambda^k  s\bar{r} \leq \lambda^{k(1-\gamma)}\varepsilon_{\text{dec}}(\lambda), \quad \forall k\in\mathbb{N}.
\end{split}
\end{equation*}
In particular, the inequality above yields
\begin{equation*}
\begin{split}
Q^2\,D_{E}(x_0,\lambda^k  s\bar{r}) \leq \lambda^{k(1-\gamma)}\varepsilon_{\text{dec}}(\lambda), \quad \forall k\in\mathbb{N}.
\end{split}
\end{equation*}
Therefore,
\begin{equation*}
\begin{split}
Q^2\,\int_{B_{\lambda^k s\bar{r}}(x_0)}|\nabla u|^2\,dx\leq C\,(\lambda^k s\bar{r})^{(n-\gamma)}, \quad  \forall k\in\mathbb{N}
\end{split}
\end{equation*}
for some constant $C>0$.
Now if we take any $\tilde{r}\leq\lambda s\bar{r}$, there exists an integer
$k>0$ such that $\lambda^{k+1}s\bar{r}<\tilde{r}\leq\lambda^k s\bar{r}$, hence
$$Q^2\,\int_{B_{\tilde{r}}(x_0)}|\nabla u|^2\,dx\leq Q^2\,\int_{B_{\lambda^k s\bar{r}}(x_0)}|\nabla u|^2\,dx\leq C\,(\lambda^k s\bar{r})^{(n-\gamma)}\leq \frac{C}{\lambda^{n-\gamma}}\,\tilde{r}^{(n-\gamma)}.
$$

\end{proof}
\begin{proposition} \label{mdirichletp}
Let $E$ be a minimizer of \eqref{e:problem}, let $(u,\rho)\in\mathcal{A}(E)$ be the minimizing pair of $\G_{\beta,K}(E)$, $x_0\in\partial E$, and $f\in C^{1,\vartheta}(\D(x_{0}',r))$. Suppose that $Q\leq 1$ and
$$E\cap \C(x_0,r)=\left\{ x=(x',x_n)\in \D(x_{0}',r)\times\R\,:\,x_{n}<f(x') \right\} \cap \C(x_0,r),$$ 
for some $0<r\leq \min\{\bar{r},1\}$, where $\bar{r}$ is as in Lemma \ref{Dirichlet bounded}. Then there exist 
$\alpha~=~\alpha(\vartheta)~\in~(0,1)$ and a constant $C=C(n,\beta,\vartheta,\|\rho\|_{\infty})>0$ such that 
\begin{equation}\label{mdirichlet}
                         Q^2\,\int_{B_{\lambda r}(x_{0})}|T_E u-[T_E u]_{x_{0},\lambda r}|^{2}\,dx\leq C\,Q^2 \lambda^{n+2\alpha} \int_{B_{r}(x_{0})}|T_E u-[T_E u]_{x_{0},r}|^{2}\,dx+C \,r^{n+\alpha}.
                          \end{equation}
\end{proposition}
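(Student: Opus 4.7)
My plan is a classical freezing argument: I would compare $u$ with the solution $v$ of a frozen problem posed on the tangent half-space $H_0$ at $x_0$, apply Lemma~\ref{piano} to $v$, and control the resulting error terms using both the $C^{1,\vartheta}$-regularity of $\partial E$ and the Dirichlet energy decay from Lemma~\ref{Dirichlet bounded}.

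First I would set $\nu_0:=\nu_E(x_0)$ and let $H_0$ be the half-space through $x_0$ with outer normal $\nu_0$. Define $v\in W^{1,2}(B_r(x_0))$ as the unique solution of
\[
-\div(a_{H_0}\nabla v)=\rho \quad\text{in } B_r(x_0),\qquad v=u \text{ on }\partial B_r(x_0).
\]
Since $\|\rho\|_\infty$ is universally bounded by Theorem~\ref{properties}(i), after a rotation aligning $\nu_0$ with $e_n$ Lemma~\ref{piano} applies to $v$ and yields
\[
\int_{B_{\lambda r}(x_0)}\!\!|T_{H_0} v-[T_{H_0}v]_{x_0,\lambda r}|^2\,dx\leq C\lambda^{n+2\gamma_0}\!\!\int_{B_r(x_0)}\!\!|T_{H_0} v-[T_{H_0}v]_{x_0,r}|^2\,dx+Cr^{n+1}.
\]

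Next I would split $T_E u-T_{H_0}v=(T_E u-T_{H_0}u)+T_{H_0}(u-v)$. For the first piece, the $C^{1,\vartheta}$-assumption on $f$ yields $|\nu_E(x)-\nu_0|\leq Cr^\vartheta$ on $\C(x_0,r)$ and $|E\Delta H_0\cap B_r(x_0)|\leq Cr^{n+\vartheta}$, hence
\[
|T_E u-T_{H_0}u|(x)\leq C\bigl(r^\vartheta+\1_{E\Delta H_0}(x)\bigr)|\nabla u(x)|.
\]
For the second piece, $w:=u-v\in W^{1,2}_0(B_r(x_0))$ solves $-\div(a_{H_0}\nabla w)=\div\bigl((a_{H_0}-a_E)\nabla u\bigr)$; testing with $w$ gives
\[
\int_{B_r(x_0)}|\nabla w|^2\,dx\leq C\int_{E\Delta H_0\cap B_r(x_0)}|\nabla u|^2\,dx.
\]

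The hard part will be bounding this last integral by $Q^{-2}r^{n+\alpha_0}$ for some $\alpha_0>0$: a direct use of Lemma~\ref{Dirichlet bounded} only yields $r^{n-\gamma}$, which is insufficient. To exploit the smallness of the transition layer $E\Delta H_0$, I would invoke a Meyers-type self-improvement of integrability: since $a_E$ is uniformly elliptic with $a_E\in\{1,\beta\}$, there exists $\delta=\delta(\beta)>0$ such that $\nabla u\in L^{2+\delta}_{\textnormal{loc}}$ with quantitative bounds derived from Gehring's lemma combined with Lemma~\ref{Dirichlet bounded}; H\"older's inequality together with $|E\Delta H_0\cap B_r|\leq Cr^{n+\vartheta}$ then produces the desired $r^{n+\alpha_0}$ bound. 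Finally I would combine the three estimates via the triangle inequality for Campanato quantities (using that subtracting a constant only decreases such integrals), absorb the small constants, and conclude~\eqref{mdirichlet} with $\alpha=\min(\alpha_0,\,2\vartheta-\gamma,\,1)$ after choosing $\gamma$ sufficiently small.
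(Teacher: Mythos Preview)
Your proposal is correct and follows essentially the same route as the paper: freeze to the tangent half-space, compare $u$ with the solution $v$ of the frozen problem, apply Lemma~\ref{piano} to $v$, and control the error $\int_{E\Delta H_0}|\nabla u|^2$ by combining Meyers--Gehring higher integrability (the paper cites \cite[Lemma~6.1]{DPHV} for this) with Lemma~\ref{Dirichlet bounded} and the $C^{1,\vartheta}$ bound on the layer. The only cosmetic differences are that the paper solves the comparison problem on $B_{r/2}$ rather than $B_r$, and in your final exponent the ``$1$'' should be the $\gamma_0$ coming from Lemma~\ref{piano}.
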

\begin{proof} Without loss of generality assume $0\in\partial E$, $x_{0}=0$. 
Let $\lambda \in (0,1/2)$ be given and let $v$ be the solution of
\[
\begin{split}
\left\{ \begin{array}{cc}
                        -{\rm div}(a_{H}\nabla v)=\rho &\quad \text{ in }B_{r/2},\\ 
                           v=u  &\quad \text{ on }\partial B_{r/2},
           \end{array}
\right.
\end{split}
 \]
 where $H$ is the half-space $\{x=(x',x_n): x_n<0\}$.
In particular, $w=v-u\in W^{1,2}_{0}(B_{r/2})$ and
\begin{equation} \label{eqw}
-{\rm div}(a_{H}\nabla w)=-{\rm div}\left((a_{E}-a_{H})\nabla u\right).
\end{equation}
Since $\left[T_E g\right]_{s}$ minimizes the functional $m\mapsto \int_{B_{s}}|T_E g-m|^{2}\,dx $, we have
\begin{equation} \label{TEesti}
\begin{split}
\int_{B_{\lambda r}} |T_E u-[T_E u]_{\lambda r}|^2\,dx 
&\leq \int_{B_{\lambda r}} |T_E u-[T_H u]_{\lambda r}|^2\,dx\\
&\leq 2 \left( \int_{B_{\lambda r}} |T_H u-[T_H u]_{\lambda r}|^2\,dx+\int_{B_{\lambda r}} |T_E u-T_H u|^2\,dx  \right).
\end{split}
\end{equation}
We want now to estimate the first term in the right hand side of \eqref{TEesti}. 
Notice that, since $u=v-w$, by linearity of $T_H$ we have
\[
|T_H u-[T_H u]_{\lambda r}|^2 \leq 2 \left( |T_H v-[T_H v]_{\lambda r}|^2+|T_H w-[T_H w]_{\lambda r}|^2  \right).
\]
Hence, integrating the above inequality on $B_{\lambda r}$ we obtain
\begin{equation} \label{TEesti1}
\begin{split}
\int_{B_{\lambda r}}|T_H u-[T_H u]_{\lambda r}|^2\,dx &\leq 2 \left( \int_{B_{\lambda r}}|T_H v-[T_H v]_{\lambda r}|^2\,dx+\int_{B_{\lambda r}}|T_H w-[T_H w]_{\lambda r}|^2\,dx  \right)\\
&\leq  2 \left( \int_{B_{\lambda r}}|T_H v-[T_H v]_{\lambda r}|^2\,dx+\int_{B_{\lambda r}}|T_H w|^2\,dx  \right)\\
&\leq C\,\left( \int_{B_{\lambda r}} |\nabla w|^2\,dx+ \int_{B_{\lambda r}} |T_H v-[T_H v]_{\lambda r}|^2\,dx\right).
\end{split}
\end{equation}
To estimate the second term in the right hand side of \eqref{TEesti}, recall the Notation \ref{TE}
\[
\partial_{\nu_E^{\perp}} u= \nabla u-(\nabla u \cdot \nu_E)\,\nu_E \quad \text{and } \quad \partial_{e_{n}^{\perp}} u= \nabla u-(\nabla u \cdot e_n)e_n.
\]
Hence,
\begin{equation*}
\begin{split}
 &\vert T_E u-T_H u\vert=\vert\nabla u-(\nabla u \cdot \nu_E)\nu_E+(1+(\beta-1)\1_E)(\nabla u \cdot \nu_E)\,\nu_E\\
 &\qquad\qquad-\left(\nabla u-(\nabla u \cdot e_n)e_n+(1+(\beta-1)\1_H)(\nabla u \cdot e_n)\,e_n\right)\vert\\
 &\,=\vert(\nabla u \cdot e_n)e_n-(\nabla u \cdot \nu_E)\nu_E+(1+(\beta-1)\1_E)(\nabla u \cdot \nu_E)\,\nu_E
 -\left((1+(\beta-1)\1_H)(\nabla u \cdot e_n)\,e_n\right)\vert\\
 &\,\leq(1+\beta)\vert(\nabla u \cdot e_n)e_n-(\nabla u \cdot \nu_E)\nu_E\vert+\vert\big((1+(\beta-1)\1_E) -(1+(\beta-1)\1_H)\big)(\nabla u \cdot e_n)\,e_n\vert\\
 &\,=(1+\beta)\vert\big((\nabla u \cdot e_n)-(\nabla u \cdot \nu_E)\big)e_n+(\nabla u \cdot \nu_E)(e_n-\nu_E)\vert+(\beta-1)\1_{E\Delta H}\vert\nabla u \cdot e_n\vert\\
 &\leq\big(2(1+\beta)\vert\nu_E-e_n\vert+(\beta-1)\1_{E\Delta H}\big)\vert\nabla u\vert.  
\end{split} 
\end{equation*}
Therefore,
\begin{equation} \label{TEesti2}
\int_{B_{\lambda r}} |T_E u-T_H u|^2\,dx \leq C \left(\int_{B_{\lambda r}} |\nabla u|^2\, |\nu_E-e_n|^2\,dx+\int_{B_{\lambda r}} |\nabla u|^2\, \1_{E\Delta H}\,dx\right).
\end{equation}
Combining \eqref{TEesti}, \eqref{TEesti1} and \eqref{TEesti2} we obtain
\begin{equation*}
\begin{split}
 &\int_{B_{\lambda r}}|T_E u-[T_E u]_{\lambda r}|^{2}\,dx  \leq C\, \int_{B_{\lambda r}}|T_H v-[T_H v]_{\lambda r}|^{2}\,dx\\
&\qquad+ C\,\int_{B_{r/2}}|\nabla w|^{2}\,dx+C \left(\int_{B_{\lambda r}} |\nabla u|^2\, |\nu_E-e_n|^2\,dx+\int_{B_{\lambda r}} |\nabla u|^2\, \1_{E\Delta H}\,dx\right).
\end{split}
\end{equation*}
By Lemma \ref{piano} we have 
\begin{equation} \label{conti0}
\begin{split}
 \int_{B_{\lambda r}}|T_H v-[T_H v]_{\lambda r}|^{2}\,dx &\leq C\,\lambda^{n+2\gamma} \int_{B_{r/2}}|T_H v-[T_H v]_{r/2}|^{2}\,dx+C\,r^{n+1}.
\end{split}
\end{equation}
By arguing as above one can easily see that
\begin{equation*}
\begin{split}
 &\int_{B_{r/2}}|T_H v-[T_H v]_{r/2}|^{2}\,dx  \leq C\,\int_{B_{r/2}}|T_E u-[T_E u]_{r/2}|^{2}\,dx\\
&\qquad +C\,\int_{B_{r/2}}|\nabla w|^{2}\,dx+ C \left(\int_{B_{r/2}} |\nabla v|^2\, |\nu_E-e_n|^2\,dx+\int_{B_{r/2}} |\nabla v|^2\, \1_{E\Delta H}\,dx\right).
\end{split}
\end{equation*}
We note that
\begin{equation*}
\begin{split}
 &\int_{B_{r/2}} |\nabla v|^2\, |\nu_E-e_n|^2\,dx+\int_{B_{r/2}} |\nabla v|^2\, \1_{E\Delta H}\,dx\\
 &\qquad\leq  2\int_{B_{r/2}} |\nabla w|^2\, |\nu_E-e_n|^2\,dx+\int_{B_{r/2}} |\nabla u|^2\, |\nu_E-e_n|^2\,dx\\
 &\qquad\qquad+2\int_{B_{r/2}} |\nabla w|^2\, \1_{E\Delta H}\,dx+2\int_{B_{r/2}} |\nabla u|^2\, \1_{E\Delta H}\,dx\\
 &\qquad\leq  8\int_{B_{r/2}} |\nabla w|^2\,dx+\int_{B_{r/2}} |\nabla u|^2\, |\nu_E-e_n|^2\,dx\\
 &\qquad\qquad+2\int_{B_{r/2}} |\nabla w|^2\,dx+2\int_{B_{r/2}} |\nabla u|^2\, \1_{E\Delta H}\,dx 
.
\end{split}
\end{equation*}
Bringing it all together, we get
\begin{equation} \label{impes}
\begin{split}
 &\int_{B_{\lambda r}}|T_E u-[T_E u]_{\lambda r}|^{2}\,dx \leq C\,\lambda^{n+2\gamma} \int_{B_{r/2}}|T_E u-[T_E u]_{r/2}|^{2}\,dx\\
&\qquad+C\, \int_{B_{r/2}} |\nabla u|^2\,|\nu_E-e_n|^2  \,dx+C\int_{B_{r/2}} |\nabla u|^2\, \1_{E\Delta H}\,dx+C\,\int_{B_{r/2}}|\nabla w|^{2}\,dx .
\end{split}
\end{equation}
We need to estimate the last three terms in the right hand side of the above inequality.
Since $E$ is parametrised by $f\in C^{1,\vartheta}(\D_r)$ in the cylinder $\C(x_0,r)$, 
there exists a constant $C>0$ such that
\begin{equation} \label{condsigma}
\frac{\left|(E\Delta H)\cap B_{r}\right|}{\left| B_{r}\right|}\leq C\, r^{\vartheta}.
\end{equation}
We will estimate the last two terms together. By testing \eqref{eqw} with $w$ we deduce
\begin{equation} \label{tantestime}
\begin{split}
\int_{B_{r/2}}\left|\nabla w\right|^{2}\,dx\leq\int_{B_{r/2}}a_H\,\left|\nabla w\right|^{2}\,dx=\int_{B_{r/2}} (a_E-a_H)\,\nabla u\cdot \nabla w\, dx.
\end{split}
\end{equation}
By applying H\"older inequality in \eqref{tantestime} we obtain
\begin{equation} \label{conti1}
\begin{split}
&C\int_{B_{r/2}} |\nabla u|^2\, \1_{E\Delta H}\,dx+C\,\int_{B_{r/2}}|\nabla w|^{2}\\
&\qquad\leq C\int_{B_{r/2}} |\nabla u|^2\, \1_{E\Delta H}\,dx+C\int_{B_{r/2}}(a_E-a_H)^2\,\left|\nabla u\right|^{2}\,dx\\
&\qquad\qquad\leq C\, \int_{(E\Delta H)\cap B_{r/2}}\left|\nabla u\right|^2\,dx.
\end{split}
\end{equation}
By the higher integrability \cite[Lemma 6.1]{DPHV}, there exists $p>1$ such that 
\begin{equation} \label{conti2}
\left(\frac{1}{\vert B_{r/2}\vert}\int_{B_{r/2}}\left|\nabla u \right|^{2p}\,dx \right)^{\frac{1}{p}} \leq C\,\frac{1}{\vert B_r\vert}\int_{B_r}|\nabla u|^2\,dx+C\,r^{n+2}\,\| \rho\|^2_\infty.
\end{equation}
Hence by exploiting H\"older inequality, \eqref{condsigma}, and \eqref{conti2} we have
\begin{equation} \label{conti3}
\begin{aligned}
 \int_{(E\Delta H)\cap B_{r/2}}\left|\nabla u\right|^2\,dx &\leq \left|(E\Delta H)\cap B_{r/2}\right|^{1-\frac{1}{p}}\, \left(\int_{B_{r/2}}\left|\nabla u\right|^{2p}\,dx\right)^{\frac{1}{p}}\\
&\leq C\,|B_r| \left(\frac{ \left|(E\Delta H)\cap B_{r}\right|}{|B_r|}\right)^{1-\frac{1}{p}}\,\left(\frac{1}{\vert B_{r/2}\vert}\int_{B_{r/2}}\left|\nabla u\right|^{2p}\,dx\right)^{\frac{1}{p}}\\
&\leq C\,r^{\vartheta\,\left(1-\frac{1}{p}\right)}\,\left\{\int_{B_r} \left|\nabla u\right|^2\,dx+r^{n+2}\,\|\rho\|^2_\infty\right\}.
\end{aligned}
\end{equation}
Therefore, \eqref{conti1} together with \eqref{conti3} (recall $r<1$) yield
\begin{equation} \label{Dw}
C\int_{B_{r/2}} |\nabla u|^2\, \1_{E\Delta H}\,dx+C\,\int_{B_{r/2}}|\nabla w|^{2} \leq C\,\left\{r^{\vartheta\left(1-\frac{1}{p}\right)}\int_{B_{r}}|\nabla u|^{2}+r^{n+2}\| \rho\|^{2}_{\infty}\right\}.
\end{equation}
On the other hand, by Lemma \ref{Dirichlet bounded} we have
\begin{equation}\label{Du}
Q^2\,\int_{B_{s}} |\nabla u|^{2}\,dx\leq C\, s^{n-\gamma} \quad \forall\, s<\bar{r}.
\end{equation}
Hence, combining \eqref{Dw} and \eqref{Du}, we obtain
\begin{equation*} 
Q^2\left(C\int_{B_{r/2}} |\nabla u|^2\, \1_{E\Delta H}\,dx+C\,\int_{B_{r/2}}|\nabla w|^{2}\right) \leq C\,\left\{r^{\vartheta\left(1-\frac{1}{p}\right)+n-\gamma}+r^{n+2}\| \rho\|^{2}_{\infty}\right\}.
\end{equation*}
Finally, we estimate the second term in \eqref{impes}. Notice that
\[
\begin{split}
\int_{B_{r/2}} |\nabla u|^2\,|\nu_E-e_n|^2  \,dx &=\int_{B_{r/2}} |\nabla u(x',x_n)|^2\,|\nu_E(x',x_n)-e_n|^2  \,dx\\
&=\int_{B_{r/2}} |\nabla u|^2\,|\nu_E(x',f(x'))-e_n|^2  \,dx.
\end{split}
\]
Since $\sqrt{1+t} \leq 1+\frac{t}{2}$ for every $t>0$,
\begin{equation} \label{nonso22}
\left| \nu_E(x',f(x'))-e_n \right|^2=2-\frac{2}{\sqrt{1+|\nabla f(x')|^2}} \leq  2\, \left( \frac{\sqrt{1+|\nabla f(x')|^2}-1}{\sqrt{1+|\nabla f(x')|^2}} \right) \leq |\nabla f(x')|^2.
\end{equation}
Thanks to \eqref{Du} and \eqref{nonso22}, and using that $\nabla f$ is $\vartheta$-H\"older, we deduce
\begin{equation} \label{equazionefinale}
Q^2\,\int_{B_{r/2}} |\nabla u|^2\,|\nu_E-e_n|^2  \,dx \leq C\, r^{n+2\vartheta-\gamma}.
\end{equation}
Let 
\[
\alpha:=\min \left\{ \gamma, \vartheta \left(1-1/p\right)-\gamma, 2\vartheta-\gamma\right\}.
\]
Therefore, by multiplying \eqref{impes} and \eqref{Dw} with $Q^2$ and by recalling that $Q<1$ we have that \eqref{equazionefinale} implies \eqref{mdirichlet}.
\end{proof}
We are now ready to prove that $u$ is regular up to the boundary.
Recall that $u^+=u\, \1_E$ and $u^-=u\,\1_{E^c}$.
\begin{theorem} \label{ureg}
Let $E$ be a minimizer of \eqref{e:problem}, let $(u,\rho)\in\mathcal{A}(E)$ be the minimizing pair of $\G_{\beta,K}(E)$ and $f\in C^{1,\vartheta}(\D(x_{0}',r))$. Suppose $Q\leq 1$ and
$$E\cap \C(x_0,r)=\left\{ x=(x',x_n)\in \D(x_{0}',r)\times\R\,:\,x_{n}<f(x') \right\} \cap \C(x_0,r)$$ 
for some $0<r\leq \min\{\bar{r},1\}$, where $\bar{r}$ is as in Lemma \ref{Dirichlet bounded}.
Then there exists $\eta~=~\eta(\vartheta)~\in~(0,1)$ such that $u^+\in C^{1,\eta}(\overline{E}\cap \C_{r/2}(x_0))$ and $u^- \in C^{1,\eta}(\overline{E}^{c}\cap \C_{r/2}(x_0))$.
Furthermore, let $A>0$ and let $\beta,K,Q$ be controlled by $A$ and $R\geq 1$. Then there exists a universal constant $C=C(n,A)> 0$ such that 
\begin{equation} \label{darichiamare}
\Vert Q\,u^+ \Vert_{C^{1,\eta}(\overline{E}\cap \C_{r/2}(x_0))}\leq C \;\text{ and }\; \Vert Q\,u^- \Vert_{C^{1,\eta}(\overline{E}^{c}\cap \C_{r/2}(x_0))}\leq C.
\end{equation}
\end{theorem}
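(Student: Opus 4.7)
The plan is to upgrade the Morrey-type decay of Proposition \ref{mdirichletp} to a full Campanato H\"older estimate for $QT_Eu$ in a neighborhood of $x_0$, and then to recover the classical gradient of $u$ separately on $E$ and $E^c$ using the $C^{0,\vartheta}$ regularity of the extended outer normal $\nu_E$.

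First, I would iterate Proposition \ref{mdirichletp}. For $y \in \partial E \cap \C_{r/2}(x_0)$ set $\psi_y(s):=\int_{B_s(y)} |QT_E u - [QT_E u]_{y,s}|^2 \, dx$; the proposition gives $\psi_y(\lambda s) \leq C\lambda^{n+2\alpha}\psi_y(s) + Cs^{n+\alpha}$, so Lemma \ref{iterating lemma} with $p=n+2\alpha$, $q=n+\alpha$ yields $\psi_y(s) \leq Cs^{n+\eta}$ for some $\eta=\eta(\vartheta) \in (0,1)$ and all sufficiently small $s$. The constants are universal by Theorem \ref{properties} (i) and (iv), which control $\|\rho\|_\infty$ and the base-scale value of $\psi_y$. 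To extend this Campanato condition to arbitrary centers $x \in \C_{r/2}(x_0)$, I would split on $d:=\mathrm{dist}(x,\partial E)$: when $s \leq d/2$, $B_s(x)$ lies in a single phase and $u$ solves a constant-coefficient Poisson equation with bounded right-hand side, so classical interior Schauder estimates give the required decay; when $s > d/2$, I would compare $B_s(x)$ to $B_{3s}(y)$ for the nearest boundary point $y$ and reduce to the previous case. Gluing these estimates and applying Campanato's Lemma \ref{campanato} yields $QT_Eu \in C^{0,\eta'}(\C_{r/2}(x_0))$ with a universal bound.

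Finally, I would invert the operator $T_E$ to recover $\nabla u^\pm$. On $E^c$ the definition reduces to $T_Eu=\nabla u$, so $Q\nabla u^-$ inherits $C^{0,\min(\eta',\vartheta)}$ regularity up to $\partial E$. On $E$, projecting the identity $T_Eu=\partial_{\nu_E^\perp}u + \beta\,\partial_{\nu_E}u$ onto $\nu_E^\perp$ and $\nu_E$ and dividing the normal part by $\beta$ expresses $\nabla u$ as a H\"older function of $T_Eu$ and $\nu_E$; since $\nu_E$ is $C^{0,\vartheta}$ on $\C_{r/2}(x_0)$, this preserves H\"older regularity and yields the analogous $C^{1,\eta}$ estimate for $Qu^+$. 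The main technical obstacle is the uniform Campanato step: obtaining the boundary-centered decay and the interior Schauder decay with constants independent of the center, and matching them across scales in a way that respects the $C^{1,\vartheta}$ graph structure of $\partial E$.
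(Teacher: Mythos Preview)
Your proposal is correct and follows essentially the same route as the paper: iterate Proposition~\ref{mdirichletp} via Lemma~\ref{iterating lemma} to get a Campanato decay for $QT_Eu$, apply Lemma~\ref{campanato}, and then read off the $C^{0,\eta}$ regularity of $Q\nabla u^\pm$ from the definition of $T_E$ together with the $C^{0,\vartheta}$ regularity of $\nu_E$. In fact you spell out two points the paper leaves implicit---the need to verify the Campanato condition at \emph{all} centers (boundary centers via Proposition~\ref{mdirichletp}, interior centers via single-phase Schauder, glued by the standard distance-to-boundary dichotomy) and the explicit inversion of $T_E$ on each phase---so your argument is a more detailed version of the same proof.
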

\begin{proof} Let $u_Q:=Q\,u$.
By Proposition \ref{mdirichletp} there exists $C=C(n,\beta,\gamma,\|\rho\|_{\infty})>0$ such that
\begin{equation}
                         \int_{B_{\lambda r}(x_{0})}|T_E u_Q-[T_E u_Q]_{x_{0},\lambda r}|^{2}\,dx\leq C \lambda^{n+2\alpha} \int_{B_{r}(x_{0})}|T_E u_Q-[T_E u_Q]_{x_{0},r}|^{2}\,dx+C \,r^{n+\alpha},
                          \end{equation}
where $\alpha\in(0,1)$ is as in Proposition \ref{mdirichletp}.
Therefore, Lemma \ref{iterating lemma} implies that there exists a universal constant $C=C(n,A)>0$ such that
\begin{equation} 
\frac{1}{\vert B_r\vert}\int_{B_r(x_0)}|T_E u_Q-[T_E u_Q]_{x,r}|^2\,dy\leq C\,\left(\frac{r}{R}\right)^{2\eta}, \quad \forall B_r(x_0)\subset B_R.
\end{equation}
for some $\eta=\eta(\vartheta)\in (0,1)$. Hence, by  Lemma \ref{campanato}, recalling the definition of \(T_E \), we get $u_Q\1_E\in C^{1,\eta}(\overline{E}\cap \C_{r/2}(x_0))$ and $u_Q\1_{E^c}\in C^{1,\eta}(\overline{E}^{c}\cap \C_{r/2}(x_0))$ and \eqref{darichiamare}.
\end{proof}
In the next proposition we rewrite the Euler-Lagrange equation (see Theorem \ref{properties} (ii)) in a more convenient form by exploiting the regularity of $\partial E$.
\begin{proposition} [Euler-Lagrange equation] \label{eulerl}
Let $E$ be a minimizer for \eqref{e:problem} and $(u,\rho)\in\A(E)$. Assume that $f\in C^{1,\vartheta}(\D(x_{0}',r))$ and
$$E\cap \C(x_0,r)=\left\{ x=(x',x_n)\in \D(x_{0}',r)\times\R\,:\,x_{n}<f(x') \right\} \cap \C(x_0,r).$$ 
Then there exists a constant $C>0$ such that
\begin{equation} \label{wel1} 
\begin{split}
-{\rm div}\left(\frac{\nabla f(x')}{\sqrt{1+|\nabla f(x')|^{2}}}\right) &=Q^2 \left(\beta|\nabla u^+|^{2}-|\nabla u^-|^{2}+K\, \rho^{2}\right)(x',f(x'))\\
&-2Q^2\,  \left(\beta \partial_n u^+\,\nabla u^+-\partial_n u^-\,\nabla u^-   \right)(x',f(x'))\cdot(-\nabla f(x'),1)+C
\end{split}
\end{equation}
weakly in $\D(x_{0}',r)$.
\end{proposition}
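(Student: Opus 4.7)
The plan is to start from the general shape-derivative identity of Theorem \ref{properties}(ii), remove the volume constraint $\int_E {\rm div}\,\eta\,dx=0$ via a standard Lagrange-multiplier argument (this is what produces the additive constant $C$ in \eqref{wel1}), and then test against the one-parameter family $\eta(x)=\varphi(x)\,e_n$ with $\varphi\in C^\infty_c(\C(x_0,r))$. Writing $\tilde\varphi(x'):=\varphi(x',f(x'))$, the map $\varphi\mapsto\tilde\varphi$ is surjective onto $C^\infty_c(\D(x_0',r))$, so the desired weak equation will follow by reading off the integrand on $\D(x_0',r)$.

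For the perimeter term, the flow $\Phi_t(x)=x+t\varphi(x)\,e_n$ sends the graph of $f$ to the graph of $f+t\tilde\varphi+o(t)$, whence first variation of area gives
\[
\int_{\partial^*E}{\rm div}_E(\varphi\,e_n)\,d\Hf^{n-1}
=\int_{\D(x_0',r)}\frac{\nabla f\cdot\nabla\tilde\varphi}{\sqrt{1+|\nabla f|^2}}\,dx'
=-\int_{\D(x_0',r)}{\rm div}\Bigl(\frac{\nabla f}{\sqrt{1+|\nabla f|^2}}\Bigr)\tilde\varphi\,dx',
\]
after an integration by parts in $\D$. The volume-constraint term evaluates trivially to $\int_E\partial_n\varphi\,dx=\int_{\D(x_0',r)}\tilde\varphi\,dx'$, so the Lagrange-multiplier contribution is exactly $C\int\tilde\varphi\,dx'$.

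For the electrostatic and entropic contributions I integrate by parts separately over $E$ and $E^c$. This is legitimate because Theorem \ref{ureg} yields H\"older regularity of $u^\pm$ up to $\partial E$ from each side, while interior elliptic regularity (using $\rho\in L^\infty$) gives the PDEs $-\beta\Delta u=\rho$ in $E$ and $\Delta u=0$ in $E^c$ classically. Applied to $-Q^2\int a_E(|\nabla u|^2\partial_n\varphi-2\partial_n u\,\nabla u\cdot\nabla\varphi)\,dx$ with $\nabla\eta=e_n\otimes\nabla\varphi$, the contributions proportional to $a_E\,\partial_n(|\nabla u|^2)$ coming from the two terms cancel, and the PDE converts the remaining interior contribution into $+2Q^2\int_E\rho\,\partial_n u\,\varphi\,dx$, accompanied by the boundary integral
\[
Q^2\int_{\partial E}\bigl[2(\beta\partial_n u^+\nabla u^+-\partial_n u^-\nabla u^-)\!\cdot\!\nu_E-(\beta|\nabla u^+|^2-|\nabla u^-|^2)\nu_E^n\bigr]\varphi\,d\Hf^{n-1}.
\]
In parallel, integrating $-Q^2 K\int_E\rho^2\partial_n\varphi\,dx$ by parts and invoking the identity $u+K\rho=\G_{\beta,K}(E)$ in $E$ from Theorem \ref{properties}(i) — which gives $K\partial_n\rho=-\partial_n u$ — yields $-Q^2 K\int_{\partial E}\rho^2\varphi\,\nu_E^n\,d\Hf^{n-1}-2Q^2\int_E\rho\,\partial_n u\,\varphi\,dx$. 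The two bulk terms in $\rho\,\partial_n u$ cancel exactly, and only a surface integral on $\partial E$ survives.

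To finish I express the surface integrals in the graph parametrization using $\nu_E=(-\nabla f,1)/\sqrt{1+|\nabla f|^2}$ and $d\Hf^{n-1}=\sqrt{1+|\nabla f|^2}\,dx'$, so that $\nu_E^n\,d\Hf^{n-1}=dx'$ and $\nu_E\,d\Hf^{n-1}=(-\nabla f,1)\,dx'$. All the boundary terms thereby reduce to integrals over $\D(x_0',r)$ tested against $\tilde\varphi$, and arbitrariness of $\tilde\varphi$ produces \eqref{wel1} in the weak sense. The main delicate point is precisely the interior cancellation: without the identity $u+K\rho=\G_{\beta,K}(E)$ from Theorem \ref{properties}(i), the bulk term $\int_E\rho\,\partial_n u\,\varphi\,dx$ would persist and the equation could not be written purely in terms of traces on $\partial E$.
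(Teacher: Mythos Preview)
Your proposal is correct and follows essentially the same route as the paper: start from Theorem~\ref{properties}(ii), integrate by parts separately over $E$ and $E^c$ using $-\beta\Delta u=\rho$, $\Delta u=0$, and the identity $u+K\rho=\G_{\beta,K}(E)$ from Theorem~\ref{properties}(i) to cancel the bulk term $\int_E\rho\,\partial_n u\,\varphi$, then specialize to vertical test fields and pass to graph coordinates. The only cosmetic differences are that the paper removes the volume constraint at the end (first deriving the boundary identity \eqref{wel0} for constrained $\eta$, then testing with $\eta_n=\varphi(\p x)s(x_n)$ subject to $\int\varphi=0$), whereas you introduce the Lagrange multiplier $C$ at the outset; and the paper computes $\div_E\eta$ directly while you obtain the perimeter term via first variation of the graph area --- both yield the same integrand.
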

\begin{proof} Let $E\subset\R^n$ be a minimizer of  $(\mathcal{P}_{\beta,K,Q,R})$ and let $(u,\rho)\in\A(E)$.

Notice that $E \cap \C(x_0,r)$ is an open set of $\R^n$. Moreover, by an approximation argument, we can integrate over $E\cap \C(x_0,r)$ the following identity,
\[
\begin{split}
|\nabla u^+|^2\,\Div\eta &=\Div(|\nabla u^+|^2\eta)-\nabla|\nabla u^+|^2\cdot \eta\\
&=\Div(|\nabla u^+|^2\eta)-2\,\Div(\nabla u^+(\nabla u^+\cdot\eta))+2 \Delta u^+\,\nabla u^+\cdot\eta+2\nabla u^+\cdot(\nabla\eta\nabla u^+)
\end{split}
\]
for every $\eta\in C^{\infty}_c(\C(x_0,r),\R^n)$.
Therefore,
\begin{equation} \label{boh}
\begin{split}
\int_{E\cap \C(x_0,r)}\left( |\nabla u^+|^2\,\Div\eta-2\nabla u^+\cdot(\nabla\eta\nabla u^+) \right)\,dx &=\int_{E\cap\C(x_0,r)} \Div(|\nabla u^+|^2\eta)\,dx\\
&-\int_{E\cap \C(x_0,r)}2\,\Div(\nabla u^+(\nabla u^+\cdot\eta)) \, dx\\
&+\int_{E\cap\C(x_0,r)}2 \Delta u^+\,\nabla u^+\cdot\eta\,dx.
\end{split}
\end{equation}
On the other hand, since $(u,\rho)\in \mathcal{A}(E)$, we have
\[
-\beta \Delta u^+=\rho \quad\text{in}\quad\mathcal{D}'(E\cap \C(x_0,r)).
\]
Moreover, by Theorem \ref{properties} (i) we deduce
\[
\nabla u^+=-K\,\nabla \rho \quad\text{in}\quad E\cap\C(x_0,r).
\]
Then, by multiplying equation \eqref{boh} by $\beta$, we  have
\begin{equation} \label{boh2}
\begin{split}
\int_{E\cap \C(x_0,r)}\beta\left( |\nabla u^+|^2\,\Div\eta-2\nabla u^+\cdot(\nabla\eta\nabla u^+) \right)\,dx &=\int_{E\cap\C(x_0,r)} \beta \Div(|\nabla u^+|^2\eta)\,dx\\
&-\int_{E\cap \C(x_0,r)}2\beta\,\Div(\nabla u^+(\nabla u^+\cdot\eta)) \, dx\\
&+K\,\int_{E\cap\C(x_0,r)}2 \rho\,\nabla \rho\cdot\eta\,dx.
\end{split}
\end{equation}
Integrating by parts the first and the second term in the right hand side of \eqref{boh2}, we can write
\begin{equation} \label{boh3}
\begin{split}
\int_{E\cap \C(x_0,r)}\beta\left( |\nabla u^+|^2\,\Div\eta-2\nabla u^+\cdot(\nabla\eta\nabla u^+) \right)\,dx &=\int_{\partial E\cap\C(x_0,r)} \beta |\nabla u^+|^2\eta\cdot\nu_E\,d\Hf^{n-1}\\
&-\int_{\partial E\cap \C(x_0,r)}2\beta\,(\nabla u^+\cdot\eta)(\nabla u^+\cdot\nu_E)\, d\Hf^{n-1}\\
&+K\,\int_{E\cap\C(x_0,r)}2 \rho\,\nabla \rho\cdot\eta\,dx.
\end{split}
\end{equation}
By arguing similarly as above, one can also prove 
\begin{equation} \label{boh4}
\begin{split}
\int_{E^c\cap \C(x_0,r)}\left( |\nabla u^-|^2\,\Div\eta-2\nabla u^-\cdot(\nabla\eta\nabla u^-) \right)\,dx &=\int_{E^c\cap\C(x_0,r)} \Div(|\nabla u^-|^2\eta)\,dx\\
&-\int_{E^c\cap \C(x_0,r)}2\,\Div(\nabla u^-(\nabla u^-\cdot\eta)) \, dx.
\end{split}
\end{equation}
Integrating by parts the right hand side of \eqref{boh4}, we can write
\begin{equation} \label{boh5}
\begin{split}
\int_{E^c\cap \C(x_0,r)}\left( |\nabla u^-|^2\,\Div\eta-2\nabla u^-\cdot(\nabla\eta\nabla u^-) \right)\,dx &=-\int_{\partial E\cap\C(x_0,r)} |\nabla u^-|^2\eta\cdot\nu_E\,d\Hf^{n-1}\\
&+\int_{\partial E\cap \C(x_0,r)}2\,(\nabla u^-\cdot\eta)\,(\nabla u^-\cdot\nu_E) \, d\Hf^{n-1}.
\end{split}
\end{equation}
Therefore, combining \eqref{boh3} and \eqref{boh5}, we get
\begin{equation} \label{eul1}
\begin{split}
\int_{\R^{n}} & a_{E}\left({\rm div}\eta \left|\nabla u\right|^{2}-2\nabla u\cdot(\nabla \eta\nabla u)\right) dx =\int_{\partial E} \left( \beta |\nabla u^+|^{2}- |\nabla u^-|^{2} \right)\eta\cdot\nu_{E}\,d\Hf^{n-1}\\
&-\int_{\partial E\cap \C(x_0,r)}2 \big( \beta\,(\nabla u^+\cdot\eta)(\nabla u^+\cdot\nu_E)-(\nabla u^-\cdot\eta)\,(\nabla u^-\cdot\nu_E) \big)\, d\Hf^{n-1}\\
&+K\,\int_{E\cap\C(x_0,r)}2 \rho\,\nabla \rho\cdot\eta\,dx.
\end{split}
\end{equation}
Notice that the following identity holds true
\begin{equation} \label{eul2}
\begin{split}
K\,\int_{\R^{n}}\rho^{2}{\rm div}\eta &=K\,\int_{E\cap \C(x_0,r)}\Div(\rho^2\eta)\,dx-K\,\int_{E\cap \C(x_0,r)}2\rho\,\nabla\rho\,\cdot\eta\,dx\\
&=K\,\int_{\partial E\cap\C(x_0,r)} \rho^{2} \eta\cdot\nu_{E}\,d\Hf^{n-1}-K\, \int_{E\cap \C(x_0,r)}2\rho\,\nabla\rho\,\cdot\eta\,dx.
\end{split}
\end{equation}
Combining the Euler-Lagrange equation of Theorem \ref{properties} (ii), \eqref{eul1} and \eqref{eul2}, we find
\begin{equation} \label{wel0}
\begin{split}
\int_{\partial E}{\rm div}_{E}\eta\, d\Hf^{n-1} &=Q^2\int_{\partial E} \left(\beta|\nabla u^+|^{2}-|\nabla u^-|^{2}+K\, \rho^{2} \right)\,\eta\cdot\nu_{E} \,d\Hf^{n-1}\\
&-2Q^2\int_{\partial E}\beta(\eta\cdot\nabla u^+)\,(\nabla u^+\cdot\nu_E)-(\eta\cdot\nabla u^-)\,(\nabla u^-\cdot\nu_E)   \,d\Hf^{n-1}
\end{split}
\end{equation}
for every $\eta\in C^{1}_c(B_r(x_0),\R^n)$ with $\int_{E}\Div\eta\,dx=0$. 
%
%
%
%
%

Now we are ready to prove \eqref{wel1}.
The tangential divergence of $\eta$ on $\partial E$ is
\begin{equation} \label{tdiv}
{\rm div}_E\eta:={\rm div}\eta-\sum_{i,j=1}^{n}(\nu_E)_i\,(\nu_E)_j\,\partial_j\eta_i \quad \text{on }\partial E,
\end{equation}
where $\nu_E: \partial E\to\mathbb{S}^{n-1}$ is the normal vector to $\partial E$:
\begin{equation*}
\nu_E:=\frac{1}{\sqrt{1+|\nabla f|^2}}(-\nabla f,1).
\end{equation*}
Let $\eta:=(0,\dots,0,\eta_n)$, then by \eqref{tdiv} we have
\begin{equation} \label{eqdivergenzatanpart}
{\rm div}_E\eta:=\partial_n \eta_n+\frac{1}{1+|\nabla f|^2}\,\left\{\sum_{j=1}^{n-1} \partial_j\eta_n\,\partial_j f-\partial_n \eta_n\right\} \quad \text{on }\partial E.
\end{equation}
Choose $\eta_n(x):=\varphi(\p x)\,s(x_n)$, where $\varphi\in C^{1}_c(\D(x_0',r))$ is such that $\int_{\D(x_0',r)}\varphi=0$ and $s:(-1,1)\to\R^n$ is such that $s(t)=1$ for every $|t|\leq\|f\|_\infty$.
Since now $\eta_n$ does not depend on the $n$-th component on $\partial E$, we have
\begin{equation} \label{pscalarenueta}
\eta\cdot\nu_E=\frac{\varphi(\p x)}{\sqrt{1+|\nabla f|^2}} \quad \text{on }\partial E\cap \C(x_0,r),
\end{equation}
and the above equation \eqref{eqdivergenzatanpart} reads as
\begin{equation} \label{tdiveta}
{\rm div}_E\eta:=\frac{1}{1+|\nabla f|^2}\,\nabla\varphi\cdot\nabla f \quad \text{on }\partial E\cap \C(x_0,r).
\end{equation}
Moreover, 
\[
\begin{split}
&\int_E {\rm div}\eta\,dx =\int_{\partial E} (\eta\cdot\nu_E)\,d\Hf^{n-1}=\int_{\partial E \cap \C(x_0,r)} \eta_n (\nu_E \cdot e_n)\,d\Hf^{n-1}\\
&= \int_{\partial E \cap \C(x_0,r)} \varphi(\p x) s(f(x))\, (\nu_E \cdot e_n)\,d\Hf^{n-1}\\
&= \int_{\partial E \cap \C(x_0,r)} \frac{\varphi(\p x)}{\sqrt{1+|\nabla f(\p x)|}}\,d\Hf^{n-1}= \int_{\p(\partial E \cap \C(x_0,r))} \varphi \,dx=0.
\end{split}
\]
This implies that \(\eta\) is admissible in \eqref{wel0}. Hence by using $\eta$ as a test function in \eqref{wel0}, by combining \eqref{pscalarenueta} and \eqref{tdiveta}, we have
\begin{equation*}
\begin{split}
&\int_{\D(x_{0}',r)}\frac{\nabla f}{1+|\nabla f|^2}\cdot\nabla\varphi\, dx' =Q^2\int_{\D(x_{0}',r} \left(\beta|\nabla u^+|^{2}-|\nabla u^-|^{2}+K\, \rho^{2} \right)(x',f(x'))\,\frac{\varphi(x')}{\sqrt{1+|\nabla f|^2}} \,dx'\\
&\qquad\qquad-2Q^2\int_{\D(x_{0}',r}\left(\beta\partial_n u^+\nabla u^+-\partial_n u^-\nabla u^-\right)(x',f(x'))\cdot\frac{(-\nabla f,1)}{\sqrt{1+|\nabla f|^2}}\,\varphi(x')  \,dx'
\end{split}
\end{equation*}
for any $\varphi\in C^{1}_c(\D(x_0',r))$ with $\int_{\D(x_0',r)}\varphi=0$. It remains to multiply this equality by 
$\sqrt{1+|\nabla f|^2}$ and use divergence theorem on the left-hand side.

%
%
%
%
%
%
\end{proof}
\begin{corollary}\label{cor:EL on the boundary} 
Let $E$ be a minimizer for \eqref{e:problem} and $(u,\rho)\in\A(E)$. Assume that $f\in C^{1,\vartheta}(\D(x_{0}',r))$ and
$$E\cap \C(x_0,r)=\left\{ x=(x',x_n)\in \D(x_{0}',r)\times\R\,:\,x_{n}<f(x') \right\} \cap \C(x_0,r).$$ 
Then there exists a vector field $M:\mathbb{R}^n\rightarrow\mathbb{R}^n$ such that the matrix 
$\nabla M(\nabla f)$ is uniformly elliptic and H\"older continuous and a H\"older continuous
function $G$ such that
\begin{equation*}
-{\rm div}\left(\nabla M(\nabla f)\,\nabla\partial_i f\right)=\partial_i G \quad \text{weakly on }\partial E\cap \C(x_0,r/2)
\end{equation*}
for every $i=1,\dots,n$.
\end{corollary}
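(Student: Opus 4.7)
My plan is to apply the classical Nirenberg differentiation trick to the Euler-Lagrange equation of Proposition \ref{eulerl}. Setting $M(\xi) := \xi/\sqrt{1+|\xi|^{2}}$, equation \eqref{wel1} takes the form $-\div M(\nabla f) = G$ weakly on $\D(x_0', r)$, where $G(x')$ is exactly the right-hand side of \eqref{wel1}, i.e.\ a linear combination of $|\nabla u^{\pm}|^{2}(x',f(x'))$, $\rho^{2}(x',f(x'))$ and $(\partial_n u^{\pm}\nabla u^{\pm})(x',f(x'))\cdot(-\nabla f(x'),1)$, plus a constant.

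The first step is to verify that $G$ is H\"older continuous. Theorem \ref{ureg} yields $u^{+}\in C^{1,\eta}(\overline{E}\cap\C(x_0,r/2))$ and $u^{-}\in C^{1,\eta}(\overline{E}^{c}\cap\C(x_0,r/2))$, so $\nabla u^{\pm}$ are H\"older up to $\partial E$ from the respective sides. Because $u^{+}+K\rho = \G_{\beta,K}(E)$ in $E$ by Theorem \ref{properties}(i), $\rho$ inherits $C^{1,\eta}$-regularity up to $\partial E$ from inside $E$. Composing these with the H\"older map $x'\mapsto(x',f(x'))$ and multiplying with $\nabla f\in C^{0,\vartheta}$ gives $G\in C^{0,\min(\eta,\vartheta)}$ on $\D(x_0', r/2)$.

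The second step is to differentiate $-\div M(\nabla f) = G$ in $x_i$ via incremental quotients. Testing the weak formulation against $\varphi(\cdot - he_i)$, changing variables and subtracting the untranslated equation yields
\[
\int \delta_{h,i}\bigl(M(\nabla f)\bigr)\cdot\nabla\varphi \, dx' \;=\; -\int G\,\delta_{-h,i}\varphi \, dx',
\]
where $\delta_{h,i}$ is the difference quotient of step $h$ in direction $e_i$. Writing $\delta_{h,i}M(\nabla f) = A_h(x')\,\delta_{h,i}\nabla f$ with $A_h$ an averaged linearisation of $M$ around $\nabla f$, uniform ellipticity of $A_h$ (established below) together with a Caccioppoli estimate gives uniform $L^{2}$ bounds on $\delta_{h,i}\nabla f$. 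Passing to the limit $h\to 0$ produces $\partial_i f \in W^{1,2}_{\mathrm{loc}}$ and the desired equation $-\div\bigl(\nabla M(\nabla f)\nabla\partial_i f\bigr) = \partial_i G$ in the sense of distributions.

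Finally, a direct computation gives
\[
\nabla M(\xi) \;=\; \frac{I}{\sqrt{1+|\xi|^{2}}} \;-\; \frac{\xi\otimes\xi}{(1+|\xi|^{2})^{3/2}},
\]
whose smallest eigenvalue is $(1+|\xi|^{2})^{-3/2}$, so uniform ellipticity of $\nabla M(\nabla f)$ follows from $\|\nabla f\|_{\infty}\le\|f\|_{C^{1,\vartheta}}$, while H\"older continuity of $\nabla M(\nabla f)$ is immediate from smoothness of $\nabla M$ composed with $\nabla f \in C^{0,\vartheta}$. The main technical obstacle is making the differentiation in $x_i$ rigorous when only $f\in C^{1,\vartheta}$ is known a priori, with no access to second derivatives; the incremental-quotient argument combined with uniform ellipticity handles exactly this, and as a byproduct produces $f\in W^{2,2}_{\mathrm{loc}}$, paving the way for the subsequent Schauder bootstrap used to reach $C^{2,\vartheta}$ and eventually $C^{\infty}$.
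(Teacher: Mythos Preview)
Your proof is correct and follows essentially the same approach as the paper: both start from Proposition~\ref{eulerl}, set $M(\xi)=\xi/\sqrt{1+|\xi|^2}$, verify H\"older continuity of $G$ via Theorem~\ref{ureg} and Theorem~\ref{properties}(i), compute $\nabla M(\xi)$ explicitly to check uniform ellipticity, and differentiate the equation $-\div M(\nabla f)=G$ in $x_i$. The only difference is that the paper outsources the differentiation step by citing \cite[Theorem~27.1]{Maggi12}, whereas you spell out the underlying incremental-quotient argument; your version is more self-contained and has the mild bonus of making the intermediate $W^{2,2}_{\mathrm{loc}}$ conclusion explicit.
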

\begin{proof}
Exploiting Proposition \ref{eulerl}, we have
\begin{equation} \label{nonso}
-{\rm div}\left(\frac{\nabla f(x')}{\sqrt{1+|\nabla f(x')|^{2}}}\right)=G(x',f(x')) \quad \text{for a.e. }x'\in\D(x_0',r/2),
\end{equation}
where
\[
\begin{split}
G(x',f(x')) &=Q^2\,\big(\beta|\nabla u^+|^{2}-|\nabla u^-|^{2}+ K\,\rho^{2}\big)(x',f(x')) \\
&-2Q^2\,\left(\beta \partial_n u^+\,\nabla u^+-\partial_n u^-\,\nabla u^-   \right)(x',f(x'))\cdot(-\nabla f(x'),1)+C, \quad x'\in\D(x_0',r/2).
\end{split}
\]
Hence, \eqref{nonso} is equivalent to
\begin{equation} \label{nonso2}
-{\rm div}\left(M(\nabla f)\right)=G \quad \text{a.e. on }\partial E\cap \C(x_0,r/2),
\end{equation}
where
\[
M(\xi):=\frac{\xi}{\sqrt{1+|\xi|^2}}, \quad \forall\xi\in\R^n.
\]
By \cite[Theorem 27.1]{Maggi12} we can take the derivatives of \eqref{nonso2}. Then,
\begin{equation*}
-{\rm div}\left(\nabla M(\nabla f)\,\nabla\partial_i f\right)=\partial_i G \quad \text{a.e. on }\partial E\cap \C(x_0,r/2)
\end{equation*}
for every $i=1,\dots,n$.
Notice that
\[
\nabla M(\xi)=\frac{1}{\sqrt{1+|\xi|^2}}\left(\Id-\frac{\xi\otimes\xi}{1+|\xi|^2} \right) \quad \forall\xi\in\R^n,
\]
meaning that the matrix $\nabla M(\nabla f)$ is uniformly elliptic, more precisely 
\[
|\eta|^2 \geq \nabla M(\nabla f)\eta\cdot\eta\geq (1+\|\nabla f\|_{\infty})^{-3/2}\,|\eta|^2 \quad \forall \eta\in\R^n.
\]
It follows from Theorem \ref{ureg} that $G$  is H\"older continuous. By the definition of $M$ and by the regularity of $f$ we also have that $\nabla M(\nabla f)$ is H\"older continuous.
\end{proof}
We prove now the partial  $C^{2,\vartheta}$-regularity of minimizers. 
\begin{theorem} [$C^{2,\vartheta}$-regularity] \label{C2 reg}
Given \(n\ge 3\), \(A>0\) and \(\vartheta\in (0,1/2)\), there exists \(\varepsilon_{\textnormal{reg}}=\varepsilon_{\textnormal{reg}}(n, A, \vartheta)>0\)  such that if \(E\) is minimizer of\(~\eqref{e:problem}\), \(Q+\beta+K+\frac{1}{K}\le A\), $x_0\in\partial E$, and 
\[
r+\e_E(x_0, r)+Q^2\,D_E(x_0, r)\le \varepsilon_{\textnormal{reg}},
\] 
then \(E\cap \C(x_0,r/2)\)  coincides with  the  epi-graph of a \(C^{2,\vartheta}\)-function $f$. 
In particular, we have that \(\partial E\cap \C(x_0,r/2)\) is a \(C^{2,\vartheta}\) \((n-1)\)-dimensional manifold and
\begin{equation} \label{nholderf}
[f]_{C^{2,\vartheta}(\D(x_0',r/2))}\leq C\left(n,A,r, \vartheta\right).
\end{equation}
\end{theorem}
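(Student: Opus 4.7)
The strategy is to start from the $C^{1,\vartheta}$-regularity given by Theorem~\ref{eps reg}, bootstrap via the differentiated Euler-Lagrange equation, and apply classical Schauder theory for linear elliptic equations in divergence form.

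First, I would choose an auxiliary exponent $\vartheta'\in(\vartheta,1/2)$ and apply Theorem~\ref{eps reg} with this $\vartheta'$: provided $\varepsilon_{\mathrm{reg}}$ is taken small enough (depending on $\vartheta'$, hence on $\vartheta$), we obtain that $E\cap \C(x_0,r/2)$ is the epi-graph of a function $f\in C^{1,\vartheta'}(\D(x_0',r/2))$ with a universal bound on $[\nabla f]_{C^{0,\vartheta'}}$. In particular the smallness $\|\nabla f\|_\infty\le \delta$ may be assumed as small as needed, which controls the ellipticity constants later. Next, I would apply Theorem~\ref{ureg} to obtain $Qu^+\in C^{1,\eta}(\overline E\cap\C_{r/2}(x_0))$ and $Qu^-\in C^{1,\eta}(\overline{E^c}\cap\C_{r/2}(x_0))$ with universal $C^{1,\eta}$-bounds, for some $\eta=\eta(\vartheta')\in(0,1)$; by Theorem~\ref{properties}(i), the relation $u+K\rho=\mathcal{G}_{\beta,K}(E)$ in $E$ transfers this regularity also to $\rho$ up to the boundary from inside.

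Next I would invoke Corollary~\ref{cor:EL on the boundary}, which, on $\D(x_0',r/2)$, gives the linear equation
\begin{equation*}
-\mathrm{div}\bigl(\nabla M(\nabla f)\,\nabla\partial_i f\bigr)=\partial_i G,\qquad i=1,\ldots,n,
\end{equation*}
where $A:=\nabla M(\nabla f)$ is uniformly elliptic (with universal ellipticity constants, since $\|\nabla f\|_\infty$ is small) and belongs to $C^{0,\vartheta'}$ because $\nabla f\in C^{0,\vartheta'}$ and $M$ is smooth, while the boundary function $G$ from Proposition~\ref{eulerl} is a smooth combination of traces of $\nabla u^\pm$, $\rho$ and $\nabla f$; by the previous step, $G\in C^{0,\alpha}$ with $\alpha=\min(\eta,\vartheta')$. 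Equivalently, the right-hand side can be written as $\mathrm{div}(G\,e_i)$ with a Hölder-continuous vector field.

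Finally I would apply Schauder regularity for divergence-form linear elliptic equations with Hölder coefficients and Hölder right-hand side (the same Campanato-type decay argument already used in Lemma~\ref{tiposchauder}, or \cite[Theorem 5.19]{GM}). Applied to the unknown $v:=\partial_i f$, this produces $\partial_i f\in C^{1,\alpha}(\D(x_0',r/2))$ with a quantitative estimate controlled by $\|A\|_{C^{0,\vartheta'}}$, the ellipticity constant and $\|G\|_{C^{0,\alpha}}$, all of which are universal. Consequently $f\in C^{2,\alpha}(\D(x_0',r/2))$ with $[f]_{C^{2,\alpha}}\le C(n,A,r,\vartheta)$.

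\textbf{Main obstacle.} The delicate point is a book-keeping one: one must choose the auxiliary exponent $\vartheta'\in(\vartheta,1/2)$ so that the exponent $\eta(\vartheta')$ produced by Theorem~\ref{ureg}, and the exponent $\alpha$ that comes out of the Schauder step, are both at least $\vartheta$. This is why the statement is restricted to $\vartheta\in(0,1/2)$ and why $\varepsilon_{\mathrm{reg}}$ must be allowed to depend on $\vartheta$. Once this dependence is tracked, the uniform bound \eqref{nholderf} follows from the universal bounds at each step combined with the quantitative Schauder estimate, as smallness of the $C^{1,\vartheta'}$-norm of $f$ keeps the coefficient norms and ellipticity constants under universal control.
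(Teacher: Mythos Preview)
Your proposal is correct and follows essentially the same approach as the paper: invoke the $C^{1,\vartheta}$ $\varepsilon$-regularity theorem, upgrade the regularity of $u$ via Theorem~\ref{ureg}, plug into the differentiated Euler--Lagrange equation of Corollary~\ref{cor:EL on the boundary}, and conclude by Schauder estimates for divergence-form equations with H\"older coefficients. Your treatment of the H\"older exponents (introducing an auxiliary $\vartheta'>\vartheta$ so that the output exponent $\alpha=\min(\eta(\vartheta'),\vartheta')$ dominates $\vartheta$) is in fact more careful than the paper's own argument, which writes the Schauder bound directly with output exponent $\vartheta$ and input exponent $\eta$ without explicitly reconciling them.
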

%
%
\begin{proof} 
Choose $\varepsilon_{\textnormal{reg}}$ as in Theorem \ref{eps reg}. Then there exists $f\in C^{1,\vartheta}(\D(x_0',r/2))$ such that
$$E\cap \C(x_0,r/2)=\left\{ x=(x',x_n)\in \D(x_{0}',r/2)\times\R\,:\,x_{n}<f(x') \right\}.$$
By Corollary \ref{cor:EL on the boundary} we have
\begin{equation*}
-{\rm div}\left(\nabla M(\nabla f)\,\nabla\partial_i f\right)=\partial_i G \quad \text{a.e. on }\partial E\cap \C(x_0,r/2)
\end{equation*}
for every $i=1,\dots,n$, with $\nabla M(\nabla f)$ uniformly elliptic and $G$ - H\"older continuous. We also have that $\nabla M(\nabla f)$ is H\"older continuous. Hence the following Schauder estimates hold in this case
\[
[\nabla \partial_i f]_{C^{0,\vartheta}(\D(x_0',r/2))}\leq C\,\{\|\partial_i f\|_{L^2(\D(x_0',r/2))}+[G]_{C^{0,\eta}(\D(x_0',r/2))}\} \quad \forall i=1,\dots,n,
\]
for some constant $C$ depending on $r$. In particular, $f$ is $C^{2,\vartheta}$ and
\[
[f]_{C^{2,\vartheta}(\D(x_0',r/2))}\leq C\,\{\|\nabla f\|_{L^2(\D(x_0',r/2))}+[G]_{C^{0,\eta}(\C(x_0,r/2))}\}.
\]
By the definition of $G$, recalling \eqref{darichiamare} and Theorem \ref{properties} (i), using Poincar\'e inequality and since $f$ is Lipschitz, one can easily see that there exists $C=C(n,A,\vartheta,r)>0$ such that
$$[G]_{C^{0,\vartheta}(\C(x_0,r/2))} \leq C(n,A,\vartheta,r).$$
By the Lipschitz approximation theorem it follows that
\begin{equation}
\frac{1}{r^{n-1}}\int_{\D(x_0',r/2)}|\nabla f|^2\,dz\leq C_{L}\,\e_{E}(x_0,r)\leq C_{L}\,\varepsilon_{\textnormal{reg}},
\end{equation}
which implies \eqref{nholderf}.
%
\end{proof}
\begin{remark} \label{remmin}
A minimizer $E_Q$ of the problem (\ref{e:problem}) satisfies the hypothesis of Theorems \ref{C2 reg} and \ref{Cinfinity cap} whenever $Q>0$ is small enough. 
Indeed, assume $x_0\in\partial B_1$.  
Then, by the regularity of $\partial B_1$, there exists a radius $r=r(n)>0$ such that
\begin{equation}
r+\textbf{e}_{B_1}(x_0,2r) \leq \frac{\varepsilon_{\text{reg}}}{2},
\end{equation} 
where $\varepsilon_{{\text{reg}}}$ is as in Theorem \ref{Cinfinity cap}. 
On the other hand, by Proposition \ref{Linfinity closed} we have that $E_Q$ converges to $B_1$ in the Kuratowski sense when $Q\to0$.
Hence, by properties of the excess function, $\textbf{e}_{E_Q}(x_0,2r)\to\textbf{e}_{B_1}(x_0,2r) $ when $Q\to 0$. By Theorem \ref{properties} (iii) we also have  $Q^2\,D_{E_Q}(x_0,2r)\to 0$ when $Q\to 0$.
Therefore,
\begin{equation}
r+\textbf{e}_{E_Q}(x_0,2r)+Q^2\,D_{E_Q}(x_0,2r)\leq \varepsilon_{\text{reg}},
\end{equation} 
when $Q>0$ is small enough.
\end{remark}
%


\section{$C^\infty$ regularity} \label{smooth}
In this section, by a bootstrap argument, we obtain the $C^\infty$ partial regularity of minimizers. Since this result is not necessary for the proof of the main theorem, the reader may skip it unless interested.

Improving the regularity from $C^{2,\eta}$ to $C^\infty$ is easier
than from $C^{1,\eta}$ to $C^{2,\eta}$, because we can straighten the boundary in a nice way once it is $C^2$. More precisely, we have the following lemma.
\begin{lemma} \label{diffeo}
Let $k\in\N$, $k\geq 2$ and $f$ is $C^{k,\vartheta}(\D)$. There exists $\varepsilon>0$ such that if 
\[ \|f\|_{C^{2,\vartheta}(\D)}\leq \varepsilon \quad \text{and} \quad f(0)=0,\] 
then there exists a diffeomorphism $\Phi\in C^{k-1,\vartheta}$, \(\Phi: \C_{1-\varepsilon} \to \C_{1-\varepsilon},\)
such that
\[
\Phi(\Gamma_f\cap \C_{1-\varepsilon})=\{x=(x',x_n)\in\D_{1-\varepsilon}\times\R\,:\,x_n=0\},
\]
where $\Gamma_f$ is the graph of $f$. Moreover,
\begin{equation} \label{pdiffeo}
\left(\nabla \Phi (\Phi^{-1}(x))\,(\nabla\Phi(\Phi^{-1}(x)))^T\right)_{jn}=0 \;\;\forall j\neq n \quad \text{and} \quad \left(\nabla \Phi (\Phi^{-1}(x))\,(\nabla\Phi(\Phi^{-1}(x)))^T\right)_{nn}\neq0.
\end{equation}
\end{lemma}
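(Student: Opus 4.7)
The natural candidate is to straighten the graph via the normal flow rather than by a vertical shift, since the latter fails condition~\eqref{pdiffeo}. Concretely, I would set
\[
\Psi(y',y_n):=\bigl(y',f(y')\bigr)+y_n\,\nu(y'),\qquad \nu(y'):=\frac{(-\nabla f(y'),1)}{\sqrt{1+|\nabla f(y')|^2}},
\]
and define \(\Phi:=\Psi^{-1}\) (restricted to a suitable subcylinder). Clearly \(\Psi(y',0)=(y',f(y'))\), so \(\Psi\) sends \(\{y_n=0\}\) onto \(\Gamma_f\), and consequently \(\Phi(\Gamma_f\cap\C_{1-\varepsilon})\subset\{x_n=0\}\).

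\textbf{Step 1 (Diffeomorphism and regularity).} A direct computation at \(y_n=0\) gives the block form
\[
\nabla\Psi(y',0)=\begin{pmatrix} I_{n-1} & -\nabla f/\sqrt{1+|\nabla f|^2} \\[2pt] \nabla f^{T} & 1/\sqrt{1+|\nabla f|^2}\end{pmatrix},
\]
which differs from the identity by a quantity controlled by \(\|\nabla f\|_\infty\lesssim\|f\|_{C^{2,\vartheta}}\le\varepsilon\). For \(\varepsilon\) small, the same estimate holds uniformly in \(y\) (since \(\partial_j\nu\) is controlled by \(\|\nabla^2 f\|_\infty\)), so the inverse function theorem produces a diffeomorphism from a neighbourhood of \(\{y_n=0\}\) onto a neighbourhood of \(\Gamma_f\). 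A standard covering/continuity argument then shows that, after perhaps decreasing \(\varepsilon\), \(\Psi\) restricts to a bijection between two cylinders whose radii differ by \(O(\varepsilon)\); after a harmless re-adjustment of \(\varepsilon\), one obtains \(\Phi:\C_{1-\varepsilon}\to\C_{1-\varepsilon}\). Since \(f\in C^{k,\vartheta}\) yields \(\nu\in C^{k-1,\vartheta}\), the map \(\Psi\) is \(C^{k-1,\vartheta}\), and the inverse function theorem preserves this regularity, hence \(\Phi\in C^{k-1,\vartheta}\).

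\textbf{Step 2 (Orthogonality).} The crucial identities are
\[
\partial_n\Psi(y)=\nu(y'),\qquad \partial_j\Psi(y)=\bigl(e_j,\partial_j f(y')\bigr)+y_n\,\partial_j\nu(y')\quad (j<n).
\]
Taking the inner product,
\[
\partial_j\Psi\cdot\partial_n\Psi=\underbrace{\bigl(e_j,\partial_j f\bigr)\cdot\nu}_{=0\ \text{(tangent $\perp$ normal)}}+y_n\underbrace{\partial_j\nu\cdot\nu}_{=\tfrac12\partial_j|\nu|^2=0}=0,
\]
while \(\partial_n\Psi\cdot\partial_n\Psi=|\nu|^2=1\). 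Writing \(B:=\nabla\Psi=\nabla\Phi^{-1}\), this reads \((B^{T}B)_{jn}=0\) for \(j\ne n\) and \((B^{T}B)_{nn}=1\), i.e.\ \(B^{T}B\) is block-diagonal with respect to the splitting \(\R^{n-1}\oplus\R e_n\).

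\textbf{Step 3 (Derivation of \eqref{pdiffeo}).} The chain rule applied to \(\Phi\circ\Phi^{-1}=\Id\) yields \(\nabla\Phi(\Phi^{-1}(x))=B(x)^{-1}\); hence
\[
\nabla\Phi(\Phi^{-1}(x))\,\bigl(\nabla\Phi(\Phi^{-1}(x))\bigr)^{T}=B(x)^{-1}B(x)^{-T}=\bigl(B(x)^{T}B(x)\bigr)^{-1}.
\]
The inverse of a block-diagonal matrix is block-diagonal with reciprocal blocks, so the \((j,n)\) entries vanish for \(j\ne n\) and the \((n,n)\) entry equals \(1\ne 0\), giving exactly \eqref{pdiffeo}.

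The only nontrivial point is the domain bookkeeping in Step~1: one must choose the parameter \(\varepsilon\) in the hypothesis and the parameter \(\varepsilon\) appearing in \(\C_{1-\varepsilon}\) in a compatible way, so that \(\Psi\) sends the flattened cylinder onto the correct bent cylinder. This is purely technical (a continuity/inverse-function-theorem argument with explicit smallness constants) and carries no analytic subtlety beyond the \(C^{2,\vartheta}\)-smallness of \(f\); the substantive content of the lemma is the orthogonality computation in Step~2.
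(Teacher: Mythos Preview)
Your proposal is correct and uses exactly the same construction as the paper: the normal-flow straightening \(\Psi(y',y_n)=(y',f(y'))+y_n\,\nu(y')\) with \(\Phi=\Psi^{-1}\). The paper's proof consists solely of writing down this \(\Psi\); your Steps~2--3 supply the orthogonality verification that the paper leaves implicit, and the computation is right (in particular the identification \(\nabla\Phi(\Phi^{-1}(x))(\nabla\Phi(\Phi^{-1}(x)))^{T}=(B^{T}B)^{-1}\) with \(B=\nabla\Psi\) is the correct way to read off \eqref{pdiffeo}).
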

\begin{proof} Define
\[
\Psi(x',x_n):=(x', f(x'))+x_n \frac{(-\nabla f(x'),1)}{\sqrt{1+|\nabla f(x')|^2}} \quad \forall x=(x',x_n)\in\C_{1-\varepsilon},
\]
then $\Phi:=\Psi^{-1}$ is the desired diffeomorphism.
\end{proof}
\begin{lemma} \label{lemmax2}
Let k be a positive integer and let \(f\) be a \(C^{k+1,\vartheta}\)-H\"older continuous function defined on \(\D(x_0,r) \) such that \( \|f\|_{C^{k+1,\vartheta}}\leq \varepsilon \) for some \(\varepsilon>0\) and
$$E\cap \C(x_0,r)=\left\{ x=(x',x_n)\in \D(x_{0}',r)\times\R\,:\,x_{n}<f(x') \right\} \cap \C(x_0,r).$$ 
Suppose \(v\) is a solution of
\[
-\Div(a_E\,\nabla v)=h \quad \text{in }\mathcal{D}'\left(B_r(x_0) \right), \quad a_E:=\1_{E^c}+\beta\, \1_E,
\]
with  $h^+$ and  $h^-$ $C^{k,\eta}$-H\"older continuous respectively on $\overline{E}\cap \C(x_0,r)$ and $\overline{E^c}\cap \C(x_0,r)$,
where $h^+=h\,1_E$, $h^-=h\,1_{E^c}$.
Then
$v^+, v^-$ are $C^{k+1,\eta}$-H\"older continuous respectively on 
$\overline{E}\cap \C(x_0,r)$ and $\overline{E^c}\cap \C(x_0,r)$.

Moreover, 
\begin{equation}  
\Vert v_1 \Vert_{C^{k+1,\eta}(\overline{E}^c\cap \C(x_0,r))}\leq C \;\text{ and }\; \Vert v_{\beta} \Vert_{C^{k+1,\eta}(\overline{E}\cap \C(x_0,r))}\leq C
\end{equation}
for some constant \(C\geq 0\)  which depends on the \(C^{k,\eta}\)- H\"older norms of  \(h^+\) and \( h^- \) and on the \(C^{k+1,\vartheta}\) norm of \(f\). 
\end{lemma}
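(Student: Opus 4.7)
\smallskip
\noindent\textbf{Proof proposal.}
The plan is to flatten the interface and reduce the problem to a transmission problem across a hyperplane, and then bootstrap via Schauder-type estimates analogous to Lemma~\ref{tiposchauder}. Let \(\Phi\) be the diffeomorphism provided by Lemma~\ref{diffeo}; since \(f\in C^{k+1,\vartheta}\), the map \(\Phi\) is of class \(C^{k,\vartheta}\) and sends \(\Gamma_f\) onto \(\{y_n=0\}\), while mapping \(E\cap\C(x_0,r)\) onto \(\{y_n<0\}\cap\C_{r'}\) for some \(r'>0\). Set \(\tilde v:=v\circ\Phi^{-1}\), \(\tilde h:=h\circ\Phi^{-1}\), \(\tilde a:=a_E\circ\Phi^{-1}=\1_{\{y_n>0\}}+\beta\,\1_{\{y_n<0\}}\), and let \(J\) be the Jacobian of \(\Phi^{-1}\). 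A direct change of variables yields
\begin{equation*}
-\Div\bigl(\tilde A(y)\,\nabla \tilde v\bigr)=\tilde h\,J\qquad\text{in }\mathcal{D}'(\C_{r'}),
\end{equation*}
where \(\tilde A(y):=\tilde a(y)\,J(y)\,\bigl(\nabla\Phi\,(\nabla\Phi)^T\bigr)\circ\Phi^{-1}(y)\). On each side \(\{y_n>0\}\) and \(\{y_n<0\}\), the coefficient matrix is \(C^{k-1,\vartheta}\) and uniformly elliptic. Moreover, the special property~\eqref{pdiffeo} of \(\Phi\) ensures \(\tilde A_{jn}=0\) for \(j\neq n\) on each side, so the natural transmission condition on \(\{y_n=0\}\) reduces to the scalar equation
\begin{equation*}
\bigl[\tilde A_{nn}\,\partial_n \tilde v\bigr]_{y_n=0^+}=\bigl[\tilde A_{nn}\,\partial_n \tilde v\bigr]_{y_n=0^-},
\end{equation*}
with no tangential derivatives appearing in the jump.

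The proof then proceeds by induction on \(k\). For the base case \(k=1\) the coefficients are \(C^{0,\vartheta}\) on each side and the right-hand side is \(C^{0,\eta}\); writing \(\tilde h\,J=\Div G\) with \(G\in C^{1,\eta}\) locally (e.g. by solving a Poisson problem), we are in the setting of Lemma~\ref{tiposchauder}, which yields \(\tilde v^{\pm}\in C^{1,\eta}\), and a further application of the Schauder estimate for the equation posed on each half-ball separately with the already-known Dirichlet trace on \(\{y_n=0\}\) upgrades this to \(\tilde v^{\pm}\in C^{2,\eta}\). Composing back with \(\Phi\in C^{k,\vartheta}\subset C^{1,\vartheta}\) gives \(v^{\pm}\in C^{2,\eta}\) with the quantitative bound claimed.

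For the inductive step, assume the result holds up to index \(k-1\), so that \(v^{\pm}\in C^{k,\eta}\) on their respective closed domains. Differentiating the flattened equation in a tangential direction \(\tau\in\{e_1,\dots,e_{n-1}\}\), which commutes with the interface, we find that \(w:=\partial_\tau \tilde v\) satisfies an equation of the same form,
\begin{equation*}
-\Div(\tilde A\,\nabla w)=\partial_\tau(\tilde h\,J)+\Div\bigl((\partial_\tau\tilde A)\,\nabla\tilde v\bigr),
\end{equation*}
with a right-hand side that, by the inductive hypothesis applied to \(\tilde v\), has regularity \(C^{k-1,\eta}\) on each side, and still satisfies the simple transmission condition derived above (differentiated tangentially). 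The inductive hypothesis applied to \(w\) therefore yields \(w^{\pm}\in C^{k,\eta}\). To control the remaining pure normal derivatives \(\partial_n^{k+1}\tilde v^{\pm}\), one uses the equation itself to solve algebraically for \(\partial_n^{2}\tilde v\) in terms of \(\tilde h\) and mixed derivatives involving at most one factor of \(\partial_n\), and then differentiates the resulting expression \(k-1\) times in tangential directions; combined with the control on all tangential \(k\)-th derivatives of \(\partial_n \tilde v\) coming from \(w\), this gives \(\tilde v^{\pm}\in C^{k+1,\eta}\). Composing with \(\Phi\in C^{k,\vartheta}\) finally yields \(v^{\pm}\in C^{k+1,\eta}\).

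The main technical point is the handling of the transmission condition during the bootstrap: for a general diffeomorphism the conormal derivative mixes normal and tangential derivatives, and differentiating the condition tangentially produces awkward coupling terms. The special orthogonality relations~\eqref{pdiffeo} built into \(\Phi\) are precisely what decouples the two sets of derivatives and keeps the transmission problem in a form to which a direct induction applies; the remaining estimates are standard Schauder arguments with the loss \(\vartheta\mapsto\eta\) absorbed once at each step.
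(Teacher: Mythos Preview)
Your overall strategy---flatten with the special diffeomorphism of Lemma~\ref{diffeo}, differentiate tangentially, use a transmission Schauder estimate, and recover the pure normal derivatives algebraically from the equation---is exactly the paper's approach. However, there are two genuine gaps in the way you execute it.

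\textbf{The base case.} After Lemma~\ref{tiposchauder} gives \(\tilde v^{\pm}\in C^{1,\eta}\), you propose to upgrade to \(C^{2,\eta}\) by ``a further application of the Schauder estimate for the equation posed on each half-ball separately with the already-known Dirichlet trace on \(\{y_n=0\}\).'' This does not work: with Dirichlet data only in \(C^{1,\eta}\) on the flat part, one-sided Schauder theory gives back \(C^{1,\eta}\) up to the boundary, not \(C^{2,\eta}\). The transmission structure is essential for gaining the extra derivative. What the paper does instead (already at \(k=1\)) is take a tangential derivative \(\partial_j\) of the flattened equation, obtain
\[
-\Div\bigl(a_H A\,\nabla\partial_j\tilde v\bigr)=\Div\bigl(h\,e_j+\partial_j(a_H A)\,\nabla\tilde v\bigr),
\]
and apply Lemma~\ref{tiposchauder} to \(\partial_j\tilde v\), yielding \(\partial_j\tilde v^{\pm}\in C^{1,\eta}\) for all \(j\neq n\). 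Then the special structure \(A_{jn}=0\) for \(j\neq n\), \(A_{nn}\neq 0\), lets one solve the equation pointwise for \(\partial_{nn}\tilde v^{\pm}\) in terms of quantities already controlled in \(C^{0,\eta}\). This is precisely your mechanism from the inductive step; you just need to use it in the base case as well.

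\textbf{The inductive step.} You want to apply ``the inductive hypothesis to \(w=\partial_\tau\tilde v\)''. But the inductive hypothesis is Lemma~\ref{lemmax2} at index \(k-1\), which is formulated for the original operator \(-\Div(a_E\nabla\cdot)\) with right-hand side a function \(h\) whose restrictions are \(C^{k-1,\eta}\). The equation for \(w\) instead has the variable matrix \(\tilde A\) and a right-hand side partly in divergence form; if you expand \(\Div\bigl((\partial_\tau\tilde A)\nabla\tilde v\bigr)\) pointwise to force it into the required shape, you lose regularity (it is at best \(C^{k-2,\eta}\), not \(C^{k-1,\eta}\)). The paper sidesteps this mismatch by never invoking the lemma on itself: it stays in the flattened picture, takes all \(k\) tangential derivatives at once, and applies only Lemma~\ref{tiposchauder} (which is tailored to the flat transmission problem with variable \(A\) and divergence-form right-hand side). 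The normal derivatives are then recovered by a second, inner induction on the number of normal indices, again using the algebraic relation coming from \(A_{jn}=0\).
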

\begin{proof}  Assume $x_0=0$. Let $H:=\left\{x\in\R^{n}\,:\,x_n=x\cdot e_{n}\leq0\right\}$ be the half space in $\R^n$. 
By Lemma \ref{diffeo}, we can assume that
\[
\Gamma_f \cap \C_{r}=\partial H\cap \C_{r},
\]
where $\Gamma_f \cap \C_{r/2}:=\{(x',f(x'))\,:\,x'\in \D_{r}\}$, $f(0)=0$ and that $v$ solves the following equation
\begin{equation} \label{eqdiu0}
-\Div (a_H A\, \nabla v)=h,
\end{equation}
where by \eqref{pdiffeo}, $A$ is a $C^{k-1,\vartheta}$-continuous elliptic matrix such that $A_{jn}=0$ for every $j\neq n$, $A_{nn}\neq 0$.

We continue the proof by induction on $k$. For clarity, we do the detailed computations for the case $k=1$ and we explain how the formulas look like for bigger $k$.

{\bf Case $\mathbf{k=1}$.} By taking the derivatives with respect to the tangential coordinates $j\neq n$ of \eqref{eqdiu0} we deduce
\begin{equation} \label{ae30}
\begin{split}
-{\rm div} \big(a_H A\,\nabla{\partial_{j}v}\big) &=\partial_{j}h+{\rm div} \big(\partial_j (a_H A)\,\nabla{v}\big)\\
&=\Div(h\, e_j+\partial_j (a_H A)\,\nabla{v})  \qquad\text{in}\;\mathcal{D}'(\mathbb{R}^{n}).
\end{split}
\end{equation}
Notice that $a_H$  is constant along tangential directions and that $(a_H\,A)^+$, $(a_H\,A)^-$  have coefficients respectively in $C^{0,\eta}(\overline{H}^c\cap \C_{r})$ and $C^{0,\eta}(\overline{H}\cap C_{r})$.
Furthermore,
\[
( h\,e_j+\partial_j (a_H A)\,\nabla{v})^+ \in C^{0,\eta}(\overline{H}^c\cap \C_{r})  \,  \text{ and } \, ( h\,e_j+\partial_j (a_H A)\,\nabla{v})^-\in C^{0,\eta}(\overline{H}\cap \C_{r}). 
\]
Hence, exploiting Lemma \ref{tiposchauder} we deduce 
\begin{equation} \label{moltaregolarita0}
\partial_j v^+ \in  C^{1,\eta}(\overline{H}\cap \C_{r})   \quad \text{and} \quad \partial_j v^-\in C^{1,\eta}(\overline{H^c}\cap \C_{r}) \qquad \forall j\neq n.
\end{equation}
Furthermore, by \eqref{eqdiu0} we have
\begin{equation*}
-\sum_{i,j=1}^{n}\{a_H\,A_{ij}\partial_{ij}v+\partial_{i}(a_H\,A_{ij})\partial_{j}v\}=h.
\end{equation*}
Thanks to the form of the matrix $A$ we obtain
\begin{equation} \label{Ann0}
-a_H\,A_{nn}\partial_{nn}v=\sum_{i,j\neq n}\{a_H\,A_{ij}\partial_{ij}v+\partial_{i}(a_H\,A_{ij})\,\partial_{j}v\}+h.
\end{equation}
Since the right hand side of the previous equation is H\"older continuous, we have
\[
\partial_{nn}v^+ \in  C^{0,\eta}(\overline{H}^c \cap \C_{r})  \quad \text{and} \quad  \partial_{nn}v^-\in C^{0,\eta}(\overline{H}\cap \C_{r}).
\]
Moreover, \eqref{moltaregolarita0} implies 
\[
\partial_{nj}v^+ \in  C^{0,\eta}(\overline{H}^c \cap \C_{r})  \quad \text{and} \quad  \partial_{nj} v^-\in C^{0,\eta}(\overline{H}\cap \C_{r})
\]
for every $j\neq n$.
Therefore,
\[
v^+ \in C^{2,\eta}(\overline{H}^c\cap \C_{r}) \quad \text{and} \quad v^- \in C^{2,\eta}(\overline{H}^c\cap \C_{r}).
\]
By 
Lemma \ref{tiposchauder} we deduce also that
\[
\Vert \nabla v^+\Vert_{C^{1,\eta}(\overline{H}\cap \C_{r})}\, \quad \text{ and } \quad \Vert \nabla v^-\Vert_{C^{1,\eta}(\overline{H^c}\cap \C_{r})}
\]
are bounded by a constant which depends on the H\"older norms of  \(\nabla h^+\), \(\nabla h^- \), the coefficients of \((a_H A)^+\) and \((a_H A)^-\). 

{\bf General $\mathbf{k}$}. 
As in the case $k=1$, we start by taking the derivatives of \eqref{eqdiu0} with respect to the tangential coordinates $j\neq n$.  We get an equation similar to \eqref{ae30}:
\begin{equation*}
-{\rm div} \big(a_H A\,\nabla{\partial_{i_1,i_2,\dots, i_k}v}\big) 
=\Div\left(\partial_{i_2,\dots, i_k}h\,e_{i_1}+\sum\partial_{j_1,j_2,\dots, j_l}(a_H A)\nabla\left(\partial_{\{i_1,i_2,\dots, i_k\}\backslash\{{j_1,j_2,\dots, j_l}\}}v\right)\right)
\end{equation*}
 in $\mathcal{D}'(\mathbb{R}^{n})$.
This gives us
\begin{equation}\label{e:tang regularity}
\partial_{i_1,i_2,\dots, i_k} v^+ \in  C^{1,\eta}(\overline{H}\cap \C_{r})   \quad \text{and} \quad \partial_{i_1,i_2,\dots, i_k} v^-\in C^{1,\eta}(\overline{H^c}\cap \C_{r})
\end{equation}
for all $i_1\neq n$, $i_2\neq n$, \dots, $i_k\neq n$.

By \eqref{e:tang regularity} 
\begin{equation*}
\partial_{i_1,i_2,\dots, i_k, n} v^+ \in  C^{0,\eta}(\overline{H}\cap \C_{r})   \quad \text{and} \quad \partial_{i_1,i_2,\dots, i_k, n} v^-\in C^{0,\eta}(\overline{H^c}\cap \C_{r})
\end{equation*}
for all $i_1\neq n$, $i_2\neq n$, \dots, $i_k\neq n$, and thus,
taking derivatives of \eqref{Ann0} in tangential directions,
we get
\begin{equation*}
\partial_{i_1,i_2,\dots, i_{k-1}, n, n} v^+ \in  C^{0,\eta}(\overline{H}\cap \C_{r})   \quad \text{and} \quad \partial_{i_1,i_2,\dots, i_{k-1}, n, n} v^-\in C^{0,\eta}(\overline{H^c}\cap \C_{r}).
\end{equation*}
Induction on the number of normal directions yields
\[
v^+ \in C^{k+1,\eta}(\overline{H}^c\cap \C_{r}) \quad \text{and} \quad v^- \in C^{k+1,\eta}(\overline{H}^c\cap \C_{r}).
\]

\end{proof}

\begin{theorem} [$C^{\infty}$-regularity] \label{Cinfinity cap}
Given \(n\ge 3\) and \(A>0\), there exists \(\varepsilon_{\textnormal{reg}}=\varepsilon_{\textnormal{reg}}(n, A)>0\)  such that if \(E\) is minimizer of\(~\eqref{e:problem}\) with \(Q+\beta+K+\frac{1}{K}\le A\), \(x_0\in\partial E\), and
\[
r+\e_E(x_0, r)+Q^2\,D_E(x_0, r)\le \varepsilon_{\textnormal{reg}},
\] 
then \(E\cap \C(x_0,r/2)\)  coincides with  the  epi-graph of a \(C^{\infty}\)-function $f$. In particular, we have that \(\partial E\cap \C(x_0,r/2)\) is a \(C^{\infty}\) \((n-1)\)-dimensional manifold. Moreover, for every $\vartheta\in(0,\frac12)$ there exists a constant $C(n,A,k,r,\vartheta)>0$ such that
\begin{equation} \label{eq infinity hn}
[f]_{C^{k,\vartheta}(\D(x_0',r/2))}\leq C(n,A,k,r,\vartheta)
\end{equation}
for every $k\in\mathbb{N}$.
\end{theorem}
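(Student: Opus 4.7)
The plan is to bootstrap the $C^{2,\vartheta}$-regularity from Theorem~\ref{C2 reg} by alternately using Lemma~\ref{lemmax2} (to transfer regularity of $f$ into regularity of $u^{\pm}$) and the Euler-Lagrange equation of Corollary~\ref{cor:EL on the boundary} (to transfer regularity of $u^{\pm}$ back into higher regularity of $f$). Fix any $\vartheta\in(0,1/2)$ and set $\vartheta':=\min(\eta,\vartheta)$, where $\eta=\eta(\vartheta)$ is the exponent from Theorem~\ref{ureg}. By Theorem~\ref{C2 reg} we may choose $\varepsilon_{\mathrm{reg}}$ so small that $f\in C^{2,\vartheta'}(\D(x_0',r/2))$ and, by Theorem~\ref{ureg}, $u^{+}\in C^{1,\vartheta'}(\overline{E}\cap \C(x_0,r/2))$, $u^{-}\in C^{1,\vartheta'}(\overline{E^{c}}\cap \C(x_0,r/2))$. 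This will serve as the base of the induction.

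The inductive statement is: for every integer $m\ge 2$, $f\in C^{m,\vartheta'}$ and $u^{\pm}\in C^{m-1,\vartheta'}$ on the respective sides of $\partial E$. Assume it holds for some $m\geq 2$. By Theorem~\ref{properties}(i) one has $\rho_{E}=(\G_{\beta,K}(E)-u_{E})/K$ on $E$ and $\rho_{E}\equiv 0$ on $E^{c}$, so
\[
\rho^{+}\in C^{m-1,\vartheta'}(\overline{E}\cap \C(x_0,r/2)), \qquad \rho^{-}\equiv 0.
\]
Since $-\Div(a_{E}\nabla u)=\rho$ and $f\in C^{m,\vartheta'}$, the hypotheses of Lemma~\ref{lemmax2} are satisfied with parameter $k=m-1$, yielding
\[
u^{+}\in C^{m,\vartheta'}(\overline{E}\cap \C(x_0,r/2)), \qquad u^{-}\in C^{m,\vartheta'}(\overline{E^{c}}\cap \C(x_0,r/2)).
\]
Inserting this into the definition of $G$ in Corollary~\ref{cor:EL on the boundary} and using that $f\in C^{m,\vartheta'}$ (so that composition with the graph preserves regularity), we conclude $G\in C^{m-1,\vartheta'}(\D(x_0',r/2))$.

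We now exploit the Euler-Lagrange equation. Writing the identity of Proposition~\ref{eulerl} in non-divergence form we obtain
\[
\sum_{i,j=1}^{n-1}\partial_{j}M_{i}(\nabla f)\,\partial_{ij}f = -G,
\qquad M(\xi)=\frac{\xi}{\sqrt{1+|\xi|^{2}}},
\]
which is a uniformly elliptic equation (the ellipticity bound was already computed in the proof of Corollary~\ref{cor:EL on the boundary}) whose coefficients $\partial_{j}M_{i}(\nabla f)$ belong to $C^{m-1,\vartheta'}$ because $f\in C^{m,\vartheta'}$. Classical Schauder estimates (see e.g.~\cite[Theorem~6.17]{GT} in the form applied to $\partial_k f$ and then integrated, or directly their higher-order version) then give $f\in C^{m+1,\vartheta'}(\D(x_0',r/2))$, closing the induction. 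Smoothness of $f$, hence of $\partial E\cap \C(x_0,r/2)$, follows by iterating over $m\in\mathbb{N}$.

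The quantitative bound \eqref{eq infinity hn} is obtained by tracking the constants through finitely many bootstrap steps: the estimate $[f]_{C^{2,\vartheta'}}\le C(n,A,\vartheta)$ from Theorem~\ref{C2 reg} together with the universal bound $\|Qu^{\pm}\|_{C^{1,\eta}}\le C(n,A)$ from Theorem~\ref{ureg} furnish the inductive input; at each step Lemma~\ref{lemmax2} produces a constant depending only on the previous H\"older norms, and so does the Schauder estimate applied to the linear equation above. The main conceptual obstacle is that $f$ and $u^{\pm}$ are coupled through both the transmission condition (the jump of $a_{E}\partial_{\nu}u$ across $\partial E$) and the quadratic nonlinearity in $G$; this is resolved precisely by separating the induction into the two alternating half-steps above, so that at each half-step one of the two objects is regarded as data while the other is upgraded, and the overall gain in regularity at every full step is exactly one derivative.
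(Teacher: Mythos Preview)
Your proposal is correct and follows essentially the same bootstrap as the paper: alternate between Lemma~\ref{lemmax2} (upgrading $u^{\pm}$ from the regularity of $f$ and $\rho$) and Schauder estimates on the Euler--Lagrange equation (upgrading $f$ from the regularity of $G$). The only cosmetic difference is that you pass to the non-divergence form $\sum_{i,j}\partial_{j}M_i(\nabla f)\,\partial_{ij}f=-G$ and apply interior Schauder directly, whereas the paper keeps the divergence form $-\Div(\nabla M(\nabla f)\,\nabla\partial_i f)=\partial_i G$ and applies Schauder to $\partial_i f$; both yield the same one-derivative gain per loop.
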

\begin{proof}
If we choose $\varepsilon_{\textnormal{reg}}$ as in Theorem \ref{C2 reg}, then there exists $f\in C^{2,\vartheta}(\D(x_0',r/2))$ such that
$$E\cap \C(x_0,r/2)=\left\{ x=(x',x_n)\in \D(x_{0}',r/2)\times\R\,:\,x_{n}<f(x') \right\}.$$
By Corollary \ref{cor:EL on the boundary} we have
\begin{equation}\label{eqboots}
-{\rm div}\left(\nabla M(\nabla f)\,\nabla\partial_i f\right)=\partial_i G \quad \text{a.e. on }\partial E\cap \C(x_0,r/2)
\end{equation}
for every $i=1,\dots,n$, with $\nabla M(\nabla f)$ uniformly elliptic
and H\"older continuous and $G$ - H\"older continuous.

Now we argue by induction on $k$. The induction step is divided into two parts:

\textbf{Claim 1:} 
\[
\begin{split}
f\text{ is }C^k\text{-H\"older continuous}\,\Longrightarrow u^+, u^-\text{ are }& C^k \text{-H\"older continuous respectively on } \\
& \overline{E}\cap \C(x_0,r/2) \text{ and } \overline{E^c}\cap \C(x_0,r/2).
\end{split}
\]
Moreover, there exists a universal constant $C=C(n,A)> 0$ and  $\eta\in(0,\frac12)$ such that 
\begin{equation}  \label{stimeuniformiQu}
\Vert Q\,u^+ \Vert_{C^{k,\eta}(\overline{E}\cap \C(x_0,r/2))}\leq C \;\text{ and }\; \Vert Q\,u^- \Vert_{C^{k,\eta}(\overline{E^c}\cap \C(x_0,r/2))}\leq C.
\end{equation}

\textbf{Claim 2:}
\[
f\text{ is }C^k\text{-H\"older continuous}\,\Longrightarrow f\text{ is }C^{k+1}\text{-H\"older continuous}.
\]

To proof Claim 1, we apply Lemma \eqref{lemmax2} to $v=Qu$ and $h=Q\rho$. By \eqref{darichiamare} the norms 
\[
\Vert Q\,\nabla u^+\Vert_{C^{0,\eta}(\overline{H}\cap \C_{r/2})}\, \quad \text{ and } \quad \Vert Q\,\nabla u^-\Vert_{C^{0,\eta}(\overline{H^c}\cap \C_{r/2})}
\]
and bounded by a universal constant. That gives us \eqref{stimeuniformiQu}.

As for Claim 2, notice that by the definition of $M$, 
since $f$ is $C^k$-H\"older continuous, we have that $\nabla M(\nabla f)$ in \eqref{eqboots} is $C^{k-1}$-H\"older continuous. 
By Claim 1 we deduce that $G$  is $C^{k-1}$-H\"older continuous with its norm uniformly bounded. 
Then, using Schauder estimates for \eqref{eqboots}, we get that $f$ is $C^{k+1}$-H\"older continuous.

\end{proof}
%
%
%
%
%

%
%


%
%
%
%
%
%
%
%
%
%
%
\section{Reduction to nearly spherical sets} \label{nearly spherical}
In this section, by combining Proposition \ref{Linfinity closed} with the higher regularity (Theorem \ref{C2 reg}), we prove that for small enough values of the total charge the minimizers are \emph{nearly-spherical sets}. 
Recall the following definition.
\begin{definition}[\(C^{2,\gamma}\)-nearly spherical set] \label{d:ns}
		  An open bounded set $\Omega\subset\mathbb{R}^n$ is called \emph{nearly-spherical}
			of class $C^{2,\gamma}$ parametrized by $\varphi$, if there exists
			$\varphi\in C^{2,\gamma}$ with $\Vert\varphi\Vert_{L^\infty}<\frac{1}{2}$ such that
			\begin{equation*}
				\partial\Omega=\{(1+\varphi(x))x: x\in\partial B_1\}.
			\end{equation*}	
		\end{definition}
\begin{theorem} \label{smooth closed}
Let $\{Q_h\}_{h\in\mathbb{N}}$ be a sequence such that $Q_h>0$ and $Q_h\to 0$ when $h\to\infty$. Let $\{E_h\}_{h\in\mathbb{N}}$ be a sequence of minimizers of (\(\mathcal{P}_{\beta,K,Q_h,R}\)). Then for $h$ big enough $E_h$ is nearly spherical of class $C^\infty$, i.e. there exists $\varphi_{h}\in C^{\infty}$ with uniform bounds and $\Vert\varphi_{h}\Vert_{L^\infty}<\frac{1}{2}$ such that
			\begin{equation*}
				\partial E_{h}=\{(1+\varphi_{h}(x))x: x\in\partial B_1\}.
			\end{equation*}	
Moreover, $\Vert\varphi_{h}\Vert_{ C^{k}}\to 0$ when $h\to\infty$, for every $k\in\mathbb{N}$.
\end{theorem}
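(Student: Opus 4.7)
The plan is to upgrade the Kuratowski convergence of Proposition~\ref{Linfinity closed} to smooth convergence by applying the quantitative $C^{\infty}$-regularity of Theorem~\ref{Cinfinity cap} at every boundary point of $E_h$, and then gluing the resulting local graphs into a radial parametrization over $\partial B_1$. The first and main step is to verify the smallness hypothesis of Theorem~\ref{Cinfinity cap} uniformly at points $x_h\in\partial E_h$ (Remark~\ref{remmin} only gives it at points of $\partial B_1$). Fix $r_0>0$ so small that $r_0+e_{B_1}(y,r_0)<\varepsilon_{\mathrm{reg}}/4$ for every $y\in\partial B_1$; this is possible because the excess of a smooth set is of order $r^2$. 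For an arbitrary $x_h\in\partial E_h$, Kuratowski convergence yields $y_h\in\partial B_1$ with $|x_h-y_h|\to 0$. Writing
\begin{equation*}
\int_{\partial^*E\cap B_{r_0}(x)}\frac{|\nu_E-\nu|^{2}}{2}\,d\mathcal H^{n-1}=P(E,B_{r_0}(x))-\int_{\partial^*E\cap B_{r_0}(x)}\nu_E\cdot\nu\,d\mathcal H^{n-1},
\end{equation*}
the $L^{1}$-convergence $E_h\to B_1$, the convergence of perimeters $P(E_h)\to P(B_1)$ from Proposition~\ref{L1 closed}, and a finite covering of the compact set $\partial B_1$ give
\begin{equation*}
\sup_{x_h\in\partial E_h}e_{E_h}(x_h,r_0)\leq \varepsilon_{\mathrm{reg}}/2 \qquad\text{for }h\text{ large.}
\end{equation*}
Combined with $Q_h^{2}\int_{\mathbb R^n}|\nabla u_h|^{2}\to 0$ (which follows from the compactness part of Theorem~\ref{properties}(iii) and $Q_h\to 0$), this shows that the full hypothesis of Theorem~\ref{Cinfinity cap} is satisfied at every $x_h\in\partial E_h$ for $h$ sufficiently large.

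Theorem~\ref{Cinfinity cap} then supplies, around every $x_h\in\partial E_h$, a local $C^{\infty}$-graph representation in a cylinder of fixed size $r_0/2$, with uniform estimates $[f_{x_h,h}]_{C^{k,\vartheta}}\leq C(n,A,k,\vartheta,r_0)$ for every $k\geq 2$. Since the optimal direction in the excess is, by the previous step, within $o(1)$ of the radial direction $y_h/|y_h|$, the tangent plane to $\partial E_h$ at $x_h$ is uniformly transverse to the ray through $x_h$. A standard implicit function theorem / change-of-variables argument, together with the compactness of $\partial B_1$, then produces $\varphi_h\in C^{\infty}(\partial B_1)$ with
\begin{equation*}
\partial E_h=\{(1+\varphi_h(x))x:x\in\partial B_1\},\qquad \|\varphi_h\|_{L^\infty}\to 0,
\end{equation*}
and uniform bounds $\|\varphi_h\|_{C^{k,\vartheta}(\partial B_1)}\leq C(n,A,k,\vartheta)$; the $L^{\infty}$-smallness is an immediate consequence of Kuratowski convergence.

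Finally, the H\"older interpolation inequality provides, for each $k\in\mathbb N$, some $\theta=\theta(k)\in(0,1)$ such that
\begin{equation*}
\|\varphi_h\|_{C^{k}}\leq C\,\|\varphi_h\|_{C^{k+1,\vartheta}}^{\theta}\,\|\varphi_h\|_{L^{\infty}}^{1-\theta}\longrightarrow 0,
\end{equation*}
which is the last claim. The main obstacle in this scheme is the first step: the smallness of the excess must be established at points of $\partial E_h$ rather than at points of the limit sphere (where it is trivial by Remark~\ref{remmin}), and only the combination of Kuratowski convergence, $L^{1}$-convergence and convergence of perimeters, together with a uniform finite-covering argument, allows this upgrade. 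Once this is in place, the local-to-global radial reparametrization and the concluding interpolation are routine.
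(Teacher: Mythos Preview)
Your proposal is correct and follows the same overall strategy as the paper: verify the smallness hypothesis of Theorem~\ref{Cinfinity cap} uniformly along the sequence, apply it to obtain local $C^{\infty}$ graphs with uniform $C^{k,\vartheta}$ bounds, reparametrize radially over $\partial B_1$, and deduce $C^k$ convergence to zero.

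The differences are in implementation. First, the paper centers its cylinders at points $\bar x\in\partial B_1$ and invokes Remark~\ref{remmin} for the excess smallness, whereas you center at points $x_h\in\partial E_h$ and argue directly via the identity $\tfrac12|\nu_E-\nu|^2=1-\nu_E\cdot\nu$, $L^1$-convergence, convergence of perimeters, and a finite cover of $\partial B_1$; your version is more faithful to the hypothesis $x_0\in\partial E$ in Theorem~\ref{Cinfinity cap}, while the paper's version implicitly relies on shifting the center to a nearby point of $\partial E_h$. Second, for the final step the paper uses Ascoli--Arzel\`a (uniform $C^{k+1}$ bounds plus Kuratowski convergence force $\varphi_h\to 0$ in $C^k$), while you use the H\"older interpolation inequality between $L^\infty$ and $C^{k+1,\vartheta}$; both are standard and equivalent here.
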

\begin{proof} Fix a point $\bar{x}\in\partial B_1$. By Remark \ref{remmin} there exists $\bar{r}>0$ and a smooth function $g$ such that
\begin{equation} \label{gr1}
\partial B_1\cap \textbf{C}\left(\bar{x},r,\nu_{B_1}(\bar{x})\right)=\partial \left\{x\in\R^n\,:\,\textbf{q}^{\nu_{B_1}(\bar{x})}(x-\bar{x})<g(\textbf{p}^{\nu_{B_1}(\bar{x})}(x-\bar{x}))\right\}\cap \textbf{C}\left(\bar{x},r,\nu_{B_1}(\bar{x})\right)
\end{equation}
for every $0<r\leq\bar{r}$. Furthermore, there exist $r_0\leq\bar{r}$ small enough and $f_h\in C^{\infty}\left(\textbf{D}\left(\bar{x},r,\nu_{B_1}(\bar{x})\right)\right)$ such that 
\begin{equation} \label{gr2}
\partial E_h\cap \textbf{C}\left(\bar{x},r,\nu_{B_1}(\bar{x})\right)=\partial \left\{x\in\R^n\,:\,\textbf{q}^{\nu_{B_1}(\bar{x})}(x-\bar{x})<f_h(\textbf{p}^{\nu_{B_1}(\bar{x})}(x-\bar{x}))\right\}\cap \textbf{C}\left(\bar{x},r,\nu_{B_1}(\bar{x})\right)
\end{equation}
for every $h$ big enough and $r\leq r_0$.
Define $\varphi^{\bar{x}}_h(x):=f_h(g^{-1}(x))$ for every $x\in\partial B_1$.
Then $\{\varphi^{\bar{x}}_{h}\}_{h\in\mathbb{N}}$ is a family of $C^{\infty}$ functions with $\Vert\varphi^{\bar{x}}_{h}\Vert_{C^k}$ uniformly bounded (by Theorem \ref{smooth reg apx}) such that 
\begin{equation*}
				\partial E_h\cap \textbf{C}\left(\bar{x},r,\nu_{B_1}(\bar{x})\right)=\{(1+\varphi_{h}^{\bar{x}}(x))x: x\in\partial B_1\}.
			\end{equation*}	
Hence, by a covering argument we obtain a family $\{\varphi_{h}\}_{h\in\mathbb{N}}$ of $C^{\infty}$ functions with $\Vert\varphi_{h}\Vert_{C^k}$ uniformly bounded such that 
\begin{equation*}
				\partial E_h=\{(1+\varphi_{h}(x))x: x\in\partial B_1\}.
			\end{equation*}	
By Ascoli-Arzel$\grave{{\rm a}}$ and the convergence of $\partial E_{h}$ to $\partial B_{1}$ in the sense of Kuratowski we obtain that $\varphi_{h}\to 0$ in $C^{k-1}(\partial B_{1})$ for every $k\in\mathbb{N}$.
\end{proof}
	\section{Theorem \ref{thm:main} for nearly spherical sets} \label{s:thm for nearly spherical}

	To prove Theorem \ref{thm:main} for nearly spherical sets
	we are going to write Taylor expansion for the energy.
	We only need to deal with the repulsive term $\G$, as the expansion for perimeter is well-known.	
	To this end, we need to compute shape derivatives of the energy
	$\G$ near the ball and get a bound on the second derivative. For the convenience of the reader
	we make these calculations later in Section \ref{fuglede} as they are rather technical.
	
	In this section, we first replace our problem with an equivalent one
	and write Euler-Lagrange equations for it. We do it to facilitate
	the computations of Section \ref{fuglede}.
	Then we conclude the proof of Theorem \ref{thm:main} for
	nearly spherical sets given Taylor expansion. 
	Thanks to the quantitative isoperimetric inequality
	for nearly-spherical sets,
	we see that we can be crude in the bounds of Section \ref{fuglede} as we have a small parameter in front of the 
	disaggregating term.
	
	\subsection{Changing minimization problem}

	For a fixed domain $E$ we are solving the following minimization problem.
	\begin{equation*}
		\G(E)=\inf_{\substack{u\in H^1(\mathbb{R}^n) \\ \rho\1_{E^c}=0}}\left\{\frac{1}{2}\int_{\mathbb{R}^n}\left(a_E\vert\nabla u\vert^2+K\rho^2\right)dx: -\div(a_E\nabla u)=\rho, \int_{\mathbb{R}^n}\rho dx=1\right\}.
	\end{equation*}
	
	We want to get rid of the constraints and make it a minimization problem over single
functions rather than over pairs. More precisely, we prove the following lemma.

	\begin{lemma}\label{l:chngprb}
	For any $E\subset\mathbb{R}^n$ the energy $\G$ can be represented in the following way:
	\begin{equation}
		\G(E)=\frac{K}{2\vert E\vert}-\inf_{\psi\in H^1(\mathbb{R}^n)}\left(\frac{1}{2}\int_{\mathbb{R}^n}{a_E\vert\nabla \psi\vert^2}dx+\frac{1}{\vert E\vert}\int_{E}\psi dx-\frac{1}{2\vert E\vert K}\left(\int_{E}\psi dx\right)^2+\frac{1}{2K}\int_{E}\psi^2 dx\right).
	\end{equation}
	
	\end{lemma}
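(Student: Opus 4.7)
The plan is to identify the minimizer of the right-hand side functional with (minus) the electrostatic potential of the optimal pair for $\G(E)$, then read off the value. Denote the right-hand side functional by
\[
F(\psi)=\frac{1}{2}\int_{\R^n}a_E|\nabla\psi|^2\,dx+\frac{1}{|E|}\int_E\psi\,dx-\frac{1}{2|E|K}\left(\int_E\psi\,dx\right)^2+\frac{1}{2K}\int_E\psi^2\,dx,
\]
and split $F=Q+L$, where $L(\psi):=\frac{1}{|E|}\int_E\psi$ is linear and $Q$ collects the three quadratic terms. Cauchy-Schwarz gives $(\int_E\psi)^2\le|E|\int_E\psi^2$, so $Q\ge 0$, and combined with the Dirichlet piece this makes $F$ strictly convex and coercive on $H^1(\R^n)$. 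Hence $F$ admits a unique minimizer $\psi^*$ characterised by the Euler-Lagrange equation
\[
\int_{\R^n}a_E\nabla\psi^*\cdot\nabla\varphi\,dx+\int_E\left(\frac{1}{|E|}-\frac{\int_E\psi^*}{|E|K}+\frac{\psi^*}{K}\right)\varphi\,dx=0,\qquad \forall\varphi\in H^1(\R^n).
\]

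The key step is to recognise this EL equation as the system characterising the minimizer of $\G(E)$. Set $u:=-\psi^*$ and
\[
\rho:=\1_E\left(\frac{1}{|E|}-\frac{u}{K}+\frac{\int_E u}{|E|K}\right).
\]
Then the EL equation above becomes $-\div(a_E\nabla u)=\rho$ in $\mathcal{D}'(\R^n)$. A direct inspection yields $\rho\,\1_{E^c}=0$ and $\int\rho=1$, so $(u,\rho)\in\A(E)$; moreover $u+K\rho=\frac{K+\int_E u}{|E|}$ is constant on $E$. Since the original problem is convex, this Euler-Lagrange condition is sufficient for minimality, and by the uniqueness statement in Theorem \ref{properties}(i), $(u,\rho)$ must be the unique minimizer of $\G(E)$. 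Testing the PDE against $u$ and using $\int\rho=1$ together with $u+K\rho=\text{const}$ on $E$ yields $\int a_E|\nabla u|^2+K\int_E\rho^2=\frac{K+\int_E u}{|E|}$; taking into account the factor $\frac{1}{2}$ in the present definition of $\G$, this rewrites as $\int_E u=2\G(E)|E|-K$.

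It remains to compute $F(\psi^*)$. Testing the Euler-Lagrange equation against $\varphi=\psi^*$ gives $2Q(\psi^*)+L(\psi^*)=0$, so
\[
F(\psi^*)=Q(\psi^*)+L(\psi^*)=\tfrac{1}{2}L(\psi^*)=\frac{1}{2|E|}\int_E\psi^*=-\frac{1}{2|E|}\int_E u=\frac{K}{2|E|}-\G(E),
\]
which is exactly the claimed identity. The only nontrivial ingredient is the uniqueness of the Euler-Lagrange pair from Theorem \ref{properties}(i) (which is needed to identify $(u,\rho)$ with the $\G$-minimizer); everything else is convex duality and direct computation.
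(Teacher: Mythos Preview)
Your proof is correct but follows a different route from the paper. The paper derives the formula by Lagrangian duality: it introduces $\psi$ as a multiplier enforcing the constraint $-\div(a_E\nabla u)=\rho$, writes $\G(E)=\inf_{u,\rho}\sup_\psi\{\ldots\}$, swaps $\inf$ and $\sup$ via Sion's minimax theorem, and then computes the inner infima over $u$ and over $\rho$ explicitly. This approach explains \emph{where} the functional $F$ comes from and is entirely self-contained. Your argument instead starts from $F$, finds its minimizer via the Euler--Lagrange equation, and recognises that minimizer as (minus) the optimal potential by matching with the characterisation $u_E+K\rho_E=\text{const}$ from Theorem~\ref{properties}(i). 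This is shorter and more elementary (no minimax), but it relies on already knowing the optimality condition for $\G$, and one has to guess the right functional $F$ rather than derive it. Both approaches ultimately identify $\psi^*=-u_E$; the paper gets there by duality, you get there by comparing first-order conditions.
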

	\begin{proof}
	We use an ``infinite dimension Lagrange multiplier":
	\begin{equation*}
	\begin{aligned}
		\G(E)&=\inf_{\substack{u\in H^1(\mathbb{R}^n) \\ \rho\1_{E^c}=0}}\Big\{\frac{1}{2}\int_{\mathbb{R}^n}a_E\vert\nabla u\vert^2 dx+\frac{1}{2}\int_{E}K\rho^2 dx\\
		&+\sup_{\psi\in H^1(\mathbb{R}^n)}\left[\int_{\mathbb{R}^n}(a_E\nabla u\cdot\nabla\psi-\rho\psi)dx\right]: \int_{E}\rho dx=1\Big\}\\
		&=\inf_{\substack{u\in H^1(\mathbb{R}^n) \\ \rho\1_{E^c}=0}}\sup_{\psi\in H^1(\mathbb{R}^n)}\left\{\frac{1}{2}\int_{\mathbb{R}^n}a_E(\vert\nabla u\vert^2+2\nabla u\cdot\nabla\psi) dx+\frac{1}{2}\int_{E}(K\rho^2-2\rho\psi)dx: \int_{E}\rho dx=1\right\}.
	\end{aligned}	
	\end{equation*}
	
	 The convexity of the problem allows us to use Sion minimax theorem (\cite[Corollary 3.3]{Sion}) and interchange the infimum and the supremum:
	\begin{equation*}
	\begin{aligned}
		\G(E)&=\sup_{\psi\in H^1(\mathbb{R}^n)}\inf_{\substack{u\in H^1(\mathbb{R}^n) \\ \rho\1_{E^c}=0}}\left\{\frac{1}{2}\int_{\mathbb{R}^n}a_E(\vert\nabla u\vert^2+2\nabla u\cdot\nabla\psi) dx+\frac{1}{2}\int_{E}(K\rho^2-2\rho\psi)dx: \int_{E}\rho dx=1\right\}\\
		&=\sup_{\psi\in H^1(\mathbb{R}^n)}\left\{\inf_{u\in H^1(\mathbb{R}^n)}\frac{1}{2}\int_{\mathbb{R}^n}a_E(\vert\nabla u\vert^2+2\nabla u\cdot\nabla\psi) dx+\inf_{\substack{\\ \rho\1_{E^c}=0 \\ \int_{E}\rho dx=1}}\frac{1}{2}\int_{E}(K\rho^2-2\rho\psi)dx \right\}.
	\end{aligned}	
	\end{equation*}
	
	We denote the infimums inside by I and II, that is
	\begin{equation*}
	\begin{aligned}
		\mathrm{I}&:=\inf_{u\in H^1(\mathbb{R}^n)}\left\{\frac{1}{2}\int_{\mathbb{R}^n}a_E(\vert\nabla u\vert^2+2\nabla u\cdot\nabla\psi)dx\right\};\\
		\mathrm{II}&:=\inf_\rho\left\{\frac{1}{2}\int_{E}(K\rho^2-2\rho\psi)dx: \int_{E}\rho dx=1\right\}.
	\end{aligned}	
	\end{equation*}
	 
	We want to compute both I and II in terms of $\psi$.
	
	For I it is immediate. Since $a_E$ is positive we get that
	\begin{equation*}
	\begin{aligned}
		\mathrm{I}&=\inf_{u\in H^1(\mathbb{R}^n)}\left\{\frac{1}{2}\int_{\mathbb{R}^n}a_E(\vert\nabla u\vert^2+2\nabla u\cdot\nabla\psi)dx\right\}\\
		&=\inf_{u\in H^1(\mathbb{R}^n)}\left\{\frac{1}{2}\int_{\mathbb{R}^n}a_E\left(\vert\nabla u+\nabla\psi\vert^2-\vert\nabla \psi\vert^2\right)dx\right\}\\
		&=-\frac{1}{2}\int_{\mathbb{R}^n}{a_E\vert\nabla \psi\vert^2}dx.
	\end{aligned}	
	\end{equation*}
	
	We note that the corresponding minimizing $u$ equals to $-\psi$.
	 

	To compute II, note that
	\begin{equation*}
	\begin{aligned}
		\mathrm{II}&=\inf_\rho\left\{\frac{1}{2}\int_{E}(K\rho^2-2\rho\psi)dx: \int_{E}\rho dx=1\right\}\\
		&=\inf_\rho\left\{\frac{1}{2}\int_{E}\left(\sqrt{K}\rho-\frac{\psi}{\sqrt{K}}\right)^2dx: \int_{E}\rho dx=1\right\}-\frac{1}{2K}\int_{E}\psi^2 dx\\
		&=\frac{K}{2}\inf_f\left\{\int_{E}\left(f-\left(\frac{\psi}{K}-\frac{1}{\vert E\vert}\right)\right)^2dx: \int_{E}f dx=0\right\}-\frac{1}{2K}\int_{E}\psi^2 dx.
	\end{aligned}	
	\end{equation*}	
	Then the minimizing function $f^*$ is the projection in $L^2(E)$ of a function
	$\left(\frac{\psi}{K}-\frac{1}{\vert E\vert}\right)$ onto the linear
	space $\left\{f: \int_{E}f dx=0\right\}$. Thus, $f^*=\left(\frac{\psi}{K}-\frac{1}{\vert E\vert}\right)-c$, where $c$ is the constant such that
	$\int_E f^*=0$, i.e. $c=\frac{\int_E\left(\frac{\psi}{K}-\frac{1}{\vert E\vert}\right)}{\vert E\vert}$. 	
	The corresponding minimizing $\rho$ equals to $\1_E\frac{1}{K}\left(\psi+\frac{\left(1-\frac{1}{K}\int_{E}\psi dx\right)K}{{\vert E\vert}}\right)$.
	
	Bringing it all together,
	\begin{equation}\label{e:newminpb}
	\begin{aligned}
		\G(E)&=\frac{K}{2\vert E\vert}+\sup_{\psi\in H^1(\mathbb{R}^n)}\left(-\frac{1}{2}\int_{\mathbb{R}^n}{a_E\vert\nabla \psi\vert^2}dx-\frac{1}{\vert E\vert}\int_{E}\psi dx+\frac{1}{2\vert E\vert K}\left(\int_{E}\psi dx\right)^2-\frac{1}{2K}\int_{E}\psi^2 dx\right)\\
		&=\frac{K}{2\vert E\vert}-\inf_{\psi\in H^1(\mathbb{R}^n)}\left(\frac{1}{2}\int_{\mathbb{R}^n}{a_E\vert\nabla \psi\vert^2}dx+\frac{1}{\vert E\vert}\int_{E}\psi dx-\frac{1}{2\vert E\vert K}\left(\int_{E}\psi dx\right)^2+\frac{1}{2K}\int_{E}\psi^2 dx\right).
	\end{aligned}	
	\end{equation}
	\end{proof}

	\subsection{Euler-Lagrange}
	
	We now consider the following minimization problem:
	\begin{equation}\label{e:new min problem} 
		\J(E)=\inf_{\psi\in H^1(\mathbb{R}^N)}\left(\frac{1}{2}\int_{\mathbb{R}^n}{a_E\vert\nabla \psi\vert^2}dx+\frac{1}{\vert E\vert}\int_{E}\psi dx-\frac{1}{2\vert E\vert K}\left(\int_{E}\psi dx\right)^2+\frac{1}{2K}\int_{E}\psi^2 dx\right).
	\end{equation}
	\begin{remark}
	Note that $\J(E)\leq 0$. By Lemma \ref{l:chngprb}
	$$\G(E)=\frac{K}{2\vert E\vert}-\J(E).$$
	By the inequality (2.1) in \cite{DPHV},
	$\G(E)\leq C(n,K,\beta,\vert E\vert)$.
	This implies that
	\begin{equation}\label{e:bound on energy}
		\vert\J(E)\vert\leq C(n,K,\beta,\vert E\vert).
	\end{equation}
		
	\end{remark}
	
	A minimizer for this problem exists, and it is
	unique by convexity.
	Indeed, to see coercivity of the functional note that
	\begin{equation*}
	-\frac{1}{2\vert E\vert K}\left(\int_{E}\psi dx\right)^2+\frac{1}{2K}\int_{E}\psi^2 dx\geq 0
	\end{equation*}
	by Jensen inequality. As for convexity, we use that
	\begin{equation*}
	-\frac{1}{2\vert E\vert K}\left(\int_{E}\psi dx\right)^2+\frac{1}{2K}\int_{E}\psi^2 dx
	=\frac{1}{2K}\int_{E}{\left(\psi-\fint_{E}\psi\right)^2}.
	\end{equation*}	
	Note that the minimizers in the definitions of $\J$ and $\G$ coincide since the set is fixed. 
	We denote the minimizer by $\psi_E$. We would also need the interior and
	exterior restrictions of the function $\psi_E$, i.e.
	$$\psi_E^+:=\psi_E|_{E},\quad\psi_E^{-}:=\psi_E|_{E^c}.$$ 
	
	\begin{proposition}
	The following identities hold for $\psi_E$:
	\begin{enumerate}[(i)]
	\item (Euler-Lagrange equation, integral form)
	for any $\Psi\in D^1(\mathbb{R}^n)$
	\begin{equation} \label{e:el}
	\begin{aligned}
		&\int_{\mathbb{R}^n}a_E\nabla\psi_E\cdot\nabla\Psi dx +\frac{1}{K}\int_E\psi_E\Psi dx+\frac{1}{\vert E\vert}\left(\int_E{\Psi}dx\right)\left(1-\frac{1}{K}\int_E\psi_E dx\right)\\
		&=\int_{\mathbb{R}^n}\Psi \left(\frac{\1_E\psi_E}{K} - \div(a_E\nabla\psi_E)\right)dx
		+\int_{\partial E}{\left(\beta\nabla\psi^+_E-\nabla\psi^{1}_E\right)\cdot\nu}\Psi\,d\mathcal{H}^{n-1}\\
		&+\frac{1}{\vert E\vert}\left(\int_E{\Psi}dx\right)\left(1-\frac{1}{K}\int_E\psi_E dx\right)=0.
	\end{aligned}	
	\end{equation}
	\item (Euler-Lagrange equation)
		\begin{equation} \label{e:el diff form}
		\begin{cases}
			-\beta\Delta\psi_E=-\frac{1}{K}\psi_E+\frac{2}{K}\J(E)-\frac{1}{\vert E\vert}\text{ in }E,\\
			\Delta\psi_E=0\text{ in }E^c,\\
			\psi_E^+=\psi_E^{-}\text{ on }\partial E,\\
			\beta\nabla\psi^{+}_E\cdot\nu=\nabla\psi^{-}_E\cdot\nu\text{ on }\partial E.
		\end{cases}
		\end{equation}
	\item 	
		\begin{equation} \label{otherformenergy}
			\J(E)=\frac{1}{2\vert E\vert}\int_E{\psi_E}dx.
		\end{equation}	
	\item There exists a constant 
	$C=C(n,K,\beta,\vert E\vert)$ such that
		\begin{equation} \label{e:bound in D^1}
			\int_{\mathbb{R}^n}{a_E\vert\nabla\psi_E\vert^2}dx\leq C.		
		\end{equation}
	\end{enumerate}
	\end{proposition}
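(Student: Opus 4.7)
The plan is to treat the four parts as an interlocking block: (i) comes from a direct first-variation calculation, (iii) from testing (i) against $\psi_E$ itself, (ii) from localising (i) and then substituting (iii) in the inhomogeneity, and (iv) from plugging (iii) back into the definition of $\J$ and invoking the a priori bound from \eqref{e:bound on energy}.

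For (i), the functional $\J$ defined in \eqref{e:new min problem} is strictly convex and coercive on $D^1(\R^n)$, so $\psi_E$ is characterized by the vanishing of its Gateaux derivative. Differentiating each of the four summands with respect to $\psi$ at $\psi_E$ in a direction $\Psi \in D^1(\R^n)$ yields
\[
\int_{\R^n} a_E \nabla \psi_E \cdot \nabla \Psi\,dx + \frac{1}{K}\int_E \psi_E \Psi\,dx + \frac{1}{|E|}\Bigl(\int_E \Psi\,dx\Bigr)\Bigl(1-\frac{1}{K}\int_E\psi_E\,dx\Bigr) = 0,
\]
which is the first equality of \eqref{e:el}. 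The second equality is obtained by splitting the Dirichlet term over $E$ and $E^c$ and integrating by parts on each side: the interior contributions combine into $-\int \Psi \div(a_E\nabla\psi_E)\,dx$ (interpreted on the smooth parts), while the boundary pieces collect, with a careful sign count using the outer normal to $E$, into the transmission integral $\int_{\partial E}(\beta\nabla\psi_E^+-\nabla\psi_E^-)\cdot\nu\,\Psi\,d\Hf^{n-1}$. The interior regularity of $\psi_E^{\pm}$ needed to justify this integration by parts is a byproduct of the localized PDEs in (ii) by standard elliptic theory, so the argument is not circular.

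For (iii) I would test \eqref{e:el} against $\Psi = \psi_E$, obtaining
\[
\int a_E |\nabla\psi_E|^2\,dx + \frac{1}{K}\int_E \psi_E^2\,dx + \frac{1}{|E|}\int_E \psi_E\,dx - \frac{1}{K|E|}\Bigl(\int_E \psi_E\,dx\Bigr)^2 = 0,
\]
and use this to eliminate the Dirichlet integral from the definition of $\J(E)$; the squared $\psi_E$ terms cancel in pairs and the linear terms collapse, producing $\J(E) = \frac{1}{2|E|}\int_E \psi_E\,dx$. For (ii) I would localize \eqref{e:el}: taking $\Psi \in C_c^\infty(E^c)$ gives $\Delta \psi_E = 0$ in $E^c$, taking $\Psi \in C_c^\infty(E)$ gives $-\beta\Delta\psi_E + \frac{1}{K}\psi_E = -\frac{1}{|E|}\bigl(1-\frac{1}{K}\int_E\psi_E\,dx\bigr)$ in $E$, and allowing $\Psi$ to straddle $\partial E$ forces the conormal jump $\beta\nabla\psi_E^+\cdot\nu = \nabla\psi_E^-\cdot\nu$; continuity $\psi_E^+=\psi_E^-$ across $\partial E$ is automatic from $\psi_E\in H^1(\R^n)$. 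Substituting $\frac{1}{K|E|}\int_E\psi_E\,dx = \frac{2}{K}\J(E)$ from (iii) rewrites the interior inhomogeneity as $\frac{2}{K}\J(E)-\frac{1}{|E|}$, matching \eqref{e:el diff form}.

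For (iv), combining (iii) with the identity obtained by testing (i) against $\psi_E$ and with the definition of $\J(E)$ yields
\[
\tfrac12\int a_E|\nabla\psi_E|^2\,dx + \tfrac{1}{2K}\int_E\psi_E^2\,dx = \tfrac{2|E|}{K}\J(E)^2 - \J(E),
\]
in which both terms on the left are non-negative. The a priori bound $|\J(E)|\le C(n,K,\beta,|E|)$ recorded in \eqref{e:bound on energy} then immediately produces the claimed estimate. No step of the argument is genuinely hard; the only point that needs attention is ordering, since (ii) as stated relies on (iii), so the proof should proceed in the order (i) $\to$ (iii) $\to$ (ii) $\to$ (iv) rather than the order in the statement.
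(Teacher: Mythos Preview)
Your proposal is correct and matches the paper's approach: the paper, being terse, writes out only (iii) and (iv), taking (i) as the obvious first variation and (ii) as its standard localisation, exactly as you describe. The only minor difference is in (iv): where you substitute (iii) into the tested identity to obtain the exact relation $\tfrac12\int a_E|\nabla\psi_E|^2 + \tfrac{1}{2K}\int_E\psi_E^2 = \tfrac{2|E|}{K}\J(E)^2 - \J(E)$, the paper instead uses Jensen on the $\psi_E^2$ terms to get the cleaner inequality $\int a_E|\nabla\psi_E|^2 \le -\tfrac{1}{|E|}\int_E\psi_E = -2\J(E)$ before invoking \eqref{e:bound on energy}; both routes give the bound immediately.
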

	\begin{proof}
		To prove (\ref{otherformenergy}) we use $\psi_E$ as a test function
		in \eqref{e:el}.
		
		To see (\ref{e:bound in D^1}), we use $\psi_E$ as a test function in \eqref{e:el} and Cauchy-Schwarz inequality to get
		\begin{equation*}
			\int_{\mathbb{R}^n}{a_E\vert\nabla\psi_E\vert^2}dx\leq
			-\frac{1}{\vert E\vert}\left(\int_E\psi_Edx\right).
		\end{equation*}
		Now we apply (\ref{otherformenergy}) and (\ref{e:bound on energy}) to obtain
		\begin{equation*}
			\int_{\mathbb{R}^n}{a_E\vert\nabla\psi_E\vert^2}dx\leq
			-2\J(E)\leq 2C(n,K,\beta,\vert E\vert).
		\end{equation*}		
	\end{proof}	
	
	\begin{proposition}\label{p:radiality of min fct}
	Let $\psi_0$ be the minimizer for $\J(B_1)$.
	Then $\psi_0$ is radial. 
	\end{proposition}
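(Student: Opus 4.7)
The plan is to exploit the rotational symmetry of both the domain $B_1$ and the functional $\J$, together with the uniqueness of the minimizer already established right after \eqref{e:new min problem}. Concretely, for an arbitrary rotation $R\in O(n)$ I would set $\tilde\psi_0(x):=\psi_0(Rx)$; since $\tilde\psi_0$ is again in $H^1(\R^n)$, it is an admissible competitor.

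The next step is to verify, term by term, that the energy is invariant under composition with $R$. The function $a_{B_1}$ is rotation-invariant, $|\nabla\tilde\psi_0(x)|=|\nabla\psi_0(Rx)|$ because $R$ is orthogonal, and Lebesgue measure is preserved by $R$, so the change of variables $y=Rx$ gives
\begin{equation*}
\int_{\R^n}a_{B_1}|\nabla\tilde\psi_0|^2\,dx = \int_{\R^n}a_{B_1}|\nabla\psi_0|^2\,dy.
\end{equation*}
The same substitution yields $\int_{B_1}\tilde\psi_0\,dx=\int_{B_1}\psi_0\,dy$ and $\int_{B_1}\tilde\psi_0^{\,2}\,dx=\int_{B_1}\psi_0^{2}\,dy$. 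Hence $\J(B_1)$ evaluated at $\tilde\psi_0$ agrees with its value at $\psi_0$, so $\tilde\psi_0$ is also a minimizer. By uniqueness, $\tilde\psi_0=\psi_0$ almost everywhere, and since $R\in O(n)$ was arbitrary, $\psi_0$ is radial.

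There is nothing delicate in this argument: the only ingredients are the rotational symmetry of $B_1$, the orthogonality of $R$, and uniqueness of the minimizer. The latter is itself straightforward, since after rewriting the two $K$-dependent terms as the variance $\frac{1}{2K}\int_{B_1}(\psi-\fint_{B_1}\psi)^2\,dx$ the functional becomes the sum of a nonnegative quadratic form and a continuous linear term, and the quadratic form has trivial kernel on $H^1(\R^n)$ because no nonzero constant belongs to that space (this is where the standing assumption $n\ge 3$ is essentially used, via Sobolev embedding in $D^1(\R^n)$). Thus strict convexity provides the uniqueness we invoke, and no further work is needed.
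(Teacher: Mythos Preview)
Your argument is correct and is essentially the same as the paper's: compose $\psi_0$ with an arbitrary rotation, observe that the rotated function is again a minimizer by rotational invariance of the functional, and conclude $\psi_0\circ R=\psi_0$ from uniqueness. The only difference is that you spell out the term-by-term invariance and recapitulate the strict-convexity argument for uniqueness, whereas the paper leaves these implicit.
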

	\begin{proof}
	Let $R:\mathbb{R}^n\rightarrow\mathbb{R}^n$ be any rotation. Since $R(B_1)=B_1$, $\psi_0\circ R$ is also a minimizer for $\J(B_1)$.
	But the minimizer is unique, so we got that $\psi_0\circ R=\psi_0$
	for any rotation $R$. This implies that $\psi_0$ is radial.
	
	\end{proof}
		
	\subsection{Proof of Theorem \ref{thm:main}}
	
		We will use the following notation.
		\begin{definition}
			For an open set $\Omega$, $x_\Omega$ denotes the barycenter of $\Omega$, namely $$x_\Omega=\frac{1}{\vert\Omega\vert}\int_{\Omega}{x}dx.$$
		\end{definition}	

		We want to prove that for $Q$ small enough the only minimizer
		of $\F(\Omega)=P(\Omega)+Q^2\G(\Omega)$ 
		for $\Omega$ nearly spherical is a ball.
		
		We will use the following theorem proved by Fuglede. 
		
		\begin{theorem} \label{t: fuglede for perimeter}(\cite[Theorem 1.2]{Fuglede89})
			There exists a constant $c=c(N)$ such that for any 
			$\Omega$ --- nearly spherical set parametrized by $\varphi$
			with $\vert\Omega\vert=\vert B_1\vert$, $x_\Omega=0$,
			the following inequality holds
			\begin{equation*}
				P(\Omega)-P(B_1)\geq 
				c\Vert \varphi\Vert^2_{H^1(\partial B_1)}.
			\end{equation*}
		\end{theorem}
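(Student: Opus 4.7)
The plan is to write the perimeter of $\Omega$ explicitly as an integral over $\partial B_1$ using the radial parametrization, Taylor-expand to second order in $\varphi$, and then exploit the volume and barycenter constraints together with a spherical harmonics decomposition to obtain coercivity. Concretely, parametrizing $\partial\Omega$ by $\omega\mapsto(1+\varphi(\omega))\omega$, the area element on $\partial\Omega$ pulled back to $\mathbb{S}^{n-1}$ becomes $(1+\varphi)^{n-2}\sqrt{(1+\varphi)^2+|\nabla_\tau\varphi|^2}$, where $\nabla_\tau$ is the tangential gradient. Similarly, $|\Omega|=\tfrac{1}{n}\int_{\mathbb{S}^{n-1}}(1+\varphi)^n\,d\mathcal{H}^{n-1}$ and the barycenter components are proportional to $\int_{\mathbb{S}^{n-1}}\omega_i(1+\varphi)^{n+1}\,d\mathcal{H}^{n-1}$.

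Next I would Taylor-expand these quantities in $\varphi$. To second order one finds
\begin{equation*}
P(\Omega)-P(B_1)=\int_{\mathbb{S}^{n-1}}\!\!\Bigl[(n-1)\varphi+\tbinom{n-1}{2}\varphi^2+\tfrac12|\nabla_\tau\varphi|^2\Bigr]\,d\mathcal{H}^{n-1}+R,
\end{equation*}
with a remainder $R$ controlled by $\|\varphi\|_{W^{1,\infty}}\|\varphi\|_{H^1}^2$. The volume constraint yields $\int_{\mathbb{S}^{n-1}}\varphi=-\tfrac{n-1}{2}\int_{\mathbb{S}^{n-1}}\varphi^2+O(\|\varphi\|_{L^\infty}\|\varphi\|_{L^2}^2)$, and the barycenter condition gives $\int_{\mathbb{S}^{n-1}}\omega_i\varphi=O(\|\varphi\|_{L^\infty}\|\varphi\|_{L^2}^2)$ for each $i$. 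Substituting the volume identity to eliminate the linear term collapses the expansion to
\begin{equation*}
P(\Omega)-P(B_1)=\tfrac12\int_{\mathbb{S}^{n-1}}\bigl(|\nabla_\tau\varphi|^2-(n-1)\varphi^2\bigr)\,d\mathcal{H}^{n-1}+R',
\end{equation*}
with the same type of higher-order remainder $R'$.

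The heart of the proof is then to establish coercivity of the quadratic form $Q(\varphi):=\int|\nabla_\tau\varphi|^2-(n-1)\int\varphi^2$ on the subspace determined by the constraints. Decomposing $\varphi=\sum_{k\ge 0}a_kY_k$ into spherical harmonics (with $Y_k$ an eigenfunction of the Laplace--Beltrami operator with eigenvalue $k(k+n-2)$) gives $Q(\varphi)=\sum_{k\ge 0}\bigl(k(k+n-2)-(n-1)\bigr)a_k^2$. The $k=0$ coefficient is $-(n-1)$, but the volume constraint forces $a_0=O(\|\varphi\|_{L^2}^2)$; the $k=1$ coefficient vanishes exactly, but since the degree-one spherical harmonics are the coordinate functions $\omega_i$, the barycenter constraint forces each $a_1$-mode to be $O(\|\varphi\|_{L^2}^2)$; and for $k\ge 2$ the coefficient is at least $n+1>0$. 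Therefore, modulo contributions of order $\|\varphi\|_{L^2}^4$ coming from the constraints, $Q(\varphi)\ge c\|\varphi\|_{H^1(\partial B_1)}^2$.

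The main obstacle is absorbing all the error terms. Both the cubic Taylor remainder $R'$ and the quartic corrections from the constraints must be dominated by the positive definite part. This requires a sufficiently strong quantitative smallness hypothesis on $\varphi$—in Fuglede's original argument it suffices to assume $\|\varphi\|_{W^{1,\infty}}$ small enough (which is part of the notion of ``nearly spherical'' in that paper); in the present setting such smallness is available via Theorem~\ref{smooth closed}. With this in hand the error terms are absorbed, producing the claimed bound $P(\Omega)-P(B_1)\ge c\|\varphi\|_{H^1(\partial B_1)}^2$.
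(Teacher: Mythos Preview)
The paper does not prove this theorem at all; it simply cites it as \cite[Theorem 1.2]{Fuglede89} and uses it as a black box. Your proposal is therefore not a comparison case but rather a correct sketch of Fuglede's original argument, which the paper omits. The computations you outline (the explicit area element, the second-order expansion leading to the quadratic form $\int|\nabla_\tau\varphi|^2-(n-1)\int\varphi^2$, and the spherical-harmonics analysis showing the $k=0$ and $k=1$ modes are killed by the volume and barycenter constraints while $k\ge 2$ gives a uniformly positive coefficient) are all accurate and constitute exactly the standard proof. One small point: you correctly note that absorbing the remainder requires $\|\varphi\|_{W^{1,\infty}}$ small, which is part of Fuglede's hypothesis and is guaranteed here by Definition~\ref{d:ns} together with the smallness coming from Theorem~\ref{smooth closed}.
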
		
		

		We will also need the following bound on the energy $\J$, see Section \ref{fuglede} for the proof.
		
		\begin{lemma} \label{taylor}
			Given $\vartheta\in(0,1]$, there exists $\delta=\delta(N,\vartheta)>0$
			and a bounded function $g$ such that for every nearly spherical set $E$
			parametrized by $\varphi$ with $\Vert\varphi\Vert_{C^{2,\vartheta}(\partial B_1)}<\delta$
			and $\vert E\vert=\vert B_1\vert$, we have
			\begin{equation*}
				\J(E)\geq \J(B_1)
				-g(\Vert\varphi\Vert_{C^{2,\vartheta}})\Vert\varphi\Vert^2_{H^1(\partial B_1)}.
			\end{equation*}
		\end{lemma}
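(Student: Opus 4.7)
The plan is to Taylor-expand $\J$ along a one-parameter family of deformations of $B_1$ and control both $f'(0)$ and $\sup_{s\in[0,1]}|f''(s)|$ by $\|\varphi\|^2_{H^1(\partial B_1)}$ (up to a constant depending on $\|\varphi\|_{C^{2,\vartheta}}$), then invoke the second-order Taylor formula.

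Fix a cutoff $\eta\in C^\infty_c((1/2,2))$ with $\eta\equiv 1$ near $r=1$, and set $X(y):=\eta(|y|)\varphi(y/|y|)\,y/|y|$, $\Phi_t:=\Id+tX$. For $\|\varphi\|_{C^{2,\vartheta}}$ small enough, each $\Phi_t$ is a diffeomorphism of $\R^n$, $\Phi_0(B_1)=B_1$, $\Phi_1(B_1)=E$, and $X\cdot\nu_{B_1}=\varphi$ on $\partial B_1$. The volume constraint $|E|=|B_1|$ (expanded to second order in $\varphi$) gives $|\int_{\partial B_1}\varphi\,d\Hf^{n-1}|\leq C\|\varphi\|^2_{L^2(\partial B_1)}$. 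Setting $f(t):=\J(\Phi_t(B_1))$, I would prove
\[
|f'(0)|+\sup_{s\in[0,1]}|f''(s)|\leq C(\|\varphi\|_{C^{2,\vartheta}})\,\|\varphi\|^2_{H^1(\partial B_1)},
\]
whereupon Taylor's formula $f(1)=f(0)+f'(0)+\int_0^1(1-s)f''(s)\,ds$ yields the claim.

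For $f'(0)$, apply the envelope theorem to the characterization $\J(E_t)=\min_\psi F_J(E_t,\psi)$ in \eqref{e:new min problem}: with $\psi_{E_t}$ the unique minimizer, $f'(t)=\partial_E F_J(E_t,\psi_{E_t})[X]$, a boundary integral over $\partial E_t$ whose density is a polynomial in $\psi_{E_t}^{\pm}|_{\partial E_t}$, $|\nabla\psi_{E_t}^{\pm}|^2$, and the global scalar $\int_{E_t}\psi_{E_t}$, coming from differentiating the four terms in \eqref{e:new min problem} (including the jump $(\beta-1)|\nabla\psi_{E_t}|^2$ from the discontinuity of $a_{E_t}$). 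At $t=0$, radiality of $\psi_0$ (Proposition \ref{p:radiality of min fct}) makes this density constant along $\partial B_1$, so $f'(0)=c\int_{\partial B_1}\varphi\,d\Hf^{n-1}$ and hence $|f'(0)|\leq C\|\varphi\|^2_{L^2}\leq C\|\varphi\|^2_{H^1(\partial B_1)}$ by the volume constraint. Differentiating once more gives
\[
f''(t)=\partial_E^2 F_J(E_t,\psi_{E_t})[X,X]+\partial_\psi\partial_E F_J(E_t,\psi_{E_t})[\dot\psi_t,X],
\]
where $\dot\psi_t:=\frac{d}{dt}\psi_{E_t}$ solves the linearization of the transmission system \eqref{e:el diff form}. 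The pure shape term is a boundary integral quadratic in $X$ with coefficients uniformly controlled in $L^\infty$ by the $C^{1,\eta}$ bounds on $\psi_{E_t}^{\pm}$ coming from Theorem \ref{ureg} applied on the $C^{2,\vartheta}$ boundary provided by Theorem \ref{C2 reg}; after an integration by parts it is bounded by $C\|\varphi\|^2_{H^1(\partial B_1)}$ via the trace theorem. For the mixed term, differentiating \eqref{e:el diff form} in $t$ yields an elliptic transmission problem for $\dot\psi_t$ with data depending linearly on $X$, whence $\|\dot\psi_t\|_{D^1(\R^n)}\leq C\|\varphi\|_{H^{1/2}(\partial B_1)}$; a Cauchy-Schwarz estimate then gives $|\partial_\psi\partial_E F_J[\dot\psi_t,X]|\leq C\|\varphi\|^2_{H^1(\partial B_1)}$.

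The main obstacle is the explicit shape-derivative calculation, in particular the first-order cancellation, which hinges on both the radiality of $\psi_0$ and the transmission condition $\beta\nabla\psi_E^+\cdot\nu=\nabla\psi_E^-\cdot\nu$ from \eqref{e:el diff form}; without the latter the jump in $a_E\nabla\psi_E$ across $\partial B_1$ would produce a genuinely linear-in-$\varphi$ contribution. A secondary technical point is the differentiability of $t\mapsto\psi_{E_t}$ with the stated bounds, which I would obtain by an implicit-function-theorem argument on the Euler-Lagrange system \eqref{e:el diff form} once $E_t$ is known to remain uniformly $C^{2,\vartheta}$-close to $B_1$; this is ensured by the smallness of $\|\varphi\|_{C^{2,\vartheta}}$. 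All constants arising from these a-priori estimates on $\psi_{E_t}$ are then collected into the bounded function $g$.
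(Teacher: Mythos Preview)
Your overall strategy matches the paper's: Taylor-expand $\J$ along a deformation, show the first shape derivative at $B_1$ is $O(\|\varphi\|^2)$ (the paper makes it exactly zero by using the volume-preserving divergence-free flow of Lemma~\ref{lm:vectorfield}, but your argument via radiality plus the volume constraint is an acceptable variant), and bound the second derivative uniformly in $t$ by $C\|\varphi\|^2_{H^1}$.

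There is, however, a genuine gap in your control of $f''(s)$. The mixed term $\partial_\psi\partial_E F_J[\dot\psi_t,X]$ is a boundary integral on $\partial E_t$ whose integrand contains $\nabla\dot\psi_t^{\pm}$ (cf.\ the terms $I_5,I_6$ in the paper's formula~\eqref{e:second_derivative}). A Cauchy--Schwarz bound therefore requires $\|\nabla\dot\psi_t^{\pm}\|_{L^2(\partial E_t)}$, i.e.\ $H^{3/2}$ regularity of $\dot\psi_t^{\pm}$ up to the interface, not merely the $D^1(\R^n)$ bound you invoke. This is precisely the content of Proposition~\ref{sobolev bounds for transmission problem}, which the paper proves via an interpolation argument (Lemma~\ref{l:H^1/2 bound}) between the $H^1$ energy estimate and the $H^2$ transmission estimate of Theorem~\ref{t:sobolev bounds for transmission problem}. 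Your $D^1$ bound gives only $H^{1/2}$ traces of $\dot\psi_t$, not of $\nabla\dot\psi_t$.

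A second, smaller point: the transmission problem for $\dot\psi_t$ (Proposition~\ref{p:diff of psi}) has $\dot\J(E_t)$ on its right-hand side, so any a priori estimate on $\dot\psi_t$ comes with a $|\dot\J(E_t)|$ term. The paper closes this loop via a short ODE/bootstrap argument inside the proof of Lemma~\ref{l:secdercont}: one first shows $|\ddot\J(E_t)|\leq C(\|X\cdot\nu\|_{H^1}^2+|\dot\J(E_t)|\,\|X\cdot\nu\|_{H^1})$ and then deduces $|\dot\J(E_t)|\leq\|X\cdot\nu\|_{H^1}$ by a continuity argument starting from $\dot\J(B_1)=0$. Your claimed bound $\|\dot\psi_t\|_{D^1}\leq C\|\varphi\|_{H^{1/2}}$ hides this circularity; in your setup (where $f'(0)$ is only $O(\|\varphi\|^2)$ rather than zero) the same bootstrap works, but it should be made explicit.
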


	Finally, we are ready to prove the main result of the paper.
	\begin{proof}[Proof of Theorem \ref{thm:main}]
		Argue by contradiction. Suppose there exists a sequence of minimizers
		$E_h$ corresponding to $Q_h\rightarrow 0$ such that $E_h$ are not balls.
		By Theorem \ref{smooth closed} we have that starting from a certain $h$
		the sets (possibly, translated) are nearly-spherical 
		parametrized by $\varphi_h$
		with $\Vert\varphi_h\Vert_{C^{2,\gamma}(\partial B_1)}<\delta$, where $\delta$ is the one of Lemma \ref{taylor}.
		
		To apply Theorem \ref{t: fuglede for perimeter} and Lemma \ref{taylor} we need the sets to have barycenters at the origin. 
		It is not necessarily true for the sequence $E_h$, however,
		we can exploit the fact that nearly-spherical sets have barycenters close to the origin.
		Indeed, suppose that $E$ is the nearly-spherical set parametrized by $\varphi_E$. Then, using that the barycenter of the ball
		$B_1$ is at the origin, we have
		\begin{equation*}
		\begin{split}
			x_E&=\frac{1}{\vert E\vert}\int_{\partial B_1}{x\frac{\left(1+\varphi_E(x)\right)^{n+1}}{n+1}}\,d\mathcal{H}^{n-1}(x)\\
			&=\frac{1}{\vert E\vert}\int_{\partial B_1}{x\frac{\left(1+\varphi_E(x)\right)^{n+1}}{n+1}}\,d\mathcal{H}^{n-1}(x)
			-\frac{1}{\vert E\vert}\int_{\partial B_1}{x}\,d\mathcal{H}^{n-1}(x)\\
			&=\frac{1}{\vert E\vert}\int_{\partial B_1}{x\frac{\sum_{i=1}^{n+1}{N+1 \choose i}\varphi_E(x)^i}{n+1}}\,d\mathcal{H}^{n-1}(x).
		\end{split}		
		\end{equation*}				
		If $\Vert\varphi_E\Vert_{L^\infty(\partial B_1)}<1$, then
		the last computation yields
		$$\vert x_E\vert<C\Vert\varphi_E\Vert_{L^\infty(\partial B_1)}.$$
		We chose the sequence $E_h$ so that $E_h\rightarrow B_1$
		in $L^\infty$, thus, $x_{E_h}\rightarrow 0$. So if we now
		look at the sequence of sets $\tilde{E}_h=\{x-x_{E_h}:x\in E_h\}$, we see that $\tilde{E}_h\rightarrow B_1$ in $L^\infty$
		and $x_{\tilde{E}_h}=0$. It remains to apply Theorem \ref{smooth closed} to the sequence $\{E_h\}$ to see that these new
		translated sets are still nearly-spherical.
		For the sake of simplicity let us not rename the sequence
		and assume that the sequence $\{E_h\}$ is such that
		$x_{E_h}=0$.
		
		Now we can apply Theorem \ref{t: fuglede for perimeter} and Lemma \ref{taylor}.
		We want to show that $\F(E_h)>F(B_1)$ for $h$ big enough. 
		Indeed, if $Q_h$ is small enough, we have
		\begin{equation*}
		\begin{aligned}
			\F(E_h)&=P(E_h)+Q^2_h\,\G(E_h)
			\geq P(B_1)+c\Vert\varphi_h\Vert^2_{H^1(\partial B_1)}
			+Q_h^2\left(\frac{K}{2\vert B_1\vert}-\J(E_h)\right)\\
			&\geq P(B_1)+c\Vert\varphi_h\Vert^2_{H^1(\partial B_1)}
			+Q_h^2\left(\frac{K}{2\vert B_1\vert}-\J(B_1)-c'\Vert\varphi_h\Vert^2_{H^1(\partial B_1)}\right)\\
			&>P(B_1)
			+Q_h^2\left(\frac{K}{2\vert B_1\vert}-\J(B_1)\right)
			=\F(B_1).
		\end{aligned}	
		\end{equation*}
	\end{proof}
	
	We can now prove Corollary \ref{cor:existence in R^n}, which
	follows from Theorem \ref{thm:main} and properties of minimizers
	established in \cite{DPHV}.
	
	\begin{proof}[Proof of Corollary \ref{cor:existence in R^n}]
		Let $Q_0$ be the one of Theorem \ref{thm:main}.  
		Let $E$ be an open set such that $\vert E\vert=\vert B_1\vert$. Let us show that $\F(E)\geq \F(B_1)$. 
		If $E$ is bounded, then $\F(E_h)\geq F(B_1)$ by Theorem \ref{thm:main}. Assume now that $E$ is unbounded.
		
		We can assume that $E$ is of finite perimeter, since otherwise
		$\F(E)=\infty$. Then, by \cite[Remark 13.12]{Maggi12},
		there exists a sequence $R_h\rightarrow\infty$ such that
		$E\cap B_{R_h}\rightarrow E$ in $L^1$, $P(E\cap B_{R_h})\rightarrow P(E)$. Rescale the sets so that their volumes are 
		the same as the one of the ball, i.e. $$\Omega_h=\alpha_h\left(E\cap B_{R_h}\right)\text{ with }\alpha_h=\left(\frac{\vert B_1\vert}{\left\vert E\cap B_{R_h}\right\vert}\right)^{1/n}.$$
		Note that since $\vert E\vert=\vert B_1\vert$, 
		$\alpha_h\rightarrow 1$, so also for $\Omega_h$ we have 
		$\vert\Omega_h\Delta E\vert\rightarrow 0$, 
		$P(\Omega_h)\rightarrow P(E)$.
		Now, by the continuity of the functional $\G$ in $L^1$ (see 
		\cite[Proposition 2.6]{DPHV}), we get 
		\begin{equation}\label{e:convergence to F(E)}
			F(\Omega_h)=P(\Omega_h)+\G(\Omega_h)\rightarrow P(E)+\G(E)=\F(E).
		\end{equation}
		On the other hand, $\Omega_h\subset \alpha_h B_{R_h}$,
		so it is bounded and hence, by Theorem \ref{thm:main},
		$$\F(\Omega_h)\geq \F(B_1)\text{ for every }h.$$
		Combining the last inequality with \eqref{e:convergence to F(E)}, we get $\F(E)\geq \F(B_1)$. Thus, the infimum in the problem 
		(\ref{e:problem in R^n}) is achieved on balls.
		
		Let us show that the only minimizers are the balls.
		Let $E$ be a minimizer for (\ref{e:problem in R^n}).
		If $E$ is bounded, then by Theorem \ref{thm:main} it should be
		a ball of radius $1$. We now explain why $E$ cannot be unbounded. Indeed, suppose the contrary holds. Then there we can find a sequence of points $x_k$ such that $x_k\in E$, $\vert x_k-x_j\vert\geq 1$ for $k\ne j$ (for example, we can define $x_k:=E\backslash B_{\max\{\vert x_1\vert, \vert x_2\vert, \dots, \vert x_{k-1}\vert\}+1}$). Now, by density estimates for minimizers (Theorem \ref{properties} (v)), we have
		\begin{equation}\label{e:density est for minimizers}
			\frac{\vert B_r(x)\cap E\vert}{\vert B_r\vert}\geq \frac{1}{C}\text{ for }x\in E,\,r\in(0,\overline{r}).
		\end{equation}
		Note that even though Theorem \ref{properties} (v) deals with
		minimizers of (\ref{e:problem}), the constants $C$ and $\overline{r}$ do not depend on $R$, so it applies in our case.
		It remains to use \eqref{e:density est for minimizers}
		for $x=x_k$ and $r=\min(1/2\overline{r},1/2)$ to see that
		$$\vert E\vert\geq \sum_{k=1}^\infty{\vert B_r(x_k)\cap E\vert}\geq\sum_{k=1}^\infty\frac{\vert B_r\vert}{C}=\infty,$$
		which contradicts the fact that $\vert E\vert=\vert B_1\vert$.
		Thus, $E$ is bounded and it is a ball of radius $1$.
	\end{proof}

\section{Proof of Lemma \ref{taylor}} \label{fuglede}

	We will need the following technical lemma, which is almost identical to \cite[Lemma A.1]{fkstab}.		
	Since we need a slightly different conclusion than in \cite{fkstab}, we repeat the proof here.
	Throughout this section we will be using the following notation.
	
	\begin{notation}
	We denote by $J_{\Phi_t}(x)$ the jacobian of $\Phi_t$ at $x$: 
	$$J_{\Phi_t}(x)=\det\nabla\Phi_t(x).$$
	\end{notation} 	
	
	\begin{lemma}\label{lm:vectorfield}
		Given $\vartheta\in(0,1]$ there exists $\delta=\delta(n,\gamma)>0$,
		a modulus of continuity $\omega$,
		and a bounded function $g$ such that for every nearly spherical set $E$
		parametrized by $\varphi$ with $\Vert\varphi\Vert_{C^{2,\vartheta}(\partial B_1)}<\delta$
		and $\vert\Omega\vert=\vert B_1\vert$, we can find an autonomous vector field $X_\varphi$
		for which the following holds true:
		\begin{enumerate}[(i)]
			\item $\div{X_\varphi}=0$ in a $\delta$-neighborhood of $\partial B_1$;
			\item if $\Phi_t:=\Phi(t,x)$ is the flow of $X_\varphi$, i.e.
				\begin{equation*}
				  \partial_t\Phi_t=X_\varphi(\Phi_t),\qquad \Phi_0(x)=x,
				\end{equation*}
				then $\Phi_1(\partial B_1)=\partial E$ and $\vert\Phi_t(B_1)\vert=\vert B_1\vert$ for all $t\in[0,1]$;
			\item denote $E_t:=\Phi_t(B_1)$, then

				\begin{equation} \label{e:flow close to id}
				\Vert\Phi_t-Id\Vert_{C^{2,\vartheta}}\leq\omega(\Vert\varphi\Vert_{C^{2,\vartheta}(\partial B_1)})
				 \text{ for every }t\in[0,1],
				\end{equation}
						
%
%
				\begin{equation}\label{e:bound on jacobian}
					\left\vert J_\Phi\right\vert\leq g(\Vert\varphi\Vert_{C^{2,\vartheta}(\partial B_1)})\text{ in a neighborhood of }B_1, 
				\end{equation}
				
				\begin{equation}\label{e:phi to X nu}
					\left\Vert X\cdot\nu\right\Vert_{H^1(\partial E_t)}\leq g(\Vert\varphi\Vert_{C^{2,\vartheta}(\partial B_1)})\left\Vert \varphi\right\Vert_{H^1(\partial B_1)}, 
				\end{equation}
				
				and for the tangential part of $X$, defined as $X=X-(X\cdot\nu)\nu$,
				there holds
				\begin{equation}\label{e:tang part of X}
					\left\vert X^\tau\right\vert\leq \omega(\Vert\varphi\Vert_{C^{2,\vartheta}(\partial B_1)})\left\vert X\cdot\nu\right\vert\text{ on }\partial E_t. 
				\end{equation}

			\end{enumerate}
			
	\end{lemma}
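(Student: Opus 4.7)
The plan is to build $X_\varphi$ as a smoothly cut-off radial vector field, exploiting that any nearly spherical set $E$ is star-shaped with respect to the origin. Set
$$c(\omega) := \frac{(1+\varphi(\omega))^n - 1}{n}, \qquad \omega\in\partial B_1,$$
and extend by homogeneity to $\R^n \setminus \{0\}$ via $c(x) = c(x/|x|)$. Let $\chi \in C^\infty_c((0,\infty);[0,1])$ satisfy $\chi \equiv 1$ on $\{|r-1| \leq \delta/2\}$ and $\chi \equiv 0$ off $\{|r-1| < \delta\}$. Define the autonomous purely radial field
$$X_\varphi(x) := c(x/|x|)\,\chi(|x|)\,|x|^{-(n-1)}\,\tfrac{x}{|x|}.$$
A direct spherical-coordinates computation yields $\div X_\varphi = c(\omega)\chi'(r)/r^{n-1}$, which vanishes on the tubular neighbourhood $\{|r-1|<\delta/2\}$, giving (i).

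For (ii): along a radial ray through $\omega\in\partial B_1$, the ODE $\dot r = c(\omega)\chi(r)\,r^{-(n-1)}$ stays confined to $\{|r-1|<\delta/2\}$ for $t\in[0,1]$ provided $\|\varphi\|_\infty < \delta/2$. There $\chi\equiv 1$ and the ODE integrates to $r(t)^n = 1 + n c(\omega)t$; at $t=1$ one obtains $r(1) = 1+\varphi(\omega)$, so $\Phi_1(\partial B_1) = \partial E$. For the volume preservation, $(d/dt)|E_t| = \int_{E_t}\div X_\varphi\,dx$, and the integrand is supported where $\chi'\neq 0$. Since $E_t\supset B_{1-\delta/2}$ and $E_t\subset B_{1+\delta/2}$, the outer annulus $\{1+\delta/2<r<1+\delta\}$ gives no contribution, while the inner annulus $\{1-\delta<r<1-\delta/2\}$ is entirely contained in $E_t$; Fubini then yields
$$\int_{E_t}\div X_\varphi\,dx \;=\; \Bigl(\int_{\partial B_1}c\,d\mathcal{H}^{n-1}\Bigr)\int_{1-\delta}^{1-\delta/2}\chi'(r)\,dr \;=\; 0,$$
since the volume constraint $|E|=|B_1|$ translates precisely into $\int_{\partial B_1} c\,d\mathcal{H}^{n-1}=0$.

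For the quantitative bounds in (iii): since $c$ depends smoothly on $\varphi$ with $c = \varphi + O(\varphi^2)$, one has $\|X_\varphi\|_{C^{2,\vartheta}}\leq C\|\varphi\|_{C^{2,\vartheta}}$, and a Gronwall-type estimate for $\partial_t\Phi_t = X_\varphi(\Phi_t)$ yields \eqref{e:flow close to id}; the Jacobian bound \eqref{e:bound on jacobian} follows because $\Phi_t$ is close to the identity in $C^1$ (and in fact $J_{\Phi_t}\equiv 1$ in the div-free zone). For \eqref{e:phi to X nu}, on $\partial E_t$ the quantity $X_\varphi\cdot\nu_{E_t}$ agrees with $c(\omega)r(t)^{-(n-1)}$ up to a factor of order $1+O(\|\varphi\|_{C^1})$; pulling back to $\partial B_1$ and using $c = \varphi + O(\varphi^2)$ then gives $\|X_\varphi\cdot\nu\|_{H^1(\partial E_t)} \leq g(\|\varphi\|_{C^{2,\vartheta}})\|\varphi\|_{H^1(\partial B_1)}$. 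Finally \eqref{e:tang part of X} follows from radiality: since $\nu_{E_t} = \tfrac{x}{|x|} + O(\|\varphi\|_{C^1})$, decomposing $X_\varphi^\tau = X_\varphi - (X_\varphi\cdot\nu_{E_t})\nu_{E_t}$ directly yields $|X_\varphi^\tau|\leq C\|\varphi\|_{C^1}|X_\varphi\cdot\nu_{E_t}|$. The main subtlety is reconciling the cut-off with exact volume preservation $|E_t|=|B_1|$ for all $t$; this is resolved by observing that the volume constraint itself forces the zero-average condition on $c$ that cancels the cutoff-induced divergence contribution from the inner annulus.
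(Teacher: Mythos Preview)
Your construction is essentially identical to the paper's: the same radial field $X_\varphi(\rho,\theta)=\frac{(1+\varphi(\theta))^n-1}{n\rho^{n-1}}\,\theta$ near $\partial B_1$, with the same explicit flow $\Phi_t(\rho,\theta)=(\rho^n+t((1+\varphi(\theta))^n-1))^{1/n}\theta$, and the same reasoning for the estimates in (iii) via radiality and $c=\varphi+O(\varphi^2)$. Your version is somewhat more explicit than the paper's (which just writes the local formula and says ``extend globally,'' citing \cite{fkstab}): you spell out the cutoff $\chi$, the resulting divergence supported in the transition annuli, and the volume-preservation argument through $\int_{\partial B_1} c\,d\mathcal{H}^{n-1}=0$---all of which are correct and make the proof more self-contained.
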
	
		
	\begin{proof}
	Such a vector field can be constructed for any smooth set, see for example \cite{Dambrine02}. However, for the ball one can write an explicit expression
	in a neighborhood of $\partial B_1$. The proof for the case of the ball
	can be found in 	\cite[Lemma A.1]{fkstab}. 
	For the convenience of the reader we provide the expression here, as well
	as brief explanation of how to get the needed bounds. 
	In polar coordinates, $\rho=\vert x\vert$, $\theta=x/\vert x\vert$ the field looks like this:
	\begin{equation*}
	\begin{aligned}
		&X_{\varphi}(\rho,\theta)=\frac{(1+\varphi(\theta))^n-1}{n\rho^{n-1}}\theta,\\
		&\Phi_t(\rho,\theta)=\left(\rho^n+t\left(\left(1+\varphi(\theta)\right)^n-1\right)\right)^\frac{1}{N}\theta	
	\end{aligned}
	\end{equation*}
	for $\vert\rho-1\vert\ll 1$. Then we extend this vector field globally in order to satisfy (\ref{e:flow close to id}).
	Notice that \eqref{e:bound on jacobian} is a direct consequence of \eqref{e:flow close to id}.
	
	By direct computation we get
	\begin{equation}\label{e:X nu close to the ball}
		(X\cdot\theta)\circ\Phi_t-X\cdot\nu_{\partial B_1}=(X\cdot\nu_{\partial B_1})f\text{ on }\partial B_1,
	\end{equation}
	with $\Vert f\Vert_{C^{2,\vartheta}(\partial B_1)}\leq\omega\left(\Vert \varphi\Vert_{C^{2,\vartheta}(\partial B_1)}\right)$.
	Now we can get the bound \eqref{e:phi to X nu}. Indeed, \eqref{e:X nu close to the ball} together with \eqref{e:bound on jacobian} gives us
	\begin{equation*}
		\Vert X\cdot\nu\Vert_{H^1(\partial E_t)}\leq g\left(\Vert \varphi\Vert_{C^{2,\vartheta}(\partial B_1)}\right)\Vert X\cdot\nu\Vert_{H^1(\partial B_1)}.
	\end{equation*}		
	From the definition of $X$, on $\partial B_1$ we have
	$$\varphi-X\cdot\nu=\frac{1}{n}\sum_{i=2}^n{{{n}\choose{i}}\varphi^i}$$
	and thus,
	\begin{equation*}
		\Vert \varphi-X\cdot\nu\Vert_{H^1(\partial B_1)}
		\leq\omega\left(\Vert \varphi\Vert_{C^{2,\vartheta}(\partial B_1)}\right)
		\Vert X\cdot\nu\Vert_{H^1(\partial B_1)},
	\end{equation*}
	yielding the inequality \eqref{e:phi to X nu}.
	
	To see \eqref{e:tang part of X} we use that by definition $X$ is parallel
	to $\theta$ close to $\partial B_1$. 
	Thus,
	\begin{equation*}
	\begin{split}
		\left\vert X^\tau\circ\Phi_t\right\vert
		&=\left\vert\left(\left(X\cdot\theta\right)\theta\right)\circ\Phi_t
		-\left(\left(X\cdot\nu\right)\nu\right)\circ\Phi_t\right\vert\\
		&=\big\vert\left(X\cdot\nu_{\partial B_1}\right)\left(1+\omega\left(\Vert \varphi\Vert_{C^{2,\vartheta}(\partial B_1)}\right)\right)\nu_{\partial B_1}\left(1+\omega\left(\Vert \varphi\Vert_{C^{2,\vartheta}(\partial B_1)}\right)\right)\\
		&-\left(X\cdot\nu_{\partial B_1}\right)\left(1+\omega\left(\Vert \varphi\Vert_{C^{2,\vartheta}(\partial B_1)}\right)\right)\nu_{\partial B_1}\left(1+\omega\left(\Vert \varphi\Vert_{C^{2,\vartheta}(\partial B_1)}\right)\right)\big\vert\\
		&=\omega\left(\Vert \varphi\Vert_{C^{2,\vartheta}(\partial B_1)}\right)
		\left\vert \left(X\cdot\nu\right)\circ\Phi_t\right\vert.
	\end{split}	
	\end{equation*}
	\end{proof}	
	
	In what follows we omit the subscript $\varphi$ for brevity.
	

	\subsection{First derivative}	
	We want to compute $\frac{d}{dt}\J(E_t)$.
	
	Let $\psi_t$ be the minimizer in the minimization problem	
	\eqref{e:new min problem} for $E_t$. 
	Recall that by \eqref{e:el diff form}  it means that $\psi_t$
	satisfies
	\begin{equation} \label{e:el diff form on E_t}
		\begin{cases}
			-\beta\Delta\psi_t=-\frac{1}{K}\psi_t+\frac{2}{K}\J(E_t)-\frac{1}{\vert B_1\vert}\text{ in }E_t,\\
			\Delta\psi_t=0\text{ in }E_t^c,\\
			\psi_t^+=\psi_t^{-}\text{ on }\partial E_t,\\
			\beta\nabla\psi^+_{t}\cdot\nu=\nabla\psi^{-}_{t}\cdot\nu\text{ on }\partial E_t.
		\end{cases}
	\end{equation}

	First we notice that $\psi_t$ is regular since it is a solution
	to a transmission problem. More precisely, by Lemma \ref{lemmax2},
	the following holds.
	\begin{proposition}\label{bound on psi_t}
		There exists $\delta>0$ such that if $\Vert\varphi\Vert_{C^{2,\vartheta}(\partial B_1)}<\delta$, then
		$$\Vert \psi_t\Vert_{C^2(\overline{E_t})}\leq g(\Vert\varphi\Vert_{C^{2,\vartheta}(\partial B_1)})\text{ for every }t\in [0,1],$$
		where $g$ is a bounded function.
	\end{proposition}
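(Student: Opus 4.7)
My plan is to apply Lemma~\ref{lemmax2} to the transmission system \eqref{e:el diff form on E_t} after a short bootstrap that produces enough regularity of the right-hand side. I would first rewrite \eqref{e:el diff form on E_t} as the single distributional equation
\[
-\Div(a_{E_t}\nabla\psi_t) = h_t \qquad \text{in } \mathcal{D}'(\R^n),
\]
where $h_t^+ := \tfrac{2}{K}\J(E_t) - \tfrac{1}{|B_1|} - \tfrac{1}{K}\psi_t$ on $E_t$ and $h_t^- \equiv 0$ on $E_t^c$, observing that the transmission condition $\beta\nabla\psi_t^+\cdot\nu = \nabla\psi_t^-\cdot\nu$ is automatically encoded. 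By \eqref{e:flow close to id}, the interface $\partial E_t$ is locally the graph of a $C^{2,\vartheta}$-function whose norm is controlled by $\omega(\|\varphi\|_{C^{2,\vartheta}(\partial B_1)})$, so the geometric hypothesis of Lemma~\ref{lemmax2} is satisfied uniformly in $t\in[0,1]$.

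Next I would establish a uniform $L^\infty$-bound on $\psi_t$. From \eqref{e:bound on energy} we have $|\J(E_t)| \le C$, and \eqref{e:bound in D^1} gives $\int_{\R^n} a_{E_t}|\nabla\psi_t|^2 \le C$; Sobolev embedding (using $n\ge 3$) then yields $\psi_t \in L^{2n/(n-2)}(\R^n)$ with a uniform bound. Moser iteration applied to the transmission equation above, whose source is a linear function of $\psi_t$ plus bounded terms, produces $\|\psi_t\|_{L^\infty(\R^n)} \le C$ uniformly in $t$.

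With $h_t \in L^\infty$ in hand, I would then promote $\psi_t$ to $C^{1,\eta}$ up to $\partial E_t$ on each side by the Campanato decay argument of Theorem~\ref{ureg}: that proof uses only the $C^{1,\vartheta}$-regularity of the interface and the $L^\infty$-boundedness of the source, and so transfers verbatim to the present setting. Then $h_t^+ = \tfrac{2}{K}\J(E_t) - \tfrac{1}{|B_1|} - \tfrac{1}{K}\psi_t^+$ lies in $C^{1,\eta}(\overline{E_t})$ and $h_t^- \equiv 0$ trivially does as well, both with uniform bounds. Lemma~\ref{lemmax2} with $k=1$ applied on local cylindrical charts covering $\partial E_t$ then yields $\psi_t^\pm \in C^{2,\eta}$ on each side, with norms controlled by a function of $\|\varphi\|_{C^{2,\vartheta}(\partial B_1)}$. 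Interior elliptic regularity away from $\partial E_t$ handles the remainder of $\overline{E_t}$, giving the claimed $C^2$-bound.

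The main obstacle is the intermediate $L^\infty \to C^{1,\eta}$ step: Lemma~\ref{lemmax2} is stated only for positive integers $k$ and cannot initialise the bootstrap on its own. One must reuse the Campanato/Morrey machinery behind Theorem~\ref{ureg}, verifying that its argument applies to the bounded-source transmission equation for $\psi_t$ (rather than the minimiser equation for $u$), and, equally importantly, that all constants depend only on $\|\varphi\|_{C^{2,\vartheta}(\partial B_1)}$ through \eqref{e:flow close to id} rather than on $t$ or on any a priori regularity of $\psi_t$.
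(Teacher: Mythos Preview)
Your approach is correct and coincides with the paper's, which simply states that the result ``follows by Lemma~\ref{lemmax2}''. You supply exactly the bootstrap detail the paper omits: $L^\infty \Rightarrow C^{1,\eta} \Rightarrow C^{2,\eta}$, with Lemma~\ref{lemmax2} for the last step.

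One simplification: the Moser iteration is unnecessary. Since the minimiser of $\J(E_t)$ coincides with minus the electrostatic potential (as noted after Lemma~\ref{l:chngprb}), Theorem~\ref{properties}(i) gives $\|\psi_t\|_{L^\infty} \le \G_{\beta,K}(E_t) \le C(n,\beta,K,|B_1|)$ directly. With that in hand your $C^{1,\eta}$ step via the Campanato machinery behind Theorem~\ref{ureg} (which, as you observe, uses only the $C^{1,\vartheta}$ interface and $L^\infty$ source) and the final application of Lemma~\ref{lemmax2} with $k=1$ are exactly right.
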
	
		
%
	
	To compute the derivative of $\J(E_t)$ we would like to use Hadamard formula (see \cite[Chapter 5]{HP}).
	For that, we first need to prove the following proposition.
	
	\begin{proposition}\label{p:diff of psi}
		The function $t\mapsto\psi_t$ is differentiable in $t$ and its 
		derivative $\dot{\psi}_t$ satisfies
		\begin{equation}
			\begin{cases}\label{e:EL for phi dot at t}
			-\beta\Delta\dot{\psi}_t=-\frac{1}{K}\dot{\psi}_t+\frac{2}{K}\dot{\J}(E_t)\text{ in }E_t,\\
			\Delta\dot{\psi}_t=0\text{ in }E_t^c,\\
			\dot{\psi}_t^+-\dot{\psi}_t^{-}=
			-\left(\nabla\psi_t^+-\nabla\psi_t^{-}\right)\cdot\nu(X\cdot\nu)\text{ on }\partial E_t,\\
			\beta\nabla\dot{\psi}^+_t\cdot\nu-\nabla\dot{\psi}^{-}_t\cdot\nu=
			-\left(\left(\beta\nabla[\nabla\psi_t^+]-\nabla[\nabla\psi_t^{-}]\right) X\right)\cdot\nu
			\text{ on }\partial E_t.
			\end{cases}
		\end{equation}	
	\end{proposition}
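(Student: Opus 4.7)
The plan is to establish differentiability of $t\mapsto\psi_t$ by pulling back to the fixed reference ball via the flow $\Phi_t$, then to read off the equations satisfied by the shape derivative $\dot\psi_t$ by differentiating the transmission system \eqref{e:el diff form} pointwise in the bulk and along flow lines on the interface. Concretely, I would first set $\tilde\psi_t(y):=\psi_t(\Phi_t(y))$; the chain rule converts \eqref{e:el diff form} into a transmission problem on the fixed interface $\partial B_1$ with coefficients $A_t:=J_{\Phi_t}(\nabla\Phi_t)^{-1}(\nabla\Phi_t)^{-T}$ and inhomogeneity depending smoothly on $t$ via Lemma \ref{lm:vectorfield}. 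These coefficients are uniformly elliptic and H\"older continuous on each side, the underlying functional is strongly convex, and Lemma \ref{lemmax2} gives uniform $C^{2,\eta}$-regularity of $\tilde\psi_t^\pm$ up to $\partial B_1$. The implicit function theorem then yields differentiability of $t\mapsto\tilde\psi_t$, and unwinding the flow gives existence of $\dot\psi_t(x):=\partial_t\psi_t(x)$ with $\dot\psi_t\circ\Phi_t = \partial_t\tilde\psi_t - (\nabla\psi_t\circ\Phi_t)\cdot(X\circ\Phi_t)$ and $C^{2,\eta}$-regularity of $\dot\psi_t^\pm$ up to $\partial E_t$.

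The two interior equations in \eqref{e:EL for phi dot at t} are then obtained by differentiating \eqref{e:el diff form on E_t} at a fixed $x$ in the open interior of $E_t$ or $E_t^c$, using the commutation $\partial_t\nabla\psi_t = \nabla\dot\psi_t$ together with the definition $\dot\J(E_t) = \partial_t\J(E_t)$. The jump-in-values condition follows by differentiating $\psi_t^+(x_t) = \psi_t^-(x_t)$ along a flow line $x_t = \Phi_t(y)$, $y\in\partial B_1$, which yields
\[
  \dot\psi_t^+(x_t) - \dot\psi_t^-(x_t) = -\bigl(\nabla\psi_t^+ - \nabla\psi_t^-\bigr)(x_t)\cdot X(x_t);
\]
since $\psi_t^+ = \psi_t^-$ on $\partial E_t$ forces the tangential gradients to agree, $\nabla\psi_t^+ - \nabla\psi_t^-$ is purely normal, which collapses $X$ to $X\cdot\nu$ and gives the third equation of \eqref{e:EL for phi dot at t}.

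The main obstacle is the flux jump condition. Differentiating the original conormal identity $\beta\nabla\psi_t^+(x_t)\cdot\nu_{E_t}(x_t) = \nabla\psi_t^-(x_t)\cdot\nu_{E_t}(x_t)$ along the flow and using $\partial_t\nabla\psi_t^\pm = \nabla\dot\psi_t^\pm$ produces the Hessian-contraction terms on the claimed right-hand side, together with an additional boundary term $-(\beta\nabla\psi_t^+ - \nabla\psi_t^-)\cdot\partial_t[\nu_{E_t}(x_t)]$ coming from the time derivative of the outer normal. The conormal condition makes $\beta\nabla\psi_t^+ - \nabla\psi_t^-$ tangential on $\partial E_t$, and $\partial_t[\nu_{E_t}(x_t)]$ is tangential as well (orthogonal to the unit vector $\nu$), so this extra contribution is a tangential-tangential product that must be tracked using the standard shape-derivative identity $\partial_t\nu_{E_t} = -\nabla_\tau(X\cdot\nu)$ and reconciled with the specific shape-derivative (as opposed to material-derivative) convention for $\dot\psi_t$. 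Carefully handling this extra boundary term is the technical heart of the argument; once this is done, collecting all pieces produces exactly the four equations in \eqref{e:EL for phi dot at t}.
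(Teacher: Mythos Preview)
Your approach is essentially the same as the paper's: both pull back to the fixed domain via $\Phi_t$, establish differentiability of $t\mapsto\tilde\psi_t$ there, unwind via $\dot\psi_t=\partial_t\tilde\psi_t\circ\Phi_t^{-1}-\nabla\psi_t\cdot X$, and then read off the transmission system by differentiating \eqref{e:el diff form on E_t}. The one genuine methodological difference is that you invoke the implicit function theorem for the differentiability of $t\mapsto\tilde\psi_t$, whereas the paper argues directly with difference quotients $g_h=(\tilde\psi_{t+h}-\tilde\psi_t)/h$: it tests the equation for $g_h$ against itself to get a uniform $D^1$ bound, extracts a weak limit $g_0$, shows the limiting linear equation has a unique solution (so the full sequence converges), and upgrades to strong convergence. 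Your route is cleaner conceptually but requires you to verify that the linearization of the pulled-back Euler--Lagrange map is an isomorphism between the right spaces; the paper's hands-on argument effectively proves exactly this invertibility (the uniqueness step for $g_0$) while staying at the level of energy estimates, which meshes well with the Sobolev framework used downstream.

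On the flux jump: you are right to flag the extra contribution $(\beta\nabla\psi_t^+-\nabla\psi_t^-)\cdot\partial_t[\nu_{E_t}(x_t)]$ arising from the time derivative of the normal. The paper's Step~4 is terse here --- it simply records the differentiated system and only spells out the tangential reduction for the value jump, not the conormal jump --- so your more careful bookkeeping is appropriate. Note, however, that the statement of the proposition keeps the full $X$ (not $X\cdot\nu$) in the Hessian term of the fourth equation, so no tangential reduction is being claimed there; what you need to check is that the $\dot\nu$ contribution either vanishes or is absorbed consistently with the convention for $\dot\psi_t$, and for the paper's subsequent estimates (Lemma~\ref{rmk:first bounds on psi dot}, Proposition~\ref{sobolev bounds for transmission problem}) any such term is in any case controlled by $\|X\cdot\nu\|_{H^1(\partial E_t)}$.
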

	\begin{proof}
		The proof is standard, see (\cite[Chapter 5]{HP}) for the general strategy and (\cite[Theorem 3.1]{ADK}) for a different kind of a transmission problem. 
		We were unable to find a result covering our particular case in the literature,
		so we provide a proof here.
	
		We first deal with material derivative of the function $\psi$,
		i.e. we shall look at the function $t\mapsto\tilde{\psi}_t:=\psi_t(\Phi_t(x))$. The advantage is that its derivative in time
		is in $H^1$ as we will see. Note that the time derivative of $\psi_t$
		itself is not in $H^1$ as it has a jump on $\partial E_t$.
		
		\textbf{Step 1:} moving everything to a fixed domain.

		We introduce the following notation:
		\begin{equation*}
			A_t(x):=D\Phi_t^{-1}(x)\left(D\Phi_t^{-1}\right)^t(x)J_{\Phi_t}(x).
		\end{equation*}
		Note that $A_t$ is symmetric and positive definite and for $t$
		small enough it is elliptic with a constant independent of $t$.
		
		Now we perform a change of variables in Euler-Lagrange equation
		for $\psi_t$ (\ref{e:el}) to get 
		Euler-Lagrange equation for $\tilde{\psi}_t$:
		\begin{equation} \label{e:el moved to the ball}
			\int_{\mathbb{R}^n}\nabla\Psi \left(a_B A_t\nabla\tilde{\psi}_t\right)dx
			+\frac{1}{K}\int_{B}\Psi \tilde{\psi}_t J_{\Phi_t}(x)dx
			+\frac{1}{\vert B\vert}\left(\int_B{\Psi}J_{\Phi_t}(x)dx\right)\left(1-\frac{1}{K}\int_B\tilde{\psi}_t J_{\Phi_t}(x)dx\right)=0
		\end{equation}
		for any $\Psi\in D^1(\mathbb{R}^n)$.

		\textbf{Step 2:} convergence of the material derivative.
		
		Let us for convenience denote
		\[
		f(t):=\frac{1}{\vert B\vert}\left(1-\frac{1}{K}\int_B\tilde{\psi}_t J_{\Phi_t}(x)dx\right).
		\]
		We write the difference of equations (\ref{e:el moved to the ball})
		for $\tilde{\psi}_{t+h}$ and $\tilde{\psi}_t$ and divide it by $h$ to get
		\begin{equation*}
		\begin{aligned}
		&\int_{\mathbb{R}^n}\nabla\Psi \left(a_B \frac{A_{t+h}\nabla\tilde{\psi}_{t+h}-A_t\nabla\tilde{\psi}_t}{h}\right)dx
		+\frac{1}{K}\int_{B}\Psi \left(\frac{\tilde{\psi}_{t+h}-\tilde{\psi}_t}{h}\right)J_{\Phi_t}(x)dx\\
		&+\frac{1}{K}\int_B{\Psi\tilde{\psi}_{t+h}\frac{J_{\Phi_{t+h}}-J_{\Phi_t}}{h}}\,dx
		+\left(\int_B{\Psi J_{\Phi_t}(x)}dx\right)\frac{f(t+h)-f(t)}{h}\\
		&+\left(\int_B{\Psi\frac{J_{\Phi_{t+h}}(x)-J_{\Phi_t}(x)}{h}}\,dx\right)f(t+h)
=0
		\end{aligned}	
		\end{equation*}
		for any $\Psi\in D^1(\mathbb{R}^n)$.

		Now, introducing also $g_h(x):=\frac{\tilde{\psi}_{t+h}-\tilde{\psi}_t}{h}$ for convenience, we get
		\begin{equation}\label{e:EL for g_h}
		\begin{aligned}
		&\int_{\mathbb{R}^n}\nabla\Psi \left(a_B A_{t+h}\nabla g_h\right)dx
		+\frac{1}{K}\int_{B}\Psi g_h J_{\Phi_t}(x) dx
+\int_{\mathbb{R}^n}\nabla\Psi \left(a_B \frac{A_{t+h}-A_t}{h}\nabla \tilde{\psi}_t\right)dx\\
		&+\frac{1}{K}\int_B{\Psi\tilde{\psi}_{t+h}\frac{J_{\Phi_{t+h}}-J_{\Phi_t}}{h}}\,dx
		+\left(\int_B{\Psi J_{\Phi_t}(x)}dx\right)\frac{f(t+h)-f(t)}{h}\\
		&+\left(\int_B{\Psi\frac{J_{\Phi_{t+h}}(x)-J_{\Phi_t}(x)}{h}}\,dx\right)f(t+h)=0
		\end{aligned}	
		\end{equation}
		for any $\Psi\in D^1(\mathbb{R}^n)$.

		Now we want to get a uniform bound on $g_h$ in $D^1(\mathbb{R}^n)$. To do that we argue in a way similar to the proof of (\ref{e:bound in D^1}). We use $g_h$ as a test function in \eqref{e:EL for g_h} and get
		\begin{equation*}
		\begin{aligned}
		&\int_{\mathbb{R}^n}a_B\nabla g_h\cdot\left(A_{t+h}\nabla g_h\right)dx
		+\frac{1}{K}\int_{B} g_h^2 J_{\Phi_t}(x) dx
		+\int_{\mathbb{R}^n}a_B\nabla g_h\cdot\left( \frac{A_{t+h}-A_t}{h}\nabla \tilde{\psi}_t\right)dx\\
		&+\frac{1}{K}\int_B{g_h\tilde{\psi}_{t+h}\frac{J_{\Phi_{t+h}}-J_{\Phi_t}}{h}}\,dx
		+\left(\int_B{g_h J_{\Phi_t}(x)}dx\right)\frac{f(t+h)-f(t)}{h}\\
		&+\left(\int_B{g_h\frac{J_{\Phi_{t+h}}(x)-J_{\Phi_t}(x)}{h}}\,dx\right)f(t+h)
		=0.
		\end{aligned}	
		\end{equation*}
		Since $\frac{A(t+h,x)-A(t,x)}{h}$ is bounded in $L^\infty$
		and $A_t$ is uniformly elliptic we know that there exist
		some positive constant $c$ independent of $h$ such that
		\begin{equation*}
		\int_{\mathbb{R}^n}a_B\nabla g_h\cdot\left(A_{t+h}\nabla g_h\right)dx
		+\int_{\mathbb{R}^n}a_B\nabla g_h\cdot\left( \frac{A_{t+h}-A_t}{h}\nabla \tilde{\psi}_t\right)dx
		\geq c\int_{\mathbb{R}^n}\left\vert\nabla g_h\right\vert^2dx
		-C\int_{\mathbb{R}^n}\left\vert\nabla \psi_t\right\vert^2dx.
		\end{equation*}
		Thus,
		\begin{equation}\label{e:bound on H^1 norm of g - first bound}
		\begin{aligned}
		c\int_{\mathbb{R}^n}\left\vert\nabla g_h\right\vert^2dx
		+\frac{1}{K}\int_{B} g_h^2 J_{\Phi_t}(x)dx
		\leq C\int_{\mathbb{R}^n}\left\vert\nabla \psi_t\right\vert^2dx
		+\frac{1}{K}\int_B{\left\vert g_h\tilde{\psi}_{t+h}\frac{J_{\Phi_{t+h}}-J_{\Phi_t}}{h}\right\vert}\,dx\\
		+\left\vert\frac{f(t+h)-f(t)}{h}\right\vert\int_B{\left\vert g_h J_{\Phi_t}(x)\right\vert}dx
		+\vert f(t+h)\vert\int_B{\left\vert g_h\frac{J_{\Phi_{t+h}}(x)-J_{\Phi_t}(x)}{h}\right\vert}\,dx\\
		\leq C
		+C\int_B{\vert g_h\vert}\,dx
		+\left\vert\frac{f(t+h)-f(t)}{h}\right\vert\int_B{\vert g_h\vert}dx
		+\vert f(t+h)\vert\int_B{\left\vert g_h\right\vert}\,dx,		
		\end{aligned}
		\end{equation}
		where in the last inequality we used 
		the inequality \eqref{e:bound in D^1},
		Proposition \ref{bound on psi_t},
		and \eqref{e:flow close to id}.
		We want to show now that $f$ is bounded and Lipschitz. Indeed,
		we recall the definition of $f$ and use	
		the definition of $\tilde{\psi}_t$ and \eqref{otherformenergy}
		\begin{equation*}
		f(t)=\frac{1}{\vert B\vert}\left(1-\frac{1}{K}\int_B{\tilde{\psi}_{t}J_{\Phi_t}}(x)dx\right)
		=\frac{1}{\vert B\vert}-\frac{2}{K}\J(E_t).
		\end{equation*}
		We get that $f$ is bounded by \eqref{e:bound on energy}.
		To get Lipschitz continuity, we notice that by direct computation in Lagrangian coordinates one can get that
		$\frac{\J(E_{t+h})-\J(E_t)}{h}$ is uniformly bounded, 
		see \cite[Lemma 3.2]{DPHV}.
		Plugging this information into \eqref{e:bound on H^1 norm of g - first bound}, we get
		\begin{equation*}
		c\int_{\mathbb{R}^n}\left\vert\nabla g_h\right\vert^2dx
		+\frac{1}{K}\int_{B} g_h^2 J_{\Phi_t}(x)dx
		\leq C
		+C\int_B{\vert g_h\vert}\,dx.
		\end{equation*}
		Finally, we use Young's inequality and \eqref{e:flow close to id} to obtain
		\begin{equation}\label{e:bound on H^1 norm of g}
		c\int_{\mathbb{R}^n}\left\vert\nabla g_h\right\vert^2dx
		+\frac{1}{2K}\int_{B} g_h^2 J_{\Phi_t}(x)dx
		\leq C.
		\end{equation}
		Thus, $g_h$ is uniformly bounded 
		in $D^1(\mathbb{R}^n)$ and up to a subsequence, there exists a weak
		limit $g_0$ as $h$ goes to zero. Note that $g_0$ satisfies
		\begin{equation}\label{e:equation for g_0}
		\begin{split}
		\int_{\mathbb{R}^n}\nabla\Psi \left(a_B A_t\nabla g_0\right)dx
		+\int_{\mathbb{R}^n}\nabla\Psi \left(a_B \frac{d}{dt}A_t\nabla \tilde{\psi}_t\right)dx
		+\frac{1}{K}\int_{B}\Psi g_0 J_{\Phi_t}(x)\,dx\\
		+\frac{1}{K}\int_{B}{\Psi\tilde{\psi}_t\dot{J}_{\Phi_t}}\,dx
		-\frac{1}{\vert B\vert K}\left(\int_B{\Psi J_{\Phi_t}(x)}dx\right)\left(\int_B g_0 J_{\Phi_t}(x)\,dx-\int_{B}{\tilde{\psi}_t\dot{J}_{\Phi_t}}\,dx\right)\\
		+\frac{1}{\vert B\vert}\left(\int_{B}{\Psi\dot{J}_{\Phi_t}(x)}\,dx\right)\left(1-\frac{1}{K}\int_{B}{\tilde{\psi}_t J_{\Phi_t}(x)}\,dx\right)=0
		\end{split}	
		\end{equation}		
		for any $\Psi\in D^1(\mathbb{R}^n)$.
		Let us show that the equation \eqref{e:equation for g_0} has unique solution. To that end, assume that both $g_0$ and $g_0'$
		are solutions of \eqref{e:equation for g_0}. Then their
		difference $w=g_0-g_0'$ satisfies the following equation		
		\begin{equation}\label{e:linear part of equation for g_0}
		\int_{\mathbb{R}^n}\nabla\Psi \left(a_B A_t\nabla w\right)dx
		+\frac{1}{K}\int_{B}\Psi w J_{\Phi_t}(x)\,dx
		-\frac{1}{\vert B\vert K}\int_B{\Psi J_{\Phi_t}(x)}dx\int_B w J_{\Phi_t}(x)\,dx=0
		\end{equation}		
		for any $\Psi\in D^1(\mathbb{R}^n)$.
		Since $w\in D^1(\mathbb{R}^n)$, we can test \eqref{e:linear part of equation for g_0} with $w$ and get
		\begin{equation*}
		\int_{\mathbb{R}^n}\nabla w \left(a_B A_t\nabla w\right)dx
		+\frac{1}{K}\int_{B} w^2 J_{\Phi_t}(x)\,dx
		-\frac{1}{\vert B\vert K}\left(\int_B{w J_{\Phi_t}(x)}\,dx\right)^2=0.
		\end{equation*}		
		By Cauchy-Schwartz inequality, it yields 
		\begin{equation*}
		\int_{\mathbb{R}^n}\nabla w \left(a_B A_t\nabla w\right)dx
\leq 0,		
		\end{equation*}		
		which in turn gives us $w=0$ by ellipticity of $A_t$.
		Thus, the solution of \eqref{e:equation for g_0} is unique
		and thus the whole sequence $g_h$ converges to $g_0$.		
		
		To get the strong convergence of the material derivative,
		we observe that using $g_h$ as a test function in its
		Euler-Lagrange equation, we get the convergence
		of the norm in $H^1$ to the norm of $g_0$. That, together
		with weak convergence, gives us strong convergence of $g_h$.
				
		\textbf{Step 3:} existence of the shape derivative.
		
		We want to show that	
		$$\dot{\psi}_t=\frac{d}{dt}\tilde{\psi}_t-X\cdot\nabla\psi_t$$			in $D^1(E_t)\cap D^1(E_t^c)$.
		Indeed, since $\psi_t(x)=\dot{\psi}_t(\Phi_t^{-1}(x))$, 
		we have
		\begin{equation}\label{e:diff of composition}
			\frac{\psi_{t+h}(x)-\psi_{t+h}(x)}{h}
			=\frac{\psi_{t+h}(\Phi^{-1}_{t+h}(x))-\psi_{t}(\Phi^{-1}_{t+h}(x))}{h}+\frac{\psi_t(\Phi^{-1}_{t+h}(x))-\psi_{t}(\Phi^{-1}_t(x))}{h}.
		\end{equation}			
		The first term on the right-hand side converges strongly
		to $\frac{d}{dt}\psi_t(\Phi^{-1}_{t}(x))$ as $h$ goes to $0$
		by Step 2 and continuity of $\Phi_t$. As for the second term,
		by Proposition \ref{bound on psi_t} and the definition of $\Phi$, 
		it converges to $-\nabla \psi_t(\Phi_t^{-1}(x))\cdot X$. 				strongly in $D^1(E_t)\cap D^1(E_t^c)$.

		\textbf{Step 4:} the equation for the shape derivative.
		
		Now that we know that $t\mapsto\psi_t$ is differentiable,
		we can differentiate the Euler-Lagrange equation for $\psi_t$
		given by (\ref{e:el diff form on E_t})
		and we get
		\begin{equation*}
			\begin{cases}
				-\beta\Delta\dot{\psi}_t=-\frac{1}{K}\dot{\psi}_t+\frac{2}{K}\dot{\J}(E_t)\text{ in }E_t\\
				\Delta\dot{\psi}_t=0\text{ in }E_t^c\\
				\dot{\psi}_t^+-\dot{\psi}_t^{-}=
				-\left(\nabla\psi_t^+-\nabla\psi_t^{-}\right)\cdot X\text{ on }\partial E_t\\
		\beta\nabla\dot{\psi}^+_t\cdot\nu-\nabla\dot{\psi}^{1}_t\cdot\nu=
			-\left(\left(\beta\nabla[\nabla\psi_t^+]-\nabla[\nabla\psi_t^{-}]\right) X\right)\cdot\nu
			\text{ on }\partial E_t
		\end{cases}.
		\end{equation*}		
		
		Now we can use the boundary conditions in (\ref{e:el diff form on E_t})
		to get rid of the tangential part in the right-hand side.
		Indeed,
		\begin{equation*}
			-\left(\nabla\psi_t^+-\nabla\psi_t^{-}\right)\cdot X
			=-\left(\nabla^\tau\psi_t^+-\nabla^\tau\psi_t^{-}\right)\cdot X^\tau-\left(\nabla\psi_t^+-\nabla\psi_t^{-}\right)\cdot\nu (X\cdot\nu)
		\end{equation*}		
		and $\nabla^\tau\psi_t^+=\nabla^\tau\psi_t^{-}$ by differentiating
		the equality $\psi_t^+=\psi_t^{-}$ on the boundary of $E_t$.
	\end{proof}			

	The following observation, which is a consequence of equality
	for $\dot{\psi}_t$ will be useful for us.	
	\begin{lemma}\label{rmk:first bounds on psi dot}
		There exists $f\in H^{3/2}(E_t)\cap H^{3/2}(E_t^c)$ such that 
		\begin{equation}\label{e:def of f}
		f^\pm=\nabla\psi_t^\pm\cdot X\text{ on }\partial E_t, \quad 
		\Vert f^\pm\Vert_{H^{3/2}}\leq C\Vert\nabla\psi_t^\pm\cdot X\Vert_{H^1(\partial E_t)}.
		\end{equation} 
		Consider a function $v:=\dot{\psi}_t+f$, 
		Then $v$ satisfies the equation
		\begin{equation*}
			\begin{cases}
			-\beta\Delta v=-\frac{1}{K}v+\frac{2}{K}\dot{\J}(E_t)
			-\beta\Delta f+\frac{1}{K}f\text{ in }E_t,\\
			\Delta v=\Delta f\text{ in }E_t^c,\\
			v^+-v^{-}=0\text{ on }\partial E_t,\\
			\beta\nabla v^+\cdot\nu-\nabla v^{-}\cdot\nu=
			\left(-\left(\beta\nabla[\nabla\psi_t^+]-\nabla[\nabla\psi_t^{-}]\right) X
			+\beta\nabla f^+
			-\nabla f^-\right)\cdot\nu
			\text{ on }\partial E_t.
			\end{cases}
		\end{equation*}
		\begin{equation}\label{e:v on the boundary}
			v=\dot{\psi}_t^\pm+\nabla\psi_t^\pm\cdot X\text{ on }\partial E_t.
		\end{equation}
		Moreover, the following bounds hold:
		\begin{equation}\label{e:D^1 bound on v}
			\Vert v\Vert_{W^{1,2}(E_t)}+\Vert v\Vert_{D^{1,2}(E_t^c)}			\leq C\left(\vert\dot{\J}(E_t)\vert
			+\Vert X\cdot\nu\Vert_{H^1(\partial E_t)}\right);
		\end{equation}			
		\begin{equation}\label{e:L^2* bound on v}
			\Vert v\Vert_{L^{2^*}(\mathbb{R}^n)}\leq C\left(\vert\dot{\J}(E_t)\vert
			+\Vert X\cdot\nu\Vert_{H^1(\partial E_t)}\right).
		\end{equation}			
		\end{lemma}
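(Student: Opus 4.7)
The proof consists of three tasks: construct the lifting $f$, verify the transmission problem for $v = \dot{\psi}_t + f$, and establish the energy bounds.

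For the construction of $f$, we first note that the traces $\nabla\psi_t^\pm \cdot X$ lie in $H^1(\partial E_t)$: indeed, Proposition \ref{bound on psi_t} gives $\psi_t^\pm \in C^2$ up to the boundary, while Lemma \ref{lm:vectorfield} (iii) guarantees $X \in C^{2,\vartheta}$, and $\partial E_t$ is a $C^{2,\vartheta}$ perturbation of $\partial B_1$ with uniform bounds. Standard trace theory provides a bounded right inverse of $\mathrm{tr}: H^{3/2}(E_t) \to H^1(\partial E_t)$, whose operator norm is uniform in $t$ because of \eqref{e:flow close to id}. Applying it on each side yields $f^+ \in H^{3/2}(E_t)$ and $f^- \in H^{3/2}(E_t^c)$ (with compact support or decay built in for the exterior extension) satisfying \eqref{e:def of f}.

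Verifying the transmission problem is then a direct substitution using \eqref{e:EL for phi dot at t}. The PDEs in $E_t$ and $E_t^c$ and the Neumann jump condition follow by linearity. The only non-trivial identity is the continuity $v^+ = v^-$ on $\partial E_t$: from the jump of $\dot{\psi}_t$ and the definition of $f$,
\[
v^+ - v^- = -(\nabla\psi_t^+ - \nabla\psi_t^-)\cdot\nu\,(X\cdot\nu) + (\nabla\psi_t^+ - \nabla\psi_t^-)\cdot X.
\]
Differentiating the identity $\psi_t^+ = \psi_t^-$ tangentially on $\partial E_t$ shows $(\nabla\psi_t^+ - \nabla\psi_t^-)^\tau = 0$, i.e.\ the jump of the gradient is purely normal, whence the two terms on the right cancel. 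Equation \eqref{e:v on the boundary} is immediate.

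For the bounds \eqref{e:D^1 bound on v} and \eqref{e:L^2* bound on v}, since $v$ is now continuous across $\partial E_t$, it belongs to $W^{1,2}_{\mathrm{loc}}(\mathbb{R}^n)$; combining the $D^1$-bound on the material derivative obtained in Step 2 of Proposition \ref{p:diff of psi} with the bound on $f$, one checks that $v$ is an admissible test function. Testing its transmission problem against $v$ and integrating by parts on each side yields
\[
\int_{E_t}\beta|\nabla v|^2 + \int_{E_t^c}|\nabla v|^2 + \frac{1}{K}\int_{E_t} v^2
= \frac{2}{K}\dot{\J}(E_t)\int_{E_t} v + \mathcal{R}(f,X,\psi_t)(v),
\]
where $\mathcal{R}$ collects the bulk contributions of $f$ and the boundary integral of the Neumann jump $G$ against $v$. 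The bulk $f$-terms are controlled by $\|f\|_{H^{3/2}}^2$, hence by $\|\nabla\psi_t\cdot X\|_{H^1(\partial E_t)}^2$; using Proposition \ref{bound on psi_t} and Lemma \ref{lm:vectorfield} (in particular \eqref{e:tang part of X} to convert $|X|$ into $|X\cdot\nu|$ up to lower-order factors), this is bounded by $\|X\cdot\nu\|_{H^1(\partial E_t)}^2$. The boundary term $\int_{\partial E_t} G v$ is handled by Cauchy–Schwarz and the trace inequality, with the $\|v\|_{H^{1/2}(\partial E_t)}$ factor absorbed via Young into the left-hand side gradient terms. The remaining $\dot{\J}(E_t)$ contribution is handled similarly. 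This yields \eqref{e:D^1 bound on v}, and \eqref{e:L^2* bound on v} follows from the Sobolev embedding $D^{1,2}(\mathbb{R}^n) \hookrightarrow L^{2^*}(\mathbb{R}^n)$, valid since $n \geq 3$.

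The main technical obstacle is the careful bookkeeping of the boundary integral $\int_{\partial E_t} G\, v$: the jump $G$ mixes $X$ with second derivatives of $\psi_t$ and with $\nabla f$, so one needs \eqref{e:tang part of X} together with the $H^{3/2}$-control on $f$ (and hence $H^1$-control on $\nabla f$ up to $\partial E_t$) to express everything in terms of $\|X\cdot\nu\|_{H^1(\partial E_t)}$ alone, while ensuring that the absorbed gradient contributions have small constants compatible with the ellipticity of $a_{E_t}$.
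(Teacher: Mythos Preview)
Your proposal is correct and follows essentially the same approach as the paper: construct $f$ as a trace extension, verify the transmission problem for $v$ (the paper is terse here, while you spell out the tangential-gradient cancellation behind $v^+=v^-$), test against $v$ via the divergence theorem, and then apply Young, Cauchy--Schwarz, and the trace inequality together with Proposition~\ref{bound on psi_t} and \eqref{e:tang part of X} to reach \eqref{e:D^1 bound on v}, with \eqref{e:L^2* bound on v} following from Sobolev embedding.
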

		\begin{proof}
		The function $f$ exists since $\nabla\psi_t^\pm\cdot X\in H^1(\partial E_t)$. The equation for $v$ follows from the equation for $\dot{\psi}_t$ and the definition of $f$.
		Using divergence theorem, we get
		\begin{equation*}
		\begin{split}
			\int_{E_t}\frac{1}{K}v^2{dx}+\int_{E_t}\beta\vert\nabla v\vert^2{dx}+\int_{E_t^c}\vert\nabla v\vert^2{dx}
			=\int_{E_t}\left(\frac{2}{K}\dot{\J}(E_t)
			-\beta\Delta f+\frac{1}{K}f\right)v\,{dx}\\
			-\int_{E_t^c}\Delta f\,v\,{dx}
			+\int_{\partial E_t}\left(\left(-\left(\beta\nabla[\nabla\psi_t^+]-\nabla[\nabla\psi_t^{-}]\right) X
			+\beta\nabla f^+
			-\nabla f^-\right)\cdot\nu\right)v\,dx
		\end{split},	
		\end{equation*}			
		which by Young, Cauchy-Schwarz, and trace inequalities,
		recalling \eqref{e:def of f}, implies that
		\begin{equation*}
		\begin{split}
			\Vert v\Vert_{W^{1,2}(E_t)}+\Vert v\Vert_{D^{1}(E_t^c)}			\leq C\left(\vert\dot{\J}(E_t)\vert
			+\Vert\nabla\psi_t\cdot X\Vert_{H^1(\partial E_t)}\right)
		\end{split},	
		\end{equation*}			
		which in turn implies by Proposition \ref{bound on psi_t}
		and \eqref{e:tang part of X}
		\begin{equation*}
			\Vert v\Vert_{W^{1,2}(E_t)}+\Vert v\Vert_{D^{1}(E_t^c)}			\leq C\left(\vert\dot{\J}(E_t)\vert
			+\Vert X\cdot\nu\Vert_{H^1(\partial E_t)}\right).
		\end{equation*}			
		Moreover, we also can bound the $L^{2^*}$ norm of $v$. 
		Indeed, since $v$ doesn't have a jump on the boundary of $E_t$, 
		we know by \eqref{e:D^1 bound on v} that it belongs to the space $D^{1}(\mathbb{R}^n)$. Thus,
		employing Gagliardo-Nirenberg-Sobolev inequality we get
		\eqref{e:L^2* bound on v}.

	\end{proof}

	\begin{proposition}\label{p:first_derivative}
		For any $t\in [0,1]$, 
	\begin{equation*}
	\begin{aligned}
		\dot{\J}(E_t)&=\left(1-\frac{1}{K}\int_{E_t}\psi_t dx\right)\frac{1}{\vert E_t\vert}\int_{\partial E_t}{\psi_{E_t}(X\cdot\nu)}d\mathcal{H}^{n-1}\\
		&+\frac{1}{2}\int_{\partial E_t}{\left(\beta\vert\nabla \psi_t^+\vert^2-\vert\nabla \psi_t^{-}\vert^2\right)(X\cdot\nu)}d\mathcal{H}^{n-1}+\frac{1}{2K}\int_{\partial E_t}\psi_t^2(X\cdot\nu)d\mathcal{H}^{n-1}\\
		&-\int_{\partial E_t}{\left(\nabla\psi_t^{-}\cdot\nu\right)\left(\left(\nabla\psi_t^+-\nabla\psi_t^{-}\right)\cdot\nu\right)(X\cdot\nu)}d\mathcal{H}^{n-1}\\
		&=\left(1-\frac{1}{K}\int_{E_t}\psi_t dx\right)\frac{1}{\vert E_t\vert}\int_{E_t}{\div(\psi_tX)}dx
		+\frac{1}{2}\int_{\mathbb{R}^n}{\div(a_{E_t}\vert\nabla \psi_t\vert^2 X)}dx\\
		&\qquad+\frac{1}{2K}\int_{E_t}\div(\psi_t^2 X) dx
		-\int_{\mathbb{R}^n}{\div\left(a_{E_t}\left(\nabla\psi_t\cdot\nu\right)^2X\right)}dx.
	\end{aligned}	
	\end{equation*}
		In particular, $$\dot{\J}(B_1)=0.$$
	\end{proposition}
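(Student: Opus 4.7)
The plan is to differentiate each term in the explicit expression for $\J(E_t)$ by Hadamard's formula and to use the Euler--Lagrange equations to cancel the interior contributions. I would first split the Dirichlet piece as $\int_{\mathbb{R}^n}a_{E_t}\vert\nabla\psi_t\vert^2 dx = \int_{E_t}\beta\vert\nabla\psi_t^+\vert^2 dx + \int_{E_t^c}\vert\nabla\psi_t^-\vert^2 dx$, and use $\vert E_t\vert = \vert B_1\vert$ from Lemma \ref{lm:vectorfield}(ii) to regard the volume normalization as a constant. For each moving-domain integral I apply the standard Hadamard identity
\begin{equation*}
\frac{d}{dt}\int_{E_t}f_t\,dx = \int_{E_t}\dot{f}_t\,dx + \int_{\partial E_t}f_t(X\cdot\nu)\,d\mathcal{H}^{n-1},
\end{equation*}
(with the opposite sign on $E_t^c$). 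The shape derivative $\dot{\psi}_t$ exists by Proposition \ref{p:diff of psi} and satisfies \eqref{e:EL for phi dot at t}.

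Next I integrate the bulk terms $2\beta\int_{E_t}\nabla\psi_t^+\cdot\nabla\dot{\psi}_t^+ dx$ and $2\int_{E_t^c}\nabla\psi_t^-\cdot\nabla\dot{\psi}_t^- dx$ by parts. The volume pieces are rewritten via the bulk Euler--Lagrange equation \eqref{e:el diff form on E_t}: $-\beta\Delta\psi_t^+ = -\frac{1}{K}\psi_t + \frac{2}{K}\J(E_t) - \frac{1}{\vert B_1\vert}$ in $E_t$ and $\Delta\psi_t^- = 0$ in $E_t^c$. The resulting boundary integrals combine through the transmission condition $\beta\nabla\psi_t^+\cdot\nu = \nabla\psi_t^-\cdot\nu$ into the single expression $\int_{\partial E_t}(\nabla\psi_t^-\cdot\nu)(\dot{\psi}_t^+ - \dot{\psi}_t^-)\,d\mathcal{H}^{n-1}$. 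The jump condition for $\dot{\psi}_t$ in \eqref{e:EL for phi dot at t} then replaces $\dot{\psi}_t^+ - \dot{\psi}_t^-$ by $-(\nabla\psi_t^+ - \nabla\psi_t^-)\cdot\nu\,(X\cdot\nu)$, producing the fourth boundary term in the proposition.

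The key cancellation is of envelope type: collecting all contributions that multiply $\int_{E_t}\dot{\psi}_t^+\,dx$ gives a total coefficient $\frac{2}{K}\J(E_t) - \frac{1}{\vert B_1\vert K}\int_{E_t}\psi_t\,dx$, which vanishes identically by \eqref{otherformenergy}. What survives is exactly the sum of the four surface integrals in the first expression of the proposition. The second expression then follows by applying the divergence theorem on $E_t$ and $E_t^c$ separately to each surface integral, and by using once more the transmission condition to recognize
\begin{equation*}
\int_{\partial E_t}(\nabla\psi_t^-\cdot\nu)((\nabla\psi_t^+ - \nabla\psi_t^-)\cdot\nu)(X\cdot\nu)\,d\mathcal{H}^{n-1} = \int_{\mathbb{R}^n}\div\bigl(a_{E_t}(\nabla\psi_t\cdot\nu)^2 X\bigr)\,dx,
\end{equation*}
where the right-hand side is interpreted as the sum of the integrals over $E_t$ and $E_t^c$.

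For the last assertion I use Proposition \ref{p:radiality of min fct}: since $\psi_0$ is radial, on $\partial B_1$ each of $\psi_0$, $\vert\nabla\psi_0^\pm\vert^2$ and $\nabla\psi_0^\pm\cdot\nu$ is constant, and since $\div X = 0$ near $\partial B_1$ by Lemma \ref{lm:vectorfield}(i), one has $\int_{\partial B_1}X\cdot\nu\,d\mathcal{H}^{n-1} = 0$; hence every surface integral in the first expression vanishes at $t=0$. The main technical obstacle is the careful bookkeeping of signs and jump terms in the double integration by parts across $\partial E_t$, together with the envelope-type cancellation of $\int_{E_t}\dot{\psi}_t^+$, which precisely encodes the minimality of $\psi_t$ in the definition of $\J(E_t)$.
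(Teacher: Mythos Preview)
Your proposal is correct and follows essentially the same strategy as the paper: differentiate the defining expression for $\J(E_t)$ via Hadamard's formula, integrate the bulk Dirichlet terms by parts using the Euler--Lagrange system \eqref{e:el diff form on E_t}, and invoke the jump conditions in \eqref{e:EL for phi dot at t} to convert the remaining boundary contribution into the stated form. The only organizational difference is that the paper first records the identity $\dot{\J}(E_t)=\frac{1}{2|E_t|}\int_{E_t}\dot{\psi}_t+\frac{1}{2|E_t|}\int_{\partial E_t}\psi_t(X\cdot\nu)$ coming from \eqref{otherformenergy} and then eliminates $\int_{E_t}\dot{\psi}_t$ by combining it with a second computation of $\dot{\J}(E_t)$, whereas you carry all terms simultaneously and kill the $\int_{E_t}\dot{\psi}_t^+$ contribution directly via the envelope identity $\frac{2}{K}\J(E_t)=\frac{1}{|B_1|K}\int_{E_t}\psi_t$; these are two bookkeepings of the same cancellation.
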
		
		
	\begin{proof}
	We note that by (\ref{otherformenergy})
	\begin{equation*}
		\frac{d}{dt}\J(E_t)=\frac{1}{2\vert E_t\vert}\int_{E_t}{\dot{\psi}_t}dx+\frac{1}{2\vert E_t\vert}\int_{\partial E_t}{\psi_{E_t}(X\cdot\nu)}d\mathcal{H}^{n-1}.
	\end{equation*}
	
	Now we use the definition of $\J$ to get
	\begin{equation*}
	\begin{aligned}
		\dot{\J}(E_t)&=\int_{\mathbb{R}^n}{a_{E_t}\nabla\psi_t\cdot\nabla\dot{\psi}_t}dx+\frac{1}{2}\int_{\partial E_t}{\left(\beta\vert\nabla \psi_t^+\vert^2-\vert\nabla \psi_t^{-}\vert^2\right)(X\cdot\nu)}d\mathcal{H}^{n-1}\\
		&+2\dot{\J}(E_t)-\frac{2}{K}\vert E\vert 2\dot{\J}(E_t)\J(E_t)+\frac{1}{K}\int_{E_t}\dot{\psi_t}\psi_t dx+\frac{1}{2K}\int_{\partial E_t}\psi_t^2(X\cdot\nu)dx.
	\end{aligned}
	\end{equation*}
	
	Using (\ref{e:EL for phi dot at t}), we obtain
	\begin{equation*}
	\begin{aligned}
		\dot{\J}(E_t)&=-\frac{1}{\vert E_t\vert}\left(\int_{E_t}{\dot{\psi}_t}dx\right)\left(1-\frac{1}{K}\int_{E_t}\psi_t dx\right)+2\dot{\J}(E_t)\left(1-\frac{1}{K}\int_{E_t}{\psi_t}dx\right)\\
		&+\frac{1}{2}\int_{\partial E_t}{\left(\beta\vert\nabla \psi_t^+\vert^2-\vert\nabla \psi_t^{-}\vert^2\right)(X\cdot\nu)}d\mathcal{H}^{n-1}+\frac{1}{2K}\int_{\partial E_t}\psi_t^2(X\cdot\nu)dx\\
		&+\int_{\partial E_t}{\left(\beta\dot{\psi}^+\nabla\psi_t^+\cdot\nu-\dot{\psi}^{-}\nabla\psi_t^{-}\cdot\nu\right)}d\mathcal{H}^{n-1}\\
		&=\left(1-\frac{1}{K}\int_{E_t}\psi_t dx\right)\frac{1}{\vert E_t\vert}\int_{\partial E_t}{\psi_{E_t}(X\cdot\nu)}d\mathcal{H}^{n-1}\\
		&+\frac{1}{2}\int_{\partial E_t}{\left(\beta\vert\nabla \psi_t^+\vert^2-\vert\nabla \psi_t^{-}\vert^2\right)(X\cdot\nu)}d\mathcal{H}^{n-1}+\frac{1}{2K}\int_{\partial E_t}\psi_t^2(X\cdot\nu)d\mathcal{H}^{n-1}\\
		&-\int_{\partial E_t}{\left(\nabla\psi_t^{-}\cdot\nu\right)\left(\left(\nabla\psi_t^+-\nabla\psi_t^{-}\right)\cdot\nu\right)(X\cdot\nu)}d\mathcal{H}^{n-1}\\
		&=\left(1-\frac{1}{K}\int_{E_t}\psi_t dx\right)\frac{1}{\vert E_t\vert}\int_{E_t}{\div(\psi_tX)}dx
		+\frac{1}{2}\int_{\mathbb{R}^n}{\div(a_{E_t}\vert\nabla \psi_t\vert^2 X)}dx\\
		&\qquad+\frac{1}{2K}\int_{E_t}\div(\psi_t^2 X) dx
		-\int_{\mathbb{R}^n}{\div\left(a_{E_t}\left(\nabla\psi_t\cdot\nu\right)^2X\right)}dx.
	\end{aligned}	
	\end{equation*}
	
	Note that from the second to last expression it is easy to see that 
	$\dot{\J}(B_1)=0$ as $\psi_0$ is radial by Proposition \ref{p:radiality of min fct} and the volume
	of $E_t$ is constant (hence $\int_{\partial B_1}{(X\cdot\nu)}d\mathcal{H}^{n-1}=0$).

\end{proof}
%
%
%

	\subsection{Second derivative} 
	
	Now we differentiate again to get
	

	\begin{equation*}
	\begin{aligned}
		\ddot{\J}(E_t)
		&=-\frac{2}{K}\dot{\J}(E_t)\int_{E_t}{\mathrm{div}(\psi_t X)}dx\\
		&+\frac{1-\frac{2}{K}\vert E_t\vert\J(E_t)}{\vert E_t\vert}\left(\int_{E_t}{\mathrm{div}(\dot{\psi}_t X)}dx+\int_{\partial E_t}{\mathrm{div}(\psi_t X)(X\cdot\nu)}d\mathcal{H}^{n-1}\right)\\
		&+\int_{\partial E_t}{(\beta\nabla\psi_t^+\cdot\nabla\dot{\psi}_t^+
		-\nabla\psi_t^{-}\cdot\nabla\dot{\psi}_t^{-})(X\cdot\nu)}d\mathcal{H}^{n-1}\\
		&+\frac{1}{2}\int_{\partial E_t}{\nabla\left[\beta\vert\nabla \psi_t^+\vert^2-\vert\nabla \psi_t^{-}\vert^2\right]\cdot X(X\cdot\nu)}d\mathcal{H}^{n-1}\\
		&+\frac{1}{K}\int_{\partial E_t}{\psi_t\dot{\psi}_t(X\cdot\nu)}d\mathcal{H}^{n-1}
		+\frac{1}{K}\int_{\partial E_t}{\psi_t\nabla\psi_t\cdot X(X\cdot\nu)}d\mathcal{H}^{n-1}\\
		&-2\int_{\partial E_t}\left(\beta\left(\nabla\dot{\psi}_t^+\cdot\nu\right)\left(\nabla\psi_t^+\cdot\nu\right)
		-\left(\nabla\dot{\psi}_t^{-}\cdot\nu\right)\left(\nabla\psi_t^{-}\cdot\nu\right)
		\right)(X\cdot\nu)d\mathcal{H}^{n-1}\\
		&-\int_{\partial E_t}\nabla\left[\beta(\nabla\psi_t^+\cdot\nu)^2-(\nabla\psi_t^{-}\cdot\nu)^2\right]\cdot X(X\cdot\nu)d\mathcal{H}^{n-1}.
	\end{aligned}	
	\end{equation*}
	
	Using that the vector field $X$ is divergence-free in the neighborhood of $\partial B_1$ we get for $t$ small enough
	
	\begin{equation}\label{e:second_derivative}
	\begin{aligned}
		\ddot{\J}(E_t)
		&=-\frac{2}{K}\dot{\J}(E_t)\int_{\partial E_t}{\psi_t (X\cdot\nu)}d\mathcal{H}^{n-1}\\
		&+\frac{1-\frac{2}{K}\vert E_t\vert\J(E_t)}{\vert E_t\vert}\left(\int_{\partial E_t}{\dot{\psi}_t (X\cdot\nu)}d\mathcal{H}^{n-1}+\int_{\partial E_t}{(\nabla\psi_t^+\cdot X)(X\cdot\nu)}d\mathcal{H}^{n-1}\right)\\
		&+\int_{\partial E_t}{(\beta\nabla\psi_t^+\cdot\nabla\dot{\psi}_t^+
		-\nabla\psi_t^{-}\cdot\nabla\dot{\psi}_t^{-})(X\cdot\nu)}d\mathcal{H}^{n-1}\\
		&+\frac{1}{2}\int_{\partial E_t}{\nabla\left[\beta\vert\nabla \psi_t^+\vert^2-\vert\nabla \psi_t^{-}\vert^2\right]\cdot X(X\cdot\nu)}d\mathcal{H}^{n-1}\\
		&+\frac{1}{K}\int_{\partial E_t}{\psi_t\dot{\psi}_t^+(X\cdot\nu)}d\mathcal{H}^{n-1}
		+\frac{1}{K}\int_{\partial E_t}{\psi_t\nabla\psi_t^+\cdot X(X\cdot\nu)}d\mathcal{H}^{n-1}\\
		&-2\int_{\partial E_t}\left(\beta\left(\nabla\dot{\psi}_t^+\cdot\nu\right)\left(\nabla\psi_t^+\cdot\nu\right)
		-\left(\nabla\dot{\psi}_t^{-}\cdot\nu\right)\left(\nabla\psi_t^{-}\cdot\nu\right)
		\right)(X\cdot\nu)d\mathcal{H}^{n-1}\\
		&-\int_{\partial E_t}\nabla\left[\beta(\nabla\psi_t^+\cdot\nu)^2-(\nabla\psi_t^{-}\cdot\nu)^2\right]\cdot X(X\cdot\nu)d\mathcal{H}^{n-1}.
	\end{aligned}	
	\end{equation}

	Now to prove Lemma \ref{taylor} we only need the following bound 
	on the second derivative.	
	
	\begin{lemma}\label{l:secdercont}
		There exist $\delta>0$ and a bounded function $g$
		such that if 
		$\Vert\varphi\Vert_{C^{2,\vartheta}}<\delta$, then
		\begin{equation*}
			\left\vert\ddot{\J}(E_t)\right\vert
			\leq g\left(\Vert\varphi\Vert_{C^{2,\vartheta}}\right)\Vert X\cdot\nu\Vert^2_{H^{1}(\partial B_1)}.
		\end{equation*}
	\end{lemma}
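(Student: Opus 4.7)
I will bound each of the nine summands of the expression \eqref{e:second_derivative} separately by a quantity of the form $g(\|\varphi\|_{C^{2,\vartheta}})\,\|X\cdot\nu\|_{H^1(\partial B_1)}^2$. The main ingredients are: (a) the $C^2$-bound $\|\psi_t\|_{C^2(\overline{E_t})}+\|\psi_t\|_{C^2(\overline{E_t^c})}\le C$ from Proposition \ref{bound on psi_t} applied to both sides of the transmission problem \eqref{e:el diff form on E_t}; (b) the first-derivative estimate $|\dot{\J}(E_t)|\le C\|X\cdot\nu\|_{L^1(\partial E_t)}$ obtained by inserting (a) into Proposition \ref{p:first_derivative}; (c) the $W^{1,2}$ and $L^{2^*}$ bounds on $v=\dot{\psi}_t+f$ from Lemma \ref{rmk:first bounds on psi dot}, which combined with (b) yield $\|v\|_{H^1(E_t)}+\|v\|_{D^{1}(E_t^c)}\le C\|X\cdot\nu\|_{H^1(\partial E_t)}$; (d) the near-normality $|X^\tau|\le\omega(\|\varphi\|_{C^{2,\vartheta}})|X\cdot\nu|$ from \eqref{e:tang part of X}; (e) the transfer inequality \eqref{e:phi to X nu} combined with the elementary relation $\|\varphi\|_{H^1(\partial B_1)}\le(1+\omega)\|X\cdot\nu\|_{H^1(\partial B_1)}$ visible in the proof of Lemma \ref{lm:vectorfield}.

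\textbf{Easy summands.} The four summands of \eqref{e:second_derivative} in which neither $\dot{\psi}_t$ nor $\dot{\J}(E_t)$ appears have pointwise integrand bounded by $C|X||X\cdot\nu|$ by (a), hence by $C|X\cdot\nu|^2$ by (d); integrating over $\partial E_t$ and invoking (e) handles them. The summand $\dot{\J}(E_t)\int\psi_t(X\cdot\nu)$ is controlled by (a), (b), and H\"older's inequality by $C\|X\cdot\nu\|_{L^1(\partial E_t)}^2\le C|\partial E_t|\|X\cdot\nu\|_{L^2(\partial E_t)}^2$.

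\textbf{Boundary integrals of undifferentiated $\dot{\psi}_t$.} For the two summands $\int\dot{\psi}_t(X\cdot\nu)$ and $\int\psi_t\dot{\psi}_t(X\cdot\nu)$, I substitute $\dot{\psi}_t^\pm=v-f^\pm$. The trace inequality and (c) yield $\|v\|_{L^2(\partial E_t)}\le C\|v\|_{H^1(E_t)}\le C\|X\cdot\nu\|_{H^1(\partial E_t)}$, while (a), (d) and the defining property of $f$ in \eqref{e:def of f} control $\|f^\pm\|_{L^2(\partial E_t)}$ by $C\|X\cdot\nu\|_{L^2(\partial E_t)}$. Cauchy-Schwarz then closes both terms.

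\textbf{Main obstacle: $\nabla\dot{\psi}_t$ on the boundary.} The two remaining summands (number $4$ and number $8$ in \eqref{e:second_derivative}) contain $\nabla\dot{\psi}_t$ restricted to $\partial E_t$, for which (c) provides no direct control. To handle them I upgrade $v$ to $H^2$ by applying the classical $H^2$ elliptic regularity theory for transmission problems to the system solved by $v$ in Lemma \ref{rmk:first bounds on psi dot}, after choosing $f$ to be a slightly smoother extension of $\nabla\psi_t\cdot X$ (which is admissible because this trace is $C^1$ by (a) and the smoothness of $X$). Since $\partial E_t$ is $C^{2,\vartheta}$, the coefficient $a_{E_t}$ is piecewise constant, the volume data sits in $L^2$ with norm bounded by $C\|X\cdot\nu\|_{H^1(\partial E_t)}$ (via (a), (b) and the smoother extension), and the conormal jump lies in $H^{1/2}(\partial E_t)$ with the same control (via (a), (d), and the regularity of $X$), this yields
\[
\|v\|_{H^2(E_t)}+\|v\|_{H^2(E_t^c\cap B_R)}\le g(\|\varphi\|_{C^{2,\vartheta}})\,\|X\cdot\nu\|_{H^1(\partial E_t)}
\]
for every fixed ball $B_R$. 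The trace theorem then gives $\|\nabla v\|_{L^2(\partial E_t)}\le C\|X\cdot\nu\|_{H^1(\partial E_t)}$; writing $\nabla\dot{\psi}_t=\nabla v-\nabla f$ and bounding $\|\nabla f\|_{L^2(\partial E_t)}$ analogously, the two summands are controlled by (a) and Cauchy-Schwarz. Summing the contributions of all nine summands and invoking (e) delivers the lemma.
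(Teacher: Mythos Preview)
Your ingredient (b)---bounding $|\dot{\J}(E_t)|$ directly by $C\|X\cdot\nu\|_{L^1(\partial E_t)}$ via Proposition~\ref{p:first_derivative} and Proposition~\ref{bound on psi_t}---is correct and is in fact cleaner than what the paper does: the paper carries $|\dot{\J}(E_t)|$ through the estimates for the terms containing $\dot\psi_t$ (their $I_4,I_5,I_6$) and then absorbs it via a continuity/Gr\"onwall-type argument exploiting $\dot{\J}(B_1)=0$. Your observation renders that detour unnecessary.

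However, your ``Main obstacle'' step has a genuine gap. To upgrade $v$ to $H^2$ via the standard transmission estimate (Theorem~\ref{t:sobolev bounds for transmission problem} with $r=0$) you need the conormal jump
\[
\bigl(-(\beta\nabla[\nabla\psi_t^+]-\nabla[\nabla\psi_t^-])X+\beta\nabla f^+-\nabla f^-\bigr)\cdot\nu
\]
to lie in $H^{1/2}(\partial E_t)$ with norm controlled by $\|X\cdot\nu\|_{H^1}$. Your justification invokes (a), but (a) only gives $\nabla^2\psi_t\in C^0$ (at best $C^{0,\vartheta}$, with $\vartheta$ allowed to be $\le 1/2$), and a $C^0$---or $C^{0,\vartheta}$ for small $\vartheta$---multiplier does not map $H^1(\partial E_t)$ into $H^{1/2}(\partial E_t)$: you would need $\nabla^2\psi_t$ Lipschitz, i.e.\ $\psi_t\in C^{2,1}$, which is unavailable when $\partial E_t$ is only $C^{2,\vartheta}$. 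The same issue hits the ``slightly smoother'' extension $f$: the trace $\nabla\psi_t\cdot X$ is only $C^1$, hence in $H^1(\partial E_t)$ but not in $H^{3/2}(\partial E_t)$ in general, so an $H^2$ extension with the right bound is not guaranteed, and $\nabla f^\pm$ on $\partial E_t$ is not placed in $H^{1/2}$ either.

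The paper circumvents this by interpolating the transmission estimate between $r=-1$ and $r=0$ to obtain an $H^{3/2}$ bound (Lemma~\ref{l:H^1/2 bound}), which requires the conormal jump only in $L^2(\partial E_t)$; that follows immediately from (a). Combined with a cut-off and the $L^{2^*}$ control on $v$, this yields $\|\nabla\dot\psi_t^\pm\|_{L^2(\partial E_t)}\le C(\|X\cdot\nu\|_{H^1}+|\dot{\J}(E_t)|)$ (Proposition~\ref{sobolev bounds for transmission problem}), after which your (b) closes the argument without any $H^2$ upgrade.
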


	We will need the following proposition.
	\begin{proposition}\label{sobolev bounds for transmission problem}
	$$		\Vert\dot{\psi}_t^+\Vert_{H^1(\partial E_t)}+
		\Vert\dot{\psi}_t^{-}\Vert_{H^{1}(\partial E_t)}
		\leq C\left(\Vert X\cdot\nu\Vert_{H^{1}(\partial E_t)}+\left\vert\dot{\mathcal{J}}(E_t)\right\vert\right).$$
	\end{proposition}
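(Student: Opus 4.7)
The plan is to combine the regularity of the auxiliary function $v=\dot\psi_t+f$ introduced in Lemma \ref{rmk:first bounds on psi dot} with the pointwise $C^2$ regularity of $\psi_t^\pm$ afforded by Proposition \ref{bound on psi_t}. Recall from \eqref{e:v on the boundary} that
$$\dot\psi_t^\pm = v - \nabla\psi_t^\pm\cdot X \quad\text{on }\partial E_t,$$
so the proposition reduces to estimating separately $\Vert v\Vert_{H^1(\partial E_t)}$ and $\Vert\nabla\psi_t^\pm\cdot X\Vert_{H^1(\partial E_t)}$ by the right-hand side of the desired inequality.

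For the second term, I would decompose $X = (X\cdot\nu)\nu + X^\tau$ and expand
$$\nabla\psi_t^\pm\cdot X = (\nabla\psi_t^\pm\cdot\nu)(X\cdot\nu) + \nabla\psi_t^\pm\cdot X^\tau.$$
Proposition \ref{bound on psi_t} yields a uniform $C^2$ bound on $\psi_t^\pm$ up to $\partial E_t$, so $\nabla\psi_t^\pm$ and all its tangential derivatives are controlled on $\partial E_t$. Applying the product rule on $\partial E_t$, together with the pointwise bound \eqref{e:tang part of X} that forces $\vert X^\tau\vert$ to be dominated by $\omega(\Vert\varphi\Vert_{C^{2,\vartheta}})\vert X\cdot\nu\vert$, one obtains
$$\Vert\nabla\psi_t^\pm\cdot X\Vert_{H^1(\partial E_t)}\le g(\Vert\varphi\Vert_{C^{2,\vartheta}})\,\Vert X\cdot\nu\Vert_{H^1(\partial E_t)},$$
provided the $C^{2,\vartheta}$ norm of $\varphi$ stays below the threshold $\delta$ of Lemma \ref{lm:vectorfield} so that $\partial E_t$ has uniform charts.

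For the first term the key is to upgrade the $D^{1,2}$ bound \eqref{e:D^1 bound on v} on $v$ to an $H^2$ bound on each side of $\partial E_t$. The function $v$ solves the transmission problem of Lemma \ref{rmk:first bounds on psi dot}: the right-hand sides in $E_t$ and $E_t^c$ are $L^2$ functions whose norms are controlled by $\vert\dot\J(E_t)\vert$ plus terms from $f$; $v$ is continuous across $\partial E_t$ by construction; and the jump of the conormal derivative equals
$$\bigl(-(\beta\nabla[\nabla\psi_t^+]-\nabla[\nabla\psi_t^-])X+\beta\nabla f^+-\nabla f^-\bigr)\cdot\nu,$$
which lies in $H^{1/2}(\partial E_t)$ with norm dominated by $\Vert X\cdot\nu\Vert_{H^1(\partial E_t)}+\vert\dot\J(E_t)\vert$, thanks to Proposition \ref{bound on psi_t} and the extension property \eqref{e:def of f}. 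Standard $H^2$ elliptic regularity for such transmission problems, applied after flattening $\partial E_t$ via the diffeomorphism of Lemma \ref{diffeo} and, for the exterior piece, after a localisation exploiting the decay of harmonic functions at infinity, yields
$$\Vert v^+\Vert_{H^2(E_t)}+\Vert v^-\Vert_{H^2(E_t^c)}\le C\bigl(\Vert X\cdot\nu\Vert_{H^1(\partial E_t)}+\vert\dot\J(E_t)\vert\bigr).$$
Taking traces gives $v\in H^{3/2}(\partial E_t)\hookrightarrow H^1(\partial E_t)$ with the same bound, which combined with the estimate for $\nabla\psi_t^\pm\cdot X$ concludes the proof.

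The main obstacle lies in the last step: carrying out the $H^2$ transmission regularity with constants that remain uniform in $t\in[0,1]$ and in $\varphi$ (as long as $\Vert\varphi\Vert_{C^{2,\vartheta}}<\delta$). This amounts to showing that after pulling back via the flow $\Phi_t$ the resulting elliptic system on the ball has Lipschitz coefficients close to the constant ones of the transmission problem on $B_1$, so that the classical Calderón--Zygmund $H^2$ estimate and the trace $H^2\to H^{3/2}$ apply with a universal constant. The unboundedness of $E_t^c$ is handled by a standard cutoff argument using the explicit $\vert x\vert^{2-n}$ decay of the harmonic part of $v^-$.
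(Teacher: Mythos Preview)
Your overall architecture is the same as the paper's: write $\dot\psi_t^\pm = v - \nabla\psi_t^\pm\cdot X$ on $\partial E_t$, bound the second term directly using Proposition~\ref{bound on psi_t} and the control \eqref{e:tang part of X} on $X^\tau$, and then obtain a volume Sobolev estimate for $v$ strong enough to take traces. The gap is in the last step: the $H^2$ transmission estimate you invoke is not available for the data at hand.

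Recall from Lemma~\ref{rmk:first bounds on psi dot} that $f$ is merely an $H^{3/2}$ extension of $\nabla\psi_t^\pm\cdot X$, with $\Vert f^\pm\Vert_{H^{3/2}}\le C\Vert\nabla\psi_t^\pm\cdot X\Vert_{H^1(\partial E_t)}$. Consequently $\Delta f^\pm$ lies only in $H^{-1/2}$, not in $L^2$, so the interior right-hand sides of the transmission system for $v$ are not $L^2$ as you claim. Likewise, the trace $\nabla f^\pm\cdot\nu$ on $\partial E_t$ is only $L^2$ (trace of an $H^{1/2}$ vector field), and the remaining piece $(\beta\nabla[\nabla\psi_t^+]-\nabla[\nabla\psi_t^-])X\cdot\nu$ involves second derivatives of $\psi_t$, which Proposition~\ref{bound on psi_t} controls only in $C^0$; multiplying a merely continuous coefficient by $X\cdot\nu\in H^1$ does not produce an $H^{1/2}$ function. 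Thus neither the source terms nor the conormal jump have the regularity required for the standard $H^2$ transmission estimate (Theorem~\ref{t:sobolev bounds for transmission problem} with $r=0$), and your argument breaks down precisely where you say ``which lies in $H^{1/2}(\partial E_t)$''.

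The paper repairs this by \emph{not} aiming for $H^2$. Instead it proves an $H^{3/2}$ estimate (Lemma~\ref{l:H^1/2 bound}) by interpolating Theorem~\ref{t:sobolev bounds for transmission problem} between $r=0$ and the energy estimate at $r=-1$; this requires only $f^\pm\in H^{-1/2}$ and conormal jump in $L^2$, both of which hold. The unboundedness of $E_t^c$ is handled by multiplying $v$ by a fixed cutoff $\eta$ supported in $B_3$ and applying Lemma~\ref{l:H^1/2 bound} on $B_3$; the extra commutator terms $\nabla v\cdot\nabla\eta$ and $v\,\Delta\eta$ live away from $\partial E_t$ and are controlled by \eqref{e:D^1 bound on v} and \eqref{e:L^2* bound on v}. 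Trace of $H^{3/2}$ then gives exactly the $H^1(\partial E_t)$ bound you need. If you want to salvage your $H^2$ route, you would first have to upgrade Proposition~\ref{bound on psi_t} to a uniform $C^{2,\eta}$ bound on $\psi_t^\pm$ (so that the conormal jump gains half a derivative) and choose $f$ as an $H^{5/2}$ extension; this is plausible in view of Lemma~\ref{lemmax2}, but it is additional work not contained in your proposal.
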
		

	To prove the proposition we will use the following theorem
	concerning Sobolev bounds.
	\begin{theorem}(\cite[Theorem 4.20]{McL})\label{t:sobolev bounds for transmission problem}
		Let $G_1$ and $G_2$ be bounded open subsets of $\mathbb{R}^n$
		such that $\overline{G}_1\Subset G_2$ and $G_1$ intersects
		an $(n-1)$-dimensional manifold $\Gamma$, and put
		$$\Omega_j^\pm=G_j\cap\Omega^\pm\text{ and }\Gamma_j=G_j\cap\Gamma\text{ for }j=1,2.$$
		Suppose, for an integer $r\geq 0$, that $\Gamma_2$ is $C^{r+1,1}$, and consider two equations
		$$\mathcal{P}u^\pm=f^\pm\text{ on }\Omega_2^\pm,$$
		where $\mathcal{P}$ is strongly elliptic on $G_2$ with coefficients in $C^{r,1}(\overline{\Omega_2^\pm})$. 
		If $u\in L^2(G_2)$ satisfies
		$$u^\pm\in H^1(\Omega_2^\pm),\quad [u]_\Gamma\in H^{r+\frac{3}{2}}(\Gamma_2),\quad[\mathcal{B}_\nu u]_\Gamma\in H^{r+\frac{1}{2}}(\Gamma_2)\footnote{$\mathcal{B}_\nu$ here denotes conormal derivative. In our case it reduces to $a_E\partial_\nu$ since we deal with Laplacian.},$$
		and if $f^\pm\in H^r(\Omega_2^\pm)$, then $u^\pm\in H^{r+2}(\Omega_1^\pm)$ and
		\begin{equation*}
		\begin{aligned}
		\Vert u^+\Vert_{H^{r+2}(\Omega_1^+)}+\Vert u^+\Vert_{H^{r+2}(\Omega_1^-)}&\leq C\left(\Vert u^+\Vert_{H^1(\Omega_2^+)}+\Vert u^-\Vert_{H^1(\Omega_2^-)}\right)\\
		&+ C\left(\Vert [u]_{\Gamma_2}\Vert_{H^{r+\frac{3}{2}}(\Gamma_2)}+\Vert [\mathcal{B}_\nu u]_{\Gamma_2}\Vert_{H^{r+\frac{1}{2}}(\Gamma_2)}\right)\\
		&+C\left(\Vert f^+\Vert_{H^r(\Omega_2^+)}+\Vert f^-\Vert_{H^r(\Omega_2^-)}\right).
		\end{aligned}
		\end{equation*}
	\end{theorem}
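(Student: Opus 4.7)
The plan is to prove this interior transmission regularity estimate by classical elliptic methods adapted to the interface $\Gamma$: localize via a partition of unity, flatten $\Gamma$ to a hyperplane, reduce to zero transmission jumps, run Nirenberg's tangential difference quotients, recover normal derivatives algebraically from the equation, and finally bootstrap by induction on $r$. First I would take a smooth partition of unity $\{\chi_k\}$ subordinated to a finite cover of $\overline{\Omega_1}$ by balls $B_k\Subset G_2$, each contained entirely in $\Omega_2^+$, entirely in $\Omega_2^-$, or straddling $\Gamma_2$. For balls of the first two types, classical interior $H^{r+2}$-regularity for the strongly elliptic operator $\mathcal P$ with $C^{r,1}$ coefficients on $\Omega_2^\pm$ (e.g.\ via difference quotients combined with Caccioppoli) produces the desired bound on $\chi_k u^\pm$; only the interface balls are non-routine.

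For a ball $B_k$ centered at $x_0\in\Gamma_2$, a $C^{r+1,1}$ diffeomorphism $\Phi$ straightens $\Gamma$ to $\{x_n=0\}$ on a ball $B'$, and the pullback $\tilde u=u\circ\Phi^{-1}$ solves $\tilde{\mathcal P}\tilde u^\pm=\tilde f^\pm$ on $B'\cap\{\pm x_n>0\}$ with $\tilde{\mathcal P}$ strongly elliptic, coefficients in $C^{r,1}$, and jumps $[\tilde u]=\tilde g_0\in H^{r+3/2}$, $[\tilde{\mathcal B}_\nu\tilde u]=\tilde g_1\in H^{r+1/2}$ across $\{x_n=0\}$, all with norms equivalent to the original. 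Next I would \emph{reduce to zero jumps}: using surjectivity of the trace map on half-ball Sobolev spaces, pick a lifting $G\in H^{r+2}(B'\cap\{x_n>0\})$ with trace $\tilde g_0$ on $\{x_n=0\}$ and with $\alpha_{nn}\partial_n G$ matching $\tilde g_1$ modulo the tangential part of the conormal (where $\alpha_{nn}$ is the $(n,n)$-coefficient of $\tilde{\mathcal P}$ above the interface). Setting $v:=\tilde u^+-G$ on $\{x_n>0\}$ and $v:=\tilde u^-$ below gives a function in $H^1(B')$ with vanishing jump and vanishing conormal jump, solving $\tilde{\mathcal P}v^\pm=\tilde f^\pm - \tilde{\mathcal P}G\,\mathbf 1_{\{x_n>0\}}$; the new source terms have $H^r$-norms controlled by $\|\tilde g_0\|_{H^{r+3/2}}+\|\tilde g_1\|_{H^{r+1/2}}$.

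The analytic heart is now Nirenberg's tangential difference-quotient method. For $i<n$ the tangential translation preserves $\{x_n=0\}$ and the homogeneous jump conditions, so $\Delta_i^h v$ solves the same transmission problem with right-hand side $\Delta_i^h\tilde f+[\tilde{\mathcal P},\Delta_i^h]v$, whose $L^2$-norm is bounded by $\|\tilde f\|_{H^1}$ plus a Lipschitz constant of the coefficients times $\|\nabla v\|_{L^2}$. Testing against $\chi^2\Delta_i^h v$ (with $\chi$ a cutoff), using strong ellipticity and Young's inequality, yields an $H^1$-bound on $\Delta_i^h v$ on a smaller ball uniform in $h$; passing to the limit gives $\partial_i v\in H^1$ for every $i<n$ with the expected quantitative bound. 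With every $\partial_i\partial_j v$ with $\min(i,j)<n$ now in $L^2$, solve the equation $\tilde{\mathcal P}v^\pm=\tilde f^\pm+\ldots$ algebraically for $\partial_{nn}v^\pm$, which lies in $L^2$ since the coefficient of $\partial_{nn}$ is bounded below by ellipticity. Composing back through $\Phi$ and summing the partition of unity gives the estimate at level $r=0$.

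For $r\ge 1$ I would induct: tangential derivatives $\partial_i v$ satisfy an elliptic transmission problem of exactly the same form, with source in $H^{r-1}$ and jumps in $H^{r+1/2}$, $H^{r-1/2}$, so the inductive hypothesis gives $\partial_i v\in H^{r+1}$; the equation then delivers $\partial_n^{r+2}v$ in $L^2$. The main obstacle is the bookkeeping at the interface: one must verify that the lifting used in the jump-removal step, the commutators $[\tilde{\mathcal P},\Delta_i^h]$ in the tangential argument, and the coefficient regularity transported through the flattening $\Phi$ all produce corrections that lie in the correct Sobolev space at each level, so that the inductive scheme closes with an estimate which is linear in exactly the four norms $\|u^\pm\|_{H^1(\Omega_2^\pm)}$, $\|[u]\|_{H^{r+3/2}(\Gamma_2)}$, $\|[\mathcal B_\nu u]\|_{H^{r+1/2}(\Gamma_2)}$, $\|f^\pm\|_{H^r(\Omega_2^\pm)}$ appearing on the right-hand side of the statement.
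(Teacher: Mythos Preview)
The paper does not prove this theorem at all: it is stated with the citation \cite[Theorem 4.20]{McL} and used as a black box. Your sketch is the standard route to such transmission estimates (localize, flatten, remove jumps by lifting, tangential difference quotients, recover $\partial_{nn}$ from the equation, induct on $r$), and it is essentially the argument given in McLean's book; there is nothing to compare against in the present paper.
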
		

	We need an analogue of the above theorem for $r=-\frac{1}{2}$.
	To get it, we are going to interpolate between $r=0$ and $r=-1$. 
	We first prove the following lemma.
	\begin{lemma}\label{l:H^1/2 bound}
		Let $E$ be a set with the boundary in $C^{1,1}$ and 
		let $R>0$ be such that $B_R\supset \overline{E}$.
		Consider the equation
		\begin{equation}\label{e:transmission pb eq}
			\begin{cases}
			\beta\Delta u^+=f^+\text{ in }E,\\
			\Delta u^-=f^-\text{ in }B_R\backslash E,\\
			u^+=u^-\text{ on }\partial E,\\
			\beta\nabla u^+\cdot\nu-\nabla u^-\cdot\nu=g\text{ on }\partial E,\\
			u^-=0\text{ on }\partial B_R,
			\end{cases}
		\end{equation}
		where $f^+\in H^{-1}(E)$,$f^-\in H^{-1}(B_R\backslash E)$, 
		and $g\in H^{-1/2}(\partial E)$ are given.
		Then there exists $u$ - the solution of \eqref{e:transmission pb eq} in $W^{1,2}_0(B_R)$ and it satisfies
		\begin{equation}\label{H^1 bound for solutions}
			\left\Vert u\right\Vert_{H^1(B_R)}^2\leq
			C\left(\Vert f^+\Vert_{H^{-1}(E)}^2+\Vert f^-\Vert_{H^{-1}(B_R\backslash E)}^2+\Vert g\Vert_{H^{-1/2}(\partial E)}^2\right)
	\end{equation}
		with $C=C(n,R)>0$.
		Moreover, if $f^+1\in H^{-1/2}(E)$,$f^-\in H^{-1/2}(B_R\backslash E)$, 
		and $g\in L^2(\partial E)$, then
		\begin{equation}\label{H^1/2 bound for solutions}
			\left\Vert u\right\Vert_{H^{3/2}(B_R)}^2\leq
			C\left(\Vert f^+\Vert_{H^{-1/2}(E)}^2+\Vert f^-\Vert_{H^{-1/2}(B_R\backslash E)}^2+\Vert g\Vert_{L^2(\partial E)}^2\right)
	\end{equation}
		with $C=C(n,R)>0$.
	\end{lemma}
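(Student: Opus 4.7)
The plan is to establish existence and the energy bound \eqref{H^1 bound for solutions} by the Lax--Milgram theorem, and then to derive the higher estimate \eqref{H^1/2 bound for solutions} by operator interpolation between this $H^1$-bound and the $H^2$-estimate provided by Theorem \ref{t:sobolev bounds for transmission problem}.

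I would first record the weak formulation. Multiplying each PDE by $v\in W^{1,2}_0(B_R)$ and integrating by parts on $E$ and on $B_R\setminus E$ separately, the transmission conditions on $\partial E$ combine to give
\begin{equation*}
\int_{B_R} a_E\,\nabla u\cdot\nabla v\,dx
= -\langle f^+, v\rangle_{E}-\langle f^-, v\rangle_{B_R\setminus E}+\langle g, v\rangle_{\partial E}.
\end{equation*}
The bilinear form on the left is continuous and, since $a_E\ge \min(1,\beta)>0$ and $v$ vanishes on $\partial B_R$, coercive on $W^{1,2}_0(B_R)$ via Poincar\'e's inequality. The right-hand side is a continuous linear functional on $W^{1,2}_0(B_R)$ thanks to the trace inequality $\|v\|_{H^{1/2}(\partial E)}\le C\|v\|_{H^1(B_R)}$. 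Lax--Milgram therefore gives a unique solution $u\in W^{1,2}_0(B_R)$, and testing the identity against $u$ itself, combined with Cauchy--Schwarz and Young's inequality, yields \eqref{H^1 bound for solutions}.

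For \eqref{H^1/2 bound for solutions} I would use interpolation applied to the linear solution operator $T:(f^+,f^-,g)\mapsto u$. The energy bound just obtained provides
\begin{equation*}
T\colon H^{-1}(E)\times H^{-1}(B_R\setminus E)\times H^{-1/2}(\partial E)\longrightarrow H^1(B_R)
\end{equation*}
continuously, while Theorem \ref{t:sobolev bounds for transmission problem} with $r=0$ (supplemented by classical boundary $H^2$-regularity near $\partial B_R$, where $u^-$ solves a standard Dirichlet problem for the Laplacian with $L^2$ right-hand side) gives
\begin{equation*}
T\colon L^2(E)\times L^2(B_R\setminus E)\times H^{1/2}(\partial E)\longrightarrow H^2(E)\oplus H^2(B_R\setminus E)
\end{equation*}
continuously. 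Complex interpolation at parameter $1/2$, together with the standard identifications $[H^{-1},L^2]_{1/2}=H^{-1/2}$, $[H^{-1/2},H^{1/2}]_{1/2}=L^2$, and $[H^1,H^2]_{1/2}=H^{3/2}$, produces the claimed bound, where $\|u\|_{H^{3/2}(B_R)}$ is to be understood as $\|u^+\|_{H^{3/2}(E)}+\|u^-\|_{H^{3/2}(B_R\setminus E)}$ (the natural piecewise interpolation space, since $u$ has a jump in its conormal derivative on $\partial E$).

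The main technical point is the application of Theorem \ref{t:sobolev bounds for transmission problem}, which in McLean is formulated for an interior pair $\overline{G_1}\Subset G_2$ rather than up to an outer Dirichlet boundary. This is handled by a standard covering argument: balls meeting $\partial E$ are treated by Theorem \ref{t:sobolev bounds for transmission problem}, balls meeting only $\partial B_R$ by classical boundary $H^2$-regularity for the Laplacian with Dirichlet data, and interior balls by interior elliptic regularity. Patching these estimates together with a partition of unity produces the global piecewise $H^2$ estimate needed for the interpolation endpoint, after which \eqref{H^1/2 bound for solutions} follows immediately.
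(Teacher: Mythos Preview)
Your proposal is correct and follows essentially the same approach as the paper: establish the $H^1$ energy bound by testing the weak formulation against $u$ (the paper phrases existence via minimization of a convex functional rather than Lax--Milgram, but these are equivalent), then interpolate the solution operator $T$ between the endpoint $r=-1$ (the energy estimate) and $r=0$ (McLean's $H^2$ estimate) to obtain the $H^{3/2}$ bound. Your added remarks about handling the outer boundary $\partial B_R$ by a covering argument and interpreting $H^{3/2}(B_R)$ piecewise are details the paper leaves implicit.
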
	
	\begin{proof}
	First we observe that the solution in $H^1$ exists since it is a minimizer of
	the following convex functional:
	\begin{equation*}
		\int_{E_t}\left(\frac{1}{2}\beta\left\vert\nabla u^+\right\vert^2-f^+ u^+\right)+\int_{E_t^c}\left(\frac{1}{2}\left\vert\nabla u^-\right\vert^2-f^- u^-\right)+\int_{\partial E_t}g(u^+-u^-).
	\end{equation*}
	Note that if we test the equation with the solution itself, we get
	\begin{equation*}
		\int_{E_t}{\frac{1}{2}\beta\left\vert\nabla u^+\right\vert^2}dx
		+\int_{E_t^c}{\frac{1}{2}\left\vert\nabla u^-\right\vert^2}dx
		=-\int_{E_t}{f^+ u^+}dx-\int_{E_t}{f^- u^-}dx+\int_{\partial E_t}{u^+ g}\,d\mathcal{H}^{n-1}.
	\end{equation*}
	By Poincar\'e, Cauchy-Schwarz, Young, and the trace inequality we obtain \eqref{H^1 bound for solutions}.

	Now we consider an operator that takes the functions of the right-hand
	side and returns the solution of the corresponding transmission problem, i.e. we define $T(f_1,f_2,g)$ for $f_1\in H^r(E_t)$, $f_2\in H^r(E_t^c)$, $g\in H^{r+\frac{1}{2}}(\partial E_t)$ as the only $H^1$ solution of \eqref{e:transmission pb eq}.
	
	By \eqref{H^1 bound for solutions}, $T:H^r\times H^r\times H^{r+\frac{1}{2}}\rightarrow H^{r+2}$ for $r=-1$. 			
	Moreover, \eqref{H^1 bound for solutions} together with Theorem \ref{t:sobolev bounds for transmission problem} yields $T:H^r\times H^r\times H^{r+\frac{1}{2}}\rightarrow H^{r+2}$ for $r\geq 0$ - integer.
	Thus, interpolating
	between $r=0$ and $r=-1$ we get that $T:H^{-\frac{1}{2}}\times H^{-\frac{1}{2}}\times L^2\rightarrow H^{\frac{3}{2}}$, so \eqref{H^1/2 bound for solutions} holds for appropriately regular right-hand side. 
	\end{proof}
	
	\begin{proof}(Proposition \ref{sobolev bounds for transmission problem})
	Since we are interested only in the value of $\dot{\psi}_t$ on 
	$\partial E_t$, we multiply it by a cut-off function $\eta$. 
	The function $\eta\in C_c^\infty(\mathbb{R}^n)$ is such that
	$$0\leq\eta\leq 1,\quad\eta\equiv 1\text{ in }B_2,\quad\eta\equiv 0\text{ outside of }B_3,\quad\vert\nabla\eta\vert\leq 2,\quad\vert\Delta\eta\vert\leq C(n).$$
	We would also like to eliminate the jump on the boundary
	in order to use Lemma \ref{l:H^1/2 bound}, so we consider a function $u:=v\eta$, where $v$ is as in Lemma \ref{rmk:first bounds on psi dot} (we recall that $v=\dot{\psi}_t+f$, where $f$ is a $H^{3/2}$ continuation of $\nabla\psi_t\cdot X$ from $\partial E_t$ inside and outside).
	For $\delta$ small enough, all sets $E_t$ lie inside of $B_2$, 
	so 
	\begin{equation}\label{e:u on the boundary}
		u=\dot{\psi}_t+\nabla\psi_t\cdot X\text{ on }\partial E_t.
	\end{equation}	
	Note that $u$ satisfies	
	\begin{equation*}
	\begin{cases}
		-\beta\Delta u=-\frac{1}{K}v+\frac{2}{K}\dot{\J}(E_t)+\Delta f\text{ in }E_t,\\
		\Delta u=\nabla v\cdot\nabla\eta+\left(\dot{\psi}_t+f\right)\Delta\eta\text{ in }E_t^c,\\
		u^+-u^{-}=0\text{ on }\partial E_t,\\
		\beta\nabla u^+\cdot\nu-\nabla u^{-}\cdot\nu=
\left(-\left(\beta\nabla[\nabla\psi_t^+]-\nabla[\nabla\psi_t^{-}]\right) X
+\beta\nabla f^+-\nabla f^{-}\right)\cdot\nu
			\text{ on }\partial E_t,\\
		u=0\text{ on }\partial B_3.
	\end{cases}
	\end{equation*}		
	By Lemma \ref{l:H^1/2 bound},
	\begin{equation*}
	\begin{aligned}
		\Vert u^+\Vert_{H^{\frac{3}{2}}(E_t)}+\Vert u^-\Vert_{H^{\frac{3}{2}}(E_t^c)}\leq
		&\,\,C\left(\left\Vert \left(\beta\nabla[\nabla\psi_t^+
		\cdot X]\right)\cdot\nu\right\Vert_{L^2(\Gamma_2)}+
		\left\Vert \left(\nabla[\nabla\psi_t^{-}\cdot X]\right)\cdot\nu\right\Vert_{L^2(\Gamma_2)}\right)\\
		&+ C\left(\left\Vert \left(\beta\nabla[\nabla\psi_t^+]
		\cdot X\right)\cdot\nu\right\Vert_{L^2(\Gamma_2)}+
		\left\Vert \left(\nabla[\nabla\psi_t^{-}]\cdot X\right)\cdot\nu\right\Vert_{L^2(\Gamma_2)}\right)\\
		&+C\left(\left\Vert \frac{1}{K}v\right\Vert_{H^{-\frac{1}{2}}(E_t)}
		+\left\Vert\frac{2}{K}\dot{\J}(E_t)\right\Vert_{H^{-\frac{1}{2}}(E_t)}+\left\Vert\Delta f\right\Vert_{H^{-\frac{1}{2}}(E_t)}\right)\\
		&+C\left(\left\Vert\nabla v\cdot\nabla\eta\right\Vert_{H^{-\frac{1}{2}}(E_t^c)}
		+\left\Vert v\Delta\eta\right\Vert_{H^{-\frac{1}{2}}(E_t^c)}\right).
	\end{aligned}
	\end{equation*}	
	Now we employ Proposition \ref{bound on psi_t}, 
	inequality \eqref{e:tang part of X}, and the definition of $f$ 
	to get
	\begin{equation*}
	\begin{aligned}
		\Vert u^+\Vert_{H^{\frac{3}{2}}(E_t)}+\Vert u^-\Vert_{H^{\frac{3}{2}}(E_t^c)}&\leq C\left(\Vert X\cdot\nu\Vert_{H^{1}(\partial E_t)}+\left\vert\dot{\mathcal{J}}(E_t)\right\vert\right)\\
		&\qquad\qquad+C\left(
		\left\Vert\nabla v\cdot\nabla\eta\right\Vert_{H^{-\frac{1}{2}}(E_t^c)}
		+\left\Vert v\Delta\eta\right\Vert_{H^{-\frac{1}{2}}(E_t^c)}\right).
	\end{aligned}
	\end{equation*}	
	Remembering \eqref{e:u on the boundary}, using trace inequality
	and properties of $\eta$, we have
	\begin{equation*}
	\begin{split}
		\Vert \dot{\psi}_t^+\Vert_{H^1}(\partial E_t)+\Vert \dot{\psi}_t^-\Vert_{H^1(\partial E_t)}&\leq C\left(\Vert X\cdot\nu\Vert_{H^{1}(\partial E_t)}+\left\vert\dot{\mathcal{J}}(E_t)\right\vert\right)\\
		&+C\left(
		\left\Vert\nabla v\cdot\nabla\eta\right\Vert_{H^{-\frac{1}{2}}(E_t^c)}
		+\left\Vert v\Delta\eta\right\Vert_{H^{-\frac{1}{2}}(E_t^c)}\right)\\
		&\leq C\left(\Vert X\cdot\nu\Vert_{H^{1}(\partial E_t)}+\left\vert\dot{\mathcal{J}}(E_t)\right\vert\right)+C\left(
		\left\Vert\nabla v\cdot\nabla\eta\right\Vert_{L^2(E_t^c)}
		+\left\Vert v\Delta\eta\right\Vert_{L^2(E_t^c)}\right)\\
		&\leq C\left(\Vert X\cdot\nu\Vert_{H^{1}(\partial E_t)}+\left\vert\dot{\mathcal{J}}(E_t)\right\vert\right)+C\left(
		\left\Vert\nabla v\right\Vert_{L^2(E_t^c)}
		+\left\Vert v\right\Vert_{L^2(B_3\backslash B_2)}\right).
	\end{split}	
	\end{equation*}
	Now it remains to recall the bounds \eqref{e:D^1 bound on v}
	and \eqref{e:L^2* bound on v} and notice that 
	$\Vert\cdot\Vert_{L^2(B_3\backslash B_2)}~\leq~C\Vert\cdot~\Vert_{L^{2^*}(B_3\backslash B_2)}$.	
	\end{proof}
		
	\begin{proof}(Lemma \ref{l:secdercont})

		Let us first show that the lemma is implied by the following claim.
		
		{\bf Claim: } 
		$\left\vert\ddot{\J}(E_t)\right\vert\leq C\left(\left\Vert X\cdot\nu\right\Vert^2_{H^1(\partial B_1)}+\dot{\J}(E_t)\left\Vert X\cdot\nu\right\Vert_{H^1(\partial B_1)}\right).$

		Indeed, suppose we proved the claim. Denote $\dot{\J}(E_t)$ by $f(t)$.
		Then we know the following:
		\begin{equation*}
		\begin{cases}
		\left\vert f'(t)\right\vert\leq C\left(\left\Vert X\cdot\nu\right\Vert^2_{H^1(\partial B_1)}+f(t)\left\Vert X\cdot\nu\right\Vert_{H^1(\partial B_1)}\right),\\
		f(0)=0.
		\end{cases}
		\end{equation*}
		Let us show that 
		\begin{equation}\label{e:bound for first derivative}
		\vert f(t)\vert\leq\left\Vert X\cdot\nu\right\Vert_{H^1(\partial B_1)},
		\end{equation} 
		then the lemma will follow immediately.
		Suppose that there exists a time $t\in (0,1]$ such that 
		the inequality (\ref{e:bound for first derivative}) fails.
		We denote by $t^*$ the first time when it happens, i.e.
		$$t^*:=\inf_{t\in [0,1]}\left\{t:(\ref{e:bound for first derivative})\text{ fails}\right\}.$$
		Since inequality (\ref{e:bound for first derivative}) is true for $t=0$,
		the following holds:
		\begin{equation*}
			\vert f(t^*)\vert=\left\Vert X\cdot\nu\right\Vert_{H^1(\partial B_1)}, \quad\vert f(t)\vert\leq\left\Vert X\cdot\nu\right\Vert_{H^1(\partial B_1)}\text{ for }t\in [0,t^*].
		\end{equation*}
		Now, as $f(0)=0$, we can write
		\begin{equation*}
			f(t^*)=\int_0^{t^*}{f'(t)}dt
		\end{equation*}				
		and thus
		\begin{equation*}
		\begin{aligned}
			\left\Vert X\cdot\nu\right\Vert_{H^1(\partial B_1)}&=\vert f(t^*)\vert\leq\int_0^{t^*}{\vert f'(t)\vert}dt\\
			&\leq\int_0^{t^*}{C\left(\left\Vert X\cdot\nu\right\Vert^2_{H^1(\partial B_1)}+f(t)\left\Vert X\cdot\nu\right\Vert_{H^1(\partial B_1)}\right)}dt
			\leq 2C\left\Vert X\cdot\nu\right\Vert^2_{H^1(\partial B_1)}.
		\end{aligned}	
		\end{equation*}				
		However, that cannot hold for 
		$\Vert X\cdot\nu\Vert_{H^1(\partial B_1)}$ small enough. 
		That means that (\ref{e:bound for first derivative}) holds
		for all times $t$.
			
		{\bf Proof of the claim.}	
		
		By \eqref{e:second_derivative} we have
%
%
		\begin{equation*}
	\begin{aligned}
		&\ddot{\J}(E_t)
		=-\frac{2}{K}\dot{\J}(E_t)\int_{\partial E_t}{\psi_t (X\cdot\nu)}d\mathcal{H}^{n-1}\\
		&+\frac{1}{2}\int_{\partial E_t}{\nabla\left[\beta\vert\nabla \psi_t^+\vert^2-\vert\nabla \psi_t^{-}\vert^2\right]\cdot X(X\cdot\nu)}d\mathcal{H}^{n-1}\\
		&+\int_{\partial E_t}{\left(\frac{1-\frac{2}{K}\vert B_1\vert\J(E_t)}{\vert B_1\vert}+\frac{1}{K}\psi_t\right)(\nabla\psi_t^+\cdot X)(X\cdot\nu)}d\mathcal{H}^{n-1}\\
		&+\int_{\partial E_t}{\left(\left(\frac{1-\frac{2}{K}\vert B_1\vert\J(E_t)}{\vert B_1\vert}\right)+\frac{1}{K}\psi_t\right)\dot{\psi}_t^+ (X\cdot\nu)}d\mathcal{H}^{n-1}\\
		&+\int_{\partial E_t}{(\beta\nabla\psi_t^+\cdot\nabla\dot{\psi}_t^+
		-\nabla\psi_t^{-}\cdot\nabla\dot{\psi}_t^{-})(X\cdot\nu)}d\mathcal{H}^{n-1}\\
		&-2\int_{\partial E_t}\left(\beta\left(\nabla\dot{\psi}_t^+\cdot\nu\right)\left(\nabla\psi_t^+\cdot\nu\right)
		-\left(\nabla\dot{\psi}_t^{-}\cdot\nu\right)\left(\nabla\psi_t^{-}\cdot\nu\right)
		\right)(X\cdot\nu)d\mathcal{H}^{n-1}\\
		&-\int_{\partial E_t}\nabla\left[\beta(\nabla\psi_t^+\cdot\nu)^2-(\nabla\psi_t^{-}\cdot\nu)^2\right]\cdot X(X\cdot\nu)d\mathcal{H}^{n-1}\\	
		&=:I_1(t)+I_2(t)+I_3(t)+I_4(t)+I_5(t)+I_6(t)+I_7(t).
	\end{aligned}	
	\end{equation*}

	
	We start with $I_1$. Using the expression for $\dot{\J}(E_t)$ obtained in Proposition \ref{p:first_derivative}, we get
	\begin{equation*}
	\begin{aligned}
		-\frac{K}{2}I_1(t)
		=&\dot{\J}(E_t)\int_{\partial E_t}{\psi_t (X\cdot\nu)}d\mathcal{H}^{n-1}
		=\left(1-\frac{1}{K}\int_{E_t}\psi_t dx\right)\frac{1}{\vert B_1\vert}\left(\int_{\partial E_t}{\psi_t(X\cdot\nu)}d\mathcal{H}^{n-1}\right)^2\\
		&+\frac{1}{2}\int_{\partial E_t}{\left(\beta\vert\nabla \psi_t^+\vert^2-\vert\nabla \psi_t^{-}\vert^2\right)(X\cdot\nu)}d\mathcal{H}^{n-1}\int_{\partial E_t}{\psi_t (X\cdot\nu)}d\mathcal{H}^{n-1}\\
		&+\frac{1}{2K}\int_{\partial E_t}\psi_t^2(X\cdot\nu)d\mathcal{H}^{n-1}\int_{\partial E_t}{\psi_t (X\cdot\nu)}d\mathcal{H}^{n-1}.
	\end{aligned}	
	\end{equation*}	
	

	Thus, $$\left\vert I_1(t)\right\vert\leq g(\Vert\psi_t\Vert_{C^1(\overline{E_t})})\Vert X\cdot\nu\Vert_{L^1(\partial E_t)}^2$$ for some bounded function $g$.

	To prove the bounds for $I_2$, $I_3$, and $I_7$, we rewrite $X$ as $(X\cdot\nu)\nu+X^\tau$ and use that
	\begin{equation*}
		\vert X^\tau\circ\Phi_t\vert\leq\omega(\Vert\varphi\Vert_{C^{2,\vartheta}})
		\vert X\cdot\nu_{B_1}\vert.
	\end{equation*}

	Indeed,
	\begin{equation*}
	\begin{aligned}
		&I_2(t)=\frac{1}{2}\int_{\partial E_t}{\nabla\left[\beta\vert\nabla \psi_t^+\vert^2-\vert\nabla \psi_t^{-}\vert^2\right]\cdot X(X\cdot\nu)}d\mathcal{H}^{n-1}\\
		&=\frac{1}{2}\int_{\partial E_t}{\nabla\left[\beta\vert\nabla \psi_t^+\vert^2-\vert\nabla \psi_t^{-}\vert^2\right]\cdot\nu(X\cdot\nu)^2}d\mathcal{H}^{n-1}\\
		&+\frac{1}{2}\int_{\partial E_t}{\nabla\left[\beta\vert\nabla \psi_t^+\vert^2-\vert\nabla \psi_t^{-}\vert^2\right]\cdot X^\tau(X\cdot\nu)}d\mathcal{H}^{n-1}
	\end{aligned}
	\end{equation*}
	and thus
	$$\left\vert I_2(t)\right\vert\leq g(\Vert\psi_t\Vert_{C^2(\overline{E_t})})\Vert X\cdot\nu\Vert_{L^2(\partial E_t)}^2$$ for some bounded function $g$. $I_3$ and $I_7$ are treated in the same way.

	To bound $I_4$, $I_5$, and $I_6$ we use Proposition \ref{sobolev bounds for transmission problem} and Proposition \ref{bound on psi_t}.
	Let us show the inequality for $I_5$, $I_4$ and $I_6$ can be treated in a similar way.
	\begin{equation}
	\begin{split}
	&\left\vert\int_{\partial E_t}{\left(\beta\nabla\psi_t^+\cdot\nabla\dot{\psi}_t^+
		-\nabla\psi_t^{-}\cdot\nabla\dot{\psi}_t^{-}\right)(X\cdot\nu)}d\mathcal{H}^{n-1}\right\vert\\
	&\leq\int_{\partial E_t}{\left(\left\vert\beta\nabla\psi_t^+\cdot\nabla\dot{\psi}_t^+\right\vert
		+\left\vert\nabla\psi_t^{-}\cdot\nabla\dot{\psi}_t^{-}\right\vert\right)\left\vert X\cdot\nu\right\vert}d\mathcal{H}^{n-1}\\
		&\leq
		\left(
		\left(\int_{\partial E_t}{\left\vert\beta\nabla\psi_t^+\cdot\nabla\dot{\psi}_t^+\right\vert^2}\right)^\frac{1}{2}
		+\left(\int_{\partial E_t}{\left\vert\nabla\psi_t^{-}\cdot\nabla\dot{\psi}_t^{-}\right\vert^2}\right)^\frac{1}{2}
		\right)
\Vert X\cdot\nu\Vert_{L^2(\partial E_t)}\\
		&\leq
		g(\Vert\psi_t\Vert_{C^2(\overline{E_t})})
		\left(
		\left(\int_{\partial E_t}{\left\vert\nabla\dot{\psi}_t^+\right\vert^2}\right)^\frac{1}{2}
		+\left(\int_{\partial E_t}{\left\vert\nabla\dot{\psi}_t^{-}\right\vert^2}\right)^\frac{1}{2}
		\right)
\Vert X\cdot\nu\Vert_{L^2(\partial E_t)}\\
		&\leq
		g(\Vert\psi_t\Vert_{C^2(\overline{E_t})})
		\left(
\Vert X\cdot\nu\Vert_{H^{1}(\partial E_t)}+\left\vert\dot{\mathcal{J}}(E_t)\right\vert		
		\right)
\Vert X\cdot\nu\Vert_{L^2(\partial E_t)}
	\end{split}
	\end{equation}
	\end{proof}
	
	Now we are ready to prove Lemma \ref{taylor}.
	\begin{proof} (Lemma \ref{taylor})
		\begin{equation*}
			\J(E)=\J(B_1)+\dot{\J}(B_1)+\int_0^1{(1-s)\ddot{\J}(E_s)}ds.
		\end{equation*}
		By Proposition \ref{p:first_derivative} we know that $\dot{\J}(B_1)=0$.
		Now use Lemma \ref{l:secdercont} to bound the integral.
	\end{proof}

\appendix

\section{Second derivative on the ball}\label{sec: second der on the ball}

		We want to show that the second variation of the energy
		which we know is bounded by $\Vert\varphi\Vert_{H^1}^2$
		is actually bounded by a stronger $H^{1/2}$ norm on the ball.
				We don't need this for our main results but it is a sharp bound
		so we prove it for the sake of completeness. 
		
		We first show the following proposition.
		\begin{proposition} Given $\vartheta\in(0,1]$ there exists $\delta=\delta(n,\vartheta)$ such that for any $\varphi\in C^{2,\vartheta}$ with $\Vert\varphi\Vert_{C^{2,\vartheta}}<\delta$ we have 		
		\begin{equation*}
			\partial^2 \G(B_1)[\varphi,\varphi]:=
			\hat{c}_1\int_{\partial B_1}{\varphi^2}d\mathcal{H}^{n-1}
			+\int_{\partial B_1}{\hat{c}_2 H(\varphi)^++\hat{c}_3(\nabla H(\varphi)^{-}\cdot\nu)\varphi}d\mathcal{H}^{n-1},
		\end{equation*}
		where $H(\varphi)$ is the unique solution of
			\begin{equation*}
		\begin{cases}
			\beta\Delta u=\frac{1}{K}u\text{ in }B_1,\\
			\Delta u=0\text{ in }B_1^c,\\
			u^+-u^{-}
			=c_1\varphi
			\text{ on }\partial B_1,\\
			\beta\nabla u^+\cdot\nu
			-\nabla u^{-}\cdot\nu
			=c_2\varphi\text{ on }\partial B_1
		\end{cases}
	\end{equation*}
 		and $\hat{c}_1$, $\hat{c}_2$, $\hat{c}_3$, $c_1$, and	$c_2$ are constants depending only on $\beta$, $K$ and dimension $n$.
		\end{proposition}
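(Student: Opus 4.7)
The starting point is the formula \eqref{e:second_derivative} for $\ddot{\mathcal{J}}(E_t)$, evaluated at $t=0$. By Lemma \ref{l:chngprb}, $\mathcal{G}(E)=K/(2|E|)-\mathcal{J}(E)$, and since the flow from Lemma \ref{lm:vectorfield} is volume preserving, $|E_t|\equiv|B_1|$, so $\partial^2\mathcal{G}(B_1)[\varphi,\varphi]=-\ddot{\mathcal{J}}(B_1)$. The task is therefore to evaluate $\ddot{\mathcal{J}}(B_1)$ as a quadratic form in $\varphi$.

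The decisive simplification is radiality: by Proposition \ref{p:radiality of min fct}, $\psi_0$ is radial. Writing $a_\pm=(\psi_0^\pm)'(1)$, $b_\pm=(\psi_0^\pm)''(1)$ and $p=\psi_0(1)$, I have on $\partial B_1$ that $\nabla\psi_0^\pm=a_\pm\nu$ (so the tangential gradient vanishes) and $\psi_0\equiv p$. Moreover the vector field $X$ constructed in Lemma \ref{lm:vectorfield} is purely radial in a neighbourhood of $\partial B_1$, hence normal to $\partial B_1$, and for any radial scalar $g(r)$ one has $\nabla g\cdot X=g'(1)(X\cdot\nu)$; a short Hessian calculation in polar coordinates yields also $(\nabla[\nabla\psi_0^\pm]\,X)\cdot\nu=b_\pm(X\cdot\nu)$ on $\partial B_1$. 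Combined with $\dot{\mathcal{J}}(B_1)=0$ from Proposition \ref{p:first_derivative}, every one of the seven terms in \eqref{e:second_derivative} collapses to one of three integrals on $\partial B_1$: a multiple of $(X\cdot\nu)^2$, a multiple of $\dot\psi_0^+(X\cdot\nu)$, or a multiple of $(\nabla\dot\psi_0^\pm\cdot\nu)(X\cdot\nu)$, with explicit coefficients depending only on $\beta,K,a_\pm,b_\pm,p,\mathcal{J}(B_1)$.

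Next I identify $\dot\psi_0$ with $H(\varphi)$. Substituting the radial formulas into the jump conditions of Proposition \ref{p:diff of psi} gives
\[
\dot\psi_0^+-\dot\psi_0^-=-(a_+-a_-)(X\cdot\nu),\qquad
\beta\nabla\dot\psi_0^+\cdot\nu-\nabla\dot\psi_0^-\cdot\nu=-(\beta b_+-b_-)(X\cdot\nu),
\]
while the PDE, since $\dot{\mathcal{J}}(B_1)=0$, reduces to $-\beta\Delta\dot\psi_0+\tfrac{1}{K}\dot\psi_0=0$ in $B_1$ and $\Delta\dot\psi_0=0$ in $B_1^c$. From the explicit expression in Lemma \ref{lm:vectorfield}, $X\cdot\nu|_{\partial B_1}=\varphi+O(\varphi^2)$ in $C^{2,\vartheta}$, and the $O(\varphi^2)$ remainder contributes only cubic and higher-order terms to the quadratic form. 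Thus $\dot\psi_0=H(\varphi)$ modulo higher-order terms, with constants $c_1=-(a_+-a_-)$ and $c_2=-(\beta b_+-b_-)$; uniqueness of $H$ follows from the standard energy argument (combined with Gagliardo--Nirenberg--Sobolev to control the exterior trace).

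The final step is purely bookkeeping: I use the conormal jump $\beta\nabla\dot\psi_0^+\cdot\nu=\nabla\dot\psi_0^-\cdot\nu-(\beta b_+-b_-)\varphi$ to eliminate every occurrence of $\nabla\dot\psi_0^+\cdot\nu$ in favour of $\nabla\dot\psi_0^-\cdot\nu$ and $\varphi$. This transfers part of the $(\nabla\dot\psi_0^+\cdot\nu)(X\cdot\nu)$ contributions into the $\varphi^2$ integral and leaves the remainder as a multiple of $(\nabla\dot\psi_0^-\cdot\nu)\varphi$. Collecting all the coefficients and relabelling them $\hat c_1,\hat c_2,\hat c_3$ yields the asserted identity. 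The main obstacle is organisational, namely tracking the numerical coefficients from the seven terms of \eqref{e:second_derivative} after the radial collapse; no new regularity input is needed beyond Proposition \ref{bound on psi_t} and Proposition \ref{sobolev bounds for transmission problem}, and the smallness hypothesis $\|\varphi\|_{C^{2,\vartheta}}<\delta$ serves only to apply Lemma \ref{lm:vectorfield} and ensure that $\Phi_t$ and $\dot\psi_0$ are well-defined.
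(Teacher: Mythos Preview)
Your proposal is correct and follows essentially the same route as the paper: evaluate \eqref{e:second_derivative} at $t=0$, use $\dot{\mathcal J}(B_1)=0$ and the radiality of $\psi_0$ to collapse each term to a constant multiple of $(X\cdot\nu)^2$, $\dot\psi_0^+(X\cdot\nu)$, or $(\nabla\dot\psi_0^\pm\cdot\nu)(X\cdot\nu)$, identify $\dot\psi_0$ with $H(\varphi)$ via its transmission problem, and use the conormal jump to rewrite everything in terms of $\nabla\dot\psi_0^-\cdot\nu$. The paper also exploits the conormal continuity $\beta(\psi^+)'(1)=(\psi^-)'(1)$ of $\psi_0$ when combining the third and seventh terms of \eqref{e:second_derivative}, which you implicitly use as well; otherwise the bookkeeping is identical.
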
		
		
		\begin{proof}
		We have
		\begin{equation*}
		\begin{aligned}
		\ddot{\J}(B_1)
		&=-\frac{2}{K}\dot{\J}(B_1)\int_{\partial B_1}{\psi_0 (X\cdot\nu)}d\mathcal{H}^{n-1}\\
		&+\frac{1-\frac{2}{K}\vert B_1\vert\J(B_1)}{\vert B_1\vert}\left(\int_{\partial B_1}{\dot{\psi}_0^+ (X\cdot\nu)}d\mathcal{H}^{n-1}+\int_{\partial B_1}{(\nabla\psi_0^+\cdot X)(X\cdot\nu)}d\mathcal{H}^{n-1}\right)\\
		&+\int_{\partial B_1}{(\beta\nabla\psi_0^+\cdot\nabla\dot{\psi}_0^+
		-\nabla\psi_0^{-}\cdot\nabla\dot{\psi}_0^{-})(X\cdot\nu)}d\mathcal{H}^{n-1}\\
		&+\frac{1}{2}\int_{\partial B_1}{\nabla\left[\beta\vert\nabla \psi_0^+\vert^2-\vert\nabla \psi_0^{-}\vert^2\right]\cdot X(X\cdot\nu)}d\mathcal{H}^{n-1}\\
		&+\frac{1}{K}\int_{\partial B_1}{\psi_0\dot{\psi}_0^+(X\cdot\nu)}d\mathcal{H}^{n-1}
		+\frac{1}{K}\int_{\partial B_1}{\psi_0\nabla\psi_0^+\cdot X(X\cdot\nu)}d\mathcal{H}^{n-1}\\
		&-2\int_{\partial B_1}\left(\beta\left(\nabla\dot{\psi}_0^+\cdot\nu\right)\left(\nabla\psi_0^+\cdot\nu\right)
		-\left(\nabla\dot{\psi}_0^{-}\cdot\nu\right)\left(\nabla\psi_0^{-}\cdot\nu\right)
		\right)(X\cdot\nu)d\mathcal{H}^{n-1}\\
		&-\int_{\partial B_1}\nabla\left[\beta(\nabla\psi_0^+\cdot\nu)^2-(\nabla\psi_0^{-}\cdot\nu)^2\right]\cdot X(X\cdot\nu)d\mathcal{H}^{n-1},
	\end{aligned}	
	\end{equation*}		
		where $\psi_0$ is the minimizer of the energy of the ball $B_1$, meaning it solves the equation 
	\begin{equation*}
		\begin{cases}
			-\beta\Delta\psi_0=-\frac{1}{K}\psi_t+\frac{2}{K}\J(B_1)-\frac{1}{\vert B_1\vert}\text{ in }B_1,\\
			\Delta\psi_0=0\text{ in }B_1^c,\\
			\psi_0^+=\psi_0^{-}\text{ on }\partial B_1,\\
			\beta\nabla\psi^+_0\cdot\nu=\nabla\psi^{-}_0\cdot\nu\text{ on }\partial B_1.
		\end{cases}
	\end{equation*}
	We recall that $\dot{\J}(B_1)=0$ by Proposition \ref{p:first_derivative} and $\psi_0$ is radial by Proposition \ref{p:radiality of min fct}. This allows us to say that $\psi_0(x)=\psi(\vert x\vert)$ for some function $\psi:\mathbb{R}^+\rightarrow \mathbb{R}$ and we get
		\begin{equation*}
		\begin{aligned}
		\ddot{\J}(B_1)
		&=\frac{1-\frac{2}{K}\vert B_1\vert\J(B_1)}{\vert B_1\vert}\left(\int_{\partial B_1}{\dot{\psi}_0^+ (X\cdot\nu)}d\mathcal{H}^{n-1}+(\psi^+)'(1)\int_{\partial B_1}{(X\cdot\nu)^2}d\mathcal{H}^{n-1}\right)\\
		&-\beta(\psi^+)'(1)\int_{\partial B_1}{(\nabla\dot{\psi}_0^+
		-\nabla\dot{\psi}_0^{-})\cdot\nu(X\cdot\nu)}d\mathcal{H}^{n-1}\\
		&-\big(\beta(\psi^+)'(1)(\psi^+)''(1)-(\psi^-)'(1)(\psi^-)''(1)\big)\int_{\partial B_1}{(X\cdot\nu)^2}d\mathcal{H}^{n-1}\\
		&+\frac{1}{K}\psi(1)\int_{\partial B_1}{\dot{\psi}_0^+(X\cdot\nu)}d\mathcal{H}^{n-1}
		+\frac{1}{K}\psi(1)(\psi^+)'(1)\int_{\partial B_1}{(X\cdot\nu)^2}d\mathcal{H}^{n-1}\\
		&=\Big(\frac{1-\frac{2}{K}\vert B_1\vert\J(B_1)}{\vert B_1\vert}(\psi^+)'(1)-\big(\beta(\psi^+)'(1)(\psi^+)''(1)-(\psi^-)'(1)(\psi^-)''(1)\big)\\
		&\qquad\qquad\qquad\qquad\qquad\qquad\qquad\qquad\qquad+\frac{1}{K}\psi(1)(\psi^+)'(1)\Big)\int_{\partial B_1}{(X\cdot\nu)^2}d\mathcal{H}^{n-1}\\
		&+\left(\frac{1-\frac{2}{K}\vert B_1\vert\J(B_1)}{\vert B_1\vert}+\frac{1}{K}\psi(1)\right)\int_{\partial B_1}{\dot{\psi}_0^+(X\cdot\nu)}d\mathcal{H}^{n-1}\\
		&+(\beta-1)(\psi^+)'(1)\int_{\partial B_1}{\nabla\dot{\psi}_0^{-}\cdot\nu(X\cdot\nu)}d\mathcal{H}^{n-1},
	\end{aligned}	
	\end{equation*}	
	where $\dot{\psi}_0$ satisfies
	\begin{equation*}
	\begin{cases}
		-\beta\Delta\dot{\psi}_0=-\frac{1}{K}\dot{\psi}_0\text{ in }B_1,\\
		\Delta\dot{\psi}_0=0\text{ in }B_1^c,\\
		\dot{\psi}_0^+-\dot{\psi}_0^{-}=
		-\left((\psi^+)'(1)-(\psi^-)'(1)\right)(X\cdot\nu)\text{ on }\partial B_1,\\
		\beta\nabla\dot{\psi}^+_0\cdot\nu-\nabla\dot{\psi}^{-}_0\cdot\nu=
		-\left(\beta(\psi^+)''(1)-(\psi^-)''(1)\right) (X\cdot\nu)
		\text{ on }\partial B_1.
	\end{cases}
	\end{equation*}				
		Remembering that $\G(E)=\frac{K}{2\vert E\vert}-\J(E)$, we get
		\begin{equation*}
				\partial^2 \G(B_1)[\varphi,\varphi]:=
				\hat{c}_1\int_{\partial B_1}{\varphi^2}d\mathcal{H}^{n-1}
				+\int_{\partial B_1}{\hat{c}_2 H(\varphi)^++\hat{c}_3(\nabla H(\varphi)^{-}\cdot\nu)\varphi}d\mathcal{H}^{n-1}
			\end{equation*}
		where $$\hat{c}_1=-\frac{1-\frac{2}{K}\vert B_1\vert\J(B_1)}{\vert B_1\vert}(\psi^+)'(1)
		+\big(\beta(\psi^+)'(1)(\psi^+)''(1)-(\psi^-)'(1)(\psi^-)''(1)\big)
		-\frac{1}{K}\psi(1)(\psi^+)'(1),$$ 
		$$\hat{c}_2=-\frac{1-\frac{2}{K}\vert B_1\vert\J(B_1)}{\vert B_1\vert}
		-\frac{1}{K}\psi(1),$$ 
		$$\hat{c}_3=-(\beta-1)(\psi^+)'(1)$$  
		and $H(\varphi)$ is the unique solution of
			\begin{equation*}
		\begin{cases}
			\beta\Delta u=\frac{1}{K}u\text{ in }B_1,\\
			\Delta u=0\text{ in }B_1^c,\\
			u^+-u^{-}
			=c_1\varphi
			\text{ on }\partial B_1,\\
			\beta\nabla u^+\cdot\nu
			-\nabla u^{-}\cdot\nu
			=c_2\varphi\text{ on }\partial B_1,
		\end{cases}
	\end{equation*}
 		where $c_1=-\left((\psi^+)'(1)-(\psi^-)'(1)\right)$, 
 		$c_2=-\left(\beta(\psi^+)''(1)-(\psi^-)''(1)\right)$.

	\end{proof}
	
	We are now ready to show the following lemma.
	
	\begin{lemma}	
	 Given $\vartheta\in(0,1]$ there exists $\delta=\delta(n,\vartheta)$, $c=c(n,\beta,K)$ such that for any $\varphi\in C^{2,\vartheta}$ with $\Vert\varphi\Vert_{C^{2,\vartheta}}<\delta$ we have
	$$\partial^2 \G(B_1)[\varphi,\varphi]\geq -c\Vert \varphi\Vert^2_{H^\frac{1}{2}(\partial B_1)}.$$
	\end{lemma}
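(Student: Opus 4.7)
The plan is to exploit the fact that we are working on the ball, where the transmission problem defining $H(\varphi)$ separates completely in spherical harmonics. Expand $\varphi = \sum_{k\ge 0}\sum_{j} a_{k,j} Y_{k,j}$, where $\{Y_{k,j}\}$ is an orthonormal basis of spherical harmonics of degree $k$ on $\partial B_1$. Recall that
\[
\|\varphi\|^2_{L^2(\partial B_1)} = \sum_{k,j} a_{k,j}^2, \qquad \|\varphi\|^2_{H^{1/2}(\partial B_1)} \asymp \sum_{k,j} (1+k)\,a_{k,j}^2.
\]
Since the operators $\varphi \mapsto H(\varphi)^+|_{\partial B_1}$ and $\varphi \mapsto \nabla H(\varphi)^-\cdot\nu|_{\partial B_1}$ are linear and commute with rotations, they act diagonally on each spherical harmonic. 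So it suffices to identify the eigenvalues $\lambda_k^+$ and $\lambda_k^-$ associated to mode $Y_{k,j}$ and establish their growth rate in $k$.

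For $\varphi = Y_{k,j}$, write
\[
H(\varphi)^+(x) = A_k\, f_k(|x|)\, Y_{k,j}(x/|x|),\qquad H(\varphi)^-(x) = B_k\, |x|^{-(k+n-2)}\, Y_{k,j}(x/|x|),
\]
where $f_k$ is the solution, bounded at the origin, of the radial ODE coming from $\beta\Delta u = u/K$ on spherical-harmonic mode $k$:
\[
\beta\Bigl(f_k'' + \tfrac{n-1}{r}f_k' - \tfrac{k(k+n-2)}{r^2} f_k\Bigr) = \tfrac{1}{K}\, f_k,
\]
i.e., (after the standard substitution) a modified Bessel equation of order $k + (n-2)/2$. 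The two transmission conditions become the $2\times 2$ linear system
\[
A_k f_k(1) - B_k = c_1,\qquad \beta A_k f_k'(1) + (k+n-2)B_k = c_2,
\]
whose explicit solution gives $\lambda_k^+ = A_k f_k(1)$ and $\lambda_k^- = -(k+n-2)B_k$.

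The main technical step is an asymptotic analysis of the ratio $f_k'(1)/f_k(1)$, which by classical facts about modified Bessel functions (or by a direct Frobenius argument writing $f_k(r) = r^k g_k(r)$ with $g_k$ smooth and $g_k(1)$ of order $1$) satisfies $f_k'(1)/f_k(1) = k + O(1)$ as $k\to \infty$. Plugging this into the $2\times 2$ system, the determinant is of order $(1+\beta)k$, and solving one obtains
\[
\lambda_k^+ = A_k f_k(1) = O(1),\qquad \lambda_k^- = -(k+n-2)B_k = O(k)\qquad \text{as } k\to \infty,
\]
with constants depending only on $n,\beta,K$. By Parseval,
\[
\Bigl|\int_{\partial B_1} H(\varphi)^+\,\varphi\,d\mathcal{H}^{n-1}\Bigr| = \Bigl|\sum_{k,j} \lambda_k^+\, a_{k,j}^2\Bigr| \le C \sum_{k,j} a_{k,j}^2 \le C \|\varphi\|_{H^{1/2}}^2,
\]
and likewise $\bigl|\int_{\partial B_1} (\nabla H(\varphi)^-\cdot\nu)\,\varphi\,d\mathcal{H}^{n-1}\bigr|\le C\sum_{k,j}(1+k)a_{k,j}^2 \le C\|\varphi\|_{H^{1/2}}^2$. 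The first term $\hat c_1 \int_{\partial B_1}\varphi^2$ is trivially controlled by $\|\varphi\|^2_{L^2} \le \|\varphi\|^2_{H^{1/2}}$. Combining the three bounds and using the previous proposition yields $\partial^2\mathcal{G}(B_1)[\varphi,\varphi] \ge -c\,\|\varphi\|^2_{H^{1/2}(\partial B_1)}$.

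The only nontrivial point is verifying $f_k'(1)/f_k(1) = k + O(1)$, which is the ingredient that forces the DtN-like operator of the outer harmonic extension to contribute the $O(k)$ growth (i.e., to be exactly an $H^{1/2}\to H^{-1/2}$ map), while the inner component stays bounded thanks to the zero-order term $u^+/K$. Once that asymptotic is in hand, the spherical harmonic expansion makes the whole argument a one-line summation via Parseval.
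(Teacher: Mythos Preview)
Your argument is correct and follows essentially the same route as the paper: expand in spherical harmonics, separate variables in the transmission problem, and show that the resulting diagonal multipliers grow at most linearly in the degree, which is exactly the $H^{1/2}$ scale. The paper carries out the radial analysis via an explicit Frobenius series (your alternative $f_k(r)=r^k g_k(r)$), arriving at the same asymptotic $|R_2'(1)|\sim m$ and hence $|\partial^2\G(B_1)[Y_{m,i},Y_{m,i}]|\sim m$.

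Two small remarks. First, the paper additionally invokes the volume and barycenter constraints to control the $m=0$ and $m=1$ coefficients; for the lemma as stated (no constraints on $\varphi$) this step is unnecessary, since the corresponding multipliers are finite constants and are absorbed into $c$. Your uniform treatment of all modes is therefore slightly cleaner for the stated result. Second, your closing comment that ``the inner component stays bounded thanks to the zero-order term $u^+/K$'' is a misattribution: the boundedness of $\lambda_k^+$ comes purely from the structure of the $2\times2$ boundary system (numerator and denominator both of order $k$), and would hold equally if the interior equation were $\Delta u^+=0$. This does not affect the proof.
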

	\begin{proof}
	Consider $\varphi$ in the basis of spherical harmonics,
	\begin{equation*}
		\varphi=\sum_{m=0}^\infty\sum_{i=1}^{N(m,n)}\alpha_{m,i}Y_{m,i}.
	\end{equation*}
	First, we would like to bound $\partial^2 \G(B_1)[Y_{m,i},Y_{m,i}]$.
	One can easily see that $H(Y_{m,i})~=~R(r)Y_{m,i}$, where $R(r)$ is the only solution of the following system:
	\begin{equation*}
		\begin{cases}
			R_1''(r)+\frac{n-1}{r}R_1'(r)+(-\frac{1}{\beta K}+\frac{\lambda_{m,i}}{r^2})R_1(r)=0\text{ for }r\leq 1,\\
			R_2''(r)+\frac{n-1}{r}R_2'(r)+\frac{\lambda_{m,i}}{r^2}R_2(r)=0\text{ for }r\geq 1,\\
			R_1(1)-R_2(1)=c_1,\\
			\beta R_1'(1)-R_2'(1)=c_2,
		\end{cases}
	\end{equation*}
	where $\lambda_{m,i}=-m(m+n-2)$.
	
	A straightforward computation gives us that $R_2(r)=Ar^{-(m+n-2)}$
	for some constant $A$.
	
	Let us search for $R_1$ in the form $R_1(r)=\sum_{k=0}^\infty a_kr^k$.
	The equation for $R_1$ then will take the following form:
	\begin{equation*}
		\sum_{k=0}^\infty a_{k+2}(k+2)(k+1)r^k+\frac{n-1}{r}\sum_{k=0}^\infty a_{k+1}(k+1)r^k+\left(-\frac{1}{\beta K}+\frac{\lambda_{m,i}}{r^2}\right)\sum_{k=0}^\infty a_kr^k=0\text{ for }r\leq 1
	\end{equation*}
	If $m\geq 2$, it means that
	\begin{equation*}
		a_0=0,\quad a_1=0,\quad a_k\big(k(n+k-2)-m(n+m-2)\big)=\frac{1}{\beta K}a_{k-2}\text{ for }k\geq 2.
	\end{equation*}
	The recurrent condition can be rewritten as
	\begin{equation*}
		a_k(k-m)(k+n+m-2)=-a_{k-2}\text{ for }k\geq 2.
	\end{equation*}
	Hence,
	\begin{equation*}
	\begin{cases}
		a_m=C,\\
		a_{m+2i}=\beta K a_{m+2(i-1)}\frac{1}{2i(2i+2m+n-2)}\text{ for }i\geq 1,\\
		a_k=0\text{ for all other }k,
	\end{cases}
	\end{equation*}
	where $C$ is a constant.		
	So, the coefficients $a_k$ decrease as $\frac{(\beta K)^k}{(k!)^2}$ and the series is absolutely converging. Note that $a_k=Cb_k$, where $\{b_k\}_{k}$ is the following fixed sequence:
	\begin{equation*}
	\begin{cases}
		b_m=1,\\
		b_{m+2i}=(\beta K)^i\Pi_{j=1}^{i}\frac{1}{2j(2j+2m+n-2)}\text{ for }i\geq 1,\\
		b_k=0\text{ for all other }k.
	\end{cases}
	\end{equation*}
	Our system for $R$ then becomes
	\begin{equation*}
		\begin{cases}
			R_1(r)=C\sum_{i=0}^\infty b_{m+2i}r^{m+2i}\text{ for }r\leq 1,\\
			R_2(r)=Ar^{-(m+N-2)}\text{ for }r\geq 1,\\
			C\sum_{i=0}^\infty b_{m+2i}-A=c_1,\\
			\beta C\sum_{i=0}^\infty (m+2i)b_{m+2i}+A(m+n-2)=c_2
		\end{cases}
	\end{equation*}
	with $A$ and $C$ unknowns.
	We are interested in the value of $\vert R_2'(1)\vert$:
	\begin{equation*}
	\begin{split}
		\vert R_2'(1)\vert&=\vert A(N+m-2)\vert\\
		&\qquad\qquad=\left\vert\frac{c_1(m+N-2)+c_2}{\sum_{i=0}^\infty b_{m+2i}+\beta\sum_{i=0}^\infty (m+2i)b_{m+2i}}\beta \sum_{i=0}^\infty (m+2i)b_{m+2i}-c_2\right\vert\sim m.
	\end{split}	
	\end{equation*}	
	
	Thus, 
	\begin{equation} \label{e:asymptotics of J(Y)}
		\vert\partial^2 \G(B_1)[Y_{m,i},Y_{m,i}]\vert=\vert\hat{c}_1+ \hat{c}_2A+\hat{c}_3A(n+m-2)\vert\sim m.
	\end{equation}

	Now recall that $\varphi$ is such that $\vert\Omega\vert=\vert B_1\vert$ and $x_\Omega=0$. It means that
	\begin{equation*}
	\begin{aligned}
		&\vert B_1\vert=\vert\Omega\vert=\int_{\partial B_1}{\frac{\left(1+\varphi(x)\right)^n}{n}}d\mathcal{H}^{n-1};\\
		&0=x_\Omega=\int_{\partial B_1}{y\frac{\left(1+\varphi(x)\right)^{n+1}}{n+1}}d\mathcal{H}^{n-1}.
	\end{aligned}	
	\end{equation*}
		Hence,
	\begin{equation*}
	\begin{split}
		\left\vert\int_{\partial B_1}{\varphi(x)}d\mathcal{H}^{n-1}\right\vert&=\left\vert\int_{\partial B_1}{\sum_{i=2}^n{n\choose i}\frac{\varphi(x)^i}{n}}d\mathcal{H}^{n-1}\right\vert
		\\
		&\leq C(n)\int_{\partial B_1}{\varphi(x)^2}d\mathcal{H}^{n-1}\leq C(n)\delta\|\varphi\|_{L^2}
		\end{split}
	\end{equation*}	
	and
		\begin{equation*}
		\left\vert\int_{\partial B_1}{x_i\varphi(x)}d\mathcal{H}^{n-1}\right\vert\leq\int_{\partial B_1}{\sum_{j=2}^n{n\choose j}\left\vert\frac{\varphi(x)^j}{n+1}\right\vert} d\mathcal{H}^{n-1}\leq C(n)\delta\|\varphi\|_{L^2}.
	\end{equation*}
	
	Thus, for $\delta$ sufficiently small we have
	$$a_0^2+\sum_{i=1}^na_{1,i}^2\leq 2\sum_{m=2}^\infty\sum_{i=1}^{N(m,n)}a_{m,i}^2,$$
	which in turn implies
	$$\partial^2 \G(B_1)[\varphi,\varphi]\geq -c\Vert \varphi\Vert^2_{H^\frac{1}{2}(\partial B_1)},$$
	thanks to (\ref{e:asymptotics of J(Y)}).
	\end{proof}

\end{document}